\documentclass[11pt]{amsart}

\usepackage{amsxtra,amssymb,amsmath,amscd,url,listings,stmaryrd,mathabx}

\usepackage[alphabetic]{amsrefs}
\usepackage[utf8]{inputenc}
\usepackage{eucal}
\usepackage{fullpage}
\usepackage{scrtime}
\usepackage[colorlinks]{hyperref}
\usepackage{datetime}
\usepackage[all]{xy}
\usepackage{graphicx}
\usepackage{tikz-cd}

\shortdate



\renewcommand{\leq}{\leqslant}
\renewcommand{\geq}{\geqslant}
 
\numberwithin{equation}{section}



\newcommand{\uple}[1]{\text{\boldmath${#1}$}}

\def\stacksum#1#2{{\stackrel{{\scriptstyle #1}}
{{\scriptstyle #2}}}}

\newcommand{\CGM}{$\mathrm{CGM}$}
\newcommand{\CGMT}{$\mathrm{NIO}$}


\newcommand{\bfb}{\uple{b}}

\newcommand{\bfR}{\mathbf{R}}

\newcommand{\bfK}{\mathbf{K}}

\newcommand{\Cc}{\mathbf{C}}

\newcommand{\Aa}{\mathbf{A}}

\newcommand{\Zz}{\mathbf{Z}}
\newcommand{\Pp}{\mathbf{P}}
\newcommand{\Rr}{\mathbf{R}}
\newcommand{\Gg}{\mathbf{G}}
\newcommand{\Gm}{\mathbf{G}_{m}}

\newcommand{\Qq}{\mathbf{Q}}
\newcommand{\Fp}{{\mathbf{F}_p}}

\newcommand{\Fq}{{\mathbf{F}_q}}
\newcommand{\Fqt}{{\mathbf{F}^\times_q}}
\newcommand{\Fqd}{{\mathbf{F}_{q^d}}}

\newcommand{\Ff}{\mathbf{F}}

\newcommand{\bFq}{\overline{\Ff}_q}
\newcommand{\bQl}{\overline{\Qq}_{\ell}}

\newcommand{\mmu}{\boldsymbol{\mu}}
\newcommand{\Oc}{\mathcal{O}}
\newcommand{\mcV}{\mathcal{V}}
\newcommand{\mcW}{\mathcal{W}}

\newcommand{\mcN}{\mathcal{N}}

\newcommand{\HYPK}{\mathcal{K}\ell}
\newcommand{\KL}{\mathcal{K}\ell}




\newcommand{\mods}[1]{\,(\mathrm{mod}\,{#1})}


\DeclareMathOperator{\frob}{Fr}

\DeclareMathOperator{\hypk}{Kl}



\newcommand{\ftchi}{f\otimes\chi}
\def\sump{\mathop{\sum \Bigl.^{+}}\limits}


\newcommand{\ra}{\rightarrow}
\newcommand{\lra}{\longrightarrow}

\newcommand{\fleche}[1]{\stackrel{#1}{\lra}}


\DeclareMathOperator{\Spec}{Spec}

\DeclareMathOperator{\rank}{rank}
\DeclareMathOperator{\rk}{rk}

\DeclareMathOperator{\Imag}{Im}
\DeclareMathOperator{\Reel}{Re}

\DeclareMathOperator{\Frob}{\mathrm{Fr}}

\DeclareMathOperator{\Kl}{\mathrm{Kl}}

\DeclareMathOperator{\tr}{\mathrm{Tr}}
\DeclareMathOperator{\nr}{\mathrm{Nr}}
\DeclareMathOperator{\Gal}{Gal}

\DeclareMathOperator{\Tr}{Tr}
\DeclareMathOperator{\Hom}{Hom}
\DeclareMathOperator{\End}{End}

\DeclareMathOperator{\swan}{Swan}
\DeclareMathOperator{\Sing}{Sing}

\DeclareMathOperator{\codim}{codim}

\DeclareMathOperator{\cond}{\mathbf{c}}

\DeclareMathOperator{\dual}{D}


\newcommand{\eps}{\varepsilon}
\renewcommand{\rho}{\varrho}


\DeclareMathOperator{\SL}{SL}

\DeclareMathOperator{\GL}{GL}
\DeclareMathOperator{\PGL}{PGL}

\DeclareMathOperator{\Sp}{Sp}
\DeclareMathOperator{\GSp}{GSp}
\DeclareMathOperator{\SO}{SO}


\newcommand{\demi}{{\textstyle{\frac{1}{2}}}}


\newcommand{\sheaf}[1]{\mathcal{{#1}}}


\newcommand{\ppersp}{\Pi}

\newcommand{\Qbar}{\bar{\Qq}}


\DeclareMathSymbol{\gena}{\mathord}{letters}{"3C}
\DeclareMathSymbol{\genb}{\mathord}{letters}{"3E}


\def\sump{\mathop{\sum \Bigl.^{+}}\limits}

\def\multsum{\mathop{\sum\cdots \sum}\limits}

\def\intc{\frac{1}{2i\pi}\mathop{\int}\limits}


\theoremstyle{plain}
\newtheorem{theorem}{Theorem}[section]
\newtheorem*{theorem*}{Theorem}
\newtheorem{lemma}[theorem]{Lemma}
\newtheorem{corollary}[theorem]{Corollary}

\newtheorem{proposition}[theorem]{Proposition}

\newtheorem*{TI}{(TI)}

\theoremstyle{remark}

\newtheorem*{rem}{Remark}

\theoremstyle{definition}

\newtheorem{definition}[theorem]{Definition}

\newtheorem{example}[theorem]{Example}
\newtheorem{remark}[theorem]{Remark}




\newcommand{\mcM}{\mathcal{M}}
\newcommand{\mcL}{\mathcal{L}}

\newcommand{\mcC}{\mathcal{C}}
\newcommand{\mcF}{\mathcal{F}}
\newcommand{\mcK}{\mathcal{K}}
\newcommand{\mcR}{\mathcal{R}}

\newcommand{\mcG}{\mathcal{G}}
\newcommand{\mcB}{\mathcal{B}}
\newcommand{\mcE}{\mathcal{E}}

\newcommand{\lf}{\lambda_f}

\renewcommand{\geq}{\geqslant}
\renewcommand{\leq}{\leqslant}

\newcommand{\ov}[1]{\overline{#1}}

\newcommand\sumsum{\mathop{\sum\sum}\limits}
\newcommand\sumsumsum{\mathop{\sum\sum\sum}\limits}

\begin{document}

\title[Stratification and averaging]{Stratification and averaging for
  exponential sums : bilinear forms with generalized Kloosterman sums}

\author{Emmanuel Kowalski}
\address{ETH Z\"urich -- D-MATH\\
  R\"amistrasse 101\\
  CH-8092 Z\"urich\\
  Switzerland} \email{kowalski@math.ethz.ch}

\author{Philippe Michel} \address{EPFL/SB/TAN, Station 8, CH-1015
  Lausanne, Switzerland } \email{philippe.michel@epfl.ch}

\author{Will Sawin}
\address{Columbia University, 2990 Broadway, New York, NY, USA 10027}
\email{sawin@math.columbia.edu}

\begin{abstract}
  We introduce a new comparison principle for exponential sums over
  finite fields in order to study ``sum-product'' sheaves that arise
  in the study of general bilinear forms with coefficients given by
  trace functions modulo a prime $q$. When these functions are
  hyper-Kloosterman sums with characters, we succeed in establishing
  cases of this principle that lead to non-trivial bounds below the
  Pólya-Vinogradov range.  This property is proved by a subtle
  interplay between étale cohomology in its algebraic and diophantine
  incarnations. We give a first application of our bilinear estimates
  concerning the first moment of a family of $L$-functions of degree
  $3$.
\end{abstract}

\subjclass[2010]{11T23, 11L05, 11N37, 11N75, 11F66, 14F20, 14D05}

\keywords{generalized Kloosterman sums, Kloosterman sheaves,
  monodromy, Riemann Hypothesis over finite fields, short exponential
  sums, families of $L$-functions, first moment}

\thanks{Ph.\ M.\ and E.\ K.\ were partially supported by a DFG-SNF
  lead agency program grant (grants 200021L\_153647 and
  200020L\_175755). W.S. partially supported by Dr. Max R\"ossler, the
  Walter Haefner Foundation and the ETH Zurich Foundation. Ph.M. and W.S. were partially supported by NSF
Grant No. DMS-1440140. \today\
  \currenttime}

\maketitle 

\setcounter{tocdepth}{1}
\tableofcontents

\section{Introduction}

\subsection{Presentation of the results}

Let $q\geq 1$ be an integer and let $K(\cdot;q)$ be a complex-valued
$q$-periodic arithmetic function. A recurrent problem in analytic
number theory is to evaluate how such functions correlate with other
natural arithmetic functions $f(n)$, where $f$ could be the
characteristic function of an interval, or that of the primes, or the
Fourier coefficients of some automorphic form. When facing such
problems, one is often led to the problem of bounding non-trivially
some bilinear forms
$$
B(K,\uple{\alpha},\uple{\beta})=\sumsum_{m\leq M,n\leq
  N}\alpha_m\beta_n K(mn;q),
$$
where the ranges of the variables $M,N\geq 1$ usually depend on $q$,
and $\uple{\alpha}=(\alpha_m)_{m\leq M}$,
$\uple{\beta}=(\beta_n)_{n\leq N}$ are complex numbers which,
depending on the initial problem, are quite arbitrary.  One of the
main objectives is to improve on the trivial bound
$$
\|K\|_\infty\|\uple\alpha\|_2\|\uple\beta\|_2(MN)^{1/2}
$$ 
for ranges of $M$ and $N$ that are as small as possible compared to
$q$; indeed, this uniformity is often more important than the strength
of the saving compared to the trivial bound. 
\par
A natural benchmark is the \emph{P\'olya-Vinogradov} method, which
often provides non-trivial bounds as long as $M, N\geq
q^{1/2}$. Indeed, obtaining a result below that range is usually
extremely challenging. When the modulus $q$ is composite, a number of
techniques exploiting the possibility of factoring $q$ (starting with
the Chinese Remainder Theorem) become available, and results exist in
fair generality.
\par
In this paper, we will only consider the case where $q$ is a prime,
and when $K$ is a trace function (see~\cite{FKMSurvey} for a
background survey).
 
The landmark result in this setting is the work of Burgess~\cite{Bur},
which provides a non-trivial bound for the sum
$$
\sum_{n\leq N}\chi(n)
$$
when $\chi$ is a non-trivial Dirichlet character modulo $q$ and
$N\geq q^{3/8+\eta}$, for any $\eta>0$. This is therefore well below
P\'olya-Vinogradov range. The ideas of Burgess (especially the ``$+ab$
shifting trick'') combine successfully the multiplicativity of $\chi$
and the (almost) invariance of intervals by additive translations. 

Another twist of Burgess's method was given by the works of Karatsuba
and Vinogradov, Friedlander-Iwaniec \cite{FI} and subsequently
Fouvry-Michel \cite{FoMi} to bound non-trivially the bilinear sums
$B(K,\uple{\alpha},\uple{\beta})$ for various choices of functions $K$
and ranges $M,N$ shorter than $q^{1/2}$. In particular, using some
version of the Sato-Tate equidistribution laws due to Katz
\cite{ESDE}, Fouvry and Michel considered
$$
K(x;q)=e\Bigl(\frac{x^k+ax}q\Bigr),\ k\in\Zz-\{0,1,2\},\
a\in\Fqt,\quad (x,q)=1,
$$
and proved that for any $\delta>0$, there exists $\eta>0$ such that,
\begin{equation}\label{nontrivialbilineargeneral}
\sumsum_{m\leq M,n\leq N}\alpha_m\beta_n K(mn;q)
  \ll \|\uple{\alpha}\|_2\|\uple{\beta}\|_2(MN)^{1/2-\eta}	
\end{equation}
as long as
\begin{equation}\label{eqburgessrange}
  M,N\geq q^\delta\text{ and }MN\geq q^{3/4+\delta}.	
\end{equation}
The condition $MN \geq q^{3/4+ \delta}$ is believed to be a barrier in
this setting analogous to the condition $N > q^{1/4+\delta}$ in the
Burgess bound for short character sums.
 
In our previous paper~\cite{KMS}, motivated by the study of moments of
$L$-functions (especially in our papers with Blomer, Mili\'cevi\'c and
Fouvry~\cite{BFKMM, BFKMMS}), we obtained bounds of type
\eqref{nontrivialbilineargeneral} when $K(\cdot;q)$ is a
hyper-Kloosterman sum, namely
$$
\Kl_k(x;q)=\frac{1}{q^{\frac{k-1}{2}}}
\sum_\stacksum{y_1,\cdots,y_k\in\Fqt}{y_1\cdots y_k=x}
e\Bigl(\frac{y_1+\cdots+y_k}{q}\Bigr)
$$
where $k\geq 2$ is some fixed integer. More precisely, we proved that
\eqref{nontrivialbilineargeneral} holds as long as
$$
M,N\geq q^\delta\text{ and }MN\geq q^{7/8+\delta}
$$ 
for some $\delta>0$. The argument was delicate and quite difficult.

In this second paper, we introduce a new approach that is both more
robust and more powerful. The main complete exponential sum that needs
to be bounded in this general setting is a difference of two
exponential sums, which in previous work was bounded by estimating
separately the main terms on both sides. Here, we show that the two
underlying cohomology groups are equal, hence the main terms cancel,
without explicitly calculating them. To establish the desired
cohomological comparison, we define a stratification of the parameter
space, and show using vanishing cycles that if the result fails at any
point of one of the strata, it fails on the generic point. Using a
variant of Katz's diophantine criterion of irreducibility, this
implies that the original exponential sum estimate fails on average
over the stratum. We check that the strata are defined by equations of
a specific type, which makes the averaged exponential sum estimate
amenable to classical analytic techniques, specifically separation of
variables.

\begin{remark}\label{rm-xu}
  As the referee pointed out to us, a similar stratification strategy
  is present in the paper~\cite{xu} of J. Xu on multiplicative
  character sums, where the key applications are related to
  multi-variable Burgess estimates. The main differences are that in
  Xu's method the stratification is more abstract, whereas for us it
  is explicit, and Xu's method relies on the higher moments of the
  exponential sums, while we use only the first moment.
\end{remark}

Our main application in this paper is the proof of the estimate
\eqref{nontrivialbilineargeneral} in the full range
\eqref{eqburgessrange} for generalized hyper-Kloosterman sums with
character twists, whose definition we now recall.  Let $k\geq 1$ be an
integer, and let $\uple{\chi}=(\chi_1,\ldots,\chi_k)$ be a tuple of
$k$ Dirichlet characters modulo $q$, each of which might be
trivial. The $(k-1)$-dimensional generalized Kloosterman sums
associated to $\uple{\chi}$ are the exponential sums defined for
$x\in\Fqt$ by
$$
\Kl_k(x;\uple{\chi},q)=\frac{1}{q^{\frac{k-1}{2}}}
\sum_\stacksum{y_1,\cdots,y_k\in\Fqt}{y_1\cdots y_k=x}
\chi_1(y_1)\cdots\chi_k(y_k)e\Bigl(\frac{y_1+\cdots+y_k}{q}\Bigr).
$$
The hyper-Kloosterman sums (which correspond to the case $\chi_i=1$)
were introduced by Deligne \cite{sga4h}, and these generalisations
were introduced and studied by Katz in~\cite[Ch. 4]{GKM}. As an
application of the Riemann Hypothesis over finite fields, Deligne and
Katz established the highly non-trivial pointwise bounds
$$
|\Kl_k(x;\uple{\chi},q)|\leq k.
$$
\par
The finer properties of these sums were studied in great depth by Katz
in \cite{GKM} and~\cite{ESDE}. Among other things, Katz proved
equidistribution statements that describe precisely the distribution
of generalized Kloosterman sums inside $\Cc$, at least for most
possible choices of $\uple{\chi}$.

A special case of our main result, Theorem \ref{thmtypeIIprecise}, is
the following:

\begin{theorem}\label{1stthmII}
  Assume that $\uple{\chi}$ has Property \CGMT\ of
  \emph{Definition~\ref{def-cgm}}, for instance all $\chi_i$ are
  trivial.  For any $\delta>0$ there exists $\eta>0$ such that for any
  integer $k\geq 2$, any prime number $q$, and any integers
  $M,N\geq 1$ such that
  $$M,\ N\geq q^\delta,\ MN\geq q^{3/4+\delta}$$
  we have
  $$
  \sumsum_{m\leq M,n\leq N}\alpha_m\beta_n \Kl_k(amn;\uple{\chi},q)
  \ll \|\uple{\alpha}\|_2\|\uple{\beta}\|_2(MN)^{1/2-\eta}
  $$ 
  for any $a\in\Fqt$ and for arbitrary families of complex numbers
  $\uple{\alpha}=(\alpha_m)_{m\leq M}$ and
  $\uple{\beta}=(\beta_n)_{n\leq N}$. The implied constant depends
  only on $\delta $ and $k$.
\end{theorem}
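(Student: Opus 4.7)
The plan is to reduce Theorem~\ref{1stthmII} to a uniform bound for a family of complete exponential sums modulo $q$, and then apply the paper's new cohomological comparison principle to that family. First I would apply Cauchy--Schwarz in the $m$-variable to smooth away the $\alpha_m$, and then complete the resulting $m$-sum on $[1,M]$ by Pólya--Vinogradov Fourier completion modulo $q$. Up to admissible error and the standard harmonic weights of completion, the problem reduces to bounding, on average over $(n_1,n_2)\in[1,N]^2$ and a dual frequency $h$ modulo $q$, the complete sum
\begin{equation*}
T(a;n_1,n_2,h)=\sum_{m\in\Fqt}e\Bigl(\frac{hm}{q}\Bigr)\Kl_k(amn_1;\uple{\chi},q)\overline{\Kl_k(amn_2;\uple{\chi},q)}.
\end{equation*}
The goal is a uniform bound $T\ll q^{1/2}$ outside an explicit, low-dimensional exceptional locus in $(n_1,n_2,h)$, with the diagonal $\{n_1=n_2,\ h=0\}$ producing a main term absorbed by $\|\uple{\beta}\|_2^2$.

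Each $T(a;n_1,n_2,h)$ is the Frobenius trace of the cohomology of an $\ell$-adic \emph{correlation sheaf} on $\Gm$ built from pullbacks of the generalized Kloosterman sheaf $\shKl_k(\uple{\chi})$ and its dual, twisted by an Artin--Schreier character of frequency $h$. By Grothendieck's trace formula and Deligne's Riemann Hypothesis, $T$ decomposes into a ``main term'' carried by the invariants/coinvariants of the correlation sheaf, of size $O(q)$, plus an error of square-root size. The new idea of the paper, to be applied here, is to write the dangerous piece of $T$ as a \emph{difference} of two such cohomological expressions, and to show that the corresponding main-term cohomology groups are \emph{isomorphic as Frobenius modules}, so that these contributions cancel automatically without ever being computed explicitly. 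This replaces the delicate explicit main-term matching that made \cite{KMS} difficult.

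The cohomological comparison itself is established by stratification and vanishing cycles. I would stratify the parameter space of $(n_1,n_2,h)$ according to the isomorphism type of the correlation sheaf, and then use vanishing cycles to propagate failure of the desired comparison from a special fibre to the generic fibre of a given stratum. A variant of Katz's diophantine criterion for irreducibility then converts generic geometric failure into an arithmetic statement: the target sum estimate must fail \emph{on average} over the stratum. Property \CGMT{} from Definition~\ref{def-cgm} enters precisely here, to guarantee that the monodromy of $\shKl_k(\uple{\chi})$ is large enough to rule out spurious geometric identifications that would enlarge the exceptional set. Because the strata turn out to be cut out by equations of a very particular multiplicative form, reflecting the product structure of $mn_1$ and $mn_2$, the resulting averaged exponential sum admits a separation of variables and is controlled by classical tools (Weil's bound for curves together with elementary counting on the low-dimensional exceptional strata).

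Feeding the pointwise bound $T\ll q^{1/2}$ back into the Cauchy--Schwarz and completion setup then yields the Burgess-type saving $(MN)^{1/2-\eta}$ in the advertised range $M,N\geq q^\delta$, $MN\geq q^{3/4+\delta}$. The main obstacle is unquestionably the middle step: constructing the correct cohomological comparison of correlation sheaves, running the vanishing-cycles argument to propagate failure to the generic fibre of each stratum, and extracting from it an averaged exponential sum statement with the multiplicative shape needed for analytic separation of variables. The surrounding analytic manipulations follow familiar templates; the substance lies in the interaction between étale cohomology and the diophantine irreducibility criterion that drives the new comparison principle.
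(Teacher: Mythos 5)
Your reduction cannot reach the advertised range, and the gap is structural, not cosmetic. After Cauchy--Schwarz and Pólya--Vinogradov completion in the inner variable, the best conceivable input is square-root cancellation $T(a;n_1,n_2,h)\ll q^{1/2}$ for all triples outside a low-dimensional exceptional set. Feeding that into your setup gives
\[
|B(K,\uple{\alpha},\uple{\beta})|^2\ll \|\uple{\alpha}\|_2^2\Bigl(M\|\uple{\beta}\|_2^2+\|\uple{\beta}\|_1^2\, q^{1/2+\eps}\Bigr)\ll \|\uple{\alpha}\|_2^2\|\uple{\beta}\|_2^2\bigl(M+Nq^{1/2+\eps}\bigr),
\]
which beats the trivial bound only when $M\gg q^{1/2}$ (or, by symmetry, $N\gg q^{1/2}$). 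That is precisely the Pólya--Vinogradov barrier. At the critical corner $M=N\approx q^{3/8}$ of the claimed range $MN\geq q^{3/4+\delta}$, the inequality $Nq^{1/2}\ll (MN)^{1-2\eta}$ fails for every $\eta>0$. So the final sentence of your proposal --- that $T\ll q^{1/2}$ fed back into Cauchy--Schwarz plus completion yields the $MN\geq q^{3/4+\delta}$ range --- is simply false, independently of how good the cohomological input is.

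The missing idea is the $+ab$-shifting trick of Karatsuba--Vinogradov (used after Cauchy--Schwarz, as in Fouvry--Michel and the earlier KMS paper). One exploits the approximate invariance of the interval $\{n\leq N\}$ under translations $n\mapsto n+ab$ with $a\sim A$, $b\sim B$, $AB\approx N$, then changes variables $(r,s_1,s_2)=(\bar a n, am_1, am_2)\bmod q$. The map is not surjective, and that loss is tamed by Hölder's inequality with a large even exponent $2l$. The outcome is a family of complete sums $\Sigma_{II}(K,\uple{b})$ indexed by $2l$-tuples $\uple{b}\in[B,2B)^{2l}$, summed over an $\Fq$-line in $r$ and over $s_1\neq s_2$ in $\Fqt$; it is the $s_1\neq s_2$ restriction that produces the crucial ``difference of two sums'' structure $\Sigma_{II}=\sum_r|\bfR(r,\uple{b})|^2-\sum_s\sum_r|\bfK(sr,s\uple{b})|^2$ whose main terms are shown to cancel cohomologically. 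Your $T(a;n_1,n_2,h)$ has no such structure: it is a single twisted correlation sum in one complete variable, and the bound $T\ll q^{1/2}$ for generic $(n_1,n_2,h)$ already follows from Deligne plus Goursat--Kolchin--Ribet without any of the paper's new machinery. The stratification/vanishing-cycles/diophantine-criterion apparatus in the paper lives on the high-dimensional $\uple{b}$-space created by the shift-plus-Hölder step; your reduction never produces that space, so your invocation of that apparatus is a paraphrase of the introduction attached to the wrong object.
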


Property \CGMT\ (short for ``Not Induced or Orthogonal'') is an
elementary combinatorial property that we define below in
Section~\ref{sec-notation}; it is easy to check, and it is
``generically'' satisfied in some sense. For instance, the case
$\uple{\chi}=(1,\ldots, 1)$ corresponding to hyper-Kloosterman sums
themselves has \CGMT, and so does $(1,\ldots,1,\chi)$ if $k$ is odd.

The exponent $3/4=2\times3/8$ seem to be a recurring barrier: it
occurs in classical subconvexity estimates for $L$-functions, and more
recently (see \cites{FKM1,FKM2}) when dealing with sums of the shape
$$\sum_\stacksum{p\leq N}{p\ prime}K(p;q)$$
where $p$ ranges over prime numbers, or
$$\sum_{n\leq N}\lf(n)K(n;q)$$
where $K$ is a general trace function modulo $q$ and
$(\lf(n))_{n\leq N}$ are the Hecke eigenvalues of a fixed Hecke
eigenform $f$ (cuspidal or Eisenstein).

For
special bilinear forms, where one of the variables is smooth, i.e., for
$$
B(K,\uple{\alpha},1_N)=\sumsum_{m\leq M,n\leq N}\alpha_m K(mn;q).
$$
the barrier occurs at a shorter range, and we again are able to prove
an estimate that reaches this barrier.

A special case of Theorem \ref{thmtypeIprecise} is:

\begin{theorem}\label{1stthmI} 
  Assume that $\uple{\chi}$ has \CGMT. For any $\delta>0$ there exists
  $\eta>0$ such that for $k\geq 2$ an integer, $q$ a prime and
  $M,N\geq 1$ some integers satisfying
 $$M,N\geq q^\delta,\ MN^2\geq q^{1+\delta}$$ we have
 $$
 \sumsum_{m\leq M,n\leq N}\alpha_m \Kl_k(amn;\uple{\chi},q) \ll
 \|\uple{\alpha}\|_2(MN^2)^{1/2-\eta}
$$ 
for any $a\in\Fqt$ and for any tuple of complex numbers
$\uple{\alpha}=(\alpha_m)_{m\leq M}$, where the implicit constant
depends on $\delta $ and $k$.
\end{theorem}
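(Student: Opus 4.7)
I would reduce Theorem~\ref{1stthmI} to the type II machinery behind Theorem~\ref{1stthmII} (or a parallel statement proved in the same framework), exploiting the smoothness of the $n$-variable to gain an extra factor of $N$. The first step is a Cauchy--Schwarz in $m$:
$$
\Big|\sum_{m\leq M,\,n\leq N}\alpha_m\Kl_k(amn;\uple{\chi},q)\Big|^2 \leq \|\uple{\alpha}\|_2^2 \sum_{m\leq M}\Big|\sum_{n\leq N}\Kl_k(amn;\uple{\chi},q)\Big|^2.
$$
Expanding the inner square and completing the short $m$-sum by Plancherel on $\Fq$ produces, up to acceptable error, a weighted average of correlation sums
$$
\mathcal{C}_h(n_1,n_2) = \sum_{m\in\Fq}\Kl_k(amn_1;\uple{\chi},q)\overline{\Kl_k(amn_2;\uple{\chi},q)}\,e\Big(\frac{mh}{q}\Big)
$$
over triples $(h,n_1,n_2)$, with Fourier weights $\hat 1_{[1,M]}(-h)$. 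The diagonal $n_1=n_2$, $h=0$ contributes $O(MN)$, well below the target.

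Outside the diagonal, Deligne's bound yields $\mathcal{C}_h(n_1,n_2)\ll_k q^{1/2}$ as soon as $(n_1,n_2,h)$ avoids an explicit exceptional locus. Here property \CGMT\ is essential: it ensures that the two multiplicative translates of the hyper-Kloosterman sheaf, twisted by the Artin--Schreier sheaf attached to $mh/q$, are not geometrically isomorphic. Nevertheless this pointwise estimate is insufficient in the target range, since the unweighted off-diagonal contribution $N^2 q^{1/2}$ exceeds the desired bound by a factor of order $q^{1/2}/M$ whenever $M$ is small. One must therefore extract extra cancellation from the averaging over $(h,n_1,n_2)$ against the Fourier weights.

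Producing this averaged cancellation is the role of the new comparison principle of the paper. Following the strategy described after Remark~\ref{rm-xu}, one stratifies the parameter space of triples $(h,n_1,n_2)$, realizes $\mathcal{C}_h$ on each stratum as the trace function of a sum-product sheaf depending on $a$ and $\uple{\chi}$, and uses the vanishing-cycles argument to reduce the required cohomological comparison to a first-moment estimate. This moment estimate is then accessible by elementary separation of variables because the defining equations of the strata are explicit polynomial relations among $h$, $n_1$, $n_2$. The hardest part, and the technical heart of the proof, is verifying the hypotheses of the comparison principle for the sum-product sheaves arising here: identifying the strata explicitly, checking that the geometric monodromy of each associated sheaf has the structure required by the comparison principle whenever $\uple{\chi}$ satisfies \CGMT, and then propagating the vanishing-cycles diagram to the generic point so that the diophantine irreducibility criterion (in the variant of Katz adapted by the authors) can be invoked. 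Once this cohomological input is in place, substituting the resulting improved bound for the averaged correlation sum back into the Cauchy--Schwarz inequality yields $|B|\ll\|\uple{\alpha}\|_2(MN^2)^{1/2-\eta}$ in the range $M,N\ge q^\delta$, $MN^2\ge q^{1+\delta}$, for some $\eta=\eta(\delta,k)>0$.
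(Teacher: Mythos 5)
Your proposal has a fundamental gap: Cauchy--Schwarz in $m$ followed by completion of the $m$-sum is a Pólya--Vinogradov--type argument, and it cannot reach the range claimed in the theorem. After expanding and completing, the off-diagonal contribution to $\sum_{m\leq M}|\sum_{n\leq N}K(amn)|^2$ is of size roughly $N^2q^{1/2}$, because $\tfrac{1}{q}\sum_h|\hat 1_{[1,M]}(h)|\ll\log q$ and the correlation sums $\mathcal C_h(n_1,n_2)$ are genuinely of size $q^{1/2}$ generically. To beat the trivial bound $\|\uple\alpha\|_2(MN^2)^{1/2}$ one would then need $N^2q^{1/2}\ll (MN^2)^{1-2\eta}$, i.e.\ essentially $M\gg q^{1/2}$. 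But the theorem is supposed to hold whenever $M,N\geq q^\delta$ and $MN^2\geq q^{1+\delta}$, which allows $M$ to be as small as $q^\delta$ (for instance $M=q^\delta$, $N\approx q^{1/2}$). Your hope that ``extracting extra cancellation from the averaging over $(h,n_1,n_2)$'' saves the argument cannot be realized: the bound $\mathcal C_h\ll q^{1/2}$ is already sharp (it reflects true square-root cancellation in the complete sum over $m$), so there is no further saving available from the $(h,n_1,n_2)$-average against the fixed Fourier weights in this three-parameter family.

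The paper's actual proof of Theorem~\ref{1stthmI} (via Theorem~\ref{thmtypeIprecise}) does \emph{not} begin with Cauchy--Schwarz. Because $\beta_n=1$, the $n$-variable is an interval, and the key step is to apply the $+ab$-shift of Karatsuba--Vinogradov directly to it: replace $n\leftrightarrow n+ab$ with $(a,b)\sim(A,B)$, $AB=N$, change variables $(r,s)=(\overline a n, am)$, and then apply Hölder's inequality with a \emph{large} exponent $2l$. This produces a $2l$-parameter family of complete sums
$$
\Sigma_I(K,\uple b)=\sum_{r\in\Fq}\sum_{s\in\Fqt}\prod_{i=1}^l K(s(r+b_i))\,\overline{K(s(r+b_{i+l}))},\qquad \uple b\in[B,2B)^{2l},
$$
and it is to \emph{this} $2l$-dimensional parameter space that the stratification and comparison principle are applied (Theorem~\ref{th-complete-2}), not to a fixed three-dimensional family. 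The point is that the exceptional locus $\mcW$ has codimension growing linearly in $l$ (at least $(l-1)/2$), so taking $l$ large makes the bad tuples $\uple b$ increasingly rare, and optimizing over $A,B,l$ yields the stated range $MN^2\geq q^{1+\delta}$ with $M$ as small as $q^\delta$. Without the shift trick — and the accompanying freedom to let the number of parameters $2l$ grow — the stratification machinery has nothing to bite on in the range where $M$ is small, which is exactly where your plan fails.
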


In particular, for $M=N$, we obtain a non-trivial bound as long as
$$
M=N\geq q^{1/3+\delta}
$$
for some $\delta>0$. If we denote by $d_2(n)$ the classical  divisor
function, we deduce the following result:

\begin{corollary}
  Assume that $\uple{\chi}$ has \CGMT.  For any $\delta>0$, there
  exists $\eta>0$ such that for any integer $k\geq 2$, any prime
  number $q$, and any $N\geq q^{2/3+\delta}$, we have
$$
\sum_{n\leq N}d_2(n)\Kl_k(an;\uple{\chi},q)\ll Nq^{-\eta},
$$
for any $a\in\Fqt$ where the implicit constant depends on $\delta $ and $k$.
\end{corollary}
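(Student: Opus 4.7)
The plan is to reduce the sum with $d_2(n)$ to the bilinear sums controlled by Theorem~\ref{1stthmI} via the identity $d_2 = \mathbf{1} * \mathbf{1}$ and a smooth dyadic decomposition. Setting $K(x) := \Kl_k(ax;\uple{\chi},q)$ and using
$$
\sum_{n \leq N} d_2(n) K(n) = \sum_{m_1 m_2 \leq N} K(m_1 m_2),
$$
we apply Dirichlet's hyperbola method to rewrite this as $2\sum_{m_1 \leq \sqrt N}\sum_{m_2 \leq N/m_1} K(m_1 m_2) - \sum_{m_1, m_2 \leq \sqrt N} K(m_1 m_2)$. The diagonal rectangular sum fits directly into Theorem~\ref{1stthmI} with $M = N' = \sqrt N$ and $\alpha_{m_1} = \mathbf{1}_{m_1 \leq \sqrt N}$ of $L^2$-norm $N^{1/4}$: since $N \geq q^{2/3+\delta}$ ensures $\sqrt N \geq q^\delta$ and $MN'^2 = N^{3/2} \geq q^{1+3\delta/2}$, its contribution is $\ll N^{1-3\eta/2}$.

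For the main term $\sum_{m_1 \leq \sqrt N}\sum_{m_2 \leq N/m_1} K(m_1 m_2)$ the essential technical obstruction is that the upper bound $N/m_1$ depends on $m_1$, so Theorem~\ref{1stthmI}, which requires a rectangular range, is not directly applicable. The standard workaround is a smooth truncation plus Mellin inversion: introduce a fixed smooth $\phi$ equal to $1$ on $[0, 1]$ and vanishing on $[1+\epsilon, \infty)$ with $\epsilon = q^{-\eta'}$, and replace $\mathbf{1}_{m_1 m_2 \leq N}$ by $\phi(m_1 m_2/N)$; the boundary loss
$$
\sum_{N < n \leq (1+\epsilon)N} d_2(n) |K(n)| \ll k\,\epsilon\, N \log N
$$
is negligible, of size $\ll Nq^{-\eta'+o(1)}$. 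Mellin inversion $\phi(x) = \frac{1}{2\pi i}\int_{(c)} \tilde\phi(s) x^{-s}\, ds$, combined with $(N/m_1 m_2)^{s} = N^s m_1^{-s} m_2^{-s}$, then separates the variables $m_1$ and $m_2$; the contour can be truncated at $|\Im(s)| \leq T = q^{O(1)}$ at negligible cost by the rapid decay of $\tilde\phi$.

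After separation and partial summation in $m_2$ (to absorb the weight $m_2^{-s}$), each dyadic range $m_1 \in (M, 2M]$ reduces to a type I sum covered by Theorem~\ref{1stthmI}, with $\alpha_{m_1}(s) = m_1^{-s}\mathbf{1}_{m_1 \sim M}$ of $L^2$-norm at most $M^{1/2}$ and inner range $m_2 \leq N_s \asymp N/M$. For $q^\delta \leq M \leq \sqrt N$, the hypotheses $M, N_s \geq q^\delta$ and $MN_s^2 \asymp N^2/M \geq N^{3/2} \geq q^{1+3\delta/2}$ all hold, yielding $\ll M^{1/2}(N^2/M)^{1/2-\eta} = N^{1-2\eta} M^\eta \leq N^{1-3\eta/2}$ per dyadic range, and $\ll N^{1-3\eta/2}\log N$ in total. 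The $O(\log q)$ very small dyadic ranges $M < q^\delta$ are instead handled by the Weil--Deligne completion-of-sums bound $|\sum_{m_2 \leq N/m_1} K(m_1 m_2)| \ll q^{1/2}\log q$ (for each fixed $m_1$ with $am_1 \not\equiv 0 \pmod q$, exploiting geometric irreducibility of the Kloosterman sheaf of rank $k \geq 2$), contributing $\ll q^{1/2+\delta}\log^2 q \ll Nq^{-c(\delta)}$. The principal obstacle is the coupling of $m_1$ and $m_2$ through the constraint $m_1 m_2 \leq N$, which prevents a direct application of Theorem~\ref{1stthmI}; the smooth-truncation-plus-Mellin device described above is the standard mechanism for converting such triangular ranges into rectangular ones suitable for bilinear inputs.
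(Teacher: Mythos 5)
Your proposal is correct and follows the route the paper intends: the corollary appears without proof, stated as an immediate deduction from Theorem~\ref{1stthmI} via the remark that the bound is nontrivial for $M=N\geq q^{1/3+\delta}$, and your package of $d_2=\mathbf{1}\ast\mathbf{1}$, the hyperbola method, smooth dyadic/Mellin separation of variables (to convert the triangular constraint $m_1m_2\leq N$ into rectangular ranges), and P\'olya--Vinogradov completion for the small-$m_1$ dyadic blocks is precisely the standard technique the authors themselves invoke elsewhere in the paper (``dyadic subdivisions, inverse Mellin transform to separate the variables'' in Section~\ref{Zacsec}). Your numerology checks out as well: for $q^\delta\leq M\leq\sqrt N$ one has $\|\uple{\alpha}\|_2\leq M^{1/2}$, $N/M\geq\sqrt N\geq q^{1/3+\delta/2}\geq q^\delta$, and $M\cdot(N/M)^2=N^2/M\geq N^{3/2}\geq q^{1+3\delta/2}$, so Theorem~\ref{1stthmI} gives $\ll M^{\eta}N^{1-2\eta}\leq N^{1-3\eta/2}$, and since $N\geq q^{2/3}$ this is $\ll Nq^{-\eta'}$ for some $\eta'>0$, while the boundary and small-$M$ contributions are comfortably of lower order.
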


It is of considerable interest to generalize results like
Theorem~\ref{1stthmII} to other trace functions $K$ modulo $q$. We
believe that the methods in this paper could be applicable when $K$
satisfies suitable big monodromy assumptions, and has the following
property: $K$ belongs to a family $K_{a}$ parameterized by non-trivial
additive characters $x\mapsto e(ax/p)$ of $\Fq$, and this family
satisfies a relation of the type $K_{a^{\mu}}(x)=K(a^{\nu}x)$ for some
fixed non-zero integers $\mu$ and $\nu$. For instance, this holds for
the generalized Kloosterman sums with $\mu=1$, $\nu=k$ when defining
$$
K_a(x)=\frac{1}{q^{\frac{k-1}{2}}}
\sum_\stacksum{y_1,\cdots,y_k\in\Fqt}{y_1\cdots y_k=x}
\chi_1(y_1)\cdots\chi_k(y_k)e\Bigl(\frac{a(y_1+\cdots+y_k)}{q}\Bigr).
$$

\subsection{Applications to moments of $L$-functions}

As with our previous paper \cite{KMS}, Theorems \ref{1stthmII} and
\ref{1stthmI} have applications to the evaluation of moments of
$L$-functions indexed by Dirichlet characters modulo $q$. As a simple
illustration, we will prove in Section~\ref{Zacsec} the following
result, which generalizes some recent work of Zacharias~\cite{Zac}:

\begin{theorem}\label{zacthm} 
  Let $f$ be a primitive holomorphic cusp form of level $1$. For $q$
  prime, let $\xi$ be a non-trivial Dirichlet character modulo
  $q$. There exist an absolute constant $\delta>0$ such that
$$
\frac{1}{q-1}\sum_{\chi\mods
  q}L(\ftchi,1/2)L(\xi\chi,1/2)=1+O_f(q^{-\delta}).
$$
\end{theorem}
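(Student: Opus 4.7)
The plan is to follow the standard recipe for evaluating a first moment of an $L$-function on the critical line: apply approximate functional equations to both factors, average over $\chi$ using orthogonality, extract the main term from the diagonal, and reduce the off-diagonal contribution via Voronoi summation to a bilinear form with a Kloosterman-sum kernel that is covered by Theorem~\ref{1stthmII}. Since $\ftchi$ has analytic conductor $\asymp q^{2}$ and $\xi\chi$ has conductor $q$, the approximate functional equations yield
\begin{align*}
L(\ftchi,1/2)&=\sum_m\frac{\lf(m)\chi(m)}{m^{1/2}}V_1(m/q)+\varepsilon_{f,\chi}\sum_m\frac{\lf(m)\bar\chi(m)}{m^{1/2}}V_2(m/q),\\
L(\xi\chi,1/2)&=\sum_n\frac{\xi(n)\chi(n)}{n^{1/2}}W_1(n/q^{1/2})+\varepsilon_{\xi\chi}\sum_n\frac{\bar\xi(n)\bar\chi(n)}{n^{1/2}}W_2(n/q^{1/2}),
\end{align*}
with smooth weights $V_i,W_i$ and root numbers $\varepsilon_{f,\chi},\varepsilon_{\xi\chi}$ which, for $\chi$ primitive mod the prime $q$, factor through Gauss sums in $\chi$ (and $\xi$).

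Multiplying out the two AFEs and averaging, the four cross-products are processed by the orthogonality relation $\frac{1}{q-1}\sum_\chi\chi(u)=\mathbf{1}_{u\equiv 1\pmod q}$ for $(u,q)=1$. After absorbing the $\chi$-dependent part of the root numbers via the standard Gauss-sum evaluation, each term becomes a sum over $(m,n)$ with $(mn,q)=1$ subject to a multiplicative congruence of the form $a\,mn\equiv 1\pmod q$ or $a\,m\equiv n\pmod q$, where $a\in\Fqt$ depends only on $\xi$ and $f$. The diagonal solutions (essentially $m=n=1$), combined across the four terms, yield the announced main term $1$.

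For the off-diagonal, the congruence is detected via $\mathbf{1}_{u\equiv 1\pmod q}=q^{-1}\sum_{h\bmod q}e(h(u-1)/q)$; the $h=0$ term is elementary. For each $h\neq 0$, the $\GL_2$ Voronoi summation formula (for the level-$1$ form $f$) is applied to the inner $m$-sum against $e(hmna/q)$, producing a dual $\tilde m$-sum of length $\asymp q$ whose kernel, after rearrangement of variables, is the classical rank-$2$ Kloosterman sum $\Kl_2(c\,\tilde m n;q)$ for an explicit $c\in\Fqt$. The off-diagonal then takes the shape
\[
\sum_{\tilde m\leq M'}\sum_{n\leq N}\alpha_{\tilde m}\beta_n\,\Kl_2(c\,\tilde mn;q),
\]
with $M'\asymp q$, $N\asymp q^{1/2}$, and $\|\uple{\alpha}\|_2\|\uple{\beta}\|_2\ll (M'N)^{1/2+\varepsilon}$ by Rankin--Selberg and $|\xi|\leq 1$. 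Since $M'N\asymp q^{3/2}\geq q^{3/4+\delta}$ for some $\delta>0$ and $\uple{\chi}=(1,1)$ trivially has property \CGMT, Theorem~\ref{1stthmII} delivers a saving $q^{-\eta}$ for some $\eta>0$.

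The main obstacle is the bookkeeping in the second step: verifying that the four AFE cross-products, each weighted by its own root-number Gauss-sum factor and giving rise to its own congruence type, combine coherently to leave exactly one main term $1$ together with off-diagonals that each fit the Voronoi/Kloosterman-bilinear scheme. A further delicate point in the third step is that in the two mixed cross-terms, a multiplicative twist by $\bar\chi$ absorbed from the root number survives the orthogonality as a variable change such as $n\mapsto\bar n$, which must be performed before Voronoi is applied so that the resulting bilinear sum is put into precisely the shape $\sum\alpha_m\beta_n\Kl_2(amn;q)$ required by Theorem~\ref{1stthmII}.
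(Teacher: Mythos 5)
Your route is genuinely different from the paper's, and unfortunately it has gaps that make it unclear it actually reaches the point where the results of this paper are needed.

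The paper does \emph{not} apply separate approximate functional equations to the two factors and does not use Voronoi summation. Instead it treats $L((f\oplus\xi)\otimes\chi,s)$ as a single degree-3 $L$-function of conductor $q^3$ and applies an \emph{unbalanced} approximate functional equation (with $X=q^{1/2-2\alpha}$), giving two sums of lengths $\asymp q^{2-2\alpha}$ and $\asymp q^{1+2\alpha}$. In the second sum the character average of the root number $\eps(f)\eps_\chi^2\eps_{\chi\xi}$ produces, directly and without any Voronoi step, the generalized Kloosterman sum $\Kl_3(n;(1,1,\xi),q)$. Opening $(\lf\star\ov\xi)(n)=\sum_{ab=n}\lf(a)\ov\xi(b)$ then puts the off-diagonal exactly in the shape $\sum_{a,b}\lf(a)\ov\xi(b)\Kl_3(\pm ab;(1,1,\xi),q)$ required by Theorem~\ref{1stthmII}. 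The critical range is $a,b\asymp q^{1/2}$, where the other tools (Pólya--Vinogradov, the $\GL_2$ twist bound, the general trace-function bilinear bound) all fail, and where the tuple $(1,1,\xi)$ with $\xi$ a genuine character --- precisely the new feature of this paper over \cite{KMS} --- is essential.

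Your proposal has two substantive problems. First, after Voronoi the additive phases you would combine are $e(-h/q)$ and $e(-\ov{han}\,\tilde m/q)$, so the $h$-sum yields $\Kl_2(c\tilde m\ov n;q)$, not $\Kl_2(c\tilde m n;q)$. The change of variable $n\mapsto\ov n$ that you flag as ``delicate'' in fact destroys the interval constraint on $n$, which is exactly the structure that Theorem~\ref{1stthmII} relies on (its $\mcN$ must be an interval, because of the $+ab$-shift); this is not a bookkeeping issue but a structural obstruction to applying the theorem directly. Second, the ranges $M'\asymp q$, $N\asymp q^{1/2}$ that you claim put $M'N\asymp q^{3/2}\gg q$, i.e.\ well above the Pólya--Vinogradov threshold, where a Cauchy--Schwarz/completion argument in the (nearly complete) $\tilde m$-variable already yields a power saving and nothing from this paper is required. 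If, on the other hand, a careful dyadic analysis of the Voronoi dual lengths pushes you into the genuinely critical regime $\tilde m,n\asymp q^{1/2}$, you would then need to verify the side conditions of Theorem~\ref{thmtypeIIprecise} (e.g. $NM^+<\frac12 q^{1-3/(2l)}$), which your sketch does not do. Finally, your argument would only ever invoke $\Kl_2$ with $\uple\chi=(1,1)$, which is covered already by \cite{KMS}; it would not exploit, and hence would not demonstrate, the new \CGMT\ case $(1,1,\xi)$ that the theorem was proved to handle.
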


\begin{remark} 
  Zacharias established this asymptotic for $\xi=1$ using amongst
  other ingredients the bounds from \cite{KMS} for
  $K(x)=\Kl_3(x;(1,1,1),q)$; he evaluated more generally a mollified
  version of this average, enabling him to establish that, for $q$
  large, there is a positive proportion of $\chi\mods q$ such that
  $L(\ftchi,1/2)$ and $L(\chi,1/2)$ are both non-vanishing. Most
  likely a similar result may be established in our case.
\end{remark}

As in \cite{BFKMM,KMS,BFKMMS}, we also expect that our results will
prove useful to estimate other averages of certain $L$-functions of
degree $3$ and $4$ indexed by Dirichlet characters. For instance, we may consider:
\begin{itemize}
\item The twisted first moment 
$$
\frac{1}{q-1}\sum_{\chi\mods
  q}L(\ftchi,1/2)L(\xi\chi,1/2)\prod_{i}\eps_{\xi_i\chi}^{k_i}$$ where
$\uple{\xi}=(\xi_i)_i$ a tuple of characters of modulus $q$ (possibly
trivial) and $\uple{k}=(k_i)_i$ is a family of integers;
\item The shifted second moment
$$
\frac{1}{q-1}\sum_{\chi\mods q}L(\ftchi,1/2)L(f\otimes\xi\ov\chi,1/2).
$$
\end{itemize}

\subsection{Principle of the stratification and averaging method}

We denote $K(x)=\Kl_k(ax;\uple{\chi},q)$ for a fixed $k$-tuple
$\uple{\chi}$ with Property \CGMT\ and a fixed $a\in\Fqt$.

As in our previous work (and \cites{FI,FoMi}), the proof starts with
an application of the $+ab$-shifting trick of Karatsuba and
Vinogradov. Let us recall that the shifting trick builds on the almost
invariance of an interval under sufficiently small translations. The
interval to be shifted here is that of the $n$ variable (either
directly for Theorem \ref{1stthmI} or after an application of Cauchy's
inequality for Theorem \ref{1stthmII}) and the shift is by product
$+ab$ with $(a,b)\in[A,2A[\times[B,2B[$ for $A,B$ suitable parameters
(such that $AB=N$). As $K(mn;q)$ depends only on the congruence class
of $mn\mods q$ the replacement of $n\leftrightarrow n+ab$ leads to the
following transformations
\begin{gather*}
  mn\mods q\leftrightarrow m(n+ab)=am(\ov an+b)=s(r+b)\mods q,
  \\
  (m_1n,m_2n)\mods q\leftrightarrow (am_1(\ov an+b),am_2(\ov
  an+b))=(s_1(r+b),s_2(r+b))\mods q
\end{gather*}
with $(r,s)$, $(r,s_1,s_2)$ taking values in $\Fq\times \Fqt$ or
$\Fq\times ({\Fqt}^2-\Delta({\Fqt}^2))$. Under suitable assumptions on
$A,M,N$ one can then show that the above maps are essentially
injective (i.e. have fibers bounded in size by $q^{o(1)}$). However,
these maps are far from being surjective, so performing such a change
of variable will result in a loss. This can be tamed by an application
of the Hölder inequality with a sufficiently large exponent, which we
denote by $2l$ in the sequel. This process leads then to the problem
of bounding sums of the shape
$$
\sum_{\bfb\in\mcB}\bigl|\Sigma_{I}(K,\bfb)\bigr|,\quad
\sum_{\bfb\in\mcB}\bigl|\Sigma_{II}(K,\bfb)\bigr|,
$$
where $\mcB$ denotes the set of $2l$-uples of integers
$\uple{b}=(b_1,\cdots,b_{2l})\in[B,2B[^{2l}$ and
\begin{align*}
  \Sigma_I(K,\bfb)&=\sum_{r\in\Fq}\sum_{s \in\Fqt}
                    \bfK(sr,s\bfb),
  \\
  \Sigma_{II}(K,\bfb)&=\sum_{r\in\Fq}\sumsum_\stacksum{
                       s_1,s_2\in\Fqt}{s_1\not =
                       s_2}\bfK(s_1r,s_1\bfb)\ov{\bfK(s_2r,s_2\bfb)}
\end{align*}
where
\begin{equation}\label{eq-bfk}
  \bfK(r,\bfb)=\prod_{i=1}^lK(r+b_i) \ov{K(r+b_{i+l})}.
\end{equation}

The goal is to give 
individual bounds for sums $\Sigma_{I}(K,\bfb)$ and
$\Sigma_{II}(K,\bfb)$
with square-root cancellation, namely we wish to prove that
$$
\Sigma_I(K,\bfb)\ll q,\quad\quad \Sigma_{II}(K,\bfb)\ll q^{3/2}.
$$
A key fact is that these bounds do not always hold, but it will be
enough to prove them outside a sufficiently small subset $\mcB^{diag}$
of ``diagonal'' tuples $\bfb$. This subset will be the set of
$\Fq$-points of a proper algebraic subvariety $\mcV^{diag}\subset
\Aa_{\Fq}^{2l}$. In fact, it is crucial (to avoid the loss involved in
Hölder's inequality) to prove the required estimates outside of a
variety $\mcV^{diag}$ with large codimension, 
and we will do this with
\begin{equation}\label{codimlower}
  \codim(\mcV^{diag})\geq \frac{l-1}{2}.
\end{equation}
The outcome is that by taking $l$ very large, we obtain non-trivial
estimates of $B(K,\uple{\alpha},\uple{\beta})$ and
$B(K,\uple{\alpha},1_N)$ in the ranges defined by
$$MN\geq q^{3/4+\delta}\hbox{ and }MN^2\geq q^{2/3+\delta}$$
for any $\delta>0$.

We now sketch the proof in the case of general bilinear forms (the
special bilinear forms are easier).  Setting
$$
\bfR(r,\bfb)=\sum_{s\in\Fqt}\bfK(sr,s\bfb)
$$ 
we observe that
$$
\Sigma_{II}(K,\bfb)= \sum_{r\in\Fq}|\bfR(r,\bfb)|^2-\sum_{s\in\Fqt}
\sum_{r\in\Fq}|\bfK(sr,s\bfb)|^2.
$$
This is the difference of two sums of positive terms, which therefore
individually will have main terms, and we need these main terms to
compensate exactly for $\bfb\notin\mcV^{diag}$. Our argument for this
in~\cite{KMS} relies on separate evaluations of both sums to witness
the coincidence of the main terms. But one can check that this
evaluation only holds outside of a codimension $1$ subvariety, which
is far from~(\ref{codimlower}) except in the case $l=2$. 

In this paper, we compare directly the two terms in the
difference. This comparison is not a combinatorial or analytic
rearrangement of terms, but is a cohomological comparison using the
ideas of $\ell$-adic cohomology to interpret exponential sums. Using
this formalism, we interpret the functions
$$
(r,\bfb)\ra \bfK(r,\bfb),\ \bfR(r,\bfb)
$$
as trace functions of $\ell$-adic sheaves $\mcK$ and $\mcR$ on
$\Aa\times\Aa^{2l}$, which are pointwise pure of weight $0$ and mixed
of weight $\leq 1$ respectively. The functions
$$
(r,\bfb)\ra |\bfK(r,\bfb)|^2,\ |\bfR(r,\bfb)|^2
$$
are the trace functions of the endomorphisms sheaves $\End(\mcK)$ and
$\End(\mcR)$. By means of the Grothendieck--Lefschetz trace formula
and of Deligne's most general form of the Riemann Hypothesis over
finite fields~\cite{WeilII}, the desired bound
$$
\sum_{r\in\Fq}|\bfR(r,\bfb)|^2-
\sum_{s\in\Fqt}\sum_{r\in\Fq}|\bfK(sr,s\bfb)|^2\ll q^{3/2}
$$
for a given $\bfb$ can be interpreted as stating that the specialized
sheaves $\mcK_\bfb$ and $\mcR_\bfb$ have decompositions into
geometrically irreducible components whose multiplicities precisely
match.

This interpretation relies on the relationship between $\mcK_{\bfb}$
and $\mcR_{\bfb}$. As $\mcR_{\bfb}$ is obtained from applying a
cohomology functor to $\mcK_{\bfb}$, each irreducible component $\rho$
of $\mcK_{\bfb}$ defines a summand $\widetilde{\rho}$ of
$\mcR_{\bfb}$. We check explicitly that these summands
$\widetilde{\rho}$ are nontrivial, which implies that the exponential
sums match if and only if all the summands $\widetilde{\rho}$ are
themselves irreducible, and are pairwise non-isomorphic as $\rho$
varies.

The sheaf $\mcR_{\bfb}$ is a sheaf on the affine line, lisse away from
a finite set of singular points that vary depending on $\bfb$. Using
Deligne's semicontinuity theorem, and assuming that the local
monodromy of $\mcR_{\bfb}$ is tame, we can show that the decomposition
into irreducible components of $\mcR_{\bfb}$ is constant on any set of
parameters $\bfb$ over which this varying finite set $S_{\bfb}$ of
singular points does not itself develop singularities (i.e., over
which the size of $S_{\bfb}$ is constant). The tameness condition can
be verified for large primes (which is sufficient for us) by
expressing $\mcR_{\bfb}$ as the characteristic $p$ fiber of a sheaf
defined in characteristic zero. This reduces the problem to the
generic points of the strata of the stratification of the parameter
space by the number of singular points in $S_{\bfb}$.

To get a handle on this stratification, we first calculate the set of
singular points. By an explicit inductive argument, we show how the
strata can be expressed by equations in the coefficients $b_i$ and
auxiliary variables; these equations split into sums of different
terms involving different subsets of the $b_i$'s. We can then estimate
the average of the complete sums $\Sigma_{II}(K,\bfb)$ over a single
stratum using only estimates for one-variable exponential sums, as
long as the number of equations and auxiliary variables is not too
large (which means that we must keep control of these numbers in the
inductive argument).  From this average estimate and the geometric
interpretation, we deduce that the sheaves $\mcK_{\bfb}$ and
$\mcR_{\bfb}$ have the same decomposition into irreducibles when
$\bfb$ belongs to a stratum of sufficiently large dimension. This
proves the desired result for all $\bfb$ except those in
low-dimensional strata, which we simply consider as part of
``diagonal'' subset. It is therefore crucial that our induction is
efficient enough to get a good bound on the codimension of this
subset.

Stratifications where the validity of a desired estimate on a stratum
only depends on its validity at the generic point exist for arbitrary
families of complete exponential sums, arising from the stratification
of a constructible $\ell$-adic sheaf into lisse sheaves. They can
often be computed by vanishing cycles methods, such as Deligne's
semicontinuity theorem. We expect that proving estimates for
individual strata by passing to the average and applying elementary
analytic methods (which are known to perform very well when the number
of variables to average over is large enough) will be a useful
strategy for many families of exponential sums.

\begin{remark}
  (1) It would be reasonable to expect that the correct codimension is
$$
\codim (\mcV^{diag})\geq l+o(l)
$$ 
as $l\to +\infty$, which would indeed be best possible (it is easy to
see that the codimension is $\leq l$). A lower bound of this quality
was established in \cite{FoMi} in the case $K(x)=e((x^k+a)/q)$ already
mentioned. Although the bound~(\ref{codimlower}) only goes half of the
way to this expectation, it is nevertheless sufficient for our
purpose, and it seems that even the full lower bound would not help in
improving the exponents $3/4$ and $2/3$ in Theorems~\ref{1stthmII}
and~\ref{1stthmI}.
\par
(2) Readers who have some familiarity with either~\cite{FoMi}
or~\cite{KMS} will have noticed that we will make a compromise in our
argument: the new variables $s$ and $(s_1,s_2)$ belong to the subsets
$[A,2AM]$ and $[A,2AM]^2-\Delta([A,2AM]^2)$ of the larger sets $\Fqt$
or ${\Fqt}^2-\Delta({\Fqt}^2)$, so that we lose something by
``forgetting'' this fact by positivity. It is certainly possible to
compensate for this loss using the completion method, introducing
additional twists by additive characters in the $s$-variable, and
handling them by arguments similar to those of \cite[\S
4.5]{KMS}. However, when $l$ is very large, the improvement in the
final bounds is very small (because of \eqref{codimlower}), and more
importantly the final limiting exponents $3/4$ and $2/3$ are not
improved. So we have chosen to avoid the completion step, in order to
simplify an already complex argument. It should be noted however that,
for small values of $l$, the completion step is worth pursuing, and
that is was crucial in \cite{KMS} to obtain non-trivial bounds for
$l=2$ (which was the only case that could be handled in \cite{KMS},
because, as noted earlier, the diagonal variety in that paper was of
codimension $1$).
\end{remark}

\subsection*{Notation}

For any prime number $\ell$, we fix an isomorphism
$\iota\,:\, \bQl\to \Cc$.  Let $q$ be a prime number. Given an
algebraic variety $X_{\Fq}$, a prime $\ell\not=q$ and a constructible
$\bQl$-sheaf $\mcF$ on $X$, we denote by
$t_{\mcF}\,:\, X(\Fq)\lra \Cc$ its trace function, defined by
$$
t_{\mcF}(x)=\iota(\Tr(\frob_{x,\Fq}\mid \mcF_{x})),
$$
where $\mcF_x$ denotes the stalk of $\mcF$ at $x$. More generally, for
any finite extension $\Fqd/\Fq$, we denote by $t_{\mcF}(\cdot;\Fqd)$
the trace function of $\mcF$ over $\Fqd$, namely
$$
t_\mcF(x;\Fqd)=\iota(\tr(\Frob_{x,\Fqd}\mid \mcF_x)).
$$
\par
An $\ell$-adic sheaf will aways means a $\bQl$-sheaf. For standard
facts in $\ell$-adic cohomology (such as proper base change,
cohomological dimension, etc), we refer to the books of Fu~\cite{Fu}
and Milne~\cite{Milne}, and to the notes of Deligne~\cite{sga4h}.
\par
We will usually omit writing down $\iota$. In any expression where
some element $z$ of $\bQl$ has to be interpreted as a complex number,
we mean to consider $\iota(z)$.
\par
We denote by $\sheaf{F}^{\vee}$ the dual of a constructible sheaf
$\mcF$; if $\mcF$ is a middle-extension sheaf, we will use the same
notation for the middle-extension dual.
\par
Let $\psi$ (resp. $\chi$) be a non-trivial additive
(resp. multiplicative) character of $\Fq$. We denote by $\mcL_\psi$
(resp. $\mcL_\chi$) the associated Artin-Schreier (resp. Kummer) sheaf
on $\Aa^1_\Fq$ (resp. on $(\Gg_m)_{\Fq}$), as well (by abuse of
notation) as their middle extension to $\Pp^1_\Fq$. The trace
functions of the latter are given by
\begin{gather*}
  t_\psi(x;\Fqd)=\psi(\tr_{\Fqd/\Fq}(x))\quad\text{ if } x\in\Fqd,\quad
  t_{\psi}(\infty;\Fqd)=0,\\
  t_\chi(x;\Fqd)=\chi(\nr_{\Fqd/\Fq}(x))\quad\text{ if }
  x\in\Ff_{q^d}^{\times},\quad
  t_{\chi}(0;\Fqd)=t_{\chi}(\infty;\Fqd)=0
\end{gather*}
For the trivial additive or multiplicative character, the trace
function of the middle-extension is the constant function $1$.
\par
Given $\lambda\in\Fqd$, we denote by $\mcL_{\psi_\lambda}$ the
Artin-Schreier sheaf of the character of $\Fqd$ defined by
$x\mapsto \psi(\tr_{\Fqd/\Fq}(\lambda x))$.
\par
If $X_{\Fq}$ is an algebraic variety, $\psi$ (resp. $\chi$) is an
$\ell$-adic additive character of $\Fq$ (resp. $\ell$-adic
multiplicative character) and $f\,:\, X\lra \Aa^1$ (resp.
$g\,:\, X\lra \Gg_m$) is a morphism, we denote by either
$\sheaf{L}_{\psi(f)}$ or $\sheaf{L}_{\psi}(f)$ (resp. by
$\sheaf{L}_{\chi(g)}$ or $\sheaf{L}_{\chi}(g)$) the pullback
$f^*\sheaf{L}_{\psi}$ of the Artin-Schreier sheaf associated to $\psi$
(resp. the pullback $g^*\sheaf{L}_{\chi}$ of the Kummer sheaf). These
are lisse sheaves on $X$ with trace functions $x\mapsto \psi(f(x))$
and $x\mapsto \chi(g(x))$, respectively. The meaning of the notation
$\sheaf{L}_{\psi}(f)$, which we use when putting $f$ as a subscript
would be typographically unwieldy, will always be unambiguous, and no
confusion with Tate twists will arise.
\par
\par
Given a variety $X/\Fq$, an integer $k\geq 1$ and a function $c$ on
$X$, we denote by $\mcL_{\psi}(c s^{1/k})$ the sheaf on
$X\times \Aa^1$ (with coordinates $(x,s)$) given by
$\alpha_* \mcL_{\psi (c(x)t)}$, where $\alpha$ is the covering map
$(x,s,t)\mapsto (x,s)$ on the $k$-fold cover
$$
\{(x,s,t)\in X\times\Aa^1\times\Aa^1\,\mid\, t^k=s\}.
$$
\par
Given a field extension $L/\Fp$, and elements $\alpha\in L^{\times}$
and $\beta\in L$, we denote by $[\times\alpha]$ the scaling map
$x\mapsto \alpha x$ on $\Aa^1_L$, and by $[+\beta]$ the additive
translation $x\mapsto x+\beta$. For a sheaf $\mcF$, we denote by
$[\times\alpha]^*\mcF$ (resp. $[+\alpha]^*\mcF$) the respective
pull-back operation. 
\par
We will usually not indicate base points in \'etale fundamental
groups; whenever this occurs, it will be clear that the properties
under consideration are independent of the choice of a base point.

\subsection*{Acknowledgments.} 

Ph. M. and W. S. thank the students of the Arizona Winter School 2016
who studied the paper~\cite{KMS} and worked on generalizing some of
its steps. They also thank MSRI where parts of this work were
completed during the ``Analytic Number Theory'' programme during the
first semester 2017.
\par
We thank \'E. Fouvry and I. Shparlinski for comments, and the referee
for useful remarks, especially for the reference to the
paper~\cite{xu} of J. Xu.
\par
We thank A. Florea for pointing out an inaccuracy in the statement of
Theorem~\ref{thmtypeIIprecise} and the following remark. 

\section{Preliminaries}\label{sec-notation}

We begin by defining Property \CGMT, and a useful variant called \CGM\
(for ``Connected Geometric Monodromy''). These are motivated by
results of Katz (see~\cite[Cor. 8.9.2, Th. 8.8.1--8.8.2]{ESDE}).

\begin{definition}\label{def-cgm}
  Let $A$ be a finite cyclic group and
  $\uple{\chi}=(\chi_1,\ldots,\chi_k)$ a tuple of characters of
  $A$. Let $\Lambda=\chi_1\cdots \chi_k$.
  \par
\begin{enumerate}
\item The tuple $\uple{\chi}$ is \emph{Kummer-induced} if there exists
  a divisor $d$ of $k$, $d\not=1$, and a tuple
  $(\xi_{1},\ldots,\xi_{k/d})$ of characters of $A$ such that the
  $\chi$'s are all the characters with $\chi^d=\xi_j$ for some $j$,
  with multiplicity.
\item The tuple $\uple{\chi}$ is \emph{self-dual} if there is a
  character $\xi$ such that the set of characters
  $\chi\in\uple{\chi}$, with multiplicity, is stable under
  $\chi\mapsto \xi\chi^{-1}$. The character $\xi$ is called a
  ``dualizing character''.
\item A self-dual tuple $\uple{\chi}$ is \emph{alternating} if $k$ is
  even and $\Lambda=\xi^{k/2}$, and otherwise, it is
  \emph{symmetric}.
\item A tuple $\uple{\chi}$ has Property \CGMT\ if it is not
  Kummer-induced and, if $k$ is even, if it is not self-dual
  symmetric.
\item A tuple $\uple{\chi}$ has Property \CGM\ if it is not
  Kummer-induced, and $\chi_1\cdots\chi_k=1$, and one of the following
  conditions holds:
\begin{itemize}
\item $k$ is odd,
\item $\uple{\chi}$ is not self-dual,
\item $k$ is even, $\uple{\chi}$ is self-dual and alternating, and the
  dualizing character $\xi$ is trivial.
\end{itemize} 
\end{enumerate}
\end{definition}

\begin{example}\label{ex-nio}
  We consider Dirichlet characters modulo $q$ in these examples.
\par
  (1) Consider the case $k=2$ and $q$ odd,
  $\uple{\chi}=(\chi_1,\chi_2)$. Denote by $\chi_{(2)}$ the
  non-trivial real character of $\Fqt$. Then $\uple{\chi}$ is:
\begin{itemize}
\item Kummer-induced if and only if $\chi_2=\chi_1\chi_{(2)}$.
\item If not Kummer-induced, always self-dual alternating, taking
  $\xi=\chi_1\chi_2$ as dualizing character.
\end{itemize}

In particular, for $\uple{\chi}=(1,\chi_2)$, the alternating case is
$\chi_2=1$, corresponding to the ``classical'' Kloosterman sum, and
the non self-dual case is $\chi_2^2\not=1$. The Kummer-induced tuple
$\uple{\chi}=(1,\chi_{(2)})$ corresponds to Salié sums.
\par
(2) If $k$ is odd, then $\uple{\chi}$ has \CGMT\ if and only if it is
not Kummer-induced. In particular, this is the case if
$\chi_1=\ldots=\chi_{k-1}=1$.
\par
(3) If $\chi_1=\cdots=\chi_k=1$, then $\uple{\chi}$ has \CGMT.
\end{example}

In the next section, we will need the following useful lemma which
bounds the number of integral points in a box that satisfy a system of
polynomial equations modulo $q$. We thank the referee for giving us a
convenient reference.

\begin{lemma}\label{lm-sz}
  Let $k\geq 1$ be an integer and let $A>0$. Let
  $X_{\Zz}\subset \Aa^k_{\Zz}$ be an algebraic variety of dimension
  $d\geq 0$ given by the vanishing of $\leq A$ polynomials of degree
  $\leq A$. Let $p$ be a prime number and $0\leq B<p/2$ an
  integer. Then
$$
|\{x=(x_1,\ldots, x_k)\in \Ff_p^k\,\mid\, x\in X(\Ff_p)\text{ and }
B\leq x_i\leq 2B\text{ for } 1\leq i\leq k\}|\ll B^d
$$
where the implied constant depends only on $k$ and $A$, and the
notation $B\leq x_i\leq 2B$ means that the unique integer between $1$
and $p-1$ congruent to $x_i$ modulo $p$ belongs to the interval
$[B,2B]$.
\end{lemma}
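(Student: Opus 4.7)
Since $B < p/2$, the reduction map $\{B, B+1, \ldots, 2B\} \to \Ff_p$ is injective, so the quantity in the lemma equals $|X(\Ff_p) \cap S^k|$, where $S \subset \Ff_p$ is the image of $\{B, \ldots, 2B\}$, a set of cardinality at most $2B$. I would therefore deduce the lemma from a Schwartz--Zippel-type estimate for $\Ff_p$-varieties: for any $S \subset \Ff_p$ and any $X \subset \Aa^k_{\Ff_p}$ of dimension $d$ cut out by at most $A$ polynomials of degree $\leq A$,
$$
|X(\Ff_p) \cap S^k| \leq C(k, A) \cdot |S|^d.
$$
Applied with $|S| \leq 2B$, this yields the claimed bound. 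Such estimates are standard in the literature (Cafure--Matera and variants), and this is presumably the reference the referee provided.

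\textbf{Proof of the estimate.} If one wished to prove it directly, the natural approach is a double induction, primarily on $d$ and secondarily on $k$. The base case $d = 0$ follows from Bezout's theorem: $|X(\Ff_p)| = O_{k,A}(1) = O(|S|^0)$. For $d \geq 1$, decompose $X$ into its $O_{k,A}(1)$ irreducible components (whose degrees are $O_{k,A}(1)$ as well) and treat one such component $X_0$. Choose a coordinate projection $\pi : \Aa^k \to \Aa^{k-1}$, say forgetting the last coordinate, that is nonconstant on $X_0$ (such a projection exists since $d \geq 1$). Two cases arise. If $\dim \pi(X_0) = d - 1$, then by irreducibility $X_0 = \overline{\pi(X_0)} \times \Aa^1$, so
$$
|X_0(\Ff_p) \cap S^k| = |\overline{\pi(X_0)}(\Ff_p) \cap S^{k-1}| \cdot |S| \ll |S|^{d-1} \cdot |S| = |S|^d
$$
by the inductive hypothesis applied to $\overline{\pi(X_0)} \subset \Aa^{k-1}$, which has dimension $d-1$. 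If instead $\dim \pi(X_0) = d$, then $\pi|_{X_0}$ is generically finite of degree $\leq A$ (Bezout applied fiberwise). The inductive hypothesis on $\overline{\pi(X_0)} \subset \Aa^{k-1}$ bounds the contribution from points with zero-dimensional fibers by $A \cdot C(k{-}1, A') \cdot |S|^d$, while the exceptional locus where the fiber is positive-dimensional lies in a subvariety of $X_0$ of dimension $\leq d - 1$ and is handled by the primary induction on $d$.

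\textbf{Main obstacle.} The only substantive point is controlling the complexity of $\overline{\pi(X_0)}$: one must show that it is cut out by equations whose degrees and number are bounded by some $A' = O_{k,A}(1)$, uniformly in $p$, so that the inductive constant depends only on $k$ and $A$ in the end. This follows from standard effective elimination theory via resultants, for which the required degree bounds are $p$-independent. Once this bookkeeping is in place, the double induction terminates and produces the desired constant $C(k, A)$.
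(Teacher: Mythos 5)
The paper does not actually prove this lemma: it simply cites Lemma 1.7 of Xu's paper (the reference was supplied by the referee). Your proposal supplies a direct argument, and the strategy you outline---inject the box $\{B,\dots,2B\}$ into $\Ff_p$ using $B<p/2$, then prove a Schwartz--Zippel-type bound $|X(\Ff_p)\cap S^k|\ll_{k,A}|S|^{\dim X}$ by induction on dimension via coordinate projections, with Bezout controlling degrees and effective elimination theory controlling the complexity of projected varieties---is indeed the standard route, and is the route taken in Xu's paper and in the Cafure--Matera circle of ideas you mention. You also correctly isolate the one substantive technical point: one must show that the Zariski closure of a coordinate projection is again cut out by boundedly many polynomials of bounded degree, with bounds independent of $p$, which is what effective elimination via resultants provides. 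Two details that should be made explicit in a full write-up: first, the lemma's $d$ is the dimension of $X_{\Zz}$, whereas your Schwartz--Zippel bound is naturally in terms of $\dim X_{\Ff_p}$; one must observe that $\dim X_{\Ff_p}\leq d$ under the intended reading (e.g.\ for $X_\Zz$ integral and flat over $\Zz$, or contained in a single closed fibre) so that the induction applies at the correct level. Second, a product of $\Ff_p$-irreducible varieties need not be $\Ff_p$-irreducible, so the decomposition into components and the assertion $X_0=\overline{\pi(X_0)}\times\Aa^1$ in your Case 1 should be carried out over $\bFp$, counting geometric components (again boundedly many by Bezout) rather than $\Ff_p$-components; this changes nothing essential but is needed for the argument to close.
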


See~\cite[Lemma~1.7]{xu} for a proof.



\section{An application to moments of $L$-functions}	\label{Zacsec}

In this section, we will prove Theorem~\ref{zacthm}, which we recall
is a variation of a recent result of Zacharias \cite{Zac}.

Let $f$ be a primitive cusp form of level $1$, trivial nebentypus and
weight $k_f$, with Hecke eigenvalues $\lambda_f(n)$. For Dirichlet
characters $\chi$ and $\xi$ modulo $q$, we consider the $L$-function
$$
L((f\oplus \xi)\otimes\chi,s)=L(\ftchi,s)L(\chi\xi,s)
$$
of degree $3$. Note that for $\Reel(s)>1$, we have the Dirichlet
series expansion
$$
L((f\oplus \xi)\otimes\chi,s)= \sum_{n\geq
  1}\chi(n)(\lambda_f\star\xi)(n)n^{-s}.
$$
\par
We wish to evaluate the average
$$
\mathcal{M}=\frac{1}{q-1}
\sum_{\chi\mods q}L((f\oplus\xi)\otimes\chi,1/2),
$$
proving that $\mathcal{M}=1+O(q^{-\alpha})$ for some $\alpha>0$.

The proof is very similar to \cite[\S 6.2]{Zac}, which corresponds
to the case $\xi=1$, so we will only sketch certain steps.

We assume for simplicity that $\xi$ is even (ie. $\xi(-1)=1$), and we
will only evaluate the \emph{even} moment
$$
\mathcal{M}^+=
\frac{2}{q-1}\sump_{\chi\mods q}L((f\oplus\xi)\otimes\chi,1/2)
$$
where $\sump$ restricts the sum to even primitive characters modulo
$q$. We will prove that $\mathcal{M}^+=\demi+O(q^{-\alpha})$ for some
$\alpha>0$. The sum over odd characters satisfies the same
asymptotics, hence this implies Theorem~\ref{zacthm}.
\par
Define $\Gamma_\Rr(s)=\pi^{-s/2}\Gamma(s/2)$ and let
$L_{\infty}(s)=L_{\infty}(f,s)L_{\infty}(\chi\xi,s)$, where
$$
L_\infty(f,s)
=\Gamma_\Rr\Bigl(s+\frac{k-1}2\Bigr)\Gamma_\Rr\Bigl(s+\frac{k+1}2\Bigr),
\quad L_\infty(\chi\xi,s)=\Gamma_\Rr(s),
$$
are the archimedean $L$-factors of $L(f,s)$ and $L(\chi\xi,s)$
respectively. Further, let
$$
\eps((f\oplus\xi)\otimes\chi)=\eps(f)\eps_\chi^2\eps_{\chi\xi}
$$
where $\eps_{\eta}$ denotes the normalized Gauss sum of a Dirichlet
character. 
Define then the completed $L$-function
$$
\Lambda((f\oplus\xi)\otimes\chi,s)=
q^{3s/2}L_\infty(s)L((f\oplus\xi)\otimes\chi,s).
$$
For $\xi$ and $\chi$ even, we then have  the functional equation
$$
\Lambda((f\oplus\xi)\otimes\chi,s)=
\eps((f\oplus\xi)\otimes\chi)\Lambda(f\otimes\ov\chi\oplus\ov{\chi\xi},1-s).
$$
\par
Let $0<\alpha<1/4$ be a parameter to be fixed later.  For $\chi$ even,
non-trivial and not equal to $\xi^{-1}$, we apply the approximate
functional equation to the $L$-function
$L((f\oplus\xi)\otimes \chi,s)$, in an unbalanced form
(\cite[Th. 5.3]{IwKo} with $q$ replaced by the conductor $q^3$ and
$X=q^{1/2-2\alpha}$). After adding the contribution of the character
$\xi^{-1}$, which is $\ll q^{-1/5+\eps}$ for any $\eps>0$, this gives
$\mathcal{M}^+=\mathcal{M}_1+\mathcal{M}_2$, where
\begin{align*}
  \mathcal{M}_1&=\frac{2}{q-1}\sump_{\chi\mods
       q}\sum_{n\geq 1}\frac{\chi(n)(\lf\star\xi)(n)}{n^{1/2}}
       \mcV\Bigl(\frac{n}{q^{2-2\alpha}}\Bigr),\\
  \mathcal{M}_2&=\frac{2}{q-1}\sump_{\chi\mods
       q}\eps((f\oplus\xi)\otimes\chi)
       \sum_{n\geq 1}\frac{\ov{\chi(n)}
       \ov{(\lf\star\xi)(n)}}{n^{1/2}}
       \mcV\Bigl(\frac{n}{q^{1+2\alpha}}\Bigr),
\end{align*}
where the function $\mcV$ is defined by
$$
\mcV(y)=\intc_{(1)}
\frac{L_\infty(\frac12+s)}{L_\infty(\frac12)}G(s)y^{-s}\frac{ds}{s},
\quad\quad G(s)=\exp(s^2)
$$
for $y>0$.  Shifting the $s$-contour to the right if $y\geq 1$ or to
$\Reel(s)=-1/2$ if $y\leq 1$, we deduce that
$$
y^i\mcV^{(i)}(y)\ll_{A,i,f} (1+y)^{-A}
$$
for any $A>0$ and $i\geq 0$, and
$$
\mcV(y)=1+O(y^{1/2})\text{ for }y\leq 1.
$$
It follows from the first of these bounds that, for any $\kappa>0$,
the contribution to both sums of the integers $n\geq q^{3/2+\kappa}$
is $\ll_{A,f,\kappa} q^{-A}$ for any $A\geq 0$.

We first bound $\mathcal{M}_1$.  We add to $\mathcal{M}_1$ the
contribution of the trivial character, up to an error term bounded by
$O(q^{-1/5})$, and perform the summation over the even characters
$\chi$. We obtain
$$
\mathcal{M}_1=\sum_{n\equiv\pm 1\mods
  q}\frac{(\lf\star\xi)(n)}{n^{1/2}}\mcV\Bigl(\frac{n}{q^{2-2\alpha}}\Bigr)
+O(q^{-1/5}) =\mcV\Bigl(\frac{1}{q^{2-2\alpha}}\Bigr)
+O(q^{-\alpha+\eps})=1+O(q^{-\alpha+\eps}),
$$
for any $\eps>0$, where the first term $\mcV(q^{-2+2\alpha})$ is the
contribution of the trivial solution $n=1$ of the congruence
$n\equiv \pm 1\mods q$.

Now we consider $\mathcal{M}_2$.  We add to $\mathcal{M}_2$ the
contribution of the trivial character, up to an error of size
$\ll q^{\eps+\frac12+\alpha-1}\ll q^{\eps+\alpha-1/2}$, for any
$\eps>0$. We then perform the summation over $\chi$ even. We have
$$
\frac{1}{q-1} \sump_{\chi\mods
  q}\eps((f\oplus\xi)\otimes\chi)\ov\chi(n)=
\frac{\eps(f)}{q-1}\sump_{\chi\mods
  q}\eps_\chi^2\eps_{\chi\xi}\ov\chi(n)=\frac{\eps(f)}{q^{1/2}}
\Bigl(\Kl_3(n; \xi,q)+\Kl_3(-n;\xi,q)\Bigr),
$$
where we abbreviate
$$
\Kl_3(\pm n; \xi,q)=\Kl_3(\pm n;(1,1,\xi ),q).
$$
Hence we have
$$
  \mathcal{M}_2=
  \frac{\eps(f)}{q^{1/2}}
  \sum_{n}\frac{\ov{(\lf\star\xi)(n)}}{n^{1/2}}
  (\Kl_3(n; \xi,q)+\Kl_3(-n;\xi,q))
  \mcV\Bigl(\frac{n}{q^{1+2\alpha}}\Bigr)+O(q^{-1/5}).
$$
We open the Dirichlet convolution
$$
\overline{(\lf\star\xi)(n)}=\sum_{ab=n}\lf(a)\ov{\xi(b)}.
$$
By standard techniques (dyadic subdivisions, inverse Mellin transform
to separate the variables), we establish that $\mathcal{M}_2$ is, up
to a factor $\ll q^{\eps}$ for any $\eps>0$, bounded by the sum of
$\ll (\log q)^2$ bilinear sums of the type
$$
\mathcal{M}_2(M,N)=
\frac{1}{(qMN)^{1/2}}\sum_{m,n}{\lf(m)\ov{\xi(n)}}\Kl_3(a mn;
\xi,q)V\Bigl(\frac{m}{M}\Bigr)W\Bigl(\frac{n}{N}\Bigr)
$$
where
$$
1\leq MN\leq q^{1+2\alpha},
$$
$a=1$ or $-1$, and $V$ and $W$ are smooth functions, compactly
supported in $[1,2]$, such that
$$
x^iV^i(x),\ x^iW^i(x)\ll_{f,\eps} q^{i\eps}
$$
for any $\eps>0$ and $i\geq 0$.

We set $M=q^\mu$ and $N=q^\nu$. The trivial bound is
$$
\mathcal{M}_2(M,N)\ll
q^{\eps}\Bigl(\frac{MN}{q}\Bigr)^{1/2}=q^{(\mu+\nu)/2-1/2+\eps}
$$
for any $\eps>0$, which is $\ll q^{-\alpha+\eps}$ if
$\mu+\nu\leq 1-2\alpha$.  Now assume that
$$
1-2\alpha\leq\mu+\nu\leq 1+2\alpha.
$$
Estimating the sum over $n$ by the Pólya-Vinogradov technique
(completion), summing trivially over the $m$ variable,  we obtain
$$
\mathcal{M}_2(M,N)\ll q^{\eps}\Bigl(\frac {M}{N}\Bigr)^{1/2}\ll
q^{1/2-\nu+\alpha+\eps}
$$
for any $\eps>0$. This bound is $\ll q^{-\alpha+\eps}$ if
$\nu\geq \demi+2\alpha$. We then assume that
$$
\nu\leq \demi+2\alpha.
$$
\par
If $\nu$ is small, so that $\mu$ is large, we apply
\cite[Th. 1.2]{FKM1} to the sum over $m$, summing trivially over
$n$. We get
$$
\mathcal{M}_2(M,N)\ll
Nq^{-1/8+\alpha+\eps}=q^{-1/8+\nu+\alpha+\eps}
$$
for any $\eps>0$. Again, this is $\ll q^{-\alpha+\eps}$ provided
$\nu\leq \tfrac{1}{8}-2\alpha$. Now assume that
$$
\tfrac{1}{8}-2\alpha\leq \nu\leq \demi+2\alpha.
$$
Then $\demi-4\alpha\leq \mu\leq \tfrac{7}{8}+4\alpha$. The general
bilinear form estimate in~\cite[Thm 1.17]{FKM2} gives
$$
\mathcal{M}_2(M,N)\ll q^{\eps+\alpha} \min(N^{-1}+M^{-1}q^{1/2},
M^{-1}+N^{-1}q^{1/2})^{1/2}
$$
which is $\ll q^{-\alpha+\eps}$ provided $\alpha\leq 1/32$ and
$$
\max(\mu,\nu)\geq \demi+2\alpha.
$$
We finally consider the case when $\alpha\leq 1/32$ and
$$
\demi-4\alpha\leq \mu,\ \nu\leq \demi+2\alpha.
$$
In this situation, we can then apply Theorem \ref{1stthmII} for the
triple $\uple{\chi}=(1,1,\xi)$, which has Property \CGMT\ for any
$\xi$ by Example~\ref{ex-nio} (2). We obtain the bound
$$
\mathcal{M}_2(M,N)\ll
q^{\eps}\Bigl(\frac{MN}{q}\Bigr)^{1/2}(MN)^{-\eta}\ll
q^{2\alpha+\eps}(MN)^{-\eta}\ll q^{2\alpha-3\eta/4+\eps}
$$
for any $\eps>0$, where $\eta>0$ is the saving exponent in
Theorem~\ref{1stthmII} when the parameter $\delta$ there is
$\delta=\tfrac{1}{4}-8\alpha$. Hence, for $\alpha>0$ fixed and small
enough, we obtain
$$
\mathcal{M}_2(M,N)\ll q^{-\eta'+\eps}
$$
for some fixed $\eta'>0$ and any $\eps>0$, where the implied constant
depends on $\eps$ and $f$.

\section{Reduction to complete exponential sums}

In this section, we will state the general forms of Theorems
\ref{1stthmII} and \ref{1stthmI}, and reduce their proofs to certain
bounds for families of exponential sums over finite fields. In fact,
we begin with slightly more general bilinear sums.

Let $q$ be a prime number, and let $K:\Fq \ra \Cc$ be any function.
Let $M,N$ be integers such that $1\leq M,N\leq q-1$. Let $\mcM$ be a
subset of the positive integers $m\leq q-1$ of cardinality $M$. We set
$M^+=\max_{m\in \mcM} m$. Let finally 
$$
\mcN=\{n\,\mid\, 1\leq n<N\}.
$$

Given tuples of complex numbers $\uple{\alpha}=(\alpha_m)_{m\in\mcM}$
and $\uple{\beta}=(\beta_n)_{n\in \mcN}$, we set
$$
B(K,\uple{\alpha},\uple{\beta})=\sumsum_{m\in \mcM,\,n\in
  \mcN}\alpha_m\beta_n K(mn).
$$
We will prove the following:

\begin{theorem}\label{thmtypeIIprecise} Fix an integer $k\geq 2$. Let
  $q$ be a prime and let $a\in\Fqt$. Let $\uple{\chi}$ be a $k$-tuple
  of Dirichlet characters modulo $q$. Suppose that $\uple{\chi}$ has
  Property \CGMT, and define $K(x)=\Kl_k(ax;\uple{\chi},q)$. With
  notations as above, for any integer $l\geq 2$ and any $\eps>0$, we
  have
$$
B(K,\uple{\alpha},\uple{\beta})\ll
q^\eps\|\alpha\|_2\|\beta\|_2
(MN)^{1/2}\Bigl(\frac{1}{M}
+\Bigl(\frac{q^{\frac{3}{4}+\frac{3}{4l}}}{MN}\Bigr)^{\frac1{l}}\Bigr)
^{1/2},
$$
where the implied constant depends only on $(k,l,\eps)$, provided one
of the two following two conditions holds:
\begin{gather*}
q^{\frac3{2l}}\leq N<\frac12 q^{\frac12+\frac3{4l}},
\\
q^{\frac3{2l}}\leq N,\ NM^+<\frac12q^{1+\frac3{2l}}.
\end{gather*}
\end{theorem}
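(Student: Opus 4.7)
The plan is to follow the roadmap announced in the introduction and reduce Theorem~\ref{thmtypeIIprecise} to an averaged complete-sum estimate of the shape
\[
\sum_{\bfb\in[B,2B[^{2l}}|\Sigma_{II}(K,\bfb)|\ll B^{2l}q^{3/2+\eps}+B^{2l-(l-1)/2}q^{2+\eps},
\]
whose proof, via the stratification and averaging method, is the main content of the later sections. The two alternative hypotheses (on $N$ alone and on $NM^+$) correspond to two different ways of ensuring the near-injectivity of the change of variables $(m,n)\mapsto(am,\ov{a}n)$ that will appear shortly.

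I would first apply Cauchy--Schwarz in the outer $n$-variable, obtaining
\[
|B(K,\uple\alpha,\uple\beta)|^2\leq\|\uple\beta\|_2^2\sumsum_{m_1,m_2\in\mcM}\alpha_{m_1}\ov{\alpha_{m_2}}\sum_{n\in\mcN}K(m_1n)\ov{K(m_2n)}.
\]
The diagonal $m_1=m_2$ contributes $O(\|\uple\alpha\|_2^2\|\uple\beta\|_2^2 N)$, which, after taking square roots, accounts precisely for the $1/M$ summand in the final bound (since $MN\cdot\tfrac1M=N$). For the off-diagonal part I would apply the $+ab$-shifting trick of Karatsuba--Vinogradov to the inner $n$-sum with $(a,b)\in[A,2A[\times[B,2B[$ and $AB$ chosen just below $N$; the change of variables $(s_1,s_2,r)=(am_1,am_2,\ov{a}n)\in\Fqt\times\Fqt\times\Fq$ transforms $K(m_i(n+ab))$ into $K(s_i(r+b))$, its near-injectivity under the stated range hypotheses being verified via Lemma~\ref{lm-sz}. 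Applying Cauchy--Schwarz in $a$, Hölder's inequality with exponent $2l$ in $b$, and extending by positivity the ranges of $(r,s_1,s_2)$ to $\Fq\times\Fqt\times\Fqt$ then produces the complete sums $\Sigma_{II}(K,\bfb)$ of the introduction built from the $2l$-fold product $\bfK$ of~\eqref{eq-bfk}, and the problem reduces to the averaged estimate displayed above.

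Given that estimate, the proof is completed by combining the square-root cancellation bound $\Sigma_{II}(K,\bfb)\ll q^{3/2}$ outside a diagonal variety $\mcV^{diag}\subset\Aa^{2l}_{\Fq}$ of codimension $\geq(l-1)/2$ with the trivial bound $\Sigma_{II}\ll q^2$ on $\mcV^{diag}$, and using Lemma~\ref{lm-sz} to count $\mcV^{diag}\cap[B,2B[^{2l}\ll B^{2l-(l-1)/2}$. The main obstacle is a careful parameter-tracking step: one must optimize $A,B$ (subject to $AB\asymp N$) together with the two alternative range assumptions, and then balance three competing sources of loss after the final $(2l)$-th root -- the diagonal term $1/M$, the generic contribution (yielding the factor $q^{3/4}/(MN)^{1/2}$ from $\Sigma_{II}\ll q^{3/2}$) and the extra loss from $\mcV^{diag}$ (producing an additional $q^{3/(4l)}$) -- so as to arrive at the hybrid exponent $q^{3/4+3/(4l)}/(MN)$ raised to the power $1/l$ that appears in the statement. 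Modulo this bookkeeping and the yet-to-be-proved complete-sum estimate, no new ideas beyond those sketched in the introduction are required.
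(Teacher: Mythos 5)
Your overall approach tracks the paper's proof of Theorem~\ref{thmtypeIIprecise}: Cauchy--Schwarz in $n$, the $+ab$-shift trick, Hölder's inequality with exponent $2l$, reduction to the averaged complete sums $\sum_{\bfb}|\Sigma_{II}(K,\bfb)|$, and an application of the lattice-point count in Lemma~\ref{lm-sz} to the strata. However, your intermediate averaged estimate is incorrect, and this error propagates into your attribution of the final exponent.

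Specifically, your claimed bound
\[
\sum_{\bfb\in[B,2B[^{2l}}|\Sigma_{II}(K,\bfb)|\ll B^{2l}q^{3/2+\eps}+B^{2l-(l-1)/2}q^{2+\eps}
\]
assigns the trivial bound $\Sigma_{II}\ll q^2$ throughout the diagonal variety. That is false on the deepest stratum $\mcV^{\Delta}$ (all coordinates of $\bfb$ repeated, codimension exactly $l$): there $R^2\pi_!\mcK$ need not vanish, so $\bfR(r,\bfb)$ may have a weight-$2$ contribution of size $q$, and only $\Sigma_{II}\ll q^3$ holds. The bound $q^2$ is an intermediate-stratum estimate, valid on $\mcW-\mcV^{\Delta}$ where both sums in the difference defining $\Sigma_{II}$ exhibit square-root cancellation separately. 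The correct estimate is the three-term
\[
\sum_{\bfb\in[B,2B[^{2l}}|\Sigma_{II}(K,\bfb)|\ll B^{l}q^{3}+B^{2l-(l-1)/2}q^{2}+B^{2l}q^{3/2},
\]
and it is the \emph{first and third} terms that are balanced by choosing $B=q^{3/(2l)}$, producing the $q^{3/(4l)}$ factor in the final exponent, while the middle term is then dominated. Your attribution of the extra $q^{3/(4l)}$ loss to the codimension-$(l-1)/2$ variety is therefore misplaced; if one optimizes $B$ using your two-term estimate, one would take $B$ as small as possible and obtain a different (and wrong for general $l$) exponent. A minor presentational point: the paper does not apply Cauchy--Schwarz in $a$ and Hölder in $b$ separately; it introduces the multiplicity $\nu(r,s_1,s_2)$ of the change of variable, bounds $\sum\nu$ and $\sum\nu^2$, and applies Hölder once to the sum over $(r,s_1,s_2)$ followed by positivity -- but this is a difference of packaging, not substance.
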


\begin{rem} 
  This bound is non-trivial only for $l$ large enough (for
  $M=N=q^{1/2}$, this happens precisely for $l\geq 4$). As we will
  explain, this limitation results from our simplifying choice of not
  applying the completion method to detect that an auxiliary variable
  belongs to some interval in $\Fq$.
\end{rem}

In the special case of ``type I'' sums, we obtain

\begin{theorem}\label{thmtypeIprecise}  
  With the same notation and assumption as in
  Theorem~\emph{\ref{thmtypeIIprecise}}, especially assuming that
  $\uple{\chi}$ has \CGMT, and with the additional condition that
  $\beta_n=1$ for $n\in\mcN$, for any integer $l\geq 1$ and any
  $\eps>0$, we have
$$
B(K,\uple{\alpha},\uple{1}) \ll
q^{\eps}\|\uple{\alpha}\|_1^{1-\frac1l}\|\uple{\alpha}\|_2^{\frac1l}
M^{\frac1{2l}}N\Bigl(\frac {q^{1+\frac{1}{l}}}{MN^{2}}\Bigr)^{1/2l},
$$
  where the implied constant depends on $(k,l,\eps)$, provided one of
  the following two conditions holds:
\begin{gather*}
q^{\frac1{l}}\leq N\leq \frac12q^{1/2+1/2l},
\\
q^{\frac1{l}}\leq N,\ NM^+\leq \frac12q^{1+1/2l}.
\end{gather*}
\end{theorem}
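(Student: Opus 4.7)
The plan is to follow the $+ab$-shifting approach sketched in the introduction, using the simplification that for Type I sums only the weights $\uple{\alpha}$ need to be handled. First I would insert an additive shift on the $n$-variable: for parameters $A, B \geq 1$ to be chosen later, exploit the almost-invariance of $[1,N]$ under $n \mapsto n + ab$ with $(a,b) \in [A, 2A[\times [B, 2B[$ when $AB$ is somewhat smaller than $N$. Since $K$ is $q$-periodic, after the change of variables $s \equiv am$, $r \equiv \ov{a} n \pmod{q}$ one has $m(n + ab) \equiv s(r + b) \pmod{q}$, so the sum transforms into
$$ B(K,\uple{\alpha},\uple{1}) = \frac{1}{AB} \sum_{a,b} \sum_{m} \alpha_m \sum_{n} K(s(r+b)) + O\bigl(AB \, \|\uple{\alpha}\|_1\bigr), $$
where $(r,s)$ denotes the image of $(n,m)$ under the change of variables. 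Under either of the hypotheses on $N$ or on $NM^+$ in the statement, this change of variables is essentially injective with $q^{o(1)}$-bounded fibres by an application of Lemma~\ref{lm-sz}.

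Next I would apply Hölder's inequality in the $b$-variable with exponent $2l$, arranging the $\alpha$-weights so that after extracting the $2l$-th root one recovers the factor $\|\uple{\alpha}\|_1^{1-1/l}\|\uple{\alpha}\|_2^{1/l}$ in the statement. After dropping by positivity the constraint that $(r,s)$ lies in the image of the change of variables (at the price of a factor of order $q^2/(AMN)$), this reduces matters to bounding, for each $\bfb = (b_1, \ldots, b_{2l}) \in [B, 2B[^{2l}$, the complete sum
$$ \Sigma_I(K, \bfb) = \sum_{r \in \Fq} \sum_{s \in \Fqt} \bfK(sr, s\bfb) $$
with $\bfK$ as in \eqref{eq-bfk}, and more precisely the average $\sum_{\bfb \in \mcB} |\Sigma_I(K, \bfb)|$ over $\mcB = [B, 2B[^{2l}$.

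The decisive input, to be established in later sections via the stratification-and-vanishing-cycles arguments outlined in the introduction, is a pointwise bound $\Sigma_I(K, \bfb) \ll q$ valid for all $\bfb$ lying outside a diagonal subvariety $\mcV^{diag} \subset \Aa^{2l}_{\Fq}$ of codimension $\geq (l-1)/2$, cf.~\eqref{codimlower}. On $\mcV^{diag}$ itself we retain only the trivial bound $\Sigma_I(K,\bfb) \ll q^2$, and Lemma~\ref{lm-sz} bounds the lattice-point count of $\mcV^{diag}$ in the box by $\ll B^{(3l+1)/2}$, giving
$$ \sum_{\bfb \in \mcB} |\Sigma_I(K, \bfb)| \ll q^{o(1)} \bigl( q B^{2l} + q^{2} B^{(3l+1)/2} \bigr). $$

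Finally I would choose $B$ so as to balance these two contributions against the shift error and the trivial bound, taking $A$ as large as the constraint $AB \ll N$ allows. The constraints that appear in this optimisation reproduce exactly the two alternative hypotheses on $N$ in the statement: one covers the regime where the change of variables is injective in the $n$-variable alone, the other when it is injective in both $m$ and $n$ jointly. The genuinely hard part is not this reduction, which is relatively standard given the blueprint of previous work, but the codimension bound~\eqref{codimlower} used in the third step: it is here that all of the cohomological machinery of the remainder of the paper is required.
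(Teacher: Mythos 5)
Your reduction to the complete sums $\Sigma_I(K,\bfb)$ via the $+ab$-shift and H\"older is the same as the paper's, and the description of the counting function $\nu(r,s)$ and the error analysis is fine. But there is a genuine quantitative gap in the final stratification-and-counting step: you use only a \emph{two}-tier dichotomy, applying the trivial bound $\Sigma_I(K,\bfb)\ll q^2$ on a ``diagonal'' set of codimension $\geq(l-1)/2$ and the optimal bound $\Sigma_I(K,\bfb)\ll q$ elsewhere. This does not yield the theorem. Indeed, with your dichotomy and Lemma~\ref{lm-sz} you get
$$
\sum_{\bfb\in\mcB}|\Sigma_I(K,\bfb)|\ll qB^{2l}+q^{2}B^{(3l+1)/2},
$$
and no admissible choice of $B$ makes this $\ll q^{1+1/l}B^{2l-1}$ (the quantity one needs to dominate for the stated exponent): equating the two terms forces $B=q^{2/(l-1)}$ and leads to the requirement $2/(l-1)\leq 1/l$, which is false for every $l\geq 1$, while the paper's choice $B=q^{1/l}$ makes the second term of size $q^{7/2+1/(2l)}\gg q^3$, where $q^3$ is the target.

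The paper resolves this with a \emph{three}-tier stratification (Theorem~\ref{th-complete-2}): the trivial bound $q^2$ is only used on $\mcV^{\Delta}$, which has the much larger codimension $l$; on the intermediate layer $\mcW-\mcV^{\Delta}$ (of codimension $\geq(l-1)/2$) the bound is the sharper $\Sigma_I(K,\bfb)\ll q^{3/2}$, coming from the fact that $\mcR$ is mixed of weights $\leq 1$ already off the full diagonal $\mcV^\Delta$; and $\Sigma_I(K,\bfb)\ll q$ holds off $\mcW$. With $B=q^{1/l}$ all three contributions are $\ll q^{3}$ and the optimization closes. Your proposal conflates $\mcW$ (where the intermediate, not the trivial, bound is available) with $\mcV^\Delta$ (where only the trivial bound holds but which is much smaller), and that conflation is fatal: you need both the intermediate estimate on the larger bad set and the full-strength trivial bound only on the thinner diagonal.
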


\begin{rem}
  As $l$ gets large, this bound is non-trivial if
$$M^+N\leq q,\ MN^2\geq q^{1+\delta}$$
for some $\delta>0$. In particular for $M=M^+=N$, this is non trivial
if
$$N\geq q^{1/3+\delta}.$$	
\end{rem}

\subsection{The type II bilinear sum}

We now start the proof of the reduction step for Theorem
\ref{thmtypeIIprecise}.

Applying Cauchy's inequality, we obtain
$$
|B(K,\uple{\alpha},\uple{\beta})|\leq
\|\beta\|_2\Bigl(\sum_{n}|\sum_{m}\alpha_m K(mn)|^2\Bigr)^{1/2}\ll
\|\beta\|_2(\|\alpha\|_2^2N+S^{\not=})^{1/2}
$$
where
$$
S^{\not=} =\sum_{m_1\not=m_2}\alpha_{m_1}\ov{\alpha_{m_2}}\sum_n
K(m_1n)\ov{K(m_2n)}.
$$
\par
We now use the $+ab$-shift trick of Karatsuba-Vinogradov as in
\cites{FoMi,KMS}. For this we introduce two integer parameters
$A,B\geq 1$ such that $AB\leq N$. Using the notation $a\sim A$ for
$A\leq a<2A$, we then have
$$
S^{\not=}=\frac{1}{AB}\sum_{a\sim A,b\sim
  B}\sumsum_{m_1\not=m_2}\alpha_{m_1}\ov{\alpha_{m_2}}
\sum_{n+ab\in\mcN}K(m_1(n+ab))\ov{K(m_2(n+ab))}.
$$
Using the fact that $\mcN$ is an interval, we deduce as
in~\cite[p. 126, (7.2)]{FoMi} (see also~\cite[(2.11)]{KMS} that
$$
S^{\not=}\ll\frac{\log q}{AB}\sumsum_\stacksum{a,m_1,m_2,n}{a\sim
  A,m_1\not=m_2}|\alpha_{m_1}{\alpha_{m_2}}|\ \Bigl|\sum_{b\sim
  B}K(m_1(n+ab))\ov{K(m_2(n+ab))}e(bt)\Bigr|
$$
for some $t\in \Rr$ and $n$ varying over an interval of length
$\ll N+AB$.  For $(r,s_1,s_2)\in (\Fqt)^3$ set
$$
\nu(r,s_1,s_2)=\sumsum_\stacksum{a,m_1\not=m_2,n}{a\sim A,\ov an\equiv
  r, am_i\equiv s_i}|\alpha_{m_1}{\alpha_{m_2}}|
$$
so that
$$
S^{\not=}\ll\frac{\log q}{AB}\sumsum_{r,s_1,s_2}\nu(r,s_1,s_2)
\ \Bigl|\sum_{b\sim B}K(s_1(r+b))\ov{K(s_2(r+b))}e(bt)\Bigr|
$$ 
(by the change of variable
$r=\ov a\ \cdot n,\ s_i=a\cdot m_i,\ i=1,2$).  We have
$$
\sumsum_{r,s_1,s_2}\nu(r,s_1,s_2)=\sumsum_{a,n,m_1\not=m_2}
|\alpha_{m_1}{\alpha_{m_2}}|\leq AN\|\alpha\|_1^2\leq
AMN\|\alpha\|_2^2
$$
and
$$
\sumsum_{r,s_1,s_2}\nu(r,s_1,s_2)^2=
\sumsum_{a,n,m_1\not=m_2}|\alpha_{m_1}||{\alpha_{m_2}}|
\sumsum_\stacksum{a',n',m'_1\not=m'_2}{\ov a'n'\equiv \ov a n,\
  a'm'_i\equiv am_i\mods q}|\alpha_{m'_1}||{\alpha_{m'_2}}|.
$$

Now assume that
\begin{equation}\label{AcondII1}
2AN<q.
\end{equation}
Then the equation $\ov a'n'\equiv \ov a n\mods q$ is equivalent to
$an'\equiv a' n\mods q$, which is equivalent to $an'=a' n$. Therefore
if we fix $a$ and $n'$, the integers $a'$ and $n$ are determined up to
$q^{o(1)}$ values.

Suppose that $a,a',n,n'$ are so chosen. For $i=1$, $2$, we then have
$$
\sumsum_\stacksum{m_i,m'_i}{am_i\equiv a'm'_i\mods
  q}|\alpha_{m_i}||{\alpha_{m'_i}}|\leq
\sumsum_\stacksum{m_i,m'_i}{am_i\equiv a'm'_i\mods
  q}|\alpha_{m_i}|^2+\sumsum_\stacksum{m_i,m'_i}{am_i\equiv
  a'm'_i\mods q}|{\alpha_{m'_i}}|^2\ll \|\alpha\|_2^2.
$$
Indeed, since $\mcM$ is a subset of $[1,q-1]$, once $m_i$
(resp. $m'_i$) is given, the congruence $am_i\equiv a'm'_i\mods q$
uniquely determines $m'_i$ (resp. $m_i$).  Therefore
\begin{equation}\label{nusecond}
\sumsum_{r,s_1,s_2}\nu(r,s_1,s_2)^2\ll q^{o(1)}AN\|\alpha\|_2^4.	
\end{equation}
Alternatively, if we assume instead of \eqref{AcondII1} that
\begin{equation}\label{AcondII2}
2AM^+<q,
\end{equation} 
then the same reasoning with the equation $am_1\equiv a'm'_1\mods q$
also leads to \eqref{nusecond}.

Fix an integer $l\geq 2$. We apply Hölder's inequality in the
following form:
\begin{align*}
  \sumsum_{r,s_1,s_2}\nu^{1-\frac1l+\frac1l}
  \Bigl|\sum_{b\sim B}\cdots\Bigr|
  &\leq \Bigl(\sumsum_{r,s_1,s_2}\nu\Bigr)^{1-\frac1{l}}
    \Bigl(\sumsum_{r,s_1,s_2}\nu\Bigl|\sum_{b\sim
    B}\cdots\Bigr|^{l}\Bigr)^{1/l}
  \\
  &\leq \Bigl(\sumsum_{r,s_1,s_2}\nu\Bigr)^{1-\frac1{l}}
    \Bigl(\sumsum_{r,s_1,s_2}\nu^2\Bigr)^{1/2l}
    \Bigl(\sumsum_{r,s_1,s_2}\Bigl|\sum_{b\sim B}\cdots\Bigr|^{2l}\Bigr)^{1/2l}\\
  &\leq q^\eps\|\alpha\|_2^{2}(AN)^{1-\frac 1{2l}}M^{1-\frac1{l}}
    \Bigl(\sum_{\bfb\in\mcB}|	
    \Sigma_{II}(K,\bfb)|\Bigr)^{1/2l},
\end{align*}
where $\mcB=[B,2B[^{2l}$, and 
$$  
\Sigma_{II}(K,\bfb)=\sum_{r\in\Fq}\sumsum_\stacksum{
  s_1,s_2\in\Fqt}{s_1\not =
  s_2}\bfK(s_1r,s_1\bfb)\ov{\bfK(s_2r,s_2\bfb)}
$$
is the exponential sum defined in~(\ref{eq-bfk}), where
$$
  \bfK(r,\bfb)=\prod_{i=1}^lK(r+b_i)
  \ov{K(r+b_{i+l})}.
$$
We observe at this point that the sum $\Sigma_{II}(K,\bfb)$ is
independent of the parameter $a$ such that
$K(x)=\Kl_k(ax;\uple{\chi},q)$, by changing the variables $s_1$ and
$s_2$ to $as_1$ and $as_2$ respectively.

We will estimate these sums in different ways depending on the
position of $\bfb$. Precisely:

\begin{theorem}\label{th-complete-1}
 There exist affine varieties
$$
\mcV^\Delta\subset \mcW\subset \Aa_{\Zz}^{2l}
$$
defined over $\Zz$ such that 
$$
\codim(\mcV^{\Delta})=l,\quad\quad \codim(\mcW)\geq \frac{l-1}{2}
$$
which have the following property: for any prime $q$ large enough,
depending only on $k$, for any tuple $\uple{\chi}$ of characters of
$\Fqt$ with Property \CGMT, for any $a\in\Fqt$, and for all
$\bfb\in\Ff_q^{2l}$, with
$$
K(x)=\Kl_k(ax;\uple{\chi},q),
$$
we have
\begin{gather}
  \Sigma_{II}(K,\bfb)\ll
  q^3\text{ if } \bfb\in \mcV^{\Delta}(\Fq)
\label{eq-complete-11}\\
  \Sigma_{II}(K,\bfb)\ll
  q^2\text{ if } \bfb\in (\mcW-\mcV^{\Delta})(\Fq)
  \label{eq-complete-12}\\
  \Sigma_{II}(K,\bfb)\ll q^{3/2}\text{ if } \bfb\notin \mcW(\Fq).
  \label{eq-complete-13}
\end{gather}
In all cases, the implied constant depends only on $k$.
\end{theorem}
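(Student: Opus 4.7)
The plan is to realize $\bfK(r,\bfb)$ as the trace function of a constructible $\ell$-adic sheaf $\mcK$ on $\Aa^1_r \times \Aa^{2l}_\bfb$, obtained by tensoring pullbacks of the hyper-Kloosterman sheaf $\KL_k(\uple{\chi})$ along the maps $(r,\bfb) \mapsto a(r+b_i)$, with duals taken for $i>l$ so as to reproduce~\refs{eq-bfk}. Setting $\mcR = R\pi_! \mu^* \mcK$, where $\mu\colon \Gm \times \Aa^1 \times \Aa^{2l} \to \Aa^1 \times \Aa^{2l}$ is the scaling $(s,r,\bfb) \mapsto (sr,s\bfb)$ and $\pi$ projects away $s$, the function $\bfR(r,\bfb) = \sum_s \bfK(sr,s\bfb)$ is its trace function. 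The identity
\[
\Sigma_{II}(K,\bfb) = \sum_{r\in\Fq} |\bfR(r,\bfb)|^2 - \sum_{s\in\Fqt}\sum_{r\in\Fq} |\bfK(sr,s\bfb)|^2,
\]
combined with Grothendieck--Lefschetz applied to $\End(\mcR)$ and $\End(\mcK)$ and Deligne's Weil II, reduces the square-root bound $\Sigma_{II}(K,\bfb) \ll q^{3/2}$ to the statement that the decompositions of $\mcK_\bfb$ and $\mcR_\bfb$ into geometrically irreducible components match. Since every irreducible summand $\rho$ of $\mcK_\bfb$ naturally induces a summand $\widetilde{\rho}$ of $\mcR_\bfb$ via the cohomology functor, this amounts to proving that the $\widetilde{\rho}$ are themselves irreducible and pairwise non-isomorphic.

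Next I stratify $\Aa^{2l}$ by the combinatorial type of the finite singular set $S_\bfb$ of $\mcR_\bfb$ on $\Aa^1$. After lifting $\KL_k(\uple\chi)$ to characteristic zero (via \cite[Ch.~4]{GKM}), all local monodromies of $\mcR_\bfb$ are tame for $q$ large, so Deligne's semicontinuity theorem forces the Swan conductors, and hence the irreducible decomposition of $\mcR_\bfb$, to be constant on each stratum. Thus the required match need only be verified at the generic point of every stratum. An explicit inductive computation of $S_\bfb$ will simultaneously exhibit each stratum as the vanishing locus of a controlled number of polynomial equations in the $b_i$ (together with a few auxiliary variables), in such a way that each defining equation splits as a sum of terms involving small, essentially disjoint subsets of the $b_i$.

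For the averaging step I invoke a variant of Katz's diophantine irreducibility criterion: if the cohomological match fails at the generic point of a stratum $\Sigma$, then the stratum-average $\sum_{\bfb \in \Sigma(\Fq)} \Sigma_{II}(K,\bfb)$ must exceed the bound predicted by square-root cancellation. On the other hand, the additive splitting of the equations defining $\Sigma$ permits a Fourier separation of variables, so this average is bounded above by a product of one-variable Weil-type sums, contradicting the hypothetical mismatch. This forces the desired match on every stratum of sufficiently large dimension, and $\mcW$ is then defined as the union of the remaining low-dimensional strata. The nontriviality and irreducibility of each $\widetilde{\rho}$ needed to trigger this dichotomy follow from Katz's monodromy classification in~\cite{ESDE} for $\KL_k(\uple\chi)$ under Property \CGMT\ (which rules out the Kummer induction and alternating self-duality obstructions). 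The trivial bound $\Sigma_{II}(K,\bfb) \ll q^3$ on $\mcV^\Delta$ is immediate from $|\bfK(\cdot,\bfb)| \leq k^{2l}$ by summing three of the four variables trivially, while the intermediate bound $q^2$ on $\mcW \setminus \mcV^\Delta$ comes from a refined use of Weil II that permits a bounded number of Frobenius eigenvalues of maximal weight on the relevant cohomology.

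The hard part will be obtaining the codimension estimate $\codim(\mcW) \geq (l-1)/2$: the inductive procedure producing the defining equations of each stratum must contribute enough genuinely constraining relations, while keeping the number of auxiliary variables under sufficiently tight control, so that the resulting dimension count is efficient enough to feed Lemma~\ref{lm-sz} in the subsequent analytic estimate. A secondary but also delicate technical point will be verifying the nontriviality of every $\widetilde{\rho}$ generically on each stratum and ruling out accidental isomorphisms among distinct $\widetilde{\rho}$; this will require careful tracking of the local behavior of $\mcR_\bfb$ at $\infty$ and at the points of $S_\bfb$, using slope decompositions together with the constraints on monodromy imposed by Property \CGMT.
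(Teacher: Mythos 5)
Your proposal follows essentially the same route as the paper: define the sum-product sheaves $\mcK$ and $\mcR$, interpret $\Sigma_{II}$ as a comparison of endomorphism rings, stratify $\Aa^{2l}$ by the number of singularities of $\mcR_{\uple{b}}$, verify tameness via an integral model and Deligne semicontinuity to reduce to generic points, and bound the stratum averages by Katz's diophantine criterion combined with separation of variables over a parameterization of each stratum — precisely the steps in Sections 7--12 culminating in Theorem~\ref{th-thetab} and the final argument in Section~\ref{sec-end}. The only point worth flagging is that your description of the intermediate bound $\Sigma_{II}\ll q^2$ on $\mcW\setminus\mcV^\Delta$ as a ``refined use of Weil II permitting a bounded number of maximal-weight eigenvalues'' is unnecessarily elaborate: the paper simply bounds the two positive sums in $\Sigma_{II}$ separately, using $|\bfR|\ll q^{1/2}$ (weights $\leq 1$) and $|\bfK|\ll 1$ (weight $0$), which is already enough.
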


We emphasize that the varieties $\mcV^{\Delta}$ and $\mcW$ are
independent of the tuple of characters.  After a number of
preliminaries, the final proof of this theorem will be found in
Section~\ref{sec-end} (see page~\pageref{pg-th-c1}).

We will apply these estimates for the parameters $\bfb$ belonging to
the box $[B,2B)^{2l}$, and for this we use Lemma \ref{lm-sz}.


Let $\mcB^{\mcV}$ (resp. $\mcB^{\mcW}$) be the set of $\bfb\in\mcB$
such that $\bfb\in \mcV^{\Delta}(\Fq)$ (resp. $\bfb\in \mcW(\Fq)$).
Since the subvarieties $\mcV^\Delta$ and $\mcW$ are defined over
$\Zz$, it follows from Lemma~\ref{lm-sz} that
\begin{align}
  \sum_{\bfb}|\Sigma_{II}(K,\bfb)|
  &\ll q^3|\mcB^{\mcV}|+q^2|\mcB^{\mcW}|+q^{3/2}B^{2l}
    \nonumber\\
  &\ll 
    q^3B^{2l-\codim (\mcV^{\Delta})}+q^2B^{2l-\codim
    (\mcW)}+q^{3/2}B^{2l}.	
    \label{SIIbound}
\end{align}
\par
We have $\codim(\mcV^\Delta)=l$ and $\codim(\mcW)\geq (l-1)/2$ by
Theorem~\ref{th-complete-1}.
We choose $B$ so that the first and third terms in \eqref{SIIbound}
are equal, namely 
$$B=q^{3/2l}.$$
We also choose $A$ so that $AB=N$, ie.
$$A=N/B=Nq^{-\frac{3}{2 l}}.$$
Writing $\codim (\mcW)=\gamma l$, we deduce that
$$
|B(K,\uple{\alpha},\uple{\beta})|\leq\|\beta\|_2
(\|\alpha\|_2^2N+S^{\not=})^{1/2}
$$
where
$$
S^{\not=}\ll \frac{q^\eps}{AB}\|\alpha\|_2^2(AN)^{1-\frac
  1{2l}}M^{1-\frac1{l}}(q^2B^{(2-\gamma)l)}+q^{3/2}B^{2l})^{1/2l}.
$$
Hence
\begin{align}
  |B(K,\uple{\alpha},\uple{\beta})|
  &\ll
    q^\eps\|\alpha\|_2\|\beta\|_2(MN)^{1/2}\Bigl(\frac{1}{M}
    +\Bigl(\frac{q^{2}B^{-\gamma l}}{AM^2N}
    +\frac{q^{\frac{3}{2}}}{AM^2N}\Bigr)^{\frac1{2l}}\Bigr) ^{1/2}\nonumber\\
  \label{Bbound}
  &\ll
    q^\eps\|\alpha\|_2\|\beta\|_2(MN)^{1/2}\Bigl(\frac{1}{M}
    +\Bigl(\frac{q^{2-\frac{3}2\gamma+\frac{3}{2l}}}{(MN)^2}
    +\frac{q^{\frac{3}{2}+\frac{3}{2l}}}{(MN)^2}\Bigr)^{\frac1{2l}}\Bigr) ^{1/2}.	
\end{align}
This holds under the condition that
$$A=Nq^{-\frac3{2l}}\geq 1$$
and that either of \eqref{AcondII1} or \eqref{AcondII2} hold.

In particular, since $\gamma\geq 1/3$, the second term on the
right-hand side of \eqref{Bbound} is smaller than the third. This
implies Theorem \ref{thmtypeIIprecise}. Theorem \ref{1stthmII} follows
by choosing $l$ large enough depending on $\delta$.

\subsection{Bounding type I sums}

We turn now to Theorem \ref{thmtypeIprecise}, and consider the special
bilinear form
$$
B(K,\uple{\alpha},1_\mcN)= \sumsum_{m\in\mcM,\,n\in\mcN}\alpha_m K(mn).
$$

Given $l\geq 2$, a trivial bound is
$$
B(K,\uple{\alpha},1_\mcN)\leq
\|\alpha\|_1^{1-\frac1l}\|\alpha\|_2^{\frac1{2l}}M^{\frac1{2l}}N.
$$
Proceeding as before, we get
\begin{align*}
  B(K,\uple{\alpha},1_\mcN)
  &=\frac{1}{AB}
    \sumsum_{a \sim A,\ B \sim B}
  \sum_{m\in\mcM}\alpha_m\sum_{n+ab\in\mcN} K(m(n+ab))\\
  &\ll_{\eps}
\frac{q^\eps}{AB}\sumsum_{r\in\Fq,s\in \Fqt}\nu(r,s)\Bigl|\sum_{b\sim B}\eta_bK(s(r+b))\Bigr|
\end{align*}
with
$$
\nu(r,s)=\sumsumsum_\stacksum{a\sim A,\ m\in\mcM,\
  n\in\mcN}{am=s,\ \ov an\equiv r\mods q}|\alpha_m|
$$
and $|\eta_b|\leq 1$. We have
$$
\sum_{r,s}\nu(r,s)\ll AN\sum_{m\in\mcM}|\alpha_m|.
$$
We also have
$$
\sum_{r,s}\nu(r,s)^2=\multsum_\stacksum{a,m,n,a',m',n'}{
    am\equiv a'm',a'n\equiv an'\mods q} |\alpha_m||\alpha_{m'}|.$$
Assuming that
\begin{equation}\label{ANbound}
2AN<q\hbox{ or }2AM^+<q	
\end{equation}
we show by the same reasoning as above that
$$
\sum_{r,s}\nu(r,s)^2\ll \sum_{a,m}|\alpha_m|^2
\multsum_{\substack{n,a',m',n'\\am=a'm'\\a'n=an'\mods q}}1 \ll_\eps
q^\eps AN\sum_{m}|\alpha_m|^2,
$$

We next apply H\"older's inequality in the form
\begin{multline*}
  \sumsum_{r\in\Fq,s\in \Fqt} \nu(r,s) \Bigl|\sum_{B < b \leq
    2B}\eta_bK(s(r+b))\Bigr|
  \\
  \leq\Bigl(\sum_{r,s}\nu(r,s)\Bigr)^{1-\frac1l}
  \Bigl(\sum_{r,s}\nu(r,s)^2\Bigr)^{\frac1{2l}}
  \Bigl(\sum_{r,s}\Bigl|\sum_{B < b \leq 2B
  }\eta_bK(s(r+b))\Bigr|^{2l}\Bigr)^{\frac{1}{2l}}
  \\
  \ll_{\eps} q^\eps
  (AN)^{1-\frac{1}{2l}}\|\uple{\alpha}\|_1^{1-\frac{1}{l}}\|\uple{\alpha}\|_2^{\frac{1}{l}}
  \Bigl(\sum_{r,s}\Bigl|\sum_{B < b \leq 2B }\eta_b
  K(s(r+b))\Bigr|^{2l}\Bigr)^{\frac{1}{2l}}.
\end{multline*}

Expanding the $2l$-th power, we have
$$
  \sumsum_{r\in\Fq,s\in\Fqt}\Bigl|\sum_{B < b \leq 2B }\eta_b
  K(s(r+b))\Bigr|^{2l}
  \leq \sum_{\bfb\in\mcB}\bigl|\Sigma_I(K, \bfb)\bigr|
$$
with
\begin{equation}\label{eq-def-sigma}
  \Sigma_I(K,\bfb)=\sum_{r\in\Fq}\sum_{s\in\Fqt}\bfK(sr,s\bfb)=
  \sum_{r\in\Fq}\bfR(r,\bfb).
\end{equation}
Note that $\Sigma_I(K,\bfb)$ is independent of the choice of
$a\in\Fqt$ such that $K(x)=\Kl_k(ax;\uple{\chi},q)$.  We have reached
the bound
\begin{equation}\label{eq-intermediate}
  B(K,\uple{\alpha},1_\mathcal{N})
  \ll q^{\eps}\|\uple{\alpha}\|_1^{1-\frac1l}\|\uple{\alpha}\|_2^{\frac1l}M^{\frac1{2l}}N
  \Bigl( \frac{(MN)^{-1}}{AB^{2l}}
  \sum_{\bfb\in\mcB}\bigl|\Sigma_I(K, \bfb)\bigr|\Bigr)^{\frac1{2l}}.
\end{equation}

As before, we can prove different bounds on $\Sigma_I(K,\bfb)$
depending on the position of $\bfb$.

\begin{theorem}\label{th-complete-2}
  Let $\mcV^{\Delta}$ and $\mcW$ be the affine varieties on
  Theorem~\ref{th-complete-1}.  For any prime $q$ large enough,
  depending only on $k$, for any tuple $\uple{\chi}$ with Property
  \CGMT, for any $a\in\Fqt$ and for all $\bfb\in\Ff_q^{2l}$, with
$$
K(x)=\Kl_k(ax;\uple{\chi},q),
$$
we have
\begin{gather}
  \Sigma_{I}(K,\bfb)\ll q^2\text{ if } \bfb\in \mcV^{\Delta}(\Fq)
\label{eq-complete-21}\\
  \Sigma_{I}(K,\bfb)\ll
  q^{3/2}\text{ if } \bfb\in (\mcW-\mcV^{\Delta})(\Fq)
  \label{eq-complete-22}\\
  \Sigma_{I}(K,\bfb)\ll q\text{ if } \bfb\notin \mcW(\Fq).
\label{eq-complete-23}
\end{gather}
In all cases, the implied constant depends only on $k$.
\end{theorem}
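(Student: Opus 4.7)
My plan is to deduce Theorem \ref{th-complete-2} from the same cohomological setup developed in the proof of Theorem \ref{th-complete-1}, replacing the second-moment analysis by a first-moment one. The starting point is the interpretation
$$\Sigma_I(K,\bfb) = \sum_{r \in \Fq} \bfR(r,\bfb)$$
as the sum over $\Fq$-points of the trace function of the sheaf $\mcR_\bfb$ on $\Aa^1_{\Fq}$, and I will treat the three cases according to the stratification $\mcV^\Delta \subset \mcW$ by exploiting progressively stronger cohomological facts about $\mcR_\bfb$.

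For $\bfb \in \mcV^\Delta(\Fq)$, the trivial bound $|\bfR(r,\bfb)| \leq (q-1) k^{2l} \ll q$ (since each $\bfK(sr,s\bfb)$ is $O_{k,l}(1)$ by Deligne's bound $|K| \leq k$) yields $|\Sigma_I(K,\bfb)| \ll q^2$ after summing trivially over $r$. For $\bfb \in (\mcW \setminus \mcV^\Delta)(\Fq)$, the construction of $\mcR$ as a higher direct image of a sheaf pointwise pure of weight $0$ ensures that $\mcR_\bfb$ is mixed of weight $\leq 1$ outside $\mcV^\Delta$: the potentially appearing weight-$2$ contributions arise exactly from the invariants at $\infty$ that characterize $\mcV^\Delta$. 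By Deligne's Riemann Hypothesis, $|\bfR(r,\bfb)| \ll q^{1/2}$ pointwise, and summing over $r \in \Fq$ yields $|\Sigma_I(K,\bfb)| \ll q^{3/2}$.

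For $\bfb \notin \mcW(\Fq)$, the key input from the proof of Theorem \ref{th-complete-1} is that the geometrically irreducible constituents of $\mcR_\bfb$ match, with multiplicity, those of $\bigoplus_{s \in \Fqt} [\times s]^* \mcK_\bfb$. Since Property \CGMT\ guarantees that the Kloosterman-type sheaf $\mcK_\bfb$ has no geometrically trivial irreducible component (and the multiplicative translates $[\times s]^*\mcK_\bfb$ inherit this property), it follows that $\mcR_\bfb$ has no geometrically trivial summand either. Hence $H^2_c(\Aa^1_{\bFq}, \mcR_\bfb) = 0$, and the Grothendieck--Lefschetz trace formula combined with Deligne's weight bound gives
$$|\Sigma_I(K,\bfb)| = |\Tr(\Frob_q \mid H^1_c(\Aa^1_{\bFq}, \mcR_\bfb))| \ll \dim H^1_c(\mcR_\bfb)\cdot q \ll q,$$
where $\dim H^1_c$ is controlled via the Euler--Poincaré formula by the rank and total Swan conductor of $\mcR_\bfb$, both uniformly bounded in terms of $(k,l)$ by the constructions underlying Theorem \ref{th-complete-1}.

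The main obstacle is that the argument reuses rather than re-proves the structural information about $\mcR_\bfb$ established in Theorem \ref{th-complete-1}: namely that the same stratification $\mcV^\Delta \subset \mcW$ correctly captures both the appearance of a weight-$2$ part on $\mcV^\Delta$ and the presence of geometrically trivial summands on $\mcW$, while the basic complexity invariants (rank, Swan conductor) remain uniformly bounded off $\mcW$. Given these inputs, the deduction for the first moment is formally simpler than the matching-of-multiplicities statement needed for the second moment in Theorem \ref{th-complete-1}: one only needs absence of the trivial representation rather than a full isomorphism of semisimplifications.
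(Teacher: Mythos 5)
Your three-case decomposition and the first two bounds match the paper's proof exactly: the trivial bound for $\bfb\in\mcV^{\Delta}$, and the pointwise weight-$\leq 1$ bound $|\bfR(r,\bfb)|\ll q^{1/2}$ off $\mcV^{\Delta}$ giving $q^{3/2}$. The third case has the right shape (Riemann Hypothesis plus vanishing of $H^2_c$ from absence of a geometrically trivial constituent) but the justification you offer for that vanishing does not hold up.

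You write that since $\mcK_{\bfb}$ has no geometrically trivial irreducible component, ``it follows that $\mcR_{\bfb}$ has no geometrically trivial summand either.'' This implication is not valid: $\mcR^*_{\bfb}$ is built from $\mcK_{\bfb}$ by applying the cohomology functor (weight-one part of) $R^1\pi_!$, and there is no general reason why $\widetilde{\rho}=(R^1\pi_!\rho)^{w=1}$ should be nontrivial just because $\rho$ is a nontrivial irreducible of $\mcK_{\bfb}$ --- a priori $\widetilde{\rho}$ could be a rank-one, even geometrically constant, sheaf. The isomorphism $\theta_{\bfb}$ matches the endomorphism algebras (equivalently, the multiplicity pattern of distinct constituents) but does not identify the isomorphism types of the constituents of $\mcR^*_{\bfb}$ with those of $\mcK_{\bfb}$, so your phrasing ``the geometrically irreducible constituents match'' is stronger than what the machinery of Theorem~\ref{th-complete-1} gives. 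What you actually need, and what the paper uses, is the \emph{second} conclusion of Theorem~\ref{th-thetab}/Theorem~\ref{th-end}: for $\bfb\notin\mcW(\Fq)$ every geometrically irreducible component of $\mcR^*_{\bfb}$ has rank strictly greater than one. That claim is proved separately, via the local-monodromy computation of Lemma~\ref{lm-injectivity-plus-epsilon}, and it is precisely the ingredient that rules out a geometrically trivial (rank-one) constituent and forces $H^2_c(\Aa^1_{\bFq},\mcR^*_{\bfb})=0$. With that citation replacing your inference, the rest of your argument for the $\ll q$ bound goes through as in the paper.

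One further small point: the uniform bound on $\dim H^1_c(\mcR_{\bfb})$ that you invoke at the end is not ``uniformly bounded ... by the constructions underlying Theorem~\ref{th-complete-1}'' in any explicit way; the paper controls the implied constants by reducing to exponential-sum expressions and applying the Bombieri--Katz Betti number bounds~\cite{KatzBetti}, and you should do the same.
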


This is also proved ultimately in Section~\ref{sec-end}
(page~\pageref{pg-th-c2}). 
\par
\medskip
\par
Taking this for granted, and using the same notation
$\codim(\mcW)=\gamma l$ as before, we have therefore
\begin{align*}
  \sum_{\bfb\in\mcB}\bigl|\Sigma_I(K, \bfb)\bigr|
  &\ll
    |\mcB^{\mcV}|q^2+|\mcB^{\mcW}|q^{3/2}+|\mcB|q
  \\
  &\ll B^lq^2+B^{(2-\gamma)l}q^{3/2}+B^{2l}q,
\end{align*}
by Lemma~\ref{lm-sz}.  Choosing
$$
B=q^{1/l}
$$
to equate the first and third terms above and 
$$
A=N/B=Nq^{-1/l}
$$ 
we obtain from~(\ref{eq-intermediate}) the estimate
\begin{align*}
  B(K,\uple{\alpha},1_\mcN)
  &\ll_{k,\eps} 
    \ll q^{\eps}\|\uple{\alpha}\|_1^{1-\frac1l}\|\uple{\alpha}\|_2^{\frac1l}M^{\frac1{2l}}N
 \Bigl( \frac{(MN)^{-1}}{AB^{2l}}
  \big(qB^{2l}+q^{1/2}B^{{(3-\gamma)l}}\big)\Bigr)^{\frac1{2l}}\\
  &\ll_{k,\eps}q^{\eps}\|\uple{\alpha}\|_1^{1-\frac1l}\|\uple{\alpha}\|_2^{\frac1l}M^{\frac1{2l}}N\Bigl(\frac {q^{1+\frac{1}{l}}}{MN^{2}}+\frac {q^{\frac32-\gamma+\frac1l}}{MN^{2}}\Bigr)^{1/2l},
\end{align*}
assuming that \eqref{ANbound} holds and that $A\geq 1$. Since
$\gamma\geq 1/2$ (by Theorem~\ref{th-complete-1}), the second term on
the right-hand side of the last inequality is smaller than the first.
Together with \eqref{ANbound}, this leads to Theorem
\ref{thmtypeIprecise}, and Theorem \ref{1stthmI} follows by letting
$l$ get large.

\section{Algebraic preliminaries}

We collect in this section some definitions and statements of
algebraic geometry that we will use later. Most are standard, but we
include some proofs for completeness and by lack of a convenient
reference. 

Let $C_{\Fq}$ be a smooth and geometrically connected curve with
smooth projective model $S$. The conductor of a constructible
$\ell$-adic sheaf $\sheaf{F}$ on $C$ is defined by
$$
\cond(\mcF)= g(S)+\rank(\mcF)+|\Sing(\mcF)|+\sum_{x\in
  \Sing(\mcF)}\swan_x(\mcF)+ \dim H^0_c(C_{\bFq},\sheaf{F}),
$$
where $g(S)$ is the genus of $S$, $\Sing(\mcF)$ is the set of
points of $S$ where the middle-extension of $\mcF$ is not lisse and
$\swan_x(\mcF)$ is the Swan conductor at $x$.
\par
Let $C_{\Fq}$ be a curve (not necessarily smooth or irreducible).  Let
$(C_i)_{i\in I}$ be the geometrically irreducible components of
$C_{\bFq}$ and $\pi_i\colon \widetilde{C}_i\to C_i$ their canonical
desingularization. We define the conductor of a constructible
$\ell$-adic sheaf $\sheaf{F}$ on $C_{\Fq}$ by
$$
\cond(\mcF)=\sum_{i\in I}\cond(\pi_i^*(\mcF|C_i))+\sum_{x\in C_{sing}}m_x(C),
$$
where $C_{sing}$ is the singular set of $C$ and $m_x(C)$ the
multiplicity of $x$ as a singularity of $C$.

If $C_{\Fq}$ is a curve, $f$ is a function on $C$ and $\mcF$ an
$\ell$-adic sheaf on $C$, then
\begin{equation}\label{eq-cond-twist}
\cond(\mcF\otimes\sheaf{L}_{f(x)})\ll \cond(\sheaf{L}_{f(x)})^2\cond(\mcF)^2,
\end{equation}
where the implied constant is absolute.

We will use the following version of Deligne's Riemann Hypothesis over
finite fields~\cite{WeilII}.

\begin{proposition}\label{pr-recall-rh}
  Let $\Fq$ be a finite field with $q$ elements and let $C$ be a curve
  over $\Fq$. Let $\mcF$ and $\mcG$ be constructible $\ell$-adic
  sheaves on $C$ which are mixed of weights $\leq 0$ and pointwise
  pure of weight $0$ on a dense open subset. Suppose that the
  restriction of $\mcF\otimes\mcG^{\vee}$ to any geometrically
  irreducible component of $C$ has no trivial summand.  We then have
$$
\sum_{x\in C(\Fq)} t_{\mcF}(x;\Fq)\overline{t_{\mcG}(x;\Fq)}\ll
\sqrt{q}
$$
where the implied constant depend only on the conductors of $\mcF$ and
of $\mcG$.
\end{proposition}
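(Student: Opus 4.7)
The plan is a routine application of the Grothendieck--Lefschetz trace formula combined with Deligne's Riemann Hypothesis over finite fields~\cite{WeilII}. First I would remove from $C$ the finite set $Z$ consisting of its singular points together with those smooth points where $\mcF$ or $\mcG$ fails to be lisse or pointwise pure of weight $0$. Since both sheaves are mixed of weight $\leq 0$, their trace values at any point are bounded by their ranks, and both the cardinality of $Z(\Fq)$ and the ranks are bounded polynomially in the conductors, so the contribution of $Z(\Fq)$ to the sum is $\ll \cond(\mcF)\cond(\mcG)$. By additivity over geometrically irreducible components, it then suffices to bound, for each such component, the partial sum over the $\Fq$-rational points of its intersection $U$ with $C\setminus Z$, where $U$ is now smooth and geometrically connected and both $\mcF$ and $\mcG$ are lisse and pointwise pure of weight $0$.

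On such a $U$ the Frobenius eigenvalues on stalks are unitary, so $t_{\mcG^{\vee}}(x;\Fq)=\overline{t_\mcG(x;\Fq)}$ and the partial sum equals $\sum_{x\in U(\Fq)}t_{\mcF\otimes\mcG^{\vee}}(x;\Fq)$. Grothendieck--Lefschetz rewrites it as
$$
\sum_{i=0}^{2}(-1)^i\Tr\bigl(\Frob_q\mid H^i_c(U_{\bFq},\mcF\otimes\mcG^{\vee})\bigr).
$$
The $H^0_c$-term vanishes whenever $U$ is not proper, and when $U=C$ is proper it equals the space of geometric invariants; by Poincar\'e duality, the $H^2_c$-term is the space of geometric coinvariants, Tate-twisted by $(-1)$. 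The geometric monodromy representation of $\mcF\otimes\mcG^{\vee}$ on $U$ is semisimple by Deligne's semisimplicity theorem for pointwise pure lisse sheaves, so both invariants and coinvariants vanish as soon as $\mcF\otimes\mcG^{\vee}$ has no trivial geometric summand on the ambient irreducible component of $C$, which is precisely the hypothesis.

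It remains to handle $H^1_c$. Because $\mcF\otimes\mcG^{\vee}$ is mixed of weight $\leq 0$, Deligne's main theorem in Weil II bounds every Frobenius eigenvalue on $H^1_c(U_{\bFq},\mcF\otimes\mcG^{\vee})$ by $q^{1/2}$ in $\iota$-absolute value. The dimension of $H^1_c$ is controlled via the Euler--Poincar\'e / Grothendieck--Ogg--Shafarevich formula in terms of $\cond(\mcF\otimes\mcG^{\vee})$, and a tensor-product conductor estimate analogous to~\eqref{eq-cond-twist} gives $\cond(\mcF\otimes\mcG^{\vee})\ll \cond(\mcF)^2\cond(\mcG)^2$. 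Combining the three contributions yields the desired bound $\ll\sqrt{q}$ with an implied constant depending only on the conductors. The main obstacle is the verification of the vanishing of $H^0_c$ and $H^2_c$ in the proper case: one has to argue that a putative trivial summand of $\mcF\otimes\mcG^{\vee}$ on the open piece $U$ would extend to a trivial summand on the full geometrically irreducible component of $C$ containing $U$, so that the hypothesis really does force the vanishing. This is where the pointwise purity on a dense open subset, together with Deligne's semisimplicity, is used in an essential way.
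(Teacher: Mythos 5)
Your argument is correct and amounts to the same approach as the paper's: the paper simply cites \cite[Lemma 3.5]{counting} for the core Grothendieck--Lefschetz $+$ Weil~II estimate on a smooth geometrically connected curve, and then compares $C$ with its desingularization to treat the singular locus, whereas you excise the singular and non-lisse points directly; both are routine bookkeeping variants of the identical argument you have written out (vanishing of $H^0_c$, vanishing of $H^2_c$ via semisimplicity and the no-trivial-summand hypothesis, and the weight bound $q^{1/2}$ on $H^1_c$ together with the Euler--Poincar\'e control of its dimension).
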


\begin{proof}
  If $C$ is smooth and geometrically connected, and $\mcF$ and $\mcG$
  are geometrically irreducible middle-extensions, this is deduced
  from Deligne's results in~\cite[Lemma 3.5]{counting}; the extension
  to general $\mcF$ and $\mcG$ satisfying our assumptions is
  immediate. For a general smooth curve, one need only apply the bound
  to each component separately.
  
  For a general curve, observe that the difference between the sum
  over $C$ and the sum over a desingularization of $C$ is the sum over
  the singular points of $t_{\mcF}(x;\Fq)\overline{t_{\mcG}(x;\Fq)}$
  minus the sum over points of the desingularization lying over
  singular points of
  $t_{\mcF}(x;\Fq)\overline{t_{\mcG}(x;\Fq)}$. Since the size of both
  those sets of points may be bounded in terms of the sum of the
  multiplicities of singular points, and the value of
  $t_{\mcF}(x;\Fq)\overline{t_{\mcG}(x;\Fq)}$ at those points may be
  bounded in terms of the conductors, this contribution is also
  bounded in terms of the conductors.
\end{proof}

We will also use a criterion for a sheaf to be lisse that might be
well-known but for which we do not know of a suitable reference.

\begin{lemma}\label{lm-critere-lisse}
  Let $\Spec(\Oc)$ be an open dense subset of the spectrum of the ring
  of integers in a number field and $U\to\Spec(\Oc)$ a reduced scheme
  of finite type. Let $\ell$ be a prime number
  invertible in $\Oc$. Let $r\geq 1$ be an integer and let $\mcF$ be
  a constructible $\ell$-adic sheaf on $U$. 
\par
Assume that:
\par
\emph{(1)} For any finite-field valued point $\Spec(k)\to\Spec(\Oc)$,
the sheaf $\mcF_k$ on $U_k$ is lisse of rank $r$.
\par
\emph{(2)} For any finite-field valued point $\Spec(k)\to\Spec(\Oc)$,
any generic point $\eta$ of $U_{k}$, and any
$s\in \Gamma(\Spec(\Oc^{et}_{\eta}),\mcF)$, if $s$ is non-zero at the
special point of the étale local ring $\Oc^{et}_{\eta}$, then it is
non-zero at the generic point.
\par
Then $\mcF$ is lisse on $U$.
\end{lemma}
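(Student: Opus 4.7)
My plan is to verify that $\mcF$ is lisse at every point of $U$ by combining conditions~(1) and~(2), using the characterization of lisseness via strict Henselian local rings. By constructibility, $\mcF$ is lisse on a maximal dense open $V \subset U$, of some generic rank $r'$. For any finite-field valued point $\Spec(k)\to\Spec(\Oc)$, the intersection $V \cap U_k$ is a non-empty dense open on which $\mcF|_{U_k}$ is lisse of rank $r'$; combined with condition~(1), which gives $\mcF|_{U_k}$ lisse of rank $r$ on all of $U_k$, this forces $r' = r$.

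The main step is to show that every generic point $\eta$ of every fiber $U_k$ lies in $V$. Let $R = \Oc^{et}_\eta$, the strict Henselian étale local ring at $\eta$, with closed point $\eta$ and generic point $\xi$; since $\xi$ maps to a generic point of $U$ (which lies in $V$), one has $\dim \mcF_\xi = r$. By condition~(1), $\dim \mcF_\eta = r$ as well. By the Henselian property, $\Gamma(\Spec R,\mcF) \cong \mcF_\eta$, and the restriction to $\xi$ factors as
$$
\mcF_\eta \,\cong\, \Gamma(\Spec R,\mcF) \,\longrightarrow\, (\mcF_\xi)^{I} \,\hookrightarrow\, \mcF_\xi,
$$
where $I$ is the inertia acting on the generic stalk. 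Condition~(2) says exactly that this composite is injective, hence is an isomorphism by dimension count, which forces $(\mcF_\xi)^{I} = \mcF_\xi$ (trivial inertia action) and the specialization map an isomorphism. Therefore $\mcF|_{\Spec R}$ is a constant sheaf of rank $r$, so $\mcF$ is lisse in an étale neighborhood of $\eta$, and $\eta\in V$.

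To conclude $V = U$: at this stage $V$ contains all codimension-$\leq 1$ points of $U$, so $Z = U\setminus V$ has codimension $\geq 2$. For any $u \in Z$, applying a Zariski--Nagata-type purity argument to $\mcF|_{\Spec \Oc^{sh}_{U,u}}$---which is lisse of rank $r$ on the punctured spectrum by the previous step, while $\dim\mcF_u = r$ by condition~(1)---one extends lisseness uniquely across $u$ and checks that this extension coincides with $\mcF$ itself on the whole strict Hensel local ring, so $\mcF$ is lisse at $u$.

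The first two steps---pinning down the generic rank and handling codimension-one fibral points via the Henselian property and condition~(2)---are the heart of the argument and follow in a clean way from the definitions. The main technical obstacle is the last step: propagating lisseness across the codimension-$\geq 2$ locus, which requires combining a purity theorem for lisse $\ell$-adic sheaves with the constant-rank constraint coming from~(1) to ensure that the unique lisse extension on the punctured neighborhood actually agrees with the full restriction of $\mcF$, rather than with some other constructible sheaf having the same generic behavior.
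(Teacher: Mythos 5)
Your proposal takes a genuinely different route from the paper, and it has a gap at the last step.

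The paper's proof works directly at an \emph{arbitrary} point $x\in U_k$, not just at generic points of fibers: it takes a nonzero section $s$ of $\mcF$ over $\Oc^{et}_x$, uses condition~(1) (lisseness of $\mcF_k$) to propagate non-vanishing of $s$ within the fiber to the generic point of $\Oc^{et}_{x,k}$, then uses condition~(2) at the corresponding generic point $\eta$ of $U_k$ to propagate further to the generic point of $\Oc^{et}_x$; thus every nonzero section has full support, so a basis $(s_1,\dots,s_r)$ of $\mcF_x$ gives a map $\bQl^r\to\mcF_{\Oc^{et}_x}$ that is injective on stalks. Since $U$ is Jacobson and the stalk rank is constructible, condition~(1) forces the stalk rank to be $r$ everywhere, so the map is an isomorphism and $\mcF$ is lisse at $x$. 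The Jacobson property also makes this sufficient: a non-lisse locus, if nonempty, would contain a closed point, but closed points of $U$ are finite-field valued. The paper therefore never needs to stratify by codimension or to invoke any purity result.

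Your argument, by contrast, establishes lisseness only at codimension-$\leq 1$ points and then tries to close the gap using Zariski--Nagata purity. That final step does not work in the stated generality. Purity for lisse $\ell$-adic sheaves on a punctured strict Henselian local spectrum requires the local ring to be regular (or at least normal and of dimension $\geq 2$); the lemma assumes only that $U$ is reduced, so $\Oc^{sh}_{U,u}$ can fail to be a domain, let alone regular, and there is no general extension theorem available. Moreover, even if purity did apply, the additional claim that the resulting lisse extension coincides with $\mcF$ itself (rather than merely with $\mcF$ on the punctured neighborhood) is not verified --- as you acknowledge. This is a genuine missing idea, not a technicality: the hypothesis~(1) has to be used at every closed point (as in the paper), not just at generic points of fibers. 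A secondary, fixable issue: the claim that $V\cap U_k$ is nonempty for every $k$ is not automatic (a dense open of $U$ can miss entire closed fibers), though one such $k$ suffices and exists for all but finitely many primes, so the determination $r'=r$ survives.
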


\begin{proof}
  Let $x\in U_k\subset U$ and let $s$ be a non-zero section of $\mcF$
  over the étale local ring $\Oc_x^{et}$ at $x$. Since (the pullback
  of) $\mcF$ is lisse on $\Oc^{et}_{x,k}$ by Assumption (1), the
  generic point of $\Oc^{et}_{x,k}$ belongs to the support of
  $s$. Hence (the pullback of) $s$ is non-zero at the special point of
  $\Oc^{et}_{\eta}$, which maps to the generic point $\Oc^{et}_{x,k}$
  (for some generic point $\eta$ of $U_{k}$). By Assumption (2), we
  deduce that the generic point of $\Oc^{et}_{\eta}$ belongs to the
  support of (the pullback of) $s$. Since this generic point maps to
  the generic point of $\Oc^{et}_x$, this means that the support of
  $s$ contains the generic point of $\Oc^{et}_x$, hence because the support of $s$ is closed, it is the whole $\Spec(\Oc^{et}_x)$.
\par
Now let $(s_1,\ldots,s_r)$ be a basis of the stalk
$\mcF_x=\Gamma(\Oc^{et}_x,\mcF)$. These sections define a morphism
$$
\bQl^{\, r}\to \mcF_{\Oc^{et}_x}
$$
whose induced map on stalks is, by the above, injective. By Assumption (1) and the fact that the rank of the stalk of a constructible $\ell$-adic sheaf is a constructible function, the rank of the stalk of $\mcF$ at every point is $r$. Hence both
stalks have the same dimension, thus the
induced map on stalks is an isomorphism. This means that $\mcF$ is
locally constant at $x$, and we conclude that $\mcF$ is lisse.
\end{proof}

\section{Generalized Kloosterman sheaves}

In this section, we summarize the basic properties of the generalized
Kloosterman sheaves whose trace functions are the sums
$\Kl_k(x;\uple{\chi},q)$. These were defined by Katz
in~\cite[Th. 4.1.1]{GKM}, building on Deligne's work~\cite[Sommes
trig., Th. 7.8]{sga4h}. They are special cases of the hypergeometric
sheaves defined by Katz in~\cite[8.2.1]{ESDE}.

Throughout this section, we fix a prime number $p$, a prime number
$\ell\not=p$, and we consider a finite field $\Fq$ of characteristic
$p$ with $q$ elements and a non-trivial $\ell$-adic additive character
$\psi$ of $\Fq$. We fix an integer $k\geq 2$ coprime to $q$, and a
tuple $\uple{\chi}=(\chi_1,\ldots,\chi_k)$ of $\ell$-adic characters
of $\Fqt$. We denote by $\Lambda(\uple{\chi})$ (or $\Lambda$ if
$\uple{\chi}$ is understood) the product $\chi_1\cdots\chi_k$.

\begin{proposition}[Generalized Kloosterman sheaves]\label{pr-kl}
  There exists a constructible $\bQl$-sheaf
  $\KL=\HYPK_{k,\psi}(\uple{\chi})$ on $\Pp^1_{\Fq}$, called a
  \emph{generalized Kloosterman sheaf}, with the following properties:
\begin{enumerate}
\item For any $d\geq 1$ and any $x\in \Gm(\Fqd)$, we have
\begin{multline*}
  t_{\KL}(x;\Fqd)=\Kl_k(x;\uple{\chi},\Fqd)\\
  = \frac{(-1)^{k-1}}{q^{d(k-1)/2}}\sum_{x_1\cdots
    x_k=x}\chi_1(N_{\Ff_{q^d}/\Fq}x_1)\cdots
  \chi_k(N_{\Ff_{q^d}/\Fq}x_k)
  \psi\Bigl(\Tr_{\Ff_{q^d}/\Fq}(x_1+\cdots+x_k)\Bigr).
\end{multline*}
\item The sheaf $\HYPK_{k,\psi}(\uple{\chi})$ is lisse of rank $k$ on
  $\Gm$.
\item On $\Gm$, the sheaf $\HYPK_{k,\psi}(\uple{\chi})$ is
  geometrically irreducible and pure of weight $0$.
\item The sheaf $\HYPK_{k,\psi}(\uple{\chi})$ is tamely ramified at
  $0$, and its $I(0)$-decomposition is
$$
\bigoplus_{\chi\in \uple{\chi}}\mcL_{\chi}\otimes J(n_{\chi}),
$$
where $J(n)$ is a unipotent Jordan block of size $n$, and $n_{\chi}$
is the multiplicity of $\chi$ in $\uple{\chi}$.
\item The sheaf $\HYPK_{k,\psi}(\uple{\chi})$ is wildly ramified at
  $\infty$, with a single break equal to $1/k$, and with Swan
  conductor equal to $1$.
\item The stalks of $\HYPK_{k,\psi}(\uple{\chi})$ at $0$ and $\infty$
  both vanish.
\item If $\gamma\in \PGL_2(\bFq)$ is non-trivial, there does not exist
  a rank $1$ sheaf $\sheaf{L}$ such that we have a geometric
  isomorphism 
$$
\gamma^*\HYPK_{k,\psi}(\uple{\chi})\simeq
\HYPK_{k,\psi}(\uple{\chi})\otimes \sheaf{L}
$$
over a dense open set.
\item The conductor of $\HYPK_{k,\psi}(\uple{\chi})$ is $k+3$.
\end{enumerate}
\end{proposition}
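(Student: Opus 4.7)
The plan is to construct $\HYPK_{k,\psi}(\uple{\chi})$ as an iterated middle multiplicative convolution on $\Gm$ and then derive each listed property either as a formal consequence of this construction or by quoting the corresponding result from \cite{GKM, ESDE}. Concretely, one sets
$$
\HYPK_{k,\psi}(\uple{\chi}) = j_{!*}\bigl(\mcL_{\psi(x)}\otimes\mcL_{\chi_1}(x)\bigr) \star_{mid} \cdots \star_{mid} j_{!*}\bigl(\mcL_{\psi(x)}\otimes\mcL_{\chi_k}(x)\bigr),
$$
where $j\colon\Gm\hookrightarrow\Pp^1$ and $\star_{mid}$ is middle multiplicative convolution. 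Property (1) then follows from Grothendieck's trace formula applied inductively to the convolution, the sign $(-1)^{k-1}$ and the normalizing power of $q^{(k-1)/2}$ reflecting the cohomological shifts and Tate twists implicit in the definition of $\star_{mid}$. Properties (2) and (3) are standard consequences of the general theory of middle convolution on $\Gm$: each step increases the generic rank by one, preserves pointwise purity of weight $0$, and preserves geometric irreducibility because none of the factors is geometrically isomorphic to a nontrivial Kummer pullback of another, so the resulting sheaf has rank $k$ and is geometrically irreducible and pure.

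For (4), each factor $\mcL_{\psi(x)}\otimes\mcL_{\chi_i}(x)$ is tame at $0$ with $I(0)$-action $\mcL_{\chi_i}$; an induction using the behaviour of the $I(0)$-representation under $\star_{mid}$ on $\Gm$ (compare \cite[\S 8]{ESDE}) gives the claimed decomposition, the unipotent Jordan block $J(n_\chi)$ accounting for the multiplicity $n_\chi$ of $\chi$. For (5), each factor is totally wild at $\infty$ with single slope $1$ and Swan conductor $1$, and Laumon's local Fourier/stationary phase analysis, transported through the multiplicative convolution, implies that the iterated convolution has a single slope $1/k$ at $\infty$ and total Swan conductor $1$. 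Property (6) is then automatic: by construction the sheaf is the middle extension to $\Pp^1$ from $\Gm$, and by (4)--(5) it has no $I(0)$- or $I(\infty)$-invariants, so both stalks vanish.

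The main obstacle is (7), the rigidity/no-self-twist property; the remaining item (8) is a direct calculation. For (7), the strategy is to use Katz's classification of geometric isomorphism classes of hypergeometric sheaves together with the local data produced in (4)--(5). Any element $\gamma\in\PGL_2(\bFq)$ with $\gamma^*\HYPK\simeq\HYPK\otimes\sheaf{L}$ generically would, by comparing the singular sets $\{0,\infty\}$ on both sides, have to stabilize $\{0,\infty\}$; this reduces us to $\gamma\in\{[\times\alpha],[\times\alpha]\circ(x\mapsto 1/x)\}$. The possibility $x\mapsto 1/x$ is excluded because the ramification types at $0$ and $\infty$ (tame versus wild of slope $1/k$) are not interchangeable, while a genuine scaling $[\times\alpha]$ with $\alpha\neq 1$ is excluded by comparing the $I(\infty)$-representations (whose wild part characterizes $\alpha$) and the $I(0)$-representation (whose semisimplification is $\bigoplus_i\mcL_{\chi_i}$, independent of $\alpha$ but sensitive to the scaling through the choice of $\psi$). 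A rank-one twist $\sheaf{L}$ is ruled out by comparing determinants on both sides. Finally, (8) is computed directly from the definition: on $\Pp^1_{\Fq}$ one has $g=0$, $\rank=k$ by (2), $\Sing(\HYPK)=\{0,\infty\}$ of cardinality $2$, total Swan conductor $0+1=1$ by (4)--(5), and $H^0_c(\Gm_{\bFq},\HYPK)=0$ because $\HYPK$ is geometrically irreducible and nonconstant; summing yields $0+k+2+1+0=k+3$.
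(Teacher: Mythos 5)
Your outline diverges from the paper at a critical point: the paper defines the sheaf as $j_!\mathrm{Kl}(\psi;\uple{\chi};1,\ldots,1)\bigl(\frac{n-1}{2}\bigr)$, i.e.\ an \emph{extension by zero} from $\Gm$ to $\Pp^1$, and then cites~\cite[4.1.1]{GKM},~\cite[8.4.2]{ESDE} for (1)--(5), ~\cite[Prop.~3.6(2)]{FKMSP} for (7), and a short computation for (8). You instead extend by middle extension, and this is where the argument genuinely breaks.

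Your proof of (6) relies on the claim that the $I(0)$- and $I(\infty)$-representations of $\HYPK_{k,\psi}(\uple{\chi})$ have no invariants, so that the middle extension to $\Pp^1$ has vanishing stalks. But the $I(0)$-representation computed in (4) is $\bigoplus_{\chi\in\uple{\chi}}\mcL_{\chi}\otimes J(n_{\chi})$, and a summand $\mcL_\chi\otimes J(n_\chi)$ has a nonzero line of $I(0)$-invariants precisely when $\chi$ is trivial. This case is not excluded by the hypotheses (indeed the ordinary hyper-Kloosterman sheaf $\uple{\chi}=(1,\ldots,1)$ has a single unipotent block $J(k)$ at $0$, hence a one-dimensional invariant subspace), so the middle extension $j_{!*}$ has a nontrivial stalk at $0$ and (6) fails for your object. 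The paper sidesteps this by using $j_!$, for which (6) is automatic, and that choice is actually \emph{required} for the downstream Lemma~\ref{lm-top-vanishing} and for the trace-function formula in (1), since $j_{!*}$ and $j_!$ produce different trace functions at $0$ exactly when the unipotent invariants are nonzero. Apart from this, your sketches of (2)--(5) via convolution and of (7) via comparing ramification types are plausible but not self-contained (they still defer to Katz's local analysis, which the paper simply cites); your computation of (8) is correct once the conductor is computed with $j_!$.
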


\begin{proof}
  Let $j\colon \Gg_m\to \Pp^1$ be the open inclusion. We define
$$
\HYPK_{k,\psi}(\uple{\chi})=j_!\mathrm{Kl}(\psi;\uple{\chi}; 1,\ldots,
1)\Bigl(\frac{n-1}{2}\Bigr),
$$
where the sheaf on the right-hand side is the lisse sheaf on $\Gg_m$
defined by Katz in~\cite[4.1.1]{GKM}. We also have a formula in terms
of hypergeometric sheaves, namely
$$
\HYPK_{k,\psi}(\uple{\chi})=
j_!\mathcal{H}_1(!,\psi;\uple{\chi},\emptyset)\Bigl(\frac{n-1}{2}\Bigr)
$$
(see~\cite[8.4.3]{ESDE}).  Assertions (1) and (2) are, respectively,
assertions (2) and (1) of~\cite[4.1.1]{GKM}. Assertion (3) results
from the identification with hypergeometric sheaves
and~\cite[Th. 8.4.2 (1), (4)]{ESDE}.
\par
Assertions (4) and (5)) are given in~\cite[Th. 8.4.2
(6)]{ESDE}. Assertion (6) is clear from the definition as an extension
by zero of a sheaf on $\Gg_m$.
\par
Finally, (7) is a special case of~\cite[Prop. 3.6 (2)]{FKMSP}, and (8)
follows from the definition of the conductor and the previous
statements.
\end{proof}

All parts of Definition~\ref{def-cgm}, including the definition of
Property \CGM\ and Property \CGMT, make sense for tuples of
$\ell$-adic characters of $\Fqt$. When we wish to emphasize the base
finite field, we will speak of Property \CGM\ or \CGMT\ over
$\Fq$. The names \CGM\ and \CGMT\ are justified by the following
theorem of Katz.

\begin{theorem}[Katz]\label{thm-geometric-monodromy}
  Assume that $k\geq 2$, that $p>2k+1$ and that $\uple{\chi}$ is not
  Kummer induced.  Let $G$ be the geometric monodromy group of
  $\HYPK_{k,\psi}(\uple{\chi})$. We then have $G^0=G^{0,der}$, the
  derived group. Moreover
\begin{enumerate}
\item If $k$ is odd, then $G^0=G^{0,der}=\SL_k$.
\item If $k$ is even, then $G^0=G^{0,der}$
  is either
\begin{itemize}
\item  $\SO_k$ if $\uple{\chi}$ is self-dual and symmetric.
\item $\Sp_k$ if $\uple{\chi}$ is self-dual and alternating.
\item $\SL_k$ if $\uple{\chi}$ is not self-dual.
\end{itemize}
\end{enumerate}
Finally, if $\uple{\chi}$ has \CGM, then $G=G^0$ is either $\SL_k$ or
$\Sp_k$.
\end{theorem}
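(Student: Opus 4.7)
The approach is to identify $\HYPK_{k,\psi}(\uple{\chi})$ with a hypergeometric sheaf and invoke Katz's classification of the geometric monodromy groups of such sheaves from~\cite[Chapter 8]{ESDE}. Concretely, the proof of Proposition~\ref{pr-kl} gives an isomorphism $\HYPK_{k,\psi}(\uple{\chi}) \simeq j_!\mathcal{H}_1(!,\psi;\uple{\chi},\emptyset)\bigl(\tfrac{k-1}{2}\bigr)$, and since Tate twisting and extension by zero along $j$ do not affect the geometric monodromy representation on $\Gm$, one may work with the lisse hypergeometric sheaf on $\Gm$ throughout.

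First, assuming $\uple{\chi}$ is not Kummer-induced and $p > 2k+1$, I would invoke the classification theorem of Katz (\cite[Theorem 8.11.2]{ESDE}): the sheaf is Lie-irreducible, and $G^{0,\mathrm{der}}$ is a classical simple algebraic group acting in its standard $k$-dimensional representation. The proof in~\cite{ESDE} is a Goursat--Kolchin--Ribet analysis exploiting the extremely constrained local monodromy at $\infty$ (a single break of slope $1/k$ with Swan conductor $1$, Proposition~\ref{pr-kl}(5)), which rules out tensor decompositions, tensor-induction, finite monodromy, and exceptional representations; the hypothesis $p > 2k+1$ is what eliminates small-characteristic pathologies. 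Since every classical simple algebraic group coincides with its derived group, we obtain $G^0 = G^{0,\mathrm{der}}$.

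Next, I would distinguish which classical group appears by an autoduality analysis. An irreducibility+Schur argument shows that a geometric isomorphism $\HYPK_{k,\psi}(\uple{\chi})^{\vee} \simeq \HYPK_{k,\psi}(\uple{\chi}) \otimes \mcL$ for some rank one $\mcL$ is equivalent, via the $I(0)$-decomposition of Proposition~\ref{pr-kl}(4) and the rigidity at $\infty$, to stability of the multiset $\uple{\chi}$ under $\chi \mapsto \xi \chi^{-1}$, which is precisely the self-duality condition of Definition~\ref{def-cgm}(2). When $k$ is even and such a pairing exists, the Frobenius--Schur sign can be read off from the Swan slope data and the central character of the wild part at $\infty$, as in~\cite[Chapter 8]{ESDE}; it is $+1$ (symmetric, so $G^0 = \SO_k$) exactly when $\Lambda = \xi^{k/2}$, and $-1$ (alternating, so $G^0 = \Sp_k$) otherwise, matching Definition~\ref{def-cgm}(3). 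If $\uple{\chi}$ is not self-dual, or if $k$ is odd, then $G^0 = \SL_k$ by elimination.

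For the final \CGM\ clause, the key point is that $\det \HYPK_{k,\psi}(\uple{\chi})$ is determined by local data: the tame contribution from the $I(0)$-decomposition in Proposition~\ref{pr-kl}(4) gives $\mcL_{\Lambda}$, while the wild part at $\infty$ contributes a factor geometrically trivial up to a fixed twist depending on $\psi$. Thus $\Lambda = 1$ forces $\det \HYPK_{k,\psi}(\uple{\chi})$ to be geometrically trivial, so $G \subseteq \SL_k$. Property~\CGM\ excludes the symmetric case, leaving $G^0 \in \{\SL_k, \Sp_k\}$; moreover the requirement that the dualizing character $\xi$ be trivial in the alternating case is exactly what ensures the symplectic pairing lives in $\Sp_k$ (not merely in a conformal symplectic group). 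In both subcases $G^0$ fills the ambient classical group that contains $G$, so $G = G^0$. The main obstacle in this plan is that essentially all of the Lie-irreducibility and classical-dichotomy content is imported wholesale from~\cite[Chapter 8]{ESDE}; my intention is to quote that material cleanly rather than reprove the Goursat--Kolchin--Ribet reduction, and to contribute only the short bookkeeping that matches Katz's intrinsic invariants (dualizing character, Frobenius--Schur sign) with the combinatorial Definitions~\ref{def-cgm} of \CGM\ and \CGMT.
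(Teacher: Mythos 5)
Your proposal is correct and takes essentially the same approach as the paper: import Katz's classification for $G^{0}$ and then upgrade to $G=G^{0}$ using the determinant character and the self-duality, both controlled by Property~\CGM. The paper simply cites the precise inputs from~\cite{ESDE} --- namely Th.~8.11.3 and Cor.~8.11.2.1 for the $G^{0}$ classification, Lemma~8.11.6 for the identification of $\det\HYPK_{k,\psi}(\uple{\chi})$ with $\mcL_{\Lambda}$, and Th.~8.8.1 for genuine (not just up-to-twist) self-duality in the symplectic case --- rather than gesturing at the Frobenius--Schur/wild-part analysis; you should replace your heuristic remarks about the determinant of the wild part and the Frobenius--Schur sign with these exact citations.
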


\begin{proof}
  The claims about $G^0$ are proved by Katz in~\cite[Th. 8.11.3 and
  Corollary 8.11.2.1]{ESDE}.

  To evaluate $G$, note that when $G^0= \SL_k$, $G$ is contained in
  $\GL_k$. To show $G=G^0$, it suffices to show the determinant is
  trivial. But the determinant character is $\mathcal L_\Lambda$
  by~\cite[Lemma 8.11.6]{ESDE}, and we have assumed $\Lambda$ trivial.

  If $G^0 \neq \SL_k$ then $k$ is even and $\uple{\chi}$ is
  self-dual. Let $\xi$ be the dualizing character
  (Definition~\ref{def-cgm}). Under the assumptions $\Lambda=1$ and
  $\xi=1$, we always have $\Lambda =\xi^{k/2}$, so the self-duality is
  alternating. Thus $G^0=\Sp_k$, hence $G$ is contained in $\GSp_k$,
  and it suffices to show that the similitude character is trivial,
  i.e., that $\HYPK_{k,\psi}(\uple{\chi})$ is actually self-dual and
  not just self-dual up to a twist. This follows from~\cite[Theorem
  8.8.1]{ESDE}. Reviewing Definition~\ref{def-cgm}, we obtain the
  desired statements.
\end{proof}

The need to sometimes increase the base field is justified by the
following lemma that will allow us to work with tuples satisfying the
weaker \CGM\ Property.

\begin{lemma}\label{how-to-twist}
  Assume that $\uple{\chi}$ has \CGMT. Then there exists an
  $\ell$-adic character $\chi_0$, possibly over a finite extension
  $\Ff_{q^{\nu}}$ of $\Fq$, such that the tuple $\chi_0\uple{\chi}$
  has \CGM\ over $\Ff_{q^{\nu}}$.
\end{lemma}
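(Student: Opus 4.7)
The plan is to compare the definitions of \CGM{} and \CGMT{} directly and exhibit the required $\chi_{0}$ by a case-by-case construction after tracking how each relevant attribute behaves under twisting. The key observation is that \CGM{} differs from \CGMT{} in two ways: \CGM{} additionally requires $\Lambda:=\chi_{1}\cdots\chi_{k}=1$, and in the even self-dual case it requires the dualizing character $\xi$ to be trivial (which forces the alternating type). So the task is to kill $\Lambda$ (and, when relevant, $\xi$) by a twist $\chi_{i}\mapsto\chi_{0}\chi_{i}$, possibly after enlarging the base field so that the needed root exists.

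First I would check that twisting by $\chi_{0}$ preserves the ``structural'' hypotheses: being not Kummer-induced is preserved (twisting is a bijection on tuples that commutes with the Kummer-induction condition), self-duality is preserved (if $\chi\mapsto\xi\chi^{-1}$ stabilizes $\uple{\chi}$, then $\chi\mapsto\xi\chi_{0}^{2}\chi^{-1}$ stabilizes $\chi_{0}\uple{\chi}$), and in the self-dual case the new dualizing character is $\xi'=\xi\chi_{0}^{2}$ while the new product is $\Lambda'=\chi_{0}^{k}\Lambda$. In particular the alternating/symmetric dichotomy $\Lambda=\xi^{k/2}$ versus $\Lambda\neq\xi^{k/2}$ is preserved, because $(\xi')^{k/2}=\xi^{k/2}\chi_{0}^{k}$ and $\Lambda'=\Lambda\chi_{0}^{k}$ differ in the same way as $\xi^{k/2}$ and $\Lambda$.

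Next I would split into cases. If $k$ is odd or if $k$ is even and $\uple{\chi}$ is not self-dual, it is enough to arrange $\Lambda'=1$, so I pick $\chi_{0}$ with $\chi_{0}^{k}=\Lambda^{-1}$. If $k$ is even and $\uple{\chi}$ is self-dual, Property \CGMT{} rules out the symmetric case, so $\uple{\chi}$ is alternating, i.e.\ $\Lambda=\xi^{k/2}$. I then pick $\chi_{0}$ satisfying $\chi_{0}^{2}=\xi^{-1}$; this forces $\xi'=1$, and one computes
\[
\Lambda'=\chi_{0}^{k}\Lambda=(\chi_{0}^{2})^{k/2}\Lambda=\xi^{-k/2}\Lambda=1,
\]
so the twisted tuple is alternating self-dual with trivial dualizing character and trivial product, and is still not Kummer-induced. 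In each case the twisted tuple satisfies \CGM{}.

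The only remaining point is that the required $\chi_{0}$ exists as an $\ell$-adic multiplicative character over some finite extension $\Ff_{q^{\nu}}$. The character group of $\Ff_{q^{\nu}}^{\times}$ is cyclic of order $q^{\nu}-1$, and the lift via the norm map of a character of order $e\mid q-1$ remains of order $e$. Thus a $k$-th (respectively square) root of a fixed character of $\Fq^{\times}$ exists in the character group of $\Ff_{q^{\nu}}^{\times}$ provided $k(q-1)\mid q^{\nu}-1$ (respectively $2(q-1)\mid q^{\nu}-1$), and such a $\nu$ exists because $\gcd(k,q)=1$ (and $q$ is odd in the cases requiring a square root). I expect the whole proof to be essentially a verification; the only conceptual step is the observation that alternating self-dual with $\Lambda=\xi^{k/2}$ automatically gives $\Lambda'=1$ once we set $\xi'=1$, which is what allows the single twist to fix both defects simultaneously.
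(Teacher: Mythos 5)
Your proof is correct and takes the same approach as the paper: twist by the inverse of a square root of the dualizing character in the even self-dual (alternating) case, and by the inverse of a $k$-th root of $\Lambda$ otherwise. You supply more detailed verification than the paper's terse proof (preservation of non-Kummer-induction, of non-self-duality, of the alternating/symmetric dichotomy, the key identity $\Lambda'=\xi^{-k/2}\Lambda=1$ in the alternating case, and the existence of $\chi_0$ over a finite extension), but the construction and case-split are identical.
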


\begin{proof} 
  If $k$ is even and $\uple{\chi}$ is self-dual alternating, take
  $\chi_0$ to be the inverse of a square root of the duality
  character. Otherwise, take $\chi_0$ to be the inverse of a $k$-th
  root of $\Lambda$.
\end{proof}




For convenience, we will most often simply denote
$\HYPK_k=\HYPK_{k,\psi}(\uple{\chi})$ since we assume that $\psi$ and
$\uple{\chi}$ are fixed.
\par
The next lemma computes precisely the local monodromy of
$\HYPK_{k,\psi}(\uple{\chi})$ at $\infty$.

\begin{lemma}\label{lm-kl-infty}
  Assume $p>k\geq 2$. Denote by $\widetilde{\psi}$ the additive character
  $x\mapsto \psi(kx)$ of $\Fq$. Then, as representations of the
  inertia group $I(\infty)$ at $\infty$, there exists an isomorphism
$$
\HYPK_{k,\psi}(\uple{\chi})\simeq [x\mapsto
x^k]_*(\mcL_{\chi_{(2)}^{k+1}} \otimes \mcL_\Lambda\otimes\mcL_{\widetilde{\psi}}),
$$
where $\chi_{(2)}$ is the unique non-trivial character of order $2$ of
$\Fqt$.
\end{lemma}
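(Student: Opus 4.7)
The statement is essentially a special case of Katz's detailed analysis of the local structure of hypergeometric sheaves at infinity, carried out in~\cite{ESDE}, Chapter~8 (see especially Lemma~8.6.3 and its corollaries, where the $I(\infty)$-representation of a hypergeometric sheaf of type $(k,0)$ is computed). The plan is therefore to reduce to that analysis, using the explicit identification
$$
\HYPK_{k,\psi}(\uple{\chi})=
j_!\,\mcH_1(!,\psi;\uple{\chi},\emptyset)\Bigl(\tfrac{k-1}{2}\Bigr)
$$
given in the proof of Proposition~\ref{pr-kl}, and then to identify each of the three tensor factors on the right-hand side of the claimed isomorphism.

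First, from Proposition~\ref{pr-kl}(5) we know that at $\infty$ the representation has Swan conductor $1$ and a unique break equal to $1/k$. A standard consequence of the Hasse--Arf theorem (together with $p>k$, which makes the $k$-th power map tame and Galois over the punctured formal disc at $\infty$) is that any such representation of $I(\infty)$ of rank $k$ is of the form $[x\mapsto x^k]_*\mcM$ for some rank $1$ representation $\mcM$ of $I(\infty)$ having break $1$ on the source disc; see~\cite[8.6.3]{ESDE}. Thus we only need to identify $\mcM$ as
$$
\mcM\simeq \mcL_{\chi_{(2)}^{k+1}}\otimes\mcL_\Lambda\otimes\mcL_{\widetilde{\psi}},
$$
where $\widetilde{\psi}(x)=\psi(kx)$.

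Second, to pin down $\mcM$ I would carry out the stationary phase analysis at infinity, which in this context is again part of Katz's formalism in~\cite[\S 8.5--8.6]{ESDE}. The identification factors into three pieces. The wildly ramified Artin--Schreier factor $\mcL_{\widetilde{\psi}}$ is produced by the critical point of the phase $\sum y_i$ on the fibre $\prod y_i=x$; the critical point is at $y_1=\cdots=y_k=x^{1/k}$ with critical value $k\,x^{1/k}$, which is exactly the character $\widetilde{\psi}(x)=\psi(kx)$ pulled back via $x\mapsto x^k$. The Kummer factor $\mcL_\Lambda$ records the product $\chi_1(y_1)\cdots\chi_k(y_k)=\Lambda(x^{1/k})$ evaluated at the critical point. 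Finally, the factor $\mcL_{\chi_{(2)}^{k+1}}$ (a quadratic Kummer character, well-defined as an $I(\infty)$-representation because $p>2$) arises from the square-root of the Hessian determinant in stationary phase, together with the Jacobian of the $k$-th power cover; this is the standard ``Gauss sum'' contribution that always appears when one writes a wildly ramified rank $k$ object as a $[k]_*$-pushforward.

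The main obstacle will be pinning down the exact power of $\chi_{(2)}$ and checking that the normalisations (notably the Tate twist $(\tfrac{k-1}{2})$ in the definition of $\HYPK_k$, which is invisible to $I(\infty)$ up to an unramified character, and the sign conventions for $\psi$ and the middle extension) match so as to produce $\chi_{(2)}^{k+1}$ rather than some nearby power. I would verify this exponent by a small Gauss sum computation: compare, over a finite extension $\Ff_{q^d}$, the trace of the full $[k]_*$ expression against the known evaluation of $\Kl_k(x;\uple{\chi},\Ff_{q^d})$ via the stationary phase/Laplace method for exponential sums (equivalently, via the product formula for Gauss sums attached to hypergeometric sums, as in~\cite[Ch.~5]{GKM}). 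This Gauss sum check unambiguously fixes the quadratic twist as $\chi_{(2)}^{k+1}$ and completes the identification of $\mcM$, hence of the $I(\infty)$-structure of $\HYPK_{k,\psi}(\uple{\chi})$.
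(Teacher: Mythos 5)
The paper's proof is a single-line citation to an explicit result of L.\ Fu that describes the decomposition-group representation of $\HYPK_{k,\psi}(\uple{\chi})$ at $\infty$. Your proposal takes a genuinely different route: rather than cite, you re-derive the $I(\infty)$-structure from the break and Swan conductor data of Proposition~\ref{pr-kl}(5) together with Katz's framework in~\cite{ESDE} and a stationary phase analysis. That plan is sound. Since $\HYPK_k$ has all breaks $1/k$ and Swan conductor $1$, any nonzero $I(\infty)$-subrepresentation would also have all breaks $1/k$ (by functoriality of the break decomposition), hence Swan conductor $<1$, hence Swan $0$, hence tame, a contradiction; so the representation is $I(\infty)$-irreducible, and as $p\nmid k$ an irreducible $k$-dimensional representation with $k$ prime to $p$ is induced from the unique index-$k$ subgroup of $I(\infty)$. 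This gives $[k]_*\mcM$ with $\mcM$ of rank $1$ and break $1$, exactly as you say. Your identification of the three tensor factors of $\mcM$ (Artin--Schreier from the critical value $kx^{1/k}$, Kummer $\mcL_\Lambda$ from the characters at the critical point, and a quadratic Kummer twist from the Hessian) is also correct in spirit.

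The one substantive gap, which you yourself flag, is the precise exponent $k+1$ on $\chi_{(2)}$. As written, the plan defers this to a ``Gauss sum check,'' but that check is exactly the hard part and must be carried out to constitute a proof. A clean way to do it is via Mackey: compute $[k]^*\HYPK_k$ directly (its $I(\infty)$-structure is determined by the known local description on $\Gg_m$) and compare with $\bigoplus_{\zeta\in\mmu_k}[\times\zeta]^*\mcM$, which pins down $\mcM$ including its tame part. A sanity check for $k=2$, $\uple{\chi}=(1,1)$ (where the answer is the classical $[2]_*(\mcL_{\chi_{(2)}}\otimes\mcL_{\psi_2})$) confirms the exponent, and for odd $k$ it correctly predicts the quadratic twist vanishes. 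So: correct as a plan, and a legitimately different and more self-contained route than the paper's citation, but not yet a complete proof until the Hessian/Gauss-sum bookkeeping is written out.
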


\begin{proof}
  This follows from a more precise result of
  L. Fu~\cite[Prop. 0.8]{Fu} (who describes the local representations
  of the decomposition group).
\end{proof}


\section{Sheaves and statement of the target theorem}

As in the previous section, we fix a prime number $p$, a prime number
$\ell\not=p$, and we consider a finite field $\Fq$ of characteristic
$p$ with $q$ elements and a non-trivial $\ell$-adic additive character
$\psi$ of $\Fq$. We assume that $p>2k+1$.

Let $\uple{\chi}$ be a $k$-tuple of $\ell$-adic characters of
$\Fqt$. We define
$$
\mcF=\HYPK_{k,\psi}(\uple{\chi}),
$$
a constructible $\ell$-adic sheaf on $\Aa^1_{\Fq}$. \emph{In this
  section we impose no further conditions on $\uple{\chi}$}.

Fix $l\geq 2$. For $1\leq i\leq 2l$, let $f_i=s(r+b_i)$ on
$\Aa^{2+2l}$ with coordinates $(r,s,\uple{b})$.

We now define the ``sum-product'' sheaf
$$
\mcK(\uple{\chi})=\bigotimes_{1\leq i\leq l} f_i^*\mcF\otimes
f_{i+l}^*\mcF^{\vee}
$$
on $\Aa^{2+2l}_{\Fq}$.

Let $V/\Zz$ be the open subset of $\Aa^{2+2l}_{\Zz}$ where
$s(r+b_i) \neq 0$ for all $i$, so that $\mcK(\uple{\chi})$ is lisse on
$V_{\Fq}$ for all $q$. Let $\pi\colon \Aa^{2+2l}\to \Aa^{1+2l}$ be the
projection $(r,s,\uple{b})\mapsto (r,\uple{b})$ (defined over $\Zz$).
We define
$$
\mcR(\uple{\chi})=R^1\pi_!\mcK(\uple{\chi}),
$$ 
a constructible $\ell$-adic sheaf on $\Aa^{1+2l}_{\Fq}$. 

We will most often drop the dependency on $\uple{\chi}$ in these
notation and write $\mcK=\mcK(\uple{\chi})$ and
$\mcR=\mcR(\uple{\chi})$.

We define the diagonal variety $\mcV^{\Delta}$ by the condition
$$
\mcV^{\Delta}=\{\uple{b}\in\Aa^{2l}\,\mid\, \text{ for all $i$, there
  exists $j\not=i$ such that $b_i=b_j$}\}.
$$
Note that $\mcV^{\Delta}$ does not depend on the tuple of characters
considered.

\begin{lemma}\label{lm-top-vanishing}
  Outside $\mcV^{\Delta}$, we have $R^0\pi_!\mcK=R^2\pi_!\mcK=0$.
\end{lemma}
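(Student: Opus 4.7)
For $R^0\pi_!\mcK=0$ (which I claim holds everywhere, a fortiori outside $\mcV^{\Delta}$): proper base change identifies the stalk at $(r,\uple{b})$ with $H^0_c(\Aa^1_s,\mcK|_{\text{fiber}})$. Proposition~\ref{pr-kl}(6) asserts that $\mcF$ has zero stalk at the origin, so each factor $f_i^*\mcF$ and $f_i^*\mcF^\vee$ of $\mcK$ vanishes wherever $s(r+b_i)=0$. Hence $\mcK|_{\text{fiber}}$ has zero stalk at $s=0$ (and vanishes identically on fibers where some $r+b_i=0$), so it is the extension by zero of a lisse sheaf on an open subset of $\Gm$. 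The standard vanishing of $H^0_c$ for a lisse sheaf on a smooth non-proper curve then finishes this step.

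For $R^2\pi_!\mcK=0$ outside $\mcV^{\Delta}$: again by proper base change, together with Poincar\'e duality for lisse sheaves on $\Gm$, the claim reduces to the vanishing of $\Hom_{\pi_1^{\mathrm{geom}}(\Gm)}(\mcK|_{\text{fiber}},\bQl)$, and using $\mcF^\vee\otimes A=\Hom(\mcF,A)$ this rewrites as
\[
\Hom_{\pi_1^{\mathrm{geom}}(\Gm)}\Bigl(\bigotimes_{i=1}^{l}[\times c_i]^*\mcF,\;\bigotimes_{j=l+1}^{2l}[\times c_j]^*\mcF\Bigr)=0,
\]
with $c_i=r+b_i$, under the hypothesis that some $b_{i_0}$ is distinct from all the other $b_j$.

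My plan is to pull out, via adjunction, the factor $[\times c_{i_0}]^*\mcF$ from the left side, reducing the problem to showing that the geometrically irreducible sheaf $[\times c_{i_0}]^*\mcF$ (irreducibility from Proposition~\ref{pr-kl}(3)) is not a subquotient of a tensor product of pulled-back $\mcF$'s and $\mcF^\vee$'s all indexed by scalings distinct from $c_{i_0}$. This exclusion I would derive by combining Proposition~\ref{pr-kl}(7) (which rules out any isomorphism $[\times c]^*\mcF\simeq\mcF\otimes\mcL$ for $c\neq 1$ and any rank-one $\mcL$) with the local structure at infinity from Lemma~\ref{lm-kl-infty}: by Mackey and the projection formula, $\mcK|_{I(\infty)}$ decomposes as the pushforward under $[y\mapsto y^k]$ of a direct sum of rank-one sheaves whose Kummer parts contributed by $\mcF$ and $\mcF^\vee$ pair up and cancel exactly, leaving only Artin--Schreier factors $\mcL_{\psi_{k\beta_\zeta}}$ to track.

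The principal obstacle is that naive local-at-$\infty$ wildness alone does not suffice: for special algebraic relations among the $k$-th roots $c_j^{1/k}$, trivial Artin--Schreier summands at $I(\infty)$ can appear (that is, some $\beta_\zeta$ can vanish) even when $\uple{b}\notin\mcV^{\Delta}$. The argument therefore marries the local input at infinity with the global rigidity afforded by Proposition~\ref{pr-kl}(7)---an isolated $[\times c_{i_0}]^*\mcF$ cannot be realized as a genuine irreducible constituent coming from the other scalings, because the required identification of its $I(\infty)$-block with a Mackey piece would force an isomorphism violating the non-self-pullback property. Handled inductively on the number of tensor factors, this local-global combination is the technical heart of the argument.
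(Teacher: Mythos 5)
Your argument for $R^0\pi_!\mcK=0$ is correct and matches the standard reasoning implicit in the paper's citation. For $R^2\pi_!\mcK$ there is a genuine gap at precisely the place you yourself flag as "the technical heart." You reduce, via adjunction and Mackey at $I(\infty)$, to showing that $[\times c_{i_0}]^*\mcF^\vee$ is not a summand of $\mcM=\bigotimes_{j\neq i_0}[\times c_j]^*\mcF^{\pm 1}$. The $I(\infty)$-comparison only rules this out when no Mackey character $\mcL_{\widetilde\psi}(\beta_\zeta\cdot)$ matches $\mcL_{\widetilde\psi}(-c_{i_0}^{1/k}\cdot)$, i.e.\ when $\sum_i\pm\zeta_i(r+b_i)^{1/k}\neq 0$ for all $\zeta\in\mmu_k^{2l}$; but that is precisely $(r,\uple{b})\notin\widetilde Z$, and $\widetilde Z$ is a nontrivial hypersurface in $\Aa^{1+2l}$ lying outside $\mcV^{\Delta}$ (indeed the whole stratification machinery of the paper is built around $Z$). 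So for $(r,\uple{b})\in\widetilde Z$ with $\uple{b}\notin\mcV^{\Delta}$ your local argument says nothing, and you still need to exclude the global summand.

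Your proposed rescue — that the local identification "would force an isomorphism violating" Proposition~\ref{pr-kl}(7) — is not justified and does not obviously follow. The matching condition at $I(\infty)$ equates $c_{i_0}^{1/k}$ with a \emph{sum} $\pm\sum_{j\neq i_0}\zeta_jc_j^{1/k}$, not with a single $\zeta c_j^{1/k}$; so it does not produce a relation of the form $[\times(c_{i_0}/c_j)]^*\mcF\simeq\mcF\otimes\sheaf{L}$, which is the only thing Proposition~\ref{pr-kl}(7) can falsify. Excluding an irreducible sheaf from the constituents of a genuine multi-factor tensor product is a different and strictly harder statement: it is essentially a big-monodromy/Goursat--Kolchin--Ribet fact when the monodromy is large, and requires separate care otherwise — and note that Lemma~\ref{lm-top-vanishing} is stated with \emph{no} hypothesis on $\uple{\chi}$, so big monodromy (Theorem~\ref{thm-geometric-monodromy}) is not available here. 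This is exactly why the paper defers to the substantive result \cite[Theorem 1.5]{FKMSP} rather than giving a local argument; your sketch is closer in spirit to how that theorem is deployed elsewhere (e.g.\ via GKR in Lemma~\ref{lm-inj2}), but as written it does not close the case $(r,\uple{b})\in\widetilde Z$.
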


\begin{proof}
  This is very similar to~\cite[Lemma 4.1 (2)]{KMS}. By the proper
  base change theorem, the stalk of $R^i\pi_!\mcK$ at
  $x=(r,\bfb)\in \Aa^{1+2l}$ is
$$
H^i_c(\Aa^1_{\bFq},\bigotimes_{i=1}^{l}[\times (r+b_i)]^*\mcF\otimes
[\times (r+b_{i+2})]^*\mcF^{\vee}),
$$
where $s$ is the coordinate on $\Aa^1$. This cohomology group vanishes
for $i=0$ and any $x$, and it vanishes for $i=2$ and
$x\notin \mcV^{\Delta}$ by~\cite[Theorem 1.5]{FKMSP}.
\end{proof}

We now compute the local monodromy at infinity of the sheaf $\mcK$.
For any additive character $\psi$, we denote by $\widetilde{\psi}$ the
character $x\mapsto \psi(kx)$.

\begin{lemma}\label{lm-tilde-psi}
\emph{(1)} Let $r\in\Fq$ and $\bfb\in\Ff_q^{2l}$ be such that
$r+b_i\not=0$ for all $i$.  Let $(r+b_i)^{1/k}$ be a fixed $k$-th root
of $r+b_i$ in $\bFq$. Define signs $\eps_i=1$ for $1\leq i\leq l$ and
$\eps_i=-1$ for $l+1\leq i\leq 2l$.
\par
The local monodromy at $s=\infty$ of $ \mcK_{r,\bfb}$ is isomorphic to
the local monodromy at $s=\infty$ of the sheaf
$$
 \bigoplus_{(\zeta_2,\ldots,\zeta_{2l}) \in
    \mmu_k^{2l-1}} \mcL_{\widetilde{\psi}}\Bigl( \Bigl( 
    (r+b_1)^{1/k}
    +\sum_{i=2}^{2l}\eps_i \zeta_i (r+b_i)^{1/k}
\Bigr) s^{1/k}\Bigr).
$$
where $\mmu_k$ is the group of $k$-th roots of unity in $\bFq$.
\par
\emph{(2)} Let $K$ be a field of characteristic $p\nmid k$, and let
$r\in K$ and $\uple{b}\in K^{2l}$ be such that $r+b_i\not=0$ for all
$i$.  Assume that $K$ contains all $k$-th roots $(1+b_i/r)^{1/k}$ of
$1+b_i/r$ for all $i$. Let $\psi$ be a non-trivial $\ell$-adic
additive character and let $\uple{\chi}$ be a $k$-tuple of
multiplicative characters of a finite subfield of $K$. The local
monodromy at $t=\infty$ of the lisse sheaf
$$
\widetilde{\mcK}=\bigotimes_{1\leq i\leq l}
\HYPK_{k,\psi}(\uple{\chi})(t(1+b_i/r))\otimes
\HYPK_{k,\psi}(\uple{\chi})(t(1+b_{i+l}/r))^{\vee}
$$
on $\Gg_{m,K}$ is isomorphic to the local monodromy at $t=\infty$ of
the sheaf
$$
\bigoplus_{(\zeta_2,\ldots,\zeta_{2l}) \in \mmu_k^{2l-1}}
\mcL_{\widetilde{\psi}}\Bigl( \Bigl( (t(1+b_1/r))^{1/k}
+\sum_{i=2}^{2l}\eps_i \zeta_i(t(1+b_i/r))^{1/k} \Bigr)\Bigr).
$$
\end{lemma}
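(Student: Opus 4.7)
The plan is to deduce both parts from Lemma~\ref{lm-kl-infty}, which presents the local monodromy of $\HYPK_{k,\psi}(\uple{\chi})$ at $\infty$ in the form $[x\mapsto x^k]_*\mcG$ with $\mcG = \mcL_{\chi_{(2)}^{k+1}}\otimes \mcL_\Lambda\otimes \mcL_{\widetilde{\psi}}$. The two principal tools will be: (i) proper/étale base change applied to the cartesian square relating the map $[x\mapsto x^k]$ with the scaling $[\times \alpha]$, and (ii) the projection formula plus the standard identity $[x\mapsto x^k]^*[x\mapsto x^k]_*\mcH \simeq \bigoplus_{\zeta\in\mmu_k}[\times \zeta]^*\mcH$, which is valid since $p\nmid k$.

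For Part (1), I first observe that $\mcK_{r,\bfb}$, viewed as a sheaf in the variable $s$, equals $\bigotimes_{i=1}^l [\times(r+b_i)]^*\mcF \otimes \bigotimes_{i=l+1}^{2l} [\times(r+b_i)]^*\mcF^{\vee}$. By base change applied to the cartesian square associated with $[x\mapsto x^k]$ and $[\times(r+b_i)]$, we have
$$
[\times(r+b_i)]^*\mcF \simeq [x\mapsto x^k]_*\mcG_i
$$
as representations of $I(\infty)$, where $\mcG_i = \mcL_{\chi_{(2)}^{k+1}}\otimes \mcL_\Lambda\otimes \mcL_{\widetilde{\psi}((r+b_i)^{1/k}x)}$; here we used that the Kummer sheaves are invariant up to isomorphism under scaling, while $[\times(r+b_i)^{1/k}]^*\mcL_{\widetilde{\psi}}$ genuinely introduces the factor $(r+b_i)^{1/k}$. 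Different choices of the $k$-th root just permute the $\mmu_k$-translates, so the final answer will be independent of them.

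Next I consolidate the $2l$ pushforwards by iterating the projection formula together with $[x\mapsto x^k]^*[x\mapsto x^k]_*\mcH \simeq \bigoplus_{\zeta\in\mmu_k}[\times \zeta]^*\mcH$. This yields
$$
\bigotimes_{i=1}^{2l}[x\mapsto x^k]_*\mcG_i^{\epsilon_i}\simeq \bigoplus_{(\zeta_2,\dots,\zeta_{2l})\in\mmu_k^{2l-1}}[x\mapsto x^k]_*\Bigl(\mcG_1\otimes\bigotimes_{i=2}^{2l}[\times \zeta_i]^*\mcG_i^{\epsilon_i}\Bigr),
$$
with the convention $\zeta_1=1$ and $\mcG_i^{\epsilon_i}=\mcG_i$ or $\mcG_i^{\vee}$ according to $\epsilon_i=\pm1$. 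Inside the inner tensor product, the Kummer factor $\mcL_{\chi_{(2)}^{k+1}}\otimes\mcL_{\Lambda}$ appears $l$ times and its dual $l$ times, so these cancel. The additive parts combine: $[\times\zeta_i]^*\mcL_{\widetilde{\psi}(\alpha_i x)} = \mcL_{\widetilde{\psi}(\alpha_i\zeta_i x)}$ and duals negate the character, giving a single Artin--Schreier sheaf $\mcL_{\widetilde{\psi}(x\sum_{i}\epsilon_i\zeta_i(r+b_i)^{1/k})}$. Applying the outer $[x\mapsto x^k]_*$ and recalling that $\mcL_{\widetilde{\psi}}(c s^{1/k})$ is by definition the pushforward of $\mcL_{\widetilde{\psi}(ct)}$ along the $k$-fold cover $t^k=s$, we obtain exactly the sheaf asserted in Part (1).

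Part (2) is essentially identical, with $\alpha_i=1+b_i/r$ in place of $r+b_i$; the hypothesis that $K$ contains all $(1+b_i/r)^{1/k}$ is precisely what is needed to perform the base change over $K$ rather than over an algebraic closure. The main technical obstacle is merely bookkeeping: tracking the choices of $k$-th roots and verifying that the Kummer parts cancel in pairs while the additive parts accumulate with the correct signs. Once these formal identities are carefully set up, the statements follow immediately from Lemma~\ref{lm-kl-infty} without further geometric input.
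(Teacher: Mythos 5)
Your proposal is correct and follows essentially the same route as the paper, which simply points to the argument of [KMS, Lemma 4.16(1)] (with the extra factor $\mcL_\Lambda$ that cancels): namely, start from the $I(\infty)$-description of Lemma~\ref{lm-kl-infty}, use base change on the cartesian square for $[x\mapsto x^k]$ and $[\times\alpha]$ to absorb the scalings into Artin--Schreier twists, then combine the $2l$ pushforwards via the projection formula and $[x\mapsto x^k]^*[x\mapsto x^k]_*\mcH\simeq\bigoplus_{\zeta\in\mmu_k}[\times\zeta]^*\mcH$, observing that the Kummer factors $\mcL_{\chi_{(2)}^{k+1}}\otimes\mcL_\Lambda$ occur with balanced signs and cancel. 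You have supplied the details that the paper delegates to the cited reference; the argument is sound.
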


\begin{proof}
  Since Lemma~\ref{lm-kl-infty} has the same form as~\cite[Lemma
  4.9]{KMS}, up to the additional factor $\mcL_{\Lambda}$, the first
  assertion may be proved exactly like~\cite[Lemma 4.16 (1)]{KMS}
  (with $\lambda=0$ there), replacing throughout the tensor product
$$
\bigotimes_{i=1}^2 [\times (r+b_i)]^*\HYPK_{k}
\otimes [\times (r+b_{i+2})]^*\HYPK_{k}^{\vee}
$$
by
$$
\bigotimes_{i=1}^{l} [\times (r+b_i)]^*\HYPK_{k,\psi}(\uple{\chi})
\otimes [\times (r+b_{i+2})]^*\HYPK_{k,\psi}(\uple{\chi})^{\vee}
$$
(note that the factors involving $\Lambda$ cancel-out at the end).
The second statement is proved in the same manner.
\end{proof}

Let $\widetilde{Z}\subset \Aa^{1+2l}_{\Zz}$ be the image of 
\begin{equation}\label{eq-new-z}
  \widetilde{\mathcal{Z}}=\Bigl\{(r,\uple{b},\uple{x})\in \Aa^{1+4l}\,\mid\,
  x_i^k=r+b_i
  \text{ for
  } 1\leq i\leq 2k,\quad \sum_{i=1}^lx_i=\sum_{i=l+1}^{2l}x_i \Bigr\}\subset
  \Aa^{1+4l}_{\Zz}
\end{equation}
under the projection onto $(r,\uple{b})$.  Let 
$$
Z=\widetilde{Z}\cup\bigcup_{1\leq i\leq 2l}\{ r=-b_i\}.
$$
Let $U$ be the complement of $Z$. We emphasize that $\widetilde{Z}$,
$Z$ and $U$ are defined over $\Zz$, and independent of $\uple{\chi}$.

\begin{lemma}\label{lm-lisse}
  The subscheme $\widetilde{Z}$ of $\Aa^{2l+1}$ is closed and
  irreducible, and $\mcR$ is lisse on $U_{\Fq}$.
\end{lemma}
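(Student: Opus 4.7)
I will first show that $\widetilde{Z}$ is closed and irreducible, using a reparametrization of $\widetilde{\mathcal{Z}}$. The morphism
$$
\widetilde{\mathcal{Z}} \lra \Aa^{1+2l}_{(r,\uple{x})},\qquad (r,\uple{b},\uple{x}) \mapsto (r,\uple{x}),
$$
has inverse $(r,\uple{x}) \mapsto (r,(x_i^k-r)_i,\uple{x})$, and thus identifies $\widetilde{\mathcal{Z}}$ with the hyperplane $\{(r,\uple{x})\,:\,\sum_{i=1}^l x_i = \sum_{i=l+1}^{2l} x_i\}$ in $\Aa^{1+2l}$; in particular $\widetilde{\mathcal{Z}}$ is irreducible of dimension $2l$. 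The projection to $(r,\uple{b})$-space factors through the finite cover $\Pi = \{x_i^k = r+b_i\} \to \Aa^{1+2l}_{(r,\uple{b})}$; since finite morphisms are closed, the image $\widetilde{Z}$ of the closed subscheme $\widetilde{\mathcal{Z}} \subset \Pi$ is closed in $\Aa^{1+2l}$, and it is irreducible as the image of an irreducible space.

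For the lisseness of $\mcR = R^1\pi_!\mcK$ on $U_{\Fq}$, the plan is to invoke Deligne's semicontinuity theorem for the family of fibre sheaves of $\mcK$ along $\pi$. I will verify that on $U_{\Fq}$, the set of singular points, the rank, and the local Swan conductors of these fibres are all locally constant. Fix $(r_0,\uple{b_0}) \in U_{\Fq}$. Because $r_0 + b_{0,i} \neq 0$ for every $i$, the restriction $\mcK_{(r_0,\uple{b_0})}$ is lisse on $\Gm_s$ of rank $k^{2l}$, with singular set $\{0,\infty\}$. At $s=0$ the sheaf $\mcF = \HYPK_{k,\psi}(\uple{\chi})$ is tame (Proposition~\ref{pr-kl}(4)), so the fibre sheaf is tame there with Swan conductor $0$. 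At $s=\infty$, Lemma~\ref{lm-tilde-psi}(1) describes the local monodromy as
$$
\bigoplus_{(\zeta_2,\ldots,\zeta_{2l}) \in \mmu_k^{2l-1}} \mcL_{\widetilde{\psi}}\bigl(C_\zeta s^{1/k}\bigr),\qquad C_\zeta = (r_0+b_{0,1})^{1/k} + \sum_{i=2}^{2l} \eps_i \zeta_i (r_0+b_{0,i})^{1/k}.
$$

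The crucial point is that $C_\zeta = 0$ for some $\zeta$ if and only if $(r_0,\uple{b_0}) \in \widetilde{Z}$: setting $x_1 = (r_0+b_{0,1})^{1/k}$ and $x_i = \zeta_i (r_0+b_{0,i})^{1/k}$ for $i \geq 2$ gives $k$-th roots $x_i$ of $r_0 + b_{0,i}$ for which $C_\zeta = \sum_{i=1}^l x_i - \sum_{i=l+1}^{2l} x_i$, and conversely every tuple of such roots arises in this way up to global rescaling by a $k$-th root of unity (which does not affect whether $C_\zeta$ vanishes). Since $(r_0,\uple{b_0}) \notin \widetilde{Z}$, every $C_\zeta$ is nonzero, so each summand is a nontrivial wildly ramified rank-$k$ sheaf at $\infty$ whose Swan conductor depends only on $k$. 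The total Swan conductor at $\infty$ is therefore locally constant on $U_{\Fq}$, and Deligne's semicontinuity theorem then gives lisseness of every $R^i\pi_!\mcK$ on $U_{\Fq}$; in particular $\mcR$ is lisse on $U_{\Fq}$.

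The main technical obstacle will be the careful matching between the parametrization $\zeta \in \mmu_k^{2l-1}$ of Lemma~\ref{lm-tilde-psi} (with its implicit normalization $\zeta_1 = 1$, $\eps_1 = +1$) and the full space $\mmu_k^{2l}$ of choices of $k$-th roots that defines membership in $\widetilde{Z}$, tracking the signs $\eps_i$ and verifying that the global $\mmu_k$-scaling is genuinely irrelevant. Once this bijection is set up cleanly, the rest reduces to a standard application of Deligne's semicontinuity theorem together with the Grothendieck--Ogg--Shafarevich formula for the constancy of Euler characteristics of the fibres.
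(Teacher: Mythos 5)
Your proof is correct and takes essentially the same route as the paper: the reparametrization $(r,\uple{b},\uple{x})\mapsto(r,\uple{x})$ identifying $\widetilde{\mathcal{Z}}$ with a hyperplane underlies both irreducibility arguments, and for lisseness both compactify the $s$-fiber to $\Pp^1$ and apply Deligne's semicontinuity theorem, using Lemma~\ref{lm-tilde-psi} to check constancy of the Swan conductor at $\infty$. Your explicit verification that $C_\zeta\neq 0$ for all $\zeta$ exactly when $(r_0,\uple{b_0})\notin\widetilde{Z}$ (modulo the harmless global $\mmu_k$-scaling) usefully unpacks what the paper compresses into ``by definition of $U$''.
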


\begin{proof}
  This is analogue to~\cite[Lemma 4.26, (1) and (2)]{KMS}, so we will
  be brief.\footnote{To avoid confusion, note that what is called $Z$
    in~\cite{KMS} is not the analogue of what is called $Z$ here.}
  The projection $(r,\uple{b},\uple{x}) \mapsto (r,\uple{b})$ from the
  subscheme
$$
\mathcal{Z}'=\Bigl\{(r,\uple{b},\uple{x})\in \Aa^{1+4l}\,\mid\,
x_i^k=r+b_i\text{ for } 1\leq i\leq 2k \Bigr\}
$$
to $\Aa^{1+2l}$ is finite, since the domain is defined by adjoining
the coordinates $(x_1,\ldots,x_{2l})$ to $\Aa^{1+2l}$, and each
satisfies a monic polynomial equation. Thus the closed subscheme
$\widetilde{\mathcal{Z}}$ defined by~(\ref{eq-new-z}) is also finite over
$\Aa^{1+2l}$, and its image $\widetilde{Z}$ is closed.  Moreover, the
subscheme~(\ref{eq-new-z}) is the divisor in $\mathcal{Z}'$ given by
the equation
$$
\sum_{i=1}^lx_i=\sum_{i=l+1}^{2l}x_i.
$$ 
In particular, this subscheme, and consequently its projection
$\widetilde{Z}$, is irreducible.

To prove that $\mcR$ is lisse on $U_{\Fq}$, we use Deligne's
semicontinuity theorem~\cite{LaumonSMF}.  The sheaf $\mcK$ is lisse on
the complement of the divisors given by the equations $r=-b_i$ and
$s=0$ in $\Aa^{2+2l}$.  We compactify the $s$-coordinate by $\Pp^1$
and work on
 $$
 X=(\Aa^1\times\Pp^1\times \Aa^{2l})\cap \{(r,s,\bfb)\,\mid\,
 (r,\bfb)\in U\}.
$$
\par
By extending by $0$, we view $\mcK$ as a sheaf on $X$ which is lisse
on the complement in $X$ of the divisors $s=0$ and $s=\infty$ (because
$U$ is contained in the complement of the divisors $r=-b_i$ and thus
$X$ is as well). Let
$$
\pi^{(2)} \,:\, X\lra U
$$
denote the projection $(r,s,\bfb)\mapsto (r,\bfb)$. Then $\pi^{(2)}$
is proper and smooth of relative dimension $1$ and
$\mcR|U=R^1\pi^{(2)}_*\mcK$.
\par
Since the restrictions of $\mcK$ to the divisors $s=\infty$ and $s=0$
are zero, this sheaf is the extension by zero from the complement of
those divisors to the whole space of a lisse sheaf. Deligne's
semicontinuity theorem \cite[Corollary 2.1.2]{LaumonSMF} implies that
the sheaf $\mcR$ is lisse on $U$ if the Swan conductor is constant on
each of these two divisors.  By Proposition~\ref{pr-kl}, the
generalized Kloosterman sheaf has tame ramification on $s=0$, hence
any tensor product of generalized Kloosterman sheaves (such as $\mcK$)
has tame ramification, hence Swan conductor $0$, on $s=0$.  On the
other hand, Lemma \ref{lm-tilde-psi} gives a formula for the local
monodromy representation of $\mcK$ at $s=\infty$ as a sum of
pushforward of representations from the tame covering $x\mapsto x^k$.
Since the Swan conductor is additive and since the Swan conductor is
invariant under pushforward by a tame covering (see,
e.g.,~\cite[1.13.2]{GKM}), it follows that
$$
  \swan_{\infty}(\mcK_{r,\bfb})=
  \sum_{\zeta_2,\ldots,\zeta_{2l}\in\mmu_k}
  \swan_{\infty}\Bigl(\mcL_\psi \Bigl( \Bigl( (r+b_1)^{1/k}
    +\sum_{i=2}^{2l}\eps_i \zeta_i (r+b_i)^{1/k} \Bigr)
    s^{1/k}\Bigr)\Bigr)=k^{2l-1}
$$
by definition of $U$, since the Swan conductor of $\mcL_{\psi(a t)}$
is $1$ for $a\not=0$.
\end{proof}

\begin{lemma}\label{z-polynomial} The subscheme $Z$ is a hypersurface
  in $\Aa^{1+2l}_{\Zz}$. It is defined by the vanishing of a
  polynomial $P$ in $\Zz[r,b_1,\dots,b_{2l}]$ such that, for any fixed
  $\uple{b}\not\in \mcV^{\Delta}$, the polynomial
  $P_{\uple{b}}=P(\cdot,\uple{b})$ of the variable $r$ is not zero.
\end{lemma}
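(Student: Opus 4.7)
The lemma has two parts: that $Z\subset \Aa^{1+2l}_\Zz$ is a hypersurface, hence defined by a single polynomial $P\in\Zz[r,b_1,\dots,b_{2l}]$, and that $P_{\uple b}$ is a non-zero polynomial in $r$ for every fixed $\uple b\notin\mcV^\Delta$. The plan is to establish the first by a direct dimension count and the second by a Puiseux-series / Vandermonde argument near $r=\infty$.

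For the hypersurface part, I would first observe that the auxiliary scheme $\mathcal Z'=\{x_i^k=r+b_i:1\leq i\leq 2l\}$ in $\Aa^{1+4l}_\Zz$ is isomorphic to $\Aa^{1+2l}_\Zz$ via the chart $(r,x_1,\dots,x_{2l})\mapsto (r,x_1^k-r,\dots,x_{2l}^k-r,x_1,\dots,x_{2l})$. Under this identification, $\widetilde{\mathcal Z}$ is the hyperplane $\sum_i\eps_i x_i=0$, hence irreducible of relative dimension $2l$; its image $\widetilde Z$ in $\Aa^{1+2l}_\Zz$ under the finite projection already used in Lemma~\ref{lm-lisse} is then also irreducible of relative dimension $2l$, i.e., a hypersurface. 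Each coordinate hyperplane $\{r=-b_i\}$ is obviously a hypersurface as well, so $Z$ is pure of codimension $1$; since $\Zz[r,b_1,\dots,b_{2l}]$ is a UFD, $Z=V(P)$ where $P=Q\cdot\prod_{i=1}^{2l}(r+b_i)$ and $Q$ is a defining polynomial of $\widetilde Z$.

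For the non-vanishing, fix $\uple b\notin \mcV^\Delta$ and choose $i_0$ with $b_{i_0}\neq b_j$ for every $j\neq i_0$. The linear factors $r+b_i$ are each non-zero in $r$, so it suffices to show $Q(r,\uple b)\not\equiv 0$; equivalently, that the fiber $\widetilde Z_{\uple b}$ is a proper closed subset of $\Aa^1_r$. Suppose for contradiction $\widetilde Z_{\uple b}=\Aa^1_r$. Then some irreducible component $C$ of $\widetilde{\mathcal Z}_{\uple b}$ dominates $\Aa^1_r$; embedding $K(C)$ into the Puiseux-series field at $r=\infty$, the coordinates on $C$ take the form $x_i=\zeta_i r^{1/k}(1+b_i/r)^{1/k}$ for some fixed $\zeta_i\in\mmu_k$, while the defining relation $\sum_i\eps_i x_i=0$ holds identically. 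Expanding $(1+b_i/r)^{1/k}=\sum_{n\geq 0}\binom{1/k}{n}b_i^{\,n}r^{-n}$ in characteristic zero (where $\binom{1/k}{n}\neq 0$ for every $n\geq 0$, since $k\geq 2$ and so $1/k-j\neq 0$ in $\Qq$) and equating coefficients to zero yields
\[
\sum_{i=1}^{2l}\eps_i\zeta_i\,b_i^{\,n}=0\qquad\text{for every }n\geq 0.
\]
Grouping by the distinct values $\beta_1,\dots,\beta_m$ of the $b_i$'s and setting $d_\kappa=\sum_{j:\,b_j=\beta_\kappa}\eps_j\zeta_j$, this reads $\sum_\kappa d_\kappa\beta_\kappa^{\,n}=0$ for every $n$, which by a Vandermonde argument on the distinct $\beta_\kappa$'s forces $d_\kappa=0$ for each $\kappa$; but at the value $\beta_{\kappa_0}=b_{i_0}$ we obtain the single-term sum $d_{\kappa_0}=\eps_{i_0}\zeta_{i_0}\neq 0$, a contradiction.

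The main delicate step I anticipate is the passage from the set-theoretic equality $\widetilde Z_{\uple b}=\Aa^1_r$ to an honest Puiseux-series identity on a single dominant component $C$; once an appropriate $C$ is chosen and its function field completed at $r=\infty$, the remainder is elementary Vandermonde linear algebra, and the presence of an isolated coordinate $b_{i_0}$ delivers the contradiction automatically.
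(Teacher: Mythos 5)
The hypersurface part is correct, and your explicit isomorphism $\mathcal Z'\simeq \Aa^{1+2l}_\Zz$ is a clean variant of the paper's dimension count (the paper simply observes that the first $2l$ equations eliminate the $b_i$ and that the last equation is nontrivial, then concludes $\widetilde{\mathcal{Z}}$ is pure of dimension $2l$; you also get irreducibility of $\widetilde Z$, which the paper defers to Lemma~\ref{lm-lisse}).

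The non-vanishing part takes a genuinely different route from the paper. The paper works locally at the \emph{finite} singularity $r=-b_{i_0}$: since $b_{i_0}$ is isolated, the functions $x_j$ for $j\neq i_0$ lie in the \'etale local ring at $r=-b_{i_0}$ while $x_{i_0}=(r+b_{i_0})^{1/k}$ does not, so $F=\sum_j\eps_jx_j$ cannot vanish on any component of $C$. You instead work at $r=\infty$, expand in Puiseux series, and extract the relations $\sum_i\eps_i\zeta_i b_i^n=0$ for all $n\geq 0$, concluding by Vandermonde. This is a valid alternative in characteristic zero, and the identification of the role of the isolated coordinate $b_{i_0}$ (giving a one-term $d_{\kappa_0}\neq 0$) is exactly the right endgame.

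However, there is a genuine gap: you explicitly restrict to characteristic zero, invoking $\binom{1/k}{n}\neq 0$ in $\Qq$ for all $n$. The lemma is applied over $\Fq$ (it underlies Lemma~\ref{strata-finite-etale} and Lemma~\ref{lm-rank-without-local}), so this restriction is not innocuous. In characteristic $p$ with $p\nmid k$, the Puiseux expansion still makes sense, but $\binom{1/k}{n}$ vanishes in $\Ff_p$ as soon as $n$ exceeds the representative $a$ of $k^{-1}$ in $\{0,\dots,p-1\}$ (the numerator $\prod_{j<n}(1/k-j)$ acquires the factor $1/k-a=0$). You therefore only obtain the relations $\sum_\kappa d_\kappa\beta_\kappa^n=0$ for $0\leq n\leq a$. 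This is still enough for the Vandermonde step whenever $a\geq 2l-1$; and indeed $a>p/k$ always, so the argument does go through once $p>k(2l-1)$. But you do not make this observation, and as written your proof does not establish the lemma over $\Fq$ at all. Moreover, even with the patch, your argument would only prove the lemma for $p$ large depending on $l$, while the paper's argument at $r=-b_{i_0}$ works uniformly for all $p\nmid k$, i.e.\ under the standing hypothesis $p>2k+1$ with no dependence on $l$. In the end both regimes suffice for the downstream applications, but the dependence should be stated if you use the Puiseux route.
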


\begin{proof} 
  First we check that $\widetilde{Z}$ is a hypersurface in
  $\Aa^{1+2l}_{\Zz}$.  It is the projection of the closed subscheme
  $$
  \widetilde{\mathcal{Z}}=\Bigl\{(r,\uple{b},\uple{x})\in
  \Aa^{1+4l}\,\mid\, x_i^k=r+b_i\text{ for } 1\leq i\leq 2l,\
  \sum_{i=1}^lx_i=\sum_{i=l+1}^{2l}x_i \Bigr\}\subset \Aa^{1+4l}.
$$
This closed subscheme is pure of dimension $2l$, since the first $2l$
equations let us eliminate the variables $b_i$ and the last equation
is nontrivial. The projection $\widetilde{\mathcal{Z}}\to \widetilde{Z}$ is
finite (as already observed in the proof of the previous lemma)
and hence $\widetilde{Z}$ is a closed subscheme of $\Aa^{2l+1}$ that is
pure of dimension $2l$, i.e. a hypersurface.  Since $Z$ is the union
of $\widetilde{Z}$ and the hyperplanes with equation $r+b_i=0$, it is also
a hypersurface.

Let $P\in\Zz[r,\uple{b}]$ be a polynomial whose vanishing set is
$\widetilde{Z}$.  Suppose $\uple{b}$ is such that $P_{\uple{b}}$ is the
zero polynomial in the variable $r$, i.e., such that the projection
$\widetilde{\mathcal{Z}}_{\uple{b}}\to \Aa^1$ given by
$(r,\uple{x})\mapsto r$ is surjective.

The scheme $C\subset \Aa^{1+2l}$ given by  the equations
$$
x_i^k=r+b_i\quad\quad 1\leq i\leq 2k
$$
is a curve and the projection $C\to \Aa^1$ given by
$(r,\uple{x})\mapsto r$ is finite. The fiber
$\widetilde{\mathcal{Z}}_\uple{b}$ is the intersection of $C$ and the
hyperplane
$$
\sum_{i=1}^lx_i=\sum_{i=l+1}^{2l}x_i,
$$
so that $P_{\uple{b}}=0$ if and only if the function
$$
F=\sum_{i=1}^lx_i-\sum_{i=l+1}^{2l}x_i
$$
vanishes on an irreducible component of $C$.  
\par
If we assume that $\uple{b}\notin \mcV^{\Delta}$ then by definition
there exists some $i$ such that $b_i \neq b_j$ for all $j \neq
i$. Locally on $\Aa^1$ with coordinate $r$ near the point $r=-b_i$,
the covering maps $x^j_k = r+b_j$ for $j\not=i$ are étale, so the
functions $x_j$ (on the curve $C$) ``belong'' to the étale local ring
$R$ of $\Aa^1$ at $-b_i$. The function $x_i$, however, does not belong
to $R$, hence the function $F$ is non-zero in an algebraic closure of
the fraction field of $R$, which is also an algebraic closure of the
function field of any irreducible component of $C$. This concludes the
proof.
\end{proof}

\begin{definition}
  The sheaf $\mcR^*$ on $U_{\Fq}$ is the maximal quotient of the sheaf
  $\mcR|U_{\Fq}$ that is pure of weight $1$ (see~\cite{WeilII}).
\end{definition}

Define $f\colon U\to \Aa^{2l}$ over $\Zz$ by
$(r,\uple{b})\mapsto \uple{b}$.

Below, by $\End_{V_{\uple{b}}}(\mcG)$, where $\mcG$ is a lisse sheaf
on $V_{\Fq, \uple{b}}$, we mean the
$\pi_1(V_{\Fq,\uple{b}}\times\bFq)$-homomorphisms, etc.

Let $\uple{b}\in \Aa^{2l}_{\Fq}$ and let $\kappa(\uple{b})$ be the
residue field of $\uple{b}$. Since
$\mcR_{\uple{b}}=R^1\pi_!\mcK_{\uple{b}}$ by the proper base change
theorem, there exists a natural
$\Gal(\overline{\kappa(\uple{b})}/\kappa(\uple{b}))$-equivariant
morphism
$$
\End_{V_{\uple{b}}}(\mcK_{\uple{b}})\fleche{} 
\End_{U_{\uple{b}}}(\mcR_{\uple{b}}).
$$
Since every $V_{\uple{b}}$-endomorphism of $\mcK_{\uple{b}}$ preserves
the weight filtration, the image of this morphism is contained in the
subring of endomorphisms of $\mcR_{\uple{b}}$ that preserve the weight
filtration, and hence we have an induced morphism
$$
\theta_{\uple{b}}\colon
\End_{V_{\uple{b}}}(\mcK_{\uple{b}})\fleche{} 
\End_{U_{\uple{b}}}(\mcR^*_{\uple{b}}),
$$
which by construction is still Frobenius-equivariant.

In the next definition, we already describe the subvariety $\mcW$ of
Theorem~\ref{th-complete-1}; in particular, we see that it is
independent of the tuple of characters $\uple{\chi}$, since this is
the case for $X_{\infty}$ and $Z$. The difficulty will be to prove
that it satisfies the required properties.

\begin{definition}\label{def-xj}
  We denote $X_{\infty}=\Aa^{2l}-\mcV^{\Delta}$, and for any integer
  $j\geq 0$, we let
 $$
 X_j=\{\uple{b}\in X_{\infty}\,\mid\, |Z_{\uple{b}}|\leq j\}.
$$
\par
We define $\mcW$ to be the union of $\mcV^{\Delta}$ and of all
irreducible components of all $X_j$ of dimension strictly less than
$(3l+1)/2$.
\end{definition}

By definition, we therefore have the codimension bound
\begin{equation}\label{eq-codim-w}
  \codim(\mcW)\geq \frac{l-1}{2}.
\end{equation}

Our main geometric goal will be to prove the following result:

\begin{theorem}\label{th-thetab} 
  Assume that $\uple{\chi}$ has \CGMT.  If $p$ is large enough,
  depending only on $k$ and $l$, then the natural morphism
  $\theta_{\uple{b}}$ is an isomorphism for all
  $\uple{b}\in \Aa^{2l}(\Fq)-\mcW(\Fq)$.  Furthermore, each
  geometrically irreducible component of $\mcR^*_{\uple{b}}$ has rank
  greater than one.
\end{theorem}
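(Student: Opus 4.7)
The plan is to follow the cohomological comparison strategy outlined in the introduction. Since $R^1\pi_!$ is $\bQl$-linear, the map $\theta_{\bfb}$ respects the isotypic decomposition: writing $\mcK_{\bfb}\simeq \bigoplus_\rho \rho^{\oplus m_\rho}$ into geometric irreducibles over $V_{\bfb}$, each $\rho$ produces a canonical summand $\widetilde{\rho}$ of $\mcR^*_{\bfb}$, and $\theta_{\bfb}$ is an isomorphism if and only if (a) each $\widetilde{\rho}$ is non-zero and geometrically irreducible, and (b) the $\widetilde{\rho}$ are pairwise non-isomorphic as $\rho$ varies. The non-vanishing of each $\widetilde{\rho}$ can be read off from the local monodromy at $\infty$ recorded in Lemma \ref{lm-tilde-psi}. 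The rank $>1$ claim will drop out in parallel: after twisting as in Lemma \ref{how-to-twist} the tuple has \CGM, so by Theorem \ref{thm-geometric-monodromy} the sheaf $\HYPK_k$ has geometric monodromy group $\SL_k$ or $\Sp_k$ with $k\geq 2$, which forces every irreducible constituent of $\mcK_{\bfb}$, and hence of $\mcR^*_{\bfb}$, to have rank $>1$ outside a locus contained in $\mcW$.

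The second step is to use Deligne's semicontinuity theorem to propagate the isomorphism type of the decomposition of $\mcR^*_{\bfb}$ across each stratum. Since $\mcR^*$ is lisse on $U$ (Lemma \ref{lm-lisse}) and the singular locus of $\mcR^*_{\bfb}$, viewed as a sheaf on the $s$-line, is contained in $Z_{\bfb}\cup\{\infty\}$, the Euler--Poincar\'e formula together with semicontinuity of Swan conductors shows that the isomorphism class of $\mcR^*_{\bfb}$ is constant on each stratum $X_j\setminus X_{j-1}$, provided $\mcK_{\bfb}$ has tame local monodromy at every point of $Z_{\bfb}$. Tameness at these finite singularities is guaranteed for $p$ large in $k,l$ by lifting $\mcK$ to a sheaf defined in characteristic zero, whose reductions modulo $p$ are automatically tame for all but finitely many primes; a criterion for constancy of the decomposition akin to Lemma \ref{lm-critere-lisse} then finishes the semicontinuity argument. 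It therefore suffices to verify (a) and (b) at one $\bfb$ in each irreducible component of $X_j\setminus X_{j-1}$ of dimension at least $(3l+1)/2$, i.e., those not absorbed into $\mcW$.

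The heart of the argument, and in my view the main obstacle, is to verify (a) and (b) at one such generic $\bfb$. The plan is to invoke Katz's diophantine criterion of irreducibility: if (a) or (b) failed generically on an irreducible component $W_0\not\subset\mcW$, then by the Grothendieck--Lefschetz trace formula and Proposition \ref{pr-recall-rh} the average
\[
\frac{1}{|W_0(\Fqd)|}\sum_{\bfb\in W_0(\Fqd)}\Bigl(\sum_{r}|\bfR(r,\bfb)|^2-\sum_{s,r}|\bfK(sr,s\bfb)|^2\Bigr)
\]
would be of strictly larger order of magnitude than what one can bound by direct analytic techniques. To produce this upper bound, I plan to describe the strata $X_j$ via an explicit inductive procedure, so that at each step $X_j$ is cut out (up to projection from a cover of bounded degree) by a bounded number of equations splitting as sums over disjoint subsets of the $b_i$'s together with a bounded number of auxiliary variables. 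This separation of variables converts the mean above into a product of one-variable complete sums to which Weil's estimate applies, and the dimension assumption $\dim W_0\geq (3l+1)/2$ then yields enough saving to contradict the diophantine lower bound. The central technical difficulty is to keep the number of equations and auxiliary variables uniformly bounded in $k,l$ throughout the induction, which is exactly what the codimension requirement $\codim\mcW\geq (l-1)/2$ demands.
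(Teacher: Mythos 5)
Your overall plan does track the paper's: reduce to \CGM\ via Lemma~\ref{how-to-twist}, prove injectivity of $\theta_{\uple{b}}$ outside a small subvariety, propagate the dimension match across strata by semicontinuity (Proposition~\ref{pr-specialization}), and verify the match at generic points of the high-dimensional strata by averaging after the explicit parameterization of Theorem~\ref{thm-ag}. But there is a genuine gap at the heart of the generic verification, and the rank claim is misjustified.

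\textbf{The missing descent step.} To separate variables in the mean of $\Sigma_{II}(K,\uple{b})$, you must parameterize the stratum $X$ by a cover $Y$ (your ``projection from a cover of bounded degree''). But the diophantine criterion (Lemma~\ref{lm-dio}) applied to the sum over $Y(\Ff_{q^\nu})$ only gives $\dim\End_{\pi_1(Y_{\bFq})}(\mcK)=\dim\End_{\pi_1(Y_{\bFq})}(\mcR^*)$. Since $Y\to X$ is quasi-finite with a possibly non-trivial covering group, each of these can be strictly larger than the corresponding quantity over $\pi_1(X_{\bFq})$, which is what $\theta_{\uple{b}}$ actually compares. Injectivity plus the dimension match over $Y$ do not yield surjectivity over $X$. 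The paper closes this via Propositions~\ref{pr-equal-1} and~\ref{pr-equal-3}: Goursat--Kolchin--Ribet handles the $\mcK$ side for free (arithmetic and geometric monodromy coincide), and for $\mcR^*$ one needs the vanishing-cycles computation of Lemma~\ref{lm-key-generic} showing the inertia group near $r=\infty$ acts unipotently, combined with surjectivity onto $\Gal(\bar\eta/\widetilde\eta)$ and then purity to force the action on $\End(\mcR^*_{\bar\eta})$ to be trivial. Without an argument of this shape, your plan establishes the dimension match over the wrong fundamental group. Secondarily, your justification for rank$\,>1$ of components of $\mcR^*_{\uple{b}}$ (``monodromy is $\SL_k$ or $\Sp_k$, hence every irreducible constituent of $\mcR^*_{\uple{b}}$ has rank $>1$'') is not a valid inference: the constituents of $\mcR^*_{\uple{b}}$ are weight-one parts of $H^1_c$'s, whose ranks are governed by local monodromy at $s=0$, not by the monodromy group of the input, and a sheaf of rank $\geq 2$ can perfectly well have $H^1$ of rank $1$. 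The correct argument is the explicit local-monodromy count in Lemma~\ref{lm-rank-with-local} and the case analysis of Lemma~\ref{lm-injectivity-plus-epsilon}, which is exactly where the three multiplicity-one coordinates outside $\mcW_1$ get used.
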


The basic strategy to be used is as follows:

\begin{enumerate}
\item We show that for $q$ large enough and for
  $\uple{b}\in\Aa^{2l}(\Fq)$ outside an explicit subscheme $\mcW_1$ of
  codimension $l-1$, the natural morphism $\theta_{\uple{b}}$ is
  injective. This reduces the target statement to a proof that the
  dimensions $\End_{V_{\uple{b}}}(\mcK_{\uple{b}})$ and
  $\End_{U_{\uple{b}}}(\mcR^*_{\uple{b}})$ are equal.
\item We show that, when these dimensions agree for the generic point
  of an irreducible component of a stratum, this implies the
  corresponding statement on the whole irreducible component.
\item Finally, we prove the target theorem at the generic point of an
  irreducible component of a stratum with dimension $>(3l+1)/2$.
\end{enumerate}

The most difficult part is the last one. This we prove by showing the
strata can be covered by the vanishing sets of equations of a certain
type in products of curves. Using this description, and a variant of
Katz's Diophantine criterion for irreducibility, we show that the
dimension of the space of endomorphisms of $\mathcal K$ is equal to
that of the space of endomorphisms of $\mcR$ that are invariant under
the Galois group of the function field of this cover. Finally, by a
vanishing cycles argument, we show that the Galois group in fact acts
trivially.

\begin{remark}
  We have defined $U$, the stratification $X_j$, and $\mcW$ as objects
  over the integers rather than over a finite field $\Fq$. This is
  used in a few different places: first, when comparing the generic
  point and the special point of a stratum, we use a tameness property
  of the sheaf $\mcR$, which we verify by showing that the sheaf is
  defined over the integers. Second, when describing the defining
  equations of the strata, at one point we make a large characteristic
  assumption. Third, we need the set $\mcW$ to be uniform in $q$ to
  allow us to apply Lemma \ref{lm-sz}.
\end{remark}


\section{Integrality}

We fix an integer $n\geq 1$ and an integer $k\geq 2$. Let $\ell$ be a
prime number.  We denote in this section
$S=\Spec(\Zz[\mmu_n,1/n\ell])$.  For any $\ell$-adic character
$\widetilde{\chi}$ of $\mmu_n$, we have an associated lisse
$\ell$-adic sheaf $\sheaf{L}_{\widetilde{\chi}}$ over $S$ defined by
Kummer theory. If $\Fq$ is a residue field of $S$ of characteristic
$p\nmid n\ell$, so that $q\equiv 1\bmod{n}$, then there is a natural
isomorphism between the group of $\ell$-adic characters
$\widetilde{\chi}$ of $\mmu_n$ and the group of $\ell$-adic characters
$\chi$ of order dividing $n$ of $\Fqt$, such that
$\chi(x)=\widetilde{\chi}(\xi)$, where $\xi$ is the $n$-th root of
unity in $\Zz[\mmu_n,1/n\ell]$ mapping to $x^{(q-1)/n}$. We then have
a natural isomorphism $\mcL_{\widetilde{\chi},\Fq}=\mcL_{\chi}$ of
$\ell$-adic sheaves.


\begin{proposition}\label{pr-integrality}
  Let $\widetilde{\uple{\chi}}$ be a $k$-tuple of characters of
  $\mmu_n$.  There exists an $\ell$-adic sheaf
  $\mcR^{univ}(\widetilde{\uple{\chi}})$ on $\Aa^{1+2l}_{S}$, lisse on
  $U_{S}$, with the following property: for any prime $p\nmid \ell n$,
  for any finite field $\Fq$ of characteristic $p$ which is a residue
  field of a prime ideal in $\Zz[\mmu_n,1/n\ell]$, for any non-trivial
  additive character $\psi$ of $\Fq$, we have
$$
\mcR^{univ}(\widetilde{\uple{\chi}}) | \Aa^{1+2l}_{\Fq} =
\mcR(\uple{\chi})
$$
where $\uple{\chi}$ is the $k$-tuple of $\ell$-adic characters of
$\Fqt$ corresponding to $\widetilde{\uple{\chi}}$.
\end{proposition}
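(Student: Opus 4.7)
The strategy is to construct $\mcR^{univ}(\widetilde{\uple\chi})$ on $\Aa^{1+2l}_S$ by replicating the definition of $\mcR(\uple\chi)$ with ingredients that admit canonical integral models over $S$, and then to verify the specialization and lisse-ness properties. A preliminary observation is required for the statement to be meaningful: the sheaf $\mcR(\uple\chi)$ does not actually depend on the auxiliary additive character $\psi$ used to define $\mcF=\HYPK_{k,\psi}(\uple\chi)$. Indeed, replacing $\psi$ by $\psi_a(x)=\psi(ax)$ transforms $\mcF$ into $[\times a^k]^*\mcF$ up to the constant Kummer twist $\Lambda(a)^{-1}$; in the formula $\mcK=\bigotimes_{i=1}^l f_i^*\mcF\otimes f_{i+l}^*\mcF^\vee$ these constant twists cancel in pairs because of the balanced alternation of $\mcF$ and $\mcF^\vee$, while the remaining scaling by $a^k$ on each factor amounts to a global rescaling $s\mapsto a^{-k}s$ of the integration variable, which disappears upon applying $R^1\pi_!$ by change of variables.

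For the construction itself, I would build the universal Kloosterman variety $Y_S\subset\Aa^{2+2l+2lk}_S$ defined by the equations $x_{i,1}\cdots x_{i,k}=s(r+b_i)$, $1\leq i\leq 2l$, together with the universal Kummer tensor product $\mcG^{univ}=\bigotimes_{i,j}\mcL_{\widetilde{\chi_j}^{\eps_i}}(x_{i,j})$, which is defined over $S$ by Kummer theory applied to characters of $\mmu_n$. By Proposition~\ref{pr-kl}, the proper base change theorem and K\"unneth, the sheaf $\mcK$ on each fiber $\Aa^{2+2l}_{\Fq}$ is canonically identified (up to a Tate twist and explicit constant) with $R^\bullet\rho_!(\mcG^{univ}\otimes\mcL_\psi(L))$, where $\rho$ projects $Y_S$ onto $V_S$ and $L=\sum_{i,j}\eps_i x_{i,j}$.

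The main step is to eliminate the Artin-Schreier factor from the formula for $\mcR$, so that only Kummer-theoretic data remain and the construction extends over $S$. Here one leverages the $\psi$-invariance established above by realizing the $s$-integration along the $k$-fold tame cover $s=t^k$ of the $s$-line: Lemma~\ref{lm-tilde-psi} shows that on this cover the wild part of $\mcK$ at $s=\infty$ decomposes as a direct sum of tame Kummer-like summands indexed by $(2l-1)$-tuples of $k$-th roots of unity, and after summation over $t$ the additive character contributes only through Gauss-sum type constants that can be absorbed into the Tate twist. Packaging this rewriting yields an explicit cohomological definition of $\mcR^{univ}$ on $\Aa^{1+2l}_S$ whose construction involves only Kummer sheaves and polynomial equations, and whose specialization to any characteristic $p\nmid n\ell$ fiber recovers $\mcR(\uple\chi)$ by construction.

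Lisse-ness on $U_S$ follows by combining the fiberwise statement already proved in Lemma~\ref{lm-lisse} with the criterion of Lemma~\ref{lm-critere-lisse}: over each characteristic $p\nmid n\ell$ fiber, $\mcR^{univ}|U_{\Fq}=\mcR(\uple\chi)$ is lisse of rank $k^{2l-1}$ by Lemma~\ref{lm-lisse}, while condition (2) of Lemma~\ref{lm-critere-lisse} is verified using a standard spreading-out argument together with the uniformity of the local monodromy at $s=\infty$ provided by Lemma~\ref{lm-tilde-psi}(2), which is valid over any base containing the necessary $k$-th roots. The hard part of the proof will be justifying the elimination of the Artin-Schreier factor rigorously: we must compare the $\ell$-adic Fourier/Mellin-type rewriting across different characteristics and confirm that the isomorphism is canonical and preserves all the required structure, so that the fiberwise identifications glue into a single sheaf on $\Aa^{1+2l}_S$.
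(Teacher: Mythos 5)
Your proposal correctly identifies the central difficulty — eliminating the auxiliary additive character $\psi$ so that the construction descends to $S$ — but the mechanism you propose for doing so is different from the paper's and is not clearly workable.

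The paper factors the ambient variety $X_2$ (your $Y_S$) as $\Gg_m\times X$ via the substitution $(t,x_{1,1},\ldots,s,\ldots)\mapsto(tx_{1,1},\ldots,t^ks,\ldots)$, so that the phase function $g'$ becomes $tg$ with $g$ independent of $t$. The projection formula then expresses the pushforward of $\mcL_\psi(tg)$ along the $t$-direction as the pullback along $g$ of the $\ell$-adic Fourier transform of $j_!\bQl$ on $\Gg_m$, and the key point is that this Fourier transform is (up to shift and twist) $Ru_*\bQl$ for $u\colon\Gg_m\hookrightarrow\Aa^1$ — an object defined over $\Zz[1/\ell]$ and completely independent of $\psi$. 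This is a global multiplicative torus-action argument; it does not involve the $k$-fold tame cover of the $s$-line. Your proposal instead appeals to Lemma~\ref{lm-tilde-psi} and the tame cover $s=t^k$ to ``absorb the additive character into Gauss-sum constants.'' That lemma only describes the $I(\infty)$-representation of $\mcK$, not a global decomposition of the sheaf, so it cannot by itself furnish a $\psi$-free formula for the $R^1\pi_!$: the Artin--Schreier factor lives over all of $V$, not just in a punctured neighborhood of $s=\infty$, and the claim that after summing over $t$ only Gauss-sum scalars remain is not justified (nor is it clear what ``summing over $t$'' means at the level of complexes). Your preliminary observation that $\mcR$ is abstractly independent of $\psi$ (via $\psi_a$, $[\times a^k]^*$, and cancellation of $\Lambda(a)$) is likely true, but by itself it does not produce an integral model; one needs an explicit construction over $S$, which is precisely what the Fourier-transform-of-the-constant-sheaf trick delivers.

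There is a second gap in the lisse-ness verification. Criterion (1) of Lemma~\ref{lm-critere-lisse} does follow from Lemma~\ref{lm-lisse} together with constancy of rank, as you say. But criterion (2) cannot be dispatched by ``a standard spreading-out argument'': the paper's proof of this point is a genuinely delicate dévissage involving Verdier duality on $\Spec(\Oc^{et}_\eta)$, the explicit verification that $g$ is smooth near its zero locus (so that $\dual g^*(Ru_*\bQl)$ is concentrated in a single degree), and Gabber's dimension bound for cohomology of affine morphisms, which together force the offending stalk to have support of strictly negative dimension. Without some substitute for this argument, the assertion that a nonzero section of $\mcR^{univ}$ over $\Oc^{et}_\eta$ that is nonzero at the special point must be nonzero at the generic point is unproved.
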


\begin{proof}
  We will first construct a sheaf
  $\mcR^{univ}(\widetilde{\uple{\chi}})$ over $S$ with the desired
  specialization property, and we will then check that the sheaf thus
  defined is lisse on $U_{S}$.  The existence statement is a fairly
  straightforward generalization of~\cite[Lemma 4.27]{KMS}, but we
  give full details since the precise construction is needed to check
  the lisseness assertion.
\par
Let $X_1\subset \Gg_m^{k+1}$ be the subscheme over
$S$ with equation
$$
x_1\cdots x_k=t
$$
and let
$$
f_1\,:\, X_1\lra \Aa^1
$$ 
be the projection $(x_1,\ldots, x_k,t)\mapsto t$.  Let $X_2$ be the
subscheme of $\Gg_m^{2lk}\times\Aa^{2+2l}$ over $S$ defined by the
equations
$$
\prod_{j=1}^kx_{i,j} = s(r+b_i),\quad\quad 1\leq i\leq 2l,
$$
and let $f_2\,:\, X_2\lra \Aa^{1+2l}$ be the projection
$$
f_2(x_{1,1},\ldots, x_{2l,k},r,s,\bfb)= (r,\bfb).
$$
\par
Let further $X \subset X_2$ be the closed subscheme over $S$ defined
by the equation $x_{1,1}=1$. The morphism
$$
\Gg_m \times X \to X_2
$$ 
defined by
$$
(t, x_{1,1},\ldots,x_{2l,k},r,s,\bfb)\mapsto
(tx_{1,1},\dots,tx_{2l,k}, r ,t^k s, \bfb)
$$ 
is an isomorphism, with  inverse given by
$$
(x_{1,1},\ldots, x_{2l,k}, r,s,\bfb)\mapsto
\Bigl(x_{1,1},1,\frac{x_{1,2}}{x_{1,1}},\ldots,
\frac{x_{2l,k}}{x_{1,1}},r, \frac{s}{t^k},\bfb\Bigr).
$$ 
\par
Let now $p\nmid n\ell$ be a prime and $\Fq$ a finite field of
characteristic $p$ that is a residue field of a prime ideal in
$S$. Let $\psi$ be a non-trivial additive character of $\Fq$. We
have an isomorphism
$$
\HYPK_{k,\psi}(\uple{\chi})\Bigl(\frac{1-k}{2}\Bigr) [1-k]\simeq
Rf_{1,!}  \sheaf{L}_{\psi}(x_1+\cdots +x_k) \otimes
\bigotimes_{i=1}^k \mathcal L_{\chi_i} (x_i)
$$
of sheaves on $\Aa^1_{\Fq}$.  By definition and Lemma
\ref{lm-top-vanishing}, it follows that
$$
  \mcR(\uple{\chi})=R^{2l(k-1)+1}f_{2,!}\Bigl(\sheaf{L}_{\psi}
  \Bigl(\sum_{j=1}^k\Bigl(\sum_{i=1}^lx_{i,j}-\sum_{i=1}^lx_{l+i,j}\Bigr)\Bigr)
  \otimes \bigotimes_{j=1}^k \bigotimes_{i=1}^l \mcL_{\chi_j}(
  x_{i,j}/x_{l+i,j})\Bigr).
$$
\par
We now translate this by ``transport of structure'' to
$\Gg_m\times X\simeq X_2$. First, we have $f_2=f\circ p_2$ where $p_2$
is the projection $\Gg_m \times X \to X$.  Next, let
$f: X \to \Aa^{1+2l}$ be the projection onto $(r,\bfb)$, and let
$g: X \to \Aa^1$ be defined by
$$
g(x_{1,1},\ldots,x_{2l,k},r,s,\bfb)=\sum_{j=1}^k\Bigl(
\sum_{i=1}^lx_{i,j}-\sum_{i=1}^lx_{l+i,j}\Bigr).
$$
Let $g'$ be the function
$$
g'=\sum_{j=1}^k\Bigl(\sum_{i=1}^lx_{i,j}-\sum_{i=1}^lx_{l+i,j}\Bigr)
$$
on $X_2$. Then $g'$ corresponds to $tg$ under the isomorphism
$X_2\simeq \Gg_m\times X$. Moreover, the sheaves
$\mcL_{\chi_j} (x_{i,j}/ x_{l+i,j})$ are transported to
$\mcL_{\chi_j} (x_{i,j}/ x_{l+i,j})$ under this isomorphism (since
both variables involved are multiplied by $t$). We conclude that
$$
\mcR(\uple{\chi})[ -2l (k-1)-1] \simeq R(f\circ p_2)_! \Bigl(
\mcL_{\psi}(tg) \otimes \bigotimes_{j=1}^k \bigotimes_{i=1}^l
\mcL_{\chi_j}( x_{i,j}/x_{l+i,j}) \Bigr)
$$
on $\Aa^{1+2l}_{\Fq}$.
\par
We can now apply the strategy of~\cite[Lemma 4.23]{KMS}. By the
projection formula, we have
$$ 
R{p_2!} \Bigl( \mcL_{\psi}(tg) \otimes \bigotimes_{j=1}^k
\bigotimes_{i=1}^l \mcL_{\chi_j}( x_{i,j}/x_{l+i,j}) \Bigr) =\Bigl(
\bigotimes_{j=1}^k \bigotimes_{i=1}^l \mcL_{\chi_j}(
x_{i,j}/x_{l+i,j})\Bigr) \otimes Rp_{2!} \mcL_\psi(tg)
$$ 
and $Rp_{2!} \mcL_\psi(tg)$ is the pullback along $g$ of the Fourier
transform of the extension by zero of the constant sheaf on
$\Gg_{m,\Fq}$, which is $(Ru_* \bQl [-1])_{\Fq}$ for
$u:\Gg_m \to \Aa^1$ the inclusion.
\par
We then define the sheaf
$$
\mcR^{univ}(\widetilde{\uple{\chi}}) = R^{2l(k-1)}f_! \Bigl( g^* (
Ru_*\bQl) \otimes \bigotimes_{j=1}^k \bigotimes_{i=1}^l
\mcL_{\widetilde{\chi}_j}( x_{i,j}/x_{l+i,j} )\Bigr)
$$
over $S$. The preceeding computation gives an isomorphism
$\mcR^{univ}(\widetilde{\uple{\chi}})_{\Fq}\simeq \mcR(\uple{\chi})$
over $\Fq$.
\par
Furthermore, since the complex
$$ 
Rf_! \Bigl( g^* ( Ru_*\bQl) \otimes \bigotimes_{j=1}^k
\bigotimes_{i=1}^l \mcL_{\chi_j}( x_{i,j}/x_{l+i,j} )\Bigr),
$$ 
is supported in degree $2l(k-1)$ over $U_{\Fq}$ for all $\Fq$, the
corresponding complex
$$
Rf_! \Bigl( g^* ( Ru_*\bQl) \otimes \bigotimes_{j=1}^k
\bigotimes_{i=1}^l \mcL_{\widetilde{\chi}_j}( x_{i,j}/x_{l+i,j}
)\Bigr)
$$
is supported in a single degree on $S$.

We will now check that $\mcR^{univ}(\widetilde{\uple{\chi}})$ is lisse
on $U_{S}$.  By the specialization property and Lemma~\ref{lm-lisse},
we know that $\mcR^{univ}(\widetilde{\uple{\chi}})$ is lisse on
$U_{\Fq}$ for any residue field $\Fq$ of characteristic
$p\nmid\ell n$, and that it has constant rank. Because it is a
constructible sheaf, its rank is a constructible function, and hence
it has the same rank everywhere on $U_{S}$.

Write $\mcR^{univ}=\mcR^{univ}(\widetilde{\uple{\chi}})$ for
simplicity. We show that $\mcR^{univ}$ is lisse on $U_{S}$ by
contradiction. By the criterion in Lemma~\ref{lm-critere-lisse}, if
$\mcR^{univ}$ is \emph{not} lisse on $U_{S}$, then there exists a
finite-field-valued point (say over $\Fq$) and a section of
$\mcR^{univ}$ over the étale local ring $\Oc^{et}_{\eta}$ for some
generic point $\eta$ of $U_{\Fq}$ which is non-zero at the special
point, but zero at the generic point.
If we denote by 
$i$ the inclusion of $\eta$ in $\Spec(\Oc^{et}_{\eta})$, then such a
section corresponds to a 
morphism $i_* \bQl \to \mcR^{univ}$ over this local ring that is
non-trivial at the generic point. Because
$$
\mcR^{univ}= R^{2l(k-1) }f_! \Bigl( g^* ( Ru_*\bQl) \otimes
\bigotimes_{j=1}^k \bigotimes_{i=1}^l \mcL_{\widetilde{\chi}_j}(
x_{i,j}/x_{l+i,j} ) \Bigr)
$$ 
and the complex
$$
Rf_!  \Bigl( g^* ( Ru_*\bQl) \otimes \bigotimes_{j=1}^k
\bigotimes_{i=1}^l \mcL_{\widetilde{\chi}_j}( x_{i,j}/x_{l+i,j} )
\Bigr)
$$ 
is supported in a single degree, we obtain a nontrivial map.
\begin{equation}\label{eq-morph}
  Ri_* \bQl [ -2l(k-1)] \to Rf_! \Bigl( g^* ( Ru_*\bQl) \otimes
  \bigotimes_{j=1}^k \bigotimes_{i=1}^l \mcL_{\widetilde{\chi}_j}( x_{i,j}/x_{l+i,j}
  ) \Bigr).
\end{equation}

We then apply the Verdier duality functor, taking our base scheme
$S = \Spec(\Oc^{et}_{\eta})$. In this case our dualizing complex is
$\bQl$ and we set $\dual(\mcF) =\Hom(\mcF, \bQl)$. Later, we will
apply also apply Verdier duality on schemes of finite type over $S$
(see, e.g.,~\cite[Ch. 8, Ch. 10.1]{Fu} for the $\ell$-adic formalism
of Verdier duality in this setting). As usual, for a scheme of finite
type over $S$ with structural morphism $\varpi$, we set
$\dual(\mcF)=\Hom(\mcF,\varpi^!\bQl)$. Dualizing the
morphism~(\ref{eq-morph}), we obtain a morphism
\begin{equation}\label{eq-iso2}
  \dual Rf_! \Bigl( g^* ( Ru_*\bQl) \otimes \bigotimes_{j=1}^k
  \bigotimes_{i=1}^l \mcL_{\widetilde{\chi}_j}( x_{i,j}/x_{l+i,j} ) \Bigr) \to \dual
  Ri_* \bQl [2l(k-1)],
\end{equation}
that is also nontrivial, since by double-duality its dual
is~(\ref{eq-morph}).
 
We have
$$
\dual Ri_* \bQl = Ri_! \dual\bQl = R i_! i^! \bQl = R
i_! \bQl[-2] = R i_* \bQl [-2],
$$ 
where the last two
equalities follow respectively from the fact that $i$ is the inclusion
of a smooth divisor of codimension one and the fact that $i$ is
proper.  The left-hand side of~(\ref{eq-iso2}) is
$$
Rf_* \dual\Bigl( g^* ( Ru_*\bQl) \otimes \bigotimes_{j=1}^k
\bigotimes_{i=1}^l \mcL_{\widetilde{\chi}_j}( x_{i,j}/x_{l+i,j} )
\Bigr)= Rf_* \dual( g^* (Ru_* \bQl)) \otimes \bigotimes_{j=1}^k
\bigotimes_{i=1}^l \mcL_{\widetilde{\chi}_j^{-1} }( x_{i,j}/x_{l+i,j}
),
$$
since duality is local, and therefore commutes with twisting with a
locally constant sheaf.  Hence the existence of a non-trivial
morphism~(\ref{eq-iso2}) would lead to a morphism
$$ 
i^* Rf_* \dual g^* ( Ru_*\bQl) \otimes \bigotimes_{j=1}^k
\bigotimes_{i=1}^l \mcL_{\widetilde{\chi}_j^{-1} }( x_{i,j}/x_{l+i,j}
) \to \bQl[2l(k-1) +2]
$$
that is nontrivial at $\eta$. Finally, this would force the stalk of
the sheaf
$$ 
i^* Rf_* \dual g^* ( Ru_*\bQl) \otimes \bigotimes_{j=1}^k
\bigotimes_{i=1}^l \mcL_{\widetilde{\chi}_j^{-1} }( x_{i,j}/x_{l+i,j}
)
$$
in degree $-2l(k-1)-2$ to be nontrivial at the generic point of
$\Aa^{2l+1}$. We will now prove that this last property fails.
 
Away from the vanishing set of $g$, the sheaf
$g^* (Ru_* \bQl)$ is the constant sheaf $\bQl$, so
its dual is $\bQl [ 2 (2l(k-1)  ) ] $, where
$2l (k-1)$ is the relative dimension of $X$.  

On the other hand, we claim that the morphism $g$ is smooth in a
Zariski-open neighborhood of the vanishing set of $g$. To check this,
because $g'=gt$, it suffices to check that $g'$ is smooth in a
neighborhood of its vanishing set. Examining just the contribution
$\sum_{j=1}^k x_{i,j}$ to $g'$, observe that the only equation
defining $X_2$ involving $(x_{i,1},\ldots, x_{i,k})$ is of the form
$\prod_{j=1}^k x_{i,j}=\alpha$, so the derivative of this contribution
in a transverse direction is nonzero, and $g'$ is smooth, unless
$x_{i,1}=x_{i,2} = \cdots= x_{i,k}$. In this case, all the $x_i$ are
equal to some $k$-th root of $s (r+b_i)$, and thus
$$
g'=\sum_{i=1}^l (s(r+b_i))^{1/k}-
\sum_{i=l+1}^{2l} (s(r+b_i))^{1/k}
$$
which is non-zero when $(r,\bfb)\in U$.  

Since $g$ is smooth in a neighborhood of the vanishing locus of $g$,
the sheaf
$\dual g^* (Ru_* \bQl) = g^! \dual(Ru_* \bQl) $ is
there a shift (and Tate twist) of $g^* \dual(Ru_* \bQl)$,
which is a shift (and Tate twist) of $g^* Ru_! \bQl$, and
thus vanishes on the zero-set of $g$. We conclude that
$\dual g^* (Ru_* \bQl)$ is everywhere supported in degree
$$
-4l(k-1).
$$
Finally, we observe that $f$ is an affine morphism from a scheme of
dimension $2l(k-1) $. By results of Gabber (see~\cite[XV,
Theorem 1.1.2]{travaux-de-gabber}), the support of the sheaf
$$
R^df_* \dual\Bigl( g^* (Ru_* \bQl) \otimes \bigotimes_{j=1}^k
\bigotimes_{i=1}^l \mcL_{\widetilde{\chi}_j^{-1} }( x_{i,j}/x_{l+i,j}
) \Bigr)
$$
has dimension $ 2l(k-1) -d- 4l(k-1)$ relative to $S$. Hence, its stalk
in degree $2 - 2l(k-1) $ has support of dimension
$$ 
2l (k-1) +2l(k-1) -2 - 4l (k-1)=-2
$$
and therefore vanishes at the generic point of the special fiber,
which has dimension $-1$ (relative to $\Spec(\Oc^{et}_{\eta})$). This
is the desired contradiction.

\end{proof}

\section{Injectivity}

Let
$$
\mcW_1=\mcV^{\Delta}\cup \{\uple{b}\in\Aa^{2l}\,\mid\, \text{ at most two
  coordinates of $\uple{b}$ have multiplicity $1$}\}.
$$
This is a closed subvariety of codimension $l-1$ of
$\Aa^{2l}_{\Zz}$. The goal of this section is to prove the following
injectivity statement for $\theta_{\uple{b}}$:

\begin{theorem}\label{th-inj} 
  Let $p>2k+1$ be a prime and let $\Fq$ be a finite field of
  characteristic $p$ with $q$ elements. Let $\uple{\chi}$ be a
  $k$-tuple of $\ell$-adic characters of $\Fqt$ with Property \CGM.
\par
For $p$ large enough, depending only on $(k,l)$ and for
$\uple{b}\in\Aa^{2l}(\Fq)$ outside $\mcW_1(\Fq)$, the natural morphism
$$
\theta_{\uple{b}}\colon \End_{V_{\uple{b}}}(\mcK_{\uple{b}})\fleche{}
\End_{U_{\uple{b}}}(\mcR^*_{\uple{b}})
$$ 
is injective.
\end{theorem}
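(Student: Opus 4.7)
The plan is to prove injectivity of $\theta_{\uple{b}}$ by first reducing to the stalk at a geometric generic point of $U_{\uple{b}}$, and then showing that every geometrically irreducible constituent of the specialized sheaf has non-vanishing middle cohomology.

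First I would fix a geometric generic point $\bar\eta$ of $U_{\uple{b}}$. Any endomorphism of a lisse sheaf on a connected scheme is determined by its monodromy-equivariant matrix on a single stalk, so both $\End_{V_{\uple{b}}}(\mcK_{\uple{b}})$ and $\End_{U_{\uple{b}}}(\mcR^*_{\uple{b}})$ embed into their respective stalks at $\bar\eta$, with the embeddings commuting with $\theta_{\uple{b}}$. Using proper base change to identify the stalk of $\mcR^*_{\uple{b}}$ at $\bar\eta$ with $H^1_{mid}(\mcG)$, where $\mcG := (\mcK_{\uple{b}})|_{r=\bar\eta}$ is a lisse sheaf on the generic $s$-line, it then suffices to prove the stronger statement that the composite
$$\End(\mcG) \longrightarrow \End(H^1_c(\mcG)) \twoheadrightarrow \End(H^1_{mid}(\mcG))$$
is injective, where $\End(\mcG)$ means endomorphisms as a representation of the geometric fundamental group of the fiber.

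Second, I would decompose $\mcG$ geometrically into isotypic components $\mcG \simeq \bigoplus_i V_i \otimes \rho_i$, with the $\rho_i$ pairwise non-isomorphic geometrically irreducible and the $V_i$ multiplicity spaces. By Schur's lemma this identifies $\End(\mcG) = \bigoplus_i \End(V_i)$, while $H^1_{mid}(\mcG) = \bigoplus_i V_i \otimes H^1_{mid}(\rho_i)$ and an element $(\phi_i)_i$ acts as $\bigoplus_i \phi_i \otimes \mathrm{id}$. The composite above is therefore injective if and only if $H^1_{mid}(\rho_i) \neq 0$ for every index $i$, reducing the whole problem to a non-vanishing statement.

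The third and main step is to prove this non-vanishing. Here the key input is Lemma~\ref{lm-tilde-psi}, which gives the $I(\infty)$-decomposition of $\mcG$ as a direct sum of rank-one Artin-Schreier sheaves $\mcL_{\widetilde\psi}(\alpha_\zeta s^{1/k})$, with
$$\alpha_\zeta = (r_0+b_1)^{1/k} + \sum_{i=2}^{2l}\eps_i \zeta_i (r_0+b_i)^{1/k}, \qquad \zeta \in \mmu_k^{2l-1}.$$
The hypothesis $\uple{b} \notin \mcW_1$ provides at least three indices $i$ at which $b_i$ has multiplicity one, and for $r_0$ generic and $p$ large this forces many $\alpha_\zeta$ to be non-zero. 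One then argues that the resulting wild ramification at $\infty$ cannot concentrate on a single isotypic summand: any irreducible $\rho_i$ whose $I(\infty)$-restriction consisted only of the summands with $\alpha_\zeta = 0$ would have $\rank(\rho_i)$ bounded by the number of vanishing $\alpha_\zeta$'s, and a combinatorial bound coming from the three multiplicity-one coordinates would contradict the rank constraints imposed by the isotypic decomposition. Hence every $\rho_i$ inherits a positive Swan conductor at $\infty$ and has $\rho_i^{I_\infty}=0$; the Euler-Poincaré formula combined with the short exact sequence
$$0 \to \bigoplus_{s \in \Sing} \rho_i^{I_s} \to H^1_c(\rho_i) \to H^1_{mid}(\rho_i) \to 0$$
then yields $\dim H^1_{mid}(\rho_i) > 0$, finishing the argument. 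The main obstacle is precisely the combinatorial propagation of wild ramification to every irreducible constituent; the ``at least three multiplicity-one coordinates'' assumption is calibrated so that the set of non-vanishing linear forms $\alpha_\zeta$ is spread out enough to rule out any tame irreducible summand, with everything else reducing to a standard Euler-Poincaré computation.
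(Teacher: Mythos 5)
Your first two steps are sound and match the paper's reduction (Lemmas~\ref{lm-inj1}, \ref{lm-inj2}): using semisimplicity, Schur's lemma, and the isotypic decomposition, the injectivity of $\theta_{\uple{b}}$ is equivalent to the non-vanishing of $H^1_{mid}(\rho_i)$ for each geometrically irreducible constituent $\rho_i$ of the restriction of $\mcK_{\uple{b}}$ to the generic $s$-line. The identification of the weight-one part of $R^1\pi_!$ with the middle extension cohomology is also correct.

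The gap is in the third step, and it is the crux of the matter. After Euler--Poincar\'e and your short exact sequence (with $\rho_i^{I_\infty}=0$) you get
$$\dim H^1_{mid}(\rho_i) = \swan_\infty(\rho_i) - \dim \rho_i^{I_0} = \frac{\rank(\rho_i)}{k} - \dim \rho_i^{I_0},$$
and it is simply false that positive Swan conductor forces this to be positive: the tame invariants at $s=0$ can be large. (For $l=1$, $k=2$, $\uple{\chi}$ trivial, $b_1\neq b_2$ one gets exactly $0$; of course this $\uple{b}$ lies in $\mcW_1$, but it shows the estimate you need does not come for free.) Your discussion of ``wild ramification concentrating on a summand'' is also aimed at the wrong target: for $\uple{b}\notin\mcV^\Delta$ and $r$ generic, $(r,\uple{b})\in U$, so by the very definition of $Z$ all the coefficients $\alpha_\zeta$ are non-zero, every $\rho_i$ already has all breaks equal to $1/k$ at $\infty$, and no tame constituent can occur. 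The actual work in the paper (Lemmas~\ref{lm-rank-without-local}, \ref{lm-rank-with-local}, \ref{lm-injectivity-plus-epsilon}) is a careful comparison between $\rank(\rho_i)/k$ and the Jordan-block structure of $\rho_i$ at $s=0$: one peels off a Kloosterman factor indexed by a multiplicity-one $b_i$ so that $\rank/k$ becomes $\sum_\eta m_\eta$, computes the $I_0$-invariants of the remaining tensor as $\sum_\eta\min(m_\eta,n_{\eta^\epsilon})$, and then shows $\sum_\eta\max(m_\eta - n_{\eta^\epsilon},0)\geq 2$ using the Property \CGM\ hypothesis on $\uple{\chi}$ and a second and third multiplicity-one coordinate to rule out pseudoreflection degeneracies. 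Your proposal does not use the \CGM\ hypothesis anywhere, nor does it produce any bound on $\dim\rho_i^{I_0}$, so the final inequality $\dim H^1_{mid}(\rho_i)>0$ is unproved.
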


We begin with a lemma. First, we observe that for any $\uple{b}$, and
any geometrically irreducible component $\sheaf{H}$ of
$\sheaf{K}_{\uple{b}}$, we can meaningfully speak of the weight one
part of $R^1\pi_!\sheaf{H}$, since $\sheaf{H}$ is defined over a
finite field extension of $\Fq$.

\begin{lemma}\label{lm-inj1}
  For any $\uple{b}\in\Aa^{2l}(\Fq)$, the morphism $\theta_{\uple{b}}$
  is injective if, and only if, for any geometrically irreducible
  component $\sheaf{H}$ of $\sheaf{K}_{\uple{b}}$, the weight one part
  of $R^1\pi_!\sheaf{H}$ is non-zero.
\end{lemma}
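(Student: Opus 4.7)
The plan is to decompose $\mcK_{\uple{b}}$ into its geometric isotypic components and to show that $\theta_{\uple{b}}$ splits into corresponding blocks, each of which is either zero or injective according to whether the associated weight-one piece vanishes.

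First I would verify that $\mcK_{\uple{b}}$ is geometrically semisimple. By Proposition~\ref{pr-kl}, the generalized Kloosterman sheaf $\mcF$ is lisse and pure of weight $0$ on $\Gm$, so each factor $f_i^*\mcF$ and $f_i^*\mcF^{\vee}$ is lisse and pointwise pure of weight $0$ on $V$, and the same holds for the tensor product $\mcK$ and for the restriction $\mcK_{\uple{b}}$ to the fiber. Deligne's theorem~\cite{WeilII} then ensures that $\mcK_{\uple{b}}$ is geometrically semisimple, so one may write
$$
\mcK_{\uple{b}}\simeq\bigoplus_{i\in I}\mcH_i^{\oplus n_i}
$$
with pairwise non-isomorphic geometrically irreducible factors $\mcH_i$. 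Schur's lemma gives $\End_{V_{\uple{b}}}(\mcH_i)=\bQl$ and $\Hom_{V_{\uple{b}}}(\mcH_i,\mcH_j)=0$ for $i\neq j$, so
$$
\End_{V_{\uple{b}}}(\mcK_{\uple{b}})=\bigoplus_{i\in I}M_{n_i}(\bQl).
$$

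Next I would apply the additive functor $R^1\pi_!$ followed by the functorial maximal weight-one quotient $(\cdot)^*$, which yields compatible decompositions
$$
\mcR_{\uple{b}}=\bigoplus_{i\in I}(R^1\pi_!\mcH_i)^{\oplus n_i},\qquad
\mcR^*_{\uple{b}}=\bigoplus_{i\in I}((R^1\pi_!\mcH_i)^*)^{\oplus n_i}.
$$
An endomorphism $\phi$ of $\mcK_{\uple{b}}$ is automatically block-diagonal with $i$-th block a matrix $A_i\in M_{n_i}(\bQl)$, and its image $\theta_{\uple{b}}(\phi)$ is block-diagonal with $i$-th block acting on $((R^1\pi_!\mcH_i)^*)^{n_i}$ via the tautological $M_{n_i}(\bQl)$-action.

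Finally, the $i$-th block map $M_{n_i}(\bQl)\to\End(((R^1\pi_!\mcH_i)^*)^{n_i})$ is identically zero when $(R^1\pi_!\mcH_i)^*=0$, and injective otherwise, since the tautological action of $M_{n_i}(\bQl)$ on the $n_i$-fold direct sum of any non-zero module is faithful. Consequently,
$$
\ker\theta_{\uple{b}}=\bigoplus_{\{i\,:\,(R^1\pi_!\mcH_i)^*=0\}}M_{n_i}(\bQl),
$$
which vanishes precisely when the weight-one part of $R^1\pi_!\mcH$ is non-zero for every geometrically irreducible component $\mcH$ of $\mcK_{\uple{b}}$. I do not anticipate a serious obstacle in this proof: the whole argument is a formal consequence of geometric semisimplicity of pure sheaves together with the functoriality of $R^1\pi_!$ and of the weight filtration. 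The actual content of the lemma lies in recasting the injectivity of $\theta_{\uple{b}}$ as the concrete non-vanishing condition on the right-hand side, which the subsequent sections will verify.
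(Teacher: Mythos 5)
Your proof is correct and follows essentially the same route as the paper: decompose $\mcK_{\uple{b}}$ into isotypic pieces using geometric semisimplicity (a consequence of pointwise purity), observe that $R^1\pi_!$ and the maximal weight-one quotient commute with this decomposition, and check injectivity block by block via the faithful/zero dichotomy. The only cosmetic difference is that you state the block-by-block faithfulness argument slightly more explicitly than the paper does, but the key ideas (Schur's lemma, block-diagonal structure, non-vanishing of each weight-one piece) are identical.
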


\begin{proof} 
  Since $\mcK_{\uple{b}}$ is pointwise pure, hence geometrically
  semisimple, it is geometrically isomorphic to a direct sum
$$
\bigoplus_{i\in I}\sheaf{F}_i^{\oplus n_i}
$$
for some geometrically irreducible sheaves $\sheaf{F}_i$ and some
integers $n_i\geq 1$.  Then 
$$
R^1 \pi_! \mcK_{\uple{b}}\simeq\bigoplus_{i\in
  I}(R^1\pi_!\sheaf{F}_i)^{\oplus n_i},
$$
and the maximal weight one quotient of $R^1 \pi_! \mcK_{\uple{b}}$ is
also the corresponding direct sum of the maximal weight one quotients
$(R^1\pi_!\sheaf{F}_i)^{w=1}$ of $R^1\pi_!\sheaf{F}_i$, with
multiplicity $n_i$.
If one of these quotients vanishes, then any
$u\in \End_{V_{\uple{b}}}(\mcK_{\uple{b}})$ that is non-zero only on
the corresponding summand $\sheaf{F}_i$ satisfies
$\theta_{\uple{b}}(u)=0$. 

Conversely, suppose that all the quotients
$(R^1\pi_!\sheaf{F}_i)^{w=1}$ are non-zero.  By Schur's Lemma, the
endomorphism algebra $\End_{U_{\uple{b}}}(\mcR^*_{\uple{b}})$ is
isomorphic to a product of matrix algebras $M_{n_i}(\bQl)$. For each
$i$, $\theta_{\uple{b}}$ maps an endomorphism $u$ to the endomorphism
of $(R^1\pi_!\sheaf{F}_i)^{w=1,\oplus n_i}$ represented by a block
matrix with diagonal scalar matrices in each block, whose entries are
the coefficients of the matrix in $M_{n_i}(\bQl)$ corresponding to
$u$. Since the blocks have non-zero size, such a matrix is zero if and
only if $u$ is zero.
\end{proof}

Let $G$ be the geometric monodromy group of
$\HYPK_{k,\psi}(\uple{\chi})$.  Let $\uple{b}\in\Aa^{2l}(\Fq)$. We
denote by $B\subset \Aa^1$ the set of values $\{b_i\}$. For any family
$\uple{\rho}=(\rho_x)_{x\in B}$ of irreducible representations of $G$,
we denote by $\sheaf{H}_{\uple{\rho}}$ the sheaf
$$
\sheaf{H}_{\uple{\rho}}=\bigotimes_{x\in
  B}\rho_x(\HYPK_{k,\psi}(\uple{\chi}))(s(r+x)).
$$
on $\Aa^2$ with coordinates $(r,s)$.

\begin{lemma}\label{lm-inj2} 
  Assume that $\uple{\chi}$ has \CGM.  Any geometrically irreducible
  component $\sheaf{H}$ of $\sheaf{K}_{\uple{b}}$ is isomorphic to
  $\sheaf{H}_{\uple{\rho}}$ for some family
  $\uple{\rho}=(\rho_x)_{x\in B}$ such that, for all $x\in B$, the
  representation $\rho_x$ is an irreducible summand of the
  representation
  $\mathrm{Std}^{\otimes n_1}\otimes(\mathrm{Std}^{\vee})^{\otimes
    n_2}$, where
\begin{equation}\label{eq-ni}
n_1=\sum_{\substack{1\leq i\leq l\\b_i=x}}1,\quad\quad
n_2=\sum_{\substack{l+1\leq i\leq 2l\\b_i=x}}1.
\end{equation}
\end{lemma}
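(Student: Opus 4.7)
The plan is to regroup factors by the distinct values appearing in $B$, reduce the question to a Goursat-type computation of geometric monodromy, and then read off the irreducible components via the representation theory of a product of simple groups.

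For each $x \in B$, let $h_x \colon \Aa^2 \to \Aa^1$ denote the morphism $(r,s) \mapsto s(r+x)$. Collecting factors with the same value of $b_i$ and using the multiplicities from~(\ref{eq-ni}), I obtain an isomorphism
$$
\sheaf{K}_{\uple{b}} \simeq \bigotimes_{x \in B} (h_x^*\sheaf{F})^{\otimes n_1(x)} \otimes (h_x^*\sheaf{F}^{\vee})^{\otimes n_2(x)}
$$
on the open locus where everything is lisse, so it suffices to describe the geometrically irreducible constituents of the right-hand side.

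By Theorem~\ref{thm-geometric-monodromy}, Property \CGM\ implies that the geometric monodromy of $\sheaf{F} = \HYPK_{k,\psi}(\uple{\chi})$ is a connected simple group $G$ equal to $\SL_k$ or $\Sp_k$; since each $h_x$ is dominant (its restriction to any horizontal slice $\{r_0\}\times\Aa^1_s$ with $r_0 \neq -x$ is a non-zero scalar multiplication), the pullback $h_x^*\sheaf{F}$ has the same geometric monodromy $G$. The crucial step is to identify the geometric monodromy $H$ of the direct sum $\bigoplus_{x \in B} h_x^*\sheaf{F}$ as the full product $\prod_{x \in B} G$. Since $H \subseteq \prod_{x \in B} G$ surjects onto each factor and $G$ is connected and simple, a Goursat-Kolchin-Ribet argument in the style of \cite[Ch.~1]{ESDE} reduces the problem to ruling out, for every pair of distinct $x, x' \in B$, the existence of a geometric isomorphism $h_x^*\sheaf{F} \simeq h_{x'}^*\sheaf{F} \otimes \sheaf{M}$ with $\sheaf{M}$ of rank one. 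Restricting to a generic horizontal slice $\{r_0\}\times\Aa^1_s$ would turn such a relation into a geometric isomorphism
$$
\sheaf{F} \simeq [\times c]^*\sheaf{F} \otimes \sheaf{L},
$$
with $c = (r_0+x')/(r_0+x) \neq 1$ and $\sheaf{L}$ of rank one, which is forbidden by Proposition~\ref{pr-kl}(7) applied to the non-trivial element $\gamma = [\times c] \in \PGL_2(\bFq)$.

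Granting $H = \prod_{x \in B} G$, the irreducible lisse sheaves on $V_{\uple{b}}$ coming from representations of $H$ are exactly the external tensor products of irreducible representations of the factors. Hence each geometrically irreducible summand of $\sheaf{K}_{\uple{b}}$ corresponds to a tuple $\uple{\rho} = (\rho_x)_{x \in B}$ in which $\rho_x$ is an irreducible summand of $\mathrm{Std}^{\otimes n_1(x)} \otimes (\mathrm{Std}^{\vee})^{\otimes n_2(x)}$, and translating back to sheaf language returns precisely $\sheaf{H}_{\uple{\rho}}$ as defined in the statement. The main obstacle is the Goursat step: beyond the no-twist input from Proposition~\ref{pr-kl}(7), one must verify that restriction to a generic horizontal slice faithfully detects twist-equivalence on $V_{\uple{b}}$, and check that any identifications between the standard and its dual representation (present when $G = \Sp_k$, or when $G = \SL_2$) do not spoil the separation between distinct indices; this is automatic since such identifications act on each factor individually rather than across factors.
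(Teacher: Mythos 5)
Your proposal is correct and follows the paper's route essentially verbatim: collect the tensor factors of $\mcK_{\uple{b}}$ according to the distinct values $x\in B$, note that Theorem~\ref{thm-geometric-monodromy} gives $G=\SL_k$ or $\Sp_k$ under \CGM, and apply Goursat--Kolchin--Ribet (which the paper cites from \cite{ESDE} and \cite{FKMSP} as a black box) to identify the geometric monodromy of $\bigoplus_{x\in B} h_x^*\HYPK_{k,\psi}(\uple{\chi})$ with the full product $G^{|B|}$, after which the irreducible constituents are external tensor products of irreducibles of the factors. The only imprecision is your assertion that the standard/dual identification is ``automatic'': for $G=\SL_k$ with $k\geq 3$ the outer automorphism does create a genuine Goursat correlation to exclude, namely $h_x^*\mcF\simeq (h_{x'}^*\mcF)^{\vee}\otimes\sheaf{M}$, but this is still handled by your slicing trick combined with Proposition~\ref{pr-kl}(7), since composing two such relations (at two generic values of $r_0$, hence two distinct scalings $c$) produces a forbidden self-twist $[\times c_1c_2^{-1}]^*\mcF\simeq \mcF\otimes\sheaf{L}$.
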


\begin{proof}
Write
$$
\mcK_{\uple{b}}=
\bigotimes_{x\in B} \HYPK_{k,\psi}(\uple{\chi})(s(r+x))^{\otimes n_1}\otimes
(\HYPK_{k,\psi}(\uple{\chi})(s(r+x))^{\vee})^{\otimes n_2}.
$$
By the Goursat--Kolchin--Ribet criterion (see~\cite{ESDE}
or~\cite{FKMSP}), which may be applied since the sheaf
$\HYPK_{k,\psi}(\uple{\chi})$ has geometric monodromy group $\SL_k$ or
$\Sp_k$ by Theorem~\ref{thm-geometric-monodromy}, the sheaf
$$
\bigoplus_{x\in B}\HYPK_{k,\psi}(\uple{\chi})(s(r+x))
$$
has geometric monodromy group $G^{|B|}$, so that its irreducible
components correspond exactly to the tuples $\uple{\rho}$.
\end{proof}

\begin{lemma}\label{lm-rank-without-local} 
  Let $\uple{b}$ be a point in $\Aa^{2l} - \mathcal V^\Delta$. Let
  $\sheaf{H}_{\uple{\rho}}$ be an irreducible component of
  $\sheaf{K}_{\uple{b}}$. Then the rank of
  $R^1 \pi_! \sheaf{H}_{\uple{\rho}}$ on the dense open set where
  $P_{\uple{b}}(r) \neq 0$ is equal to the rank of
  $\sheaf{H}_{\uple{\rho}}$ divided by $k$.
\end{lemma}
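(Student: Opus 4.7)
The plan is to compute $R^1\pi_!\sheaf{H}_{\uple{\rho}}$ fiberwise via the Euler--Poincar\'e formula on $\Aa^1$ (the $s$-line), after reducing to a Swan conductor calculation at $\infty$. Since $\sheaf{H}_{\uple{\rho}}$ is a direct summand of $\mcK_{\uple{b}}$ and $\uple{b}\notin\mcV^{\Delta}$, Lemma \ref{lm-top-vanishing} together with proper base change gives $R^0\pi_!\sheaf{H}_{\uple{\rho}}=R^2\pi_!\sheaf{H}_{\uple{\rho}}=0$ at every $(r,\uple{b})\in U$, so the rank sought equals $-\chi_c(\Aa^1,\sheaf{H}_{\uple{\rho}}|_{\{r\}\times\Aa^1})$. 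The restricted fiber is lisse on $\Gg_m$ (since $r+b_i\neq 0$) and tame at $s=0$, because $\HYPK_k$ is tame at $0$ by Proposition \ref{pr-kl}\,(4) and this property is preserved by applying representations of the monodromy group, by scalar pullbacks, and by tensor products. The Euler--Poincar\'e formula on $\Gg_m$ then reduces the computation to $\swan_\infty$ of this restricted fiber.

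To compute $\swan_\infty$, I pull back to the tame degree-$k$ cover $t\mapsto t^k=s$. By Lemma \ref{lm-kl-infty}, the $I(\infty)$-representation associated to $[\times(r+x)]^*\HYPK_k$ becomes, on the cover, a direct sum indexed by $\zeta\in\mmu_k$ of rank-one sheaves with additive-character parts $\mcL_{\psi}(k\zeta(r+x)^{1/k}t)$, twisted by a tame Kummer sheaf independent of $\zeta$. Applying the irreducible representation $\rho_x$ amounts to taking the weight-space decomposition with respect to the diagonal torus of $\GL_k$ (suitably interpreted in the symplectic case), and tensoring over $x\in B$ decomposes the pullback of $\sheaf{H}_{\uple{\rho}}|_{\{r\}\times\Aa^1}$, up to tame factors, into rank-one summands indexed by weight tuples $(\lambda_x)_{x\in B}$ of $(\rho_x)_{x\in B}$, each counted with its weight multiplicity in the corresponding $\rho_x$. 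The summand attached to $(\lambda_x)$ has additive character $\mcL_{\psi}\bigl(k\sum_{x\in B}\mu_x(r+x)^{1/k}\,t\bigr)$ with $\mu_x=\sum_{\zeta\in\mmu_k}\lambda_{x,\zeta}\zeta$, and therefore contributes $1$ to $\swan_\infty$ on the cover precisely when this coefficient is non-zero.

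The key combinatorial point is that every weight of $\sheaf{H}_{\uple{\rho}}$ arises as a weight of the ambient tensor product $\bigotimes_{x}\mathrm{Std}^{\otimes n_1(x)}\otimes(\mathrm{Std}^{\vee})^{\otimes n_2(x)}$, and hence is represented by a tuple $(\xi_i)_{1\leq i\leq 2l}\in\mmu_k^{2l}$ (one $\xi_i$ per tensor factor) with $\mu_x=\sum_{i\,:\,b_i=x}\eps_i\xi_i$; substituting yields
\[
\sum_{x\in B}\mu_x(r+x)^{1/k}\;=\;\sum_{i=1}^{2l}\eps_i\xi_i(r+b_i)^{1/k}.
\]
By the very definition of $\widetilde{Z}$ as the projection of $\{(r,\uple{b},\uple{x})\,:\,x_i^k=r+b_i,\ \sum_i\eps_ix_i=0\}$, the right-hand side is non-zero for every $(\xi_i)\in\mmu_k^{2l}$ precisely when $(r,\uple{b})\notin\widetilde{Z}$, which is implied by $P_{\uple{b}}(r)\neq 0$ since $\widetilde{Z}\subset Z$. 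Consequently every rank-one summand in the cover decomposition has break $1$, so that $\swan_\infty$ on the cover equals $\rk(\sheaf{H}_{\uple{\rho}})$; since breaks at $\infty$ multiply by $k$ under a tame degree-$k$ pullback, we conclude $\swan_\infty(\sheaf{H}_{\uple{\rho}}|_{\{r\}\times\Aa^1})=\rk(\sheaf{H}_{\uple{\rho}})/k$, as desired. The main subtlety is this combinatorial identification of the additive-character coefficients on the cover with the function defining $\widetilde{Z}$, so that the non-vanishing hypothesis $P_{\uple{b}}(r)\neq 0$ exactly rules out tame contributions on the cover.
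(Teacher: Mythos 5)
Your argument is correct in outline, but you take a much longer route than the paper does and re-derive facts that are already available. The paper observes that since $\sheaf{H}_{\uple{\rho},r}$ is a direct summand of $\mcK_{\uple{b},r}$, and Lemma \ref{lm-tilde-psi} shows that $\mcK_{\uple{b},r}$ has \emph{all} breaks equal to $1/k$ at $\infty$ precisely when $P_{\uple{b}}(r)\neq 0$ (because all the coefficients $(r+b_1)^{1/k}+\sum_{i\geq 2}\eps_i\zeta_i(r+b_i)^{1/k}$ are then nonzero), the summand automatically inherits all breaks $1/k$: the breaks of a direct summand form a submultiset of the breaks of the ambient sheaf. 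Thus $\swan_\infty(\sheaf{H}_{\uple{\rho},r})=\rk(\sheaf{H}_{\uple{\rho}})/k$ immediately, and tameness at $0$ together with the Euler--Poincar\'e formula concludes; no analysis specific to $\sheaf{H}_{\uple{\rho}}$ is required. You instead recompute the Swan conductor of $\sheaf{H}_{\uple{\rho}}$ from scratch by passing to the tame degree-$k$ cover, decomposing into weight spaces of the $\rho_x$, and matching the additive-character coefficients against the defining function of $\widetilde{Z}$. This essentially reproves Lemma \ref{lm-tilde-psi} for the summand rather than for all of $\mcK$, and while it can be made to work, it hides a genuine subtlety: identifying the break decomposition of $\rho_x(\HYPK_k)$ at $\infty$ with the torus weight-space decomposition of $\rho_x$ requires knowing that the image of wild inertia lands inside a maximal torus of the monodromy group --- true here, but your parenthetical ``(suitably interpreted in the symplectic case)'' is sweeping the needed justification under the rug. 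The shortcut you missed --- a summand of a sheaf whose breaks are all $1/k$ has all breaks $1/k$ --- eliminates the entire weight-space analysis.
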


\begin{proof} 
  Note that the set where $P_{\uple{b}}$ doesn't vanish is indeed a
  dense open subset by Lemma~\ref{z-polynomial}.

  Let $r$ be such that $P_{\uple{b}}(r) \neq 0$. Then by proper base
  change, the stalk of $R^1 \pi_! \sheaf{H}_{\uple{\rho}}$ at $r$ is
  equal to $H^1_c( \Gg_{m,\bFq}, \sheaf{H}_{\uple{\rho},r})$.

  Because $P_{\uple{b}}(r)\neq 0$, Lemma~\ref{lm-tilde-psi} shows that
  the local monodromy representation at $\infty$ of
  $\mcK_{\uple{b},r}$ is isomorphic 
  to a sum of sheaves of the form
  $\mathcal L_\psi ( \alpha \cdot s^{1/k})$ for nonzero $\alpha$. Each
  sheaf $\mathcal L_\psi ( \alpha \cdot s^{1/k})$ has all breaks $1/k$
  at $\infty$, so the same is true for $\mcK_{\uple{b},r}$.  

  The sheaf $\sheaf{H}_{\uple{\rho},r}$ is a summand of
  $\mcK_{\uple{b},r}$, hence it also lisse on $\Gg_m$, tamely ramified
  at $0$, and has all breaks $1/k$ at $\infty$. Moreover, it also
  satisfies
$$
H^0_c(\Gg_{m,\bFq},\sheaf{H}_{\uple{\rho}})=
H^2_c(\Gg_{m,\bFq},\sheaf{H}_{\uple{\rho}})=0,
$$
and therefore
the Euler-Poincaré characteristic formula for a lisse sheaf on $\Gg_m$
implies that
$$
\dim H^1_c( \Gg_{m,\bFq}, \sheaf{H}_{\uple{\rho},r})=
-\chi(\Gg_{m,\bFq},\sheaf{H}_{\uple{\rho},r}) =
\swan_0(\sheaf{H}_{\uple{\rho},r})+\swan_{\infty}(\sheaf{H}_{\uple{\rho},r})\\
=\frac{1}{k}\rk(\sheaf{H}_{\uple{\rho}}).
$$
\end{proof}

In the next lemmas, we fix a point $\uple{b}$ in
$\Aa^{2l} - \mathcal V^\Delta$, and an index $i$ such that
$b_i\not=b_j$ for $j\not=i$.

We denote $\epsilon =-1$ if $1\leq i\leq l$, and $\epsilon=1$ if
$l+1\leq i\leq 2l$. For any character $\chi$, we denote $n_\chi$ the
multiplicity of $\chi$ in $\uple{\chi}$, which is $0$ if
$\chi\not\in \uple{\chi}$.

For an irreducible component
$$
\sheaf{H}_{\uple{\rho}}=\bigotimes_{x\in
  B}\rho_x(\HYPK_{k,\psi}(\uple{\chi}))(s(r+x))
$$
of $\mcK_{\uple{b}}$ (all are of this type by Lemma~\ref{lm-inj2}), we
denote
\begin{equation}\label{eq-mrho}
\sheaf{M}_{\uple{\rho}}= \bigotimes_{\substack{x\in B\\x\neq
    b_i}}\rho_x(\HYPK_{k,\psi}(\uple{\chi}))(s(r+x)).
\end{equation}
Since $\sheaf{M}_{\uple{\rho}}$ is tamely ramified at $0$, its local
monodromy representation at $s=0$ can be expressed as a sum of Jordan
blocks, which we write
$$
\bigoplus_{\eta} \mcL_{\eta} \otimes J(m_{\eta})
$$
where $\eta$ runs over a finite set of characters.

\begin{lemma}\label{lm-rank-with-local} 
With notation as above, 
the rank of the weight one part of $R^1 \pi_! \sheaf{H}_{\uple{\rho}}$
on the nonempty open set where $P_{\uple{b}}(r)\neq 0$ is equal to
$$
\sum_{\eta}\max( m_{\eta} - n_{\eta^\epsilon} ,0 ).
$$
\end{lemma}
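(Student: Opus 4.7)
The plan is to read the weight-one rank off the weight filtration of the $H^1_c$-stalk. Fix $r$ in the open set where $P_{\uple{b}}(r)\neq 0$, so that $\sheaf{H}_{\uple{\rho},r}$ is lisse on $\Gg_m$, geometrically irreducible, pure of weight $0$, and non-trivial (the last point follows from the multiplicity-one assumption on $b_i$, which forces the factor $\rho_{b_i}(\HYPK_k)(s(r+b_i))$ to be the non-trivial irreducible sheaf $\HYPK_k$ or its dual). The standard forget-supports long exact sequence for $j\colon\Gg_m\hookrightarrow\Pp^1$ then collapses to
$$0\to \sheaf{H}_{\uple{\rho},r}^{I(0)}\oplus \sheaf{H}_{\uple{\rho},r}^{I(\infty)}\to H^1_c(\Gg_{m,\bFq},\sheaf{H}_{\uple{\rho},r})\to H^1(\Pp^1_{\bFq},j_*\sheaf{H}_{\uple{\rho},r})\to 0,$$
whose rightmost term (the interior cohomology) is pure of weight $1$ by Deligne's theorem. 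Since Lemma~\ref{lm-rank-without-local} gives $\dim H^1_c=\rk(\sheaf{H}_{\uple{\rho}})/k$, computing the weight-one rank reduces to computing the two local invariant spaces $\sheaf{H}_{\uple{\rho},r}^{I(0)}$ and $\sheaf{H}_{\uple{\rho},r}^{I(\infty)}$.

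The $I(\infty)$-invariants vanish immediately: by Lemma~\ref{lm-tilde-psi}(1), at the point $(r,\uple{b})\in U$ the $I(\infty)$-representation of $\sheaf{K}_{\uple{b},r}$, and hence that of its summand $\sheaf{H}_{\uple{\rho},r}$, is a direct sum of sheaves $\mcL_{\widetilde{\psi}}(\alpha s^{1/k})$ with $\alpha\in\bFq^{\times}$ (the non-vanishing of each $\alpha$ follows from the defining conditions of $U$), each of which has break $1/k>0$ and therefore no inertia invariants. For the $I(0)$-invariants, the assumption that $b_i$ has multiplicity one in $\uple{b}$ forces the pair $(n_1,n_2)$ of Lemma~\ref{lm-inj2} attached to $b_i$ to be $(1,0)$ or $(0,1)$ according to whether $i\leq l$ or $i>l$, so that $\rho_{b_i}\in\{\mathrm{Std},\mathrm{Std}^{\vee}\}$. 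The map $s\mapsto s(r+b_i)$ is an automorphism of $\Gg_m$ fixing $0$ (since $r+b_i\neq 0$), so the local monodromy at $s=0$ of $\rho_{b_i}(\HYPK_k)(s(r+b_i))$ coincides with that of $\HYPK_k$ or $\HYPK_k^{\vee}$; by Proposition~\ref{pr-kl}(4) and the duality involution $\chi\mapsto\chi^{-1}$ this decomposes as
$$\bigoplus_\chi \mcL_{\chi^{-\epsilon}}\otimes J(n_\chi).$$

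Tensoring this with the given $I(0)$-decomposition $\bigoplus_\eta\mcL_\eta\otimes J(m_\eta)$ of $\sheaf{M}_{\uple{\rho}}$ yields the $I(0)$-representation of $\sheaf{H}_{\uple{\rho},r}$ as
$$\bigoplus_{\chi,\eta}\mcL_{\chi^{-\epsilon}\eta}\otimes\bigl(J(n_\chi)\otimes J(m_\eta)\bigr),$$
and a summand admits invariants only when $\chi^{-\epsilon}\eta=1$, i.e.\ $\chi=\eta^{\epsilon}$. Each such summand contributes $\min(n_{\eta^\epsilon},m_\eta)$ invariants, thanks to the standard fact that $J(a)\otimes J(b)$ decomposes under the unipotent monodromy into exactly $\min(a,b)$ Jordan blocks. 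Therefore $\dim\sheaf{H}_{\uple{\rho},r}^{I(0)}=\sum_\eta\min(n_{\eta^\epsilon},m_\eta)$; combined with $\rk(\sheaf{H}_{\uple{\rho}})/k=\rk(\sheaf{M}_{\uple{\rho}})=\sum_\eta m_\eta$, the weight-one rank equals
$$\sum_\eta m_\eta-\sum_\eta\min(n_{\eta^\epsilon},m_\eta)=\sum_\eta\max(m_\eta-n_{\eta^\epsilon},0),$$
as claimed. The principal bookkeeping obstacle is tracking the sign $\epsilon$: replacing $\rho_{b_i}=\mathrm{Std}$ by its dual inverts every character appearing in the $I(0)$-decomposition at $b_i$, and this must remain consistent with the final indexing by $\eta^{\epsilon}$.
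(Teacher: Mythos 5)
Your proof is correct and follows essentially the same route as the paper: you re-derive the fact the paper cites from [KMS, Lemma~4.22(2)] — that the weight-$<1$ part of $H^1_c$ is the direct sum of the local inertia invariants at the singular points — via the forget-supports long exact sequence and Deligne's purity of interior cohomology, and the subsequent computation of the $I(0)$- and $I(\infty)$-invariants is identical to the paper's. One small point: the asserted geometric irreducibility of the fiber $\sheaf{H}_{\uple{\rho},r}$ at a fixed $r$ does not immediately follow from Lemma~\ref{lm-inj2} (which concerns $\sheaf{H}_{\uple{\rho}}$ as a sheaf on $\Aa^2$) and needs a Goursat--Kolchin--Ribet argument using that the scalings $r+x$, $x\in B$, are pairwise distinct and nonzero when $P_{\uple{b}}(r)\neq 0$; but what the collapse of the exact sequence actually requires is only $H^0(\Gg_{m,\bFq},\sheaf{H}_{\uple{\rho},r})=0$, which already follows by duality from the $H^2_c$-vanishing of Lemma~\ref{lm-top-vanishing} applied to the tuple $\uple{b}$ with the two halves interchanged.
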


\begin{proof} 
  Because $b_i$ occurs with multiplicity one in $B$, the
  representation $\rho_{b_i}$ is necessarily the standard
  representation if $i \leq l$ or its dual if $i>l$
  (see~(\ref{eq-ni})), and in any case has rank $k$. This implies that
$$
\rk(\sheaf{H}_{\uple{\rho}})=k\rk(\sheaf{M}_{\uple{\rho}})
$$ 
and hence by Lemma~\ref{lm-rank-without-local}, we have
$$
\rk (R^1 \pi_! \sheaf{H}_{\uple{\rho}})= \rk(\sheaf{M}_{\uple{\rho}})
= \sum_{\eta} m_{\eta},
$$
so that it suffices to show that the weight $<1$ part of
$R^1 \pi_! \sheaf{H}_{\uple{\rho}}$ has the rank
$$
\sum_{\eta} \min(m_{\eta} ,n_{\eta^\epsilon}).
$$

To prove this, observe that the weight $<1$ part is the sum over the
singularities of the sheaf of the local monodromy invariants (see,
e.g.,~\cite[Lemma 4.22(2)]{KMS}). Because $\sheaf{H}_{\uple{\rho},r}$
is a summand of $\mcK_{\uple{b},r}$ which by Lemma~\ref{lm-tilde-psi}
has no nontrivial local monodromy invariants at $\infty$,
$\sheaf{H}_{\uple{\rho},r}$ has no nontrivial local monodromy
invariants at $\infty$.

If $i \leq l$, then the local monodromy representation at $0$ is given
by
\begin{align*}
  \sheaf{H}_{\uple{\rho},r} 
  = \sheaf{M}_{\uple{\rho}} \otimes \HYPK_{k,\psi}(\uple{\chi})
  (s(r+b_i)) &= \Bigl( \bigoplus_{\eta} \mcL_{\eta} \otimes J(m_{\eta})
               \Bigr) \otimes 
               \Bigl( \bigoplus_{\chi\in
               \uple{\chi}}\mcL_{\chi}\otimes J(n_{\chi}) \Bigr)\\
             & = \bigoplus_{\eta} \bigoplus_{\chi \in \uple{\chi} } \mcL_{\eta
               \chi}  \otimes J(m_{\eta}) \otimes J(n_\chi).
\end{align*}
The dimension of the invariant subspace of
$ \mcL_{\eta\chi}\otimes J(m_{\eta}) \otimes J(n_\chi) $ is zero
unless $\eta \chi=1$, in which case it is $\min(m_{\eta},n_\chi)$,
hence the result follows in that case.
If $l+1\leq i\leq 2l$, the same calculation applies, except that
$\mcL_{\chi^{-1}}$ appears instead of $\mcL_\chi$.
\end{proof}

The next lemma continues with the same notation.

\begin{lemma}\label{lm-injectivity-plus-epsilon} 
  Assume that $\uple{\chi}$ has \CGM.  
  Then the rank of the weight one part of
  $R^1 \pi_! \sheaf{H}_{\uple{\rho}}$ is at least two.
\end{lemma}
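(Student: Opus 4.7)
The plan is to apply Lemma~\ref{lm-rank-with-local}, which identifies the rank of the weight one part of $R^1\pi_!\sheaf{H}_{\uple{\rho}}$ with
\[
N(\uple{\rho}) := \sum_{\eta}\max(m_{\eta}-n_{\eta^{\epsilon}},0).
\]
The starting observation is the crude inequality $N(\uple{\rho}) \geq \rk(\sheaf{M}_{\uple{\rho}}) - k$, coming from $\sum_\eta m_\eta = \rk(\sheaf{M}_{\uple{\rho}})$ and $\sum_\eta n_{\eta^{\epsilon}} \leq \sum_\chi n_\chi = k$. Hence it suffices to show $\rk(\sheaf{M}_{\uple{\rho}}) \geq k + 2$, with residual small-rank cases needing an explicit computation.

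By Lemma~\ref{lm-inj2}, each factor of $\sheaf{M}_{\uple{\rho}} = \bigotimes_{x\neq b_i}\rho_x(\HYPK)$ corresponds to a representation $\rho_x$ of $G$ that is an irreducible summand of $\mathrm{Std}^{\otimes n_{1,x}}\otimes(\mathrm{Std}^{\vee})^{\otimes n_{2,x}}$. By Theorem~\ref{thm-geometric-monodromy}, under \CGM\ we have $G\in\{\SL_k,\Sp_k\}$. The basic fact I would use is that the central character of $G$ forces the trivial representation to occur in $\rho_x$ only when $n_{1,x}-n_{2,x}\equiv 0\pmod{k}$ (for $\SL_k$) or $n_{1,x}+n_{2,x}$ is even (for $\Sp_k$). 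In particular, any $x\in B$ of multiplicity one in $\uple{b}$ forces $\rho_x=\mathrm{Std}$ or $\mathrm{Std}^{\vee}$, of rank $k$.

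The main case, when $B\setminus\{b_i\}$ contains at least two indices of multiplicity one in $\uple{b}$ (which is automatic under the working hypothesis $\uple{b}\notin\mcW_1$ of Theorem~\ref{th-inj}), is now immediate: $\rk(\sheaf{M}_{\uple{\rho}})\geq k^{2}\geq k+2$ for $k\geq 2$, and the crude bound closes the argument. The hard part is the residual case when $B\setminus\{b_i\}$ contains at most one multiplicity-one index, so $\sheaf{M}_{\uple{\rho}}$ can have rank as small as $1$ or $k$, and $N(\uple{\rho})$ must be computed directly from the explicit description of the local monodromy at $0$. I would split this into $G=\SL_k$, where $\uple{\chi}$ not self-dual forces an asymmetry between the multiplicities $n_\chi$ and $n_{\chi^{-1}}$, and $G=\Sp_k$, where $\Lambda=1$ and the trivial dualizing character required by \CGM\ combine with the non-Kummer-induced property to produce the needed inequality. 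The principal obstacle is precisely this boundary case: in the $\Sp_k$/self-dual situation the naive asymmetry between $n_\chi$ and $n_{\chi^{-1}}$ vanishes, so one must exploit finer consequences of the \CGM\ hypothesis---specifically the compatibility between the central-character constraints on the trivial factors in $\sheaf{M}_{\uple{\rho}}$ and the non-Kummer-induced condition on $\uple{\chi}$---to show that no admissible $\uple{\rho}$ simultaneously produces both a small-rank $\sheaf{M}_{\uple{\rho}}$ and a vanishing contribution to $N(\uple{\rho})$.
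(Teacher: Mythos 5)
Your ``crude bound'' $N(\uple{\rho}) \geq \rk(\sheaf{M}_{\uple{\rho}}) - k$ is false, and the gap is in the step $\sum_\eta n_{\eta^{\epsilon}} \leq \sum_\chi n_\chi = k$. In Lemma~\ref{lm-rank-with-local} the index $\eta$ runs over the \emph{Jordan blocks} of the local monodromy of $\sheaf{M}_{\uple{\rho}}$ at $s=0$ (this is what the paper's proof of that lemma actually computes: the invariants of $J(m_\eta)\otimes J(n_\chi)$ are $\min(m_\eta,n_\chi)$, a single-block computation). Because $\sheaf{M}_{\uple{\rho}}$ is a tensor product of several sheaves with non-scalar tame monodromy at $0$, the same character appears in several Jordan blocks — already $J(a)\otimes J(b)$ decomposes as $J(a+b-1)\oplus J(a+b-3)\oplus\cdots$, all with the same eigenvalue. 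Hence the map $\eta\mapsto \eta^{\epsilon}$ is not injective on the index set of the sum, and $\sum_\eta n_{\eta^{\epsilon}}$ can be much larger than $k$.

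A concrete failure: take $k=2$, $\uple{\chi}=(1,1)$ (which has \CGM), $l=2$, $\uple{b}$ with four distinct coordinates, $i=1$, and $\rho_x=\mathrm{Std}$ for all $x$. Then $\sheaf{M}_{\uple{\rho}}$ has three rank-$2$ factors, so $\rk(\sheaf{M}_{\uple{\rho}})=8$, while its local monodromy at $0$ is $J(2)^{\otimes 3} = J(4)\oplus J(2)\oplus J(2)$, all unipotent. With $n_1=2$, the formula gives
$$
N(\uple{\rho}) = \max(4-2,0) + \max(2-2,0) + \max(2-2,0) = 2,
$$
which is exactly $2$, whereas your crude bound would assert $N\geq 8-2=6$. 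So the ``main case is now immediate'' step is not sound, even though the conclusion $N\geq 2$ happens to hold here. Your ``hard case'' discussion also misidentifies the obstruction: the difficulty is not that $\rk(\sheaf{M}_{\uple{\rho}})$ could be small (it cannot, under $\uple{b}\notin\mcW_1$), but that repeated characters among the Jordan blocks destroy the crude bound.

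The paper handles this by peeling off a \emph{second} multiplicity-one index $b_j$, forming $\sheaf{M}'_{\uple{\rho}}$, and for each Jordan block $\mcL_\theta\otimes J(r)$ of $\sheaf{M}'_{\uple{\rho}}$ tracking only the top piece $J(n_\chi+r-1)$ of $J(r)\otimes J(n_\chi)$ coming from the $b_j$-factor. That top block grows, and the \CGM\ assumption is then used (a) when $\theta$ is nontrivial, to find a $\chi$ with $n_\chi>n_{\chi^{\delta\epsilon}\theta^{\epsilon}}$ so the contribution is at least $r$, and (b) to dispose of the degenerate boundary case when $\sheaf{M}'_{\uple{\rho}}$ has trivial or pseudoreflection local monodromy — which is precisely where the contributions could otherwise be too small. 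None of that bookkeeping is avoidable by a one-line rank count.
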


\begin{proof} 
  By the previous lemma, it is enough to prove that
\begin{equation}\label{eq-rank-1}
  \sum_{\eta} \max( m_{\eta} - n_{\eta^\epsilon} ,0 ) \geq 2.
\end{equation}

  Since $\uple{b} \not \in \mcW_1$, there are at least three elements
  of $B$ that occur with multiplicity one, say $b_i$, $b_j$ and
  $b_{j'}$.

  Let $\delta = 1$ if $j \leq l$ and $\delta=-1$ if $j>l$, so that
  $\rho_{b_j}$ is the standard representation if $\delta=1$ and the
  dual representation if $\delta=-1$.

  Let
  $$
  \sheaf{M'}_{\uple{\rho}}= \bigotimes_{\substack{x\in B\\x\neq
      b_i,b_j}}\rho_x(\HYPK_{k,\psi}(\uple{\chi}))(s(r+x))
$$
so that
$$
\sheaf{M}_{\uple{\rho}} = \sheaf{M'}_{\uple{\rho}} \otimes \HYPK_{k,\psi}(\uple{\chi}) ( s
(r+b_j))
$$ 
if $\delta=1$ and
$$
\sheaf{M}_{\uple{\rho}}=\sheaf{M'}_{\uple{\rho}} \otimes \HYPK_{k,\psi}(\uple{\chi}) ( s
(r+b_j))^{\vee}
$$ 
if $\delta=-1$.
    
Let $\mcL_{\theta}\otimes J(r)$ be a Jordan block in the local
monodromy representation of $\sheaf{M'}_{\uple{\rho}}$ at $s=0$.  We
estimate the contribution from this factor in the local monodromy
representation~(\ref{eq-mrho}) of $\sheaf{M}_{\uple{\rho}}$.

This contribution contains a direct sum
\begin{equation}\label{eq-contrib-j}
\bigoplus_{\chi \in \uple{\chi}} \mcL_{\chi^\delta \theta} \otimes
J(n_\chi+r-1).
\end{equation}
If the character $\theta$ is nontrivial, then the tuple of characters
$\theta^{\epsilon}\uple{\chi}^{\epsilon \delta}$ cannot be equal to
$\uple{\chi}$, up to permutation because this would contradict the
\CGM\ assumption. Hence, there exists a character $\chi$ such that
$n_\chi > n_{ \chi^{\delta \epsilon } \theta^\epsilon}$, and therefore
the Jordan blocks~(\ref{eq-contrib-j}) include a character
$\eta=\chi^{\delta}\theta$ with $m_{\eta}>n_{\eta^{\epsilon}}$. Hence
these blocks have a contribution 
$$
\geq \min(n_{\chi}+r-1-n_{\chi^{\delta \epsilon} \theta^{\epsilon}},0)\geq r
$$ 
to the sum on the left-hand side of~(\ref{eq-rank-1}).

On the other hand, if $\theta$ is trivial, then the character $\chi$
with $n_{\chi}$ maximal contributes
$$
\geq \min(n_{\chi}+r-1-n_{\chi},0)=r-1.
$$
    
In particular, we obtain~(\ref{eq-rank-1}) except if the local
monodromy of $\sheaf{M'}_{\uple{\rho}}$ at zero consists of at most
one unipotent Jordan block of rank two, or of at most one nontrivial
character of rank one, plus a sum of any number of trivial
representations. This conditions means that local monodromy
representation of $\sheaf{M'}_{\uple{\rho}}$ at zero is either trivial
or is a pseudoreflection (unipotent or not).
  
In the first case, we have a sheaf with trivial local monodromy at $0$
that is expressed as a tensor product. Then all the tensor factors
must have scalar local monodromy at $0$. This is impossible here, since
one of the tensor factors is $\HYPK_{k,\psi}(\uple{\chi}) (s (r+b_{j'}))$ or its dual, and
the local monodromy of this sheaf is not scalar (because $k \geq 2$).
 
If the local monodromy representation is a pseudoreflection, then when it
is expressed as a tensor product, all but one of the tensor factors
must be one-dimensional, and the remaining factor must have local
monodromy that is given by a pseudoreflection times a scalar.  Again,
because one of the tensor factors is $\HYPK_{k,\psi}(\uple{\chi}) (s (r+b_{j'}))$ or its
dual, this must be the special factor, and this can only happen when
$k=2$ by Proposition~\ref{pr-kl}.  All the remaining tensor factors
are one-dimensional. But since the geometric monodromy group is
$\SL_2$ in that case (because $\uple{\chi}$ has \CGM), and the only
one-dimensional representation of $\SL_2$ is the trivial
representation, and this only appears in even tensor powers of the
standard representation, we conclude that all remaining factors must
have even multiplicity. This is a contradiction, since we have three
factors with multiplicity one, and the sum of the multiplicities is
$2l$, which is even.
%
%
\end{proof}

Now Theorem~\ref{th-inj} follows immediately from Lemma~\ref{lm-inj1}
and Lemma \ref{lm-injectivity-plus-epsilon}.

\section{Specialization statement}

We continue with the previous notation. Recall that
$X_{\infty}=\Aa^{2l}-\mcV^{\Delta}$ and that $X_j$ is defined in
Definition~\ref{def-xj}. We recall that we have the projection
$f\colon U\to \Aa^{2l}$.

\begin{lemma}\label{strata-finite-etale}
For each $j$, the subvariety $X_j$ is closed in $X_{\infty}$.

For each irreducible component $X$ of $X_j$ that intersects the
characteristic zero part, the morphism
$$
f\colon Z\cap f^{-1}(X- X \cap X_{j-1} )\to X- X\cap X_{j-1}
$$
is finite étale.
\end{lemma}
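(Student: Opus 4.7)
The projection $f\colon Z\to X_\infty$ is quasi-finite by Lemma~\ref{z-polynomial}, since the defining polynomial $P_{\uple b}$ is non-zero in $r$ for every $\uple b\in X_\infty$. Upper-semicontinuity of the geometric fibre cardinality $\uple b\mapsto |Z_{\uple b}|$ is a standard fact for an algebraic family of non-zero univariate polynomials: the number of distinct geometric roots can only drop under specialisation, as roots may coalesce or escape to infinity. For a concrete justification, compactify in the $r$-direction to $\bar Z\subset\Pp^1\times\Aa^{2l}$; the projection $\bar Z\to\Aa^{2l}$ is proper (hence finite over $X_\infty$), giving upper-semicontinuity of $|\bar Z_{\uple b}|$, and the boundary $Z^\infty := \bar Z\setminus Z$, lying in $\{\infty\}\times\Aa^{2l}$, contributes $|Z^\infty_{\uple b}|\in\{0,1\}$ in a controlled way. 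It follows that $X_j = \{|Z_{\uple b}|\leq j\}$ is closed in $X_\infty$.

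For the second assertion, fix an irreducible component $X$ of $X_j$ meeting the characteristic zero fibre and set $V = X - X\cap X_{j-1}$; on $V$ the fibre cardinality is constantly $j$. \emph{Finiteness.} The constancy $|Z_{\uple b}|=j$ on $V$, together with upper-semicontinuity of $|\bar Z_{\uple b}|$ for the proper morphism $\bar Z\to\Aa^{2l}$, forces $|Z^\infty_{\uple b}|$ to be locally constant on $V$; no fibre point of $Z$ escapes to $r=\infty$ as $\uple b$ varies, and $f\colon Z\cap f^{-1}(V)\to V$ is proper, hence (being quasi-finite) finite.

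\emph{Étaleness.} At the generic point $\eta$ of $X$, which lies in characteristic zero, the polynomial $P_{\uple b_\eta}(r)$ is automatically separable, so its $j$ distinct geometric roots are simple and $\partial_r P\ne 0$ on $Z\cap f^{-1}(\eta)$; hence $f$ is étale at $\eta$, and the étale locus is open in $Z\cap f^{-1}(V)$. If $f$ were non-étale at some $(r_0,\uple b_0)\in Z\cap f^{-1}(V)$, then $r_0$ would be a multiple root of $P_{\uple b_0}$. Since $P_{\uple b}$ is separable at $\eta$, the discriminant locus (where $P_{\uple b}$ has multiple roots) is a proper closed subset of $X$; a generic deformation of $\uple b_0$ inside $V$ lands outside this locus, splitting $r_0$ into more than one simple root and strictly increasing the fibre cardinality above $j$. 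This contradicts the constancy of $|Z_{\uple b}|$ on $V$, so $f$ is étale throughout $Z\cap f^{-1}(V)$. The main subtlety lies in establishing finiteness via the compactification-plus-semicontinuity device; étaleness then follows from a short transversality/contradiction argument using characteristic-zero separability and the same semicontinuity.
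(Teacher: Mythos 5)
Your closedness and finiteness arguments, while phrased via the compactification $\bar Z\subset\Pp^1\times\Aa^{2l}$ rather than the paper's direct root-counting over a specialisation, are essentially sound and form a valid alternative route; the observation that the leading coefficient can only vanish (so the degree can only drop and $|Z^\infty_{\uple b}|$ can only increase) under specialisation is what makes the boundary contribution ``controlled,'' and you would do well to state that monotonicity explicitly rather than leave it implicit.

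The étaleness step, however, contains a genuine gap. Your key assertion is that ``at the generic point $\eta$ of $X$, which lies in characteristic zero, the polynomial $P_{\uple b_\eta}(r)$ is automatically separable.'' This is false as stated: characteristic zero guarantees that \emph{irreducible} polynomials are separable, not that a specialisation of a multivariable polynomial is squarefree. Indeed, take $P(r,a,b)=(r-a)(r-b)$ (squarefree over $\Zz[a,b]$) and $X=\{a=b\}$: the specialisation $P_\eta=(r-a)^2$ has a double root at the characteristic-zero generic point of $X$. In the paper's situation this phenomenon occurs precisely on the non-top strata $X_j$, where several of the explicit factors $(r+b_i)$ and $\sum_i\zeta_i(r+b_i)^{1/k}$ of $P_{\uple b}$ coincide identically along $X$. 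Consequently the discriminant locus of $P$ in the variable $r$ may contain all of $X$, and your deformation-to-separability contradiction never gets off the ground. The statement you actually need is that the \emph{reduced} scheme $(Z\cap f^{-1}(V))_{\mathrm{red}}$ has reduced fibre over $\eta$; this follows from the fact that the generic fibre of a reduced finite scheme over an integral base is a localisation, hence reduced, and reduced plus characteristic zero gives separable. But that is a different argument (localisation preserving reducedness plus finiteness plus a hypersurface/monicity hypothesis), and once you have isolated it you are essentially replicating what the paper does instead: work over the strict Henselian local ring at a point of $V$, observe that the equality of root counts over the generic and special points forces the polynomial to be monic and to factor as $\prod_i(x-\beta_i)^{m_i}$ with pairwise distinct reductions $\bar\beta_i$, and conclude that the reduced vanishing locus is a disjoint union of sections of $\Spec R$, hence finite étale.
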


\begin{proof} These claims follow from Lemma
  \ref{z-polynomial}. Indeed, $Z$ is the solution set of a family of
  nonzero polynomials in one variable indexed by points of
  $X_{\infty} = \Aa^{2l} - \mcV^{\Delta}$. The set $X_j$ is
  constructible, so to show it is closed it suffices to show that it
  is closed under specialization. The polynomial factorizes completely
  over any geometric generic point into one distinct factor for each
  root, raised to some power, and each factor has at most one root
  over the special point, so the number of roots over the special
  point is at most the number of roots over the generic point, as
  desired.

  To check that $Z\cap f^{-1}(X- X \cap X_{j-1} ) $ is finite
  \'{e}tale over $X- X \cap X_{j-1} $, we consider the polynomial
  $P(r)$ over the \'{e}tale local ring of a point of
  $X- X \cap X_{j-1}$, which is an integral strict Henselian local
  ring, and use the fact that the polynomial has the same number of
  roots over the special point and over the generic point. By the
  previous discussion each linear factor over the geometric generic
  point must admit a root over the residue field, which means the
  polynomial is monic. Because it is monic, and the ring is strict
  henselian, we can factor it into a product of irreducible factors,
  each with exactly one root in the residue field. Over the generic
  point each such factor will have only one root in the residue field,
  hence have only one root in the fraction field. Therefore, because
  the generic point has characteristic zero, so all polynomials are
  separable, each such factor is a power of $(x-\alpha)$ where
  $\alpha$ is its unique root, so the polynomial is a product of
  linear factors, with at most one distinct linear factor with each
  possible root in the residue field, hence its vanishing set is the
  disjoint union of the vanishing sets of these linear factors and
  thus is finite \'{e}tale.
\end{proof}

Fix $j\geq 0$. Let $X\subset X_j\subset \Aa^{2l}$ be an irreducible
component of $X_{j}$ over $\Zz$ which intersects the characteristic
zero part. We consider a finite field $\Fq$ of characteristic $p>2k+1$
such that $X_{\bFq}$ is irreducible and nonempty.

\begin{lemma}\label{strata-tame} 
  Let $\uple{\chi}$ be a $k$-tuple of characters of $\Fqt$.  The sheaf
  $\mcR^*|(U\cap f^{-1}(X_{\bFq} - X_{\bFq} \cap X_{j-1}))$ is tamely
  ramified around the divisor $Z \cup \{\infty\}$.
\end{lemma}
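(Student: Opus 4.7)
The plan is to reduce the tameness of $\mcR^*$ to that of $\mcR$, lift the question to characteristic zero via the universal sheaf of Proposition~\ref{pr-integrality}, and spread tameness back to characteristic $p$ by semicontinuity of the Swan conductor.

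First, I would observe that it suffices to prove tameness of $\mcR$ itself (rather than of its weight-one quotient $\mcR^*$), since tameness passes to subquotients of $\bQl$-sheaves: the wild inertia at any codimension one point is a pro-$p$ group with $p\not=\ell$, and trivial action on a stalk propagates to every subquotient. The target becomes: $\mcR$ restricted to $U\cap f^{-1}(X_{\bFq}-X_{\bFq}\cap X_{j-1})$ is tamely ramified around $Z\cup\{\infty\}$.

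Next, I would invoke Proposition~\ref{pr-integrality} to realise $\mcR(\uple{\chi})$ as the $\Fq$-fiber of a universal sheaf $\mcR^{univ}$ on $\Aa^{1+2l}_{S}$, where $S=\Spec(\Zz[\mmu_n,1/n\ell])$, with $\mcR^{univ}$ lisse on $U_S$. Since $X$ is defined over $\Zz$ and meets the characteristic zero fiber, the open stratum $X-X\cap X_{j-1}$ has nonempty generic fiber over $\Spec\Qq$. After compactifying the $r$-coordinate by $\Pp^1$, the locus $D=Z\cup\{r=\infty\}$ becomes a relative divisor on $\Pp^1_{X-X\cap X_{j-1}}$ which, by the previous lemma together with the étaleness of the section at $r=\infty$, is finite étale over the base; and $\mcR^{univ}$ is the extension by zero of a lisse sheaf from the complement of $D$.

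In characteristic zero the absolute inertia at any closed point of a smooth curve is isomorphic to $\hat{\Zz}(1)$ and has trivial wild part, so the restriction of $\mcR^{univ}$ to any geometric fiber over $(X-X\cap X_{j-1})_{\Qq}$ is automatically tame along $D$, with vanishing Swan conductor at each point. I would then apply Deligne's semicontinuity theorem~\cite[Corollary 2.1.2]{LaumonSMF} to the total Swan conductor function of $\mcR^{univ}$ along $D$ viewed as a function on the base $X-X\cap X_{j-1}$ over $S$: this function is upper semicontinuous, so its vanishing locus is open, and by the previous step it contains the entire generic fiber over $S$. A constructible subset of a finite type $S$-scheme that is disjoint from the generic fiber has image in $S$ supported on finitely many closed points; hence, for $p$ outside this finite exceptional set of residue characteristics---which is controlled in terms of $k$ and $l$ through the defining equations---the Swan conductor vanishes on all of $X_{\Fq}-X_{\Fq}\cap X_{j-1}$, giving the desired tameness of $\mcR$, and a fortiori of $\mcR^*$.

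The main technical point is the applicability of~\cite[Corollary 2.1.2]{LaumonSMF} in the relative form used here, over a mixed-characteristic base with a horizontal divisor $D$ finite étale over the base and a sheaf lisse on its complement; the two essential inputs---the lisseness of $\mcR^{univ}$ on $U_S$ and the finite étaleness of $D\to X-X\cap X_{j-1}$---are provided respectively by Proposition~\ref{pr-integrality} and the previous lemma, so the hypotheses are satisfied.
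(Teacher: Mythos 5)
Your proof takes a genuinely different route from the paper, which settles tameness in a single stroke via Abhyankar's Lemma~\cite[Exp. XIII, \S 5]{sga1}: once $\mcR^{univ}$ is known to be lisse on the complement of a divisor finite \'etale over a base that is flat over $S$ with characteristic-zero generic fiber, Abhyankar's Lemma (together with Zariski--Nagata purity) yields tameness along that divisor over \emph{every} residue field of $S$. You instead argue via semicontinuity of the Swan conductor, which is a reasonable idea, but there are two issues to be aware of.

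First, the direction of semicontinuity you assert is the opposite of what Laumon proves. Laumon's Th\'eor\`eme 2.1.1 (not Corollary 2.1.2, which is the consequence about lisseness of $R^i f_!$) shows that the total dimension function is \emph{lower} semicontinuous in the sense that it can only drop under specialization: for example, for $\mcL_\psi(tx)$ on $\Aa^1_x \times \Aa^1_t$ compactified in $x$, the Artin conductor at $\infty$ is $2$ for $t\neq 0$ and drops to $1$ at $t=0$, so it is not upper semicontinuous. With the correct direction, your argument would actually give the vanishing of the Swan conductor on the whole stratum, matching the paper's conclusion. With the direction you state (vanishing locus open), one can still rescue the argument using just the \emph{constructibility} of the Swan conductor function, but this only yields tameness for $p$ outside a finite exceptional set, which is a strictly weaker statement than the lemma claims (though sufficient for its downstream use in Proposition~\ref{pr-specialization}, where $p$ is taken large anyway).

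Second, Laumon's theorem is usually stated for a relative curve over a base scheme in a single characteristic; applying it directly to the mixed-characteristic family over $S = \Spec(\Zz[\mmu_n,1/n\ell])$ requires a brief justification of the scope of the theorem. The Abhyankar's Lemma route in the paper avoids this: the lisseness of $\mcR^{univ}$ on $U_S$ and the \'etaleness of the boundary divisor over the stratum are exactly the regularity inputs required, and the conclusion comes with no restriction on $p$ beyond the standing assumptions.
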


\begin{proof} 
  Let $n$ be the lcm of the orders of the characters $\chi_i$.  By the
  remarks before Proposition~\ref{pr-integrality}, there exists a
  tuple $\widetilde{\uple{\chi}}$ of characters of $\mmu_n$ such that
  $\uple{\chi}$ is associated to this tuple. Let
  $\mcR^{univ}(\widetilde{\uple{\chi}})$ be the sheaf over
  $\Zz[\mmu_n,1/(n\ell)]$ given by Proposition~\ref{pr-integrality}.
  This sheaf $\mcR^{univ}(\widetilde{\uple{\chi}})$ is lisse on the
  open set $U\cap f^{-1}(X_j - X \cap X_{j-1})$, whose complement is
  the étale divisor $Z \cup \{\infty\}$. Hence, by Abyankhar's
  Lemma~\cite[Exposé XIII, \S 5]{sga1}, the sheaf
  $\mcR^{univ}(\widetilde{\uple{\chi}})$ is tamely ramified, and hence
  so is 
$$
\mcR^{univ}(\widetilde{\uple{\chi}}) | \Aa^{1+2l}_{\Fq} =
\mcR(\uple{\chi}),
$$
and also $\mcR^*(\uple{\chi})$.
\end{proof}

\begin{proposition}\label{pr-specialization}
  Let $\eta$ be the generic point of $X_{\bFq}$, and let $\bar{\eta}$
  be a geometric generic point over $\eta$.  Let $\uple{\chi}$ be a
  $k$-tuple of characters of $\Fqt$ with Property \CGM.  Suppose that
  $$
  \dim \End_{U_{\bar{\eta}}}(\mcR^*_{\bar{\eta}})
  =
  \dim \End_{V_{\bar{\eta}}}(\mcK_{\bar{\eta}}).
  $$
  Let $\uple{b}\in X(\Fq)$ such that $\uple{b}\notin X_{j-1}$ and
  $\uple{b}\notin \mcW_1$.  Then we have
  $$
  \dim \End_{U_{\uple{b}}}(\mcR^*_{\uple{b}}) =
  \dim \End_{V_{\uple{b}}}(\mcK_{\uple{b}}).
  $$
\end{proposition}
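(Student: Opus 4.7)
The plan is as follows. Since $\uple{b}\notin\mcW_{1}$, Theorem~\ref{th-inj} gives that $\theta_{\uple{b}}$ is injective. As $\uple{b}$ belongs to the irreducible component $X$, the whole of $X$ is not contained in $\mcW_{1}$, so $\bar{\eta}\notin\mcW_{1}$ and the same theorem shows that $\theta_{\bar{\eta}}$ is injective too; together with the dimensional hypothesis this upgrades $\theta_{\bar{\eta}}$ to an isomorphism. It will therefore suffice to establish
\[
\dim\End_{V_{\uple{b}}}(\mcK_{\uple{b}})=\dim\End_{V_{\bar{\eta}}}(\mcK_{\bar{\eta}})\quad\text{and}\quad\dim\End_{U_{\uple{b}}}(\mcR^{*}_{\uple{b}})=\dim\End_{U_{\bar{\eta}}}(\mcR^{*}_{\bar{\eta}}),
\]
which I would do by showing that the first dimension is upper semicontinuous and the second is locally constant on the open stratum $X-X_{j-1}$.

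For the first statement, $\mcK$ is lisse on $V$ by construction, so $\dim\End_{V_{\uple{b}}}(\mcK_{\uple{b}})$ equals the dimension of the centralizer, in the endomorphism algebra of a fixed stalk, of the image of $\pi_{1}(V_{\uple{b}})$ in $\pi_{1}(V)$ under the monodromy of $\mcK$. Since this image can only shrink under specialization from $\bar{\eta}$ to $\uple{b}$ inside $X$, the centralizer can only grow, giving $\dim\End_{V_{\uple{b}}}(\mcK_{\uple{b}})\geq\dim\End_{V_{\bar{\eta}}}(\mcK_{\bar{\eta}})$.

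For the second statement, which is the core of the argument, I would work on the relative open curve $f\colon U\cap f^{-1}(X-X_{j-1})\to X-X_{j-1}$. By Lemma~\ref{strata-finite-etale}, the singular divisor $Z\cap f^{-1}(X-X_{j-1})$ of $\mcR^{*}$ is finite étale over the base, so the number of punctures in each fiber is constant; by Lemma~\ref{strata-tame}, $\mcR^{*}$ is tame around this divisor, hence so is $\End(\mcR^{*})$, whose Swan conductors all vanish. Under precisely these conditions, Deligne's semicontinuity theorem, in the form given by Laumon~\cite{LaumonSMF}, forces the sheaves $R^{i}f_{!}\End(\mcR^{*})$ to be lisse on $X-X_{j-1}$. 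Poincaré--Verdier duality on each fiber gives
\[
\End_{U_{\uple{b}}}(\mcR^{*}_{\uple{b}})=H^{0}(U_{\uple{b}},\End(\mcR^{*}))\simeq H^{2}_{c}(U_{\uple{b}},\End(\mcR^{*})(1))^{\vee},
\]
so the lisseness of $R^{2}f_{!}\End(\mcR^{*})(1)$ translates into the desired local constancy of $\dim\End_{U_{\uple{b}}}(\mcR^{*}_{\uple{b}})$ on $X-X_{j-1}$.

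Combining both statements with the injectivity of $\theta_{\uple{b}}$ and the hypothesis at $\bar{\eta}$ yields
\[
\dim\End_{V_{\uple{b}}}(\mcK_{\uple{b}})\leq\dim\End_{U_{\uple{b}}}(\mcR^{*}_{\uple{b}})=\dim\End_{U_{\bar{\eta}}}(\mcR^{*}_{\bar{\eta}})=\dim\End_{V_{\bar{\eta}}}(\mcK_{\bar{\eta}})\leq\dim\End_{V_{\uple{b}}}(\mcK_{\uple{b}}),
\]
so all four quantities must coincide; together with injectivity of $\theta_{\uple{b}}$ this forces it to be an isomorphism. The hard part is the use of Deligne's semicontinuity, which is only legitimate because $\mcR^{*}$ is tame along the étale singular divisor---a property obtained through the integrality construction of Proposition~\ref{pr-integrality} and Abhyankar's lemma in the proof of Lemma~\ref{strata-tame}.
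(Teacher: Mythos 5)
Your skeleton is close to the paper's: both arguments hinge on (i) local constancy of $\dim\End_{U_{\uple{b}}}(\mcR^*_{\uple{b}})$ on the stratum $X_j-X_{j-1}$, proved via Deligne's semicontinuity theorem together with Lemmas~\ref{strata-finite-etale} and \ref{strata-tame}, and (ii) a comparison with $\dim\End_{V_{\uple{b}}}(\mcK_{\uple{b}})$. Your sandwich trick, using the injectivity of $\theta_{\uple{b}}$ from Theorem~\ref{th-inj} to replace the second claimed equality with a one-sided inequality, is a nice simplification of the paper's chain of seven equalities through the two auxiliary sheaves $\mcE=R^2f_!(\mcR^*\otimes\mcR^{*,\vee})$ and $\widetilde{\mcE}=R^4g_!(\mcK\otimes\mcK^{\vee})$. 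However, there is a real gap in the one place you diverge from the paper.

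The claim that ``the image of $\pi_1(V_{\uple{b}})$ in $\pi_1(V)$ under the monodromy of $\mcK$ can only shrink under specialization'' is not a valid general principle. $V$ is an open subset of $\Aa^{2+2l}$ and the fibers $V_{\uple{b}}$ are open curves whose topology (the number of punctures) depends on the multiset of distinct values among $b_1,\dots,b_{2l}$; there is no abstract reason for the fiber monodromy group to decrease under specialization, and without a constraint on how the $b_i$ collide, the fiber $\pi_1$ could degenerate in either direction. What actually saves the argument --- and what the paper uses explicitly in the proof of claim (a') --- is that over $X_j - X_{j-1}$ the divisor $Z$ is finite \'etale (Lemma~\ref{strata-finite-etale}) and contains $\{-b_1,\dots,-b_{2l}\}$, so $b_i$'s that are distinct at $\bar{\eta}$ remain distinct at $\uple{b}$. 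The paper then checks directly, via Goursat--Kolchin--Ribet and the \CGM\ hypothesis, that the geometric monodromy group of $\mcK\otimes\mcK^\vee$ over $V\times_{\Aa^{2l}}(X_j-X_{j-1})$ coincides with that over the single fiber $V_{\uple{b}}$, yielding equality of the endomorphism dimensions --- constancy, not mere semicontinuity. You need some version of this to justify $\dim\End_{V_{\uple{b}}}(\mcK_{\uple{b}})\geq\dim\End_{V_{\bar{\eta}}}(\mcK_{\bar{\eta}})$; the ``shrinking monodromy'' heuristic as stated is not one of them. A minor additional remark: your invocation of Theorem~\ref{th-inj} at the geometric generic point $\bar{\eta}$ is not literally covered by its statement (which is for $\Fq$-points), but this step is in any case unnecessary for your sandwich.
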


\begin{proof}
Consider the sheaf
$$
\mcE=R^2f_!(\mcR^*\otimes\mcR^{*,\vee})
$$
on $\Aa^{2l}_{\Fq}$. We claim that
\begin{enumerate}
\item[(a)] The restriction of $\mcE$ to $X_j-X_{j-1}$ is lisse.
\item[(b)] We have an isomorphism
$$
\mcE_{\bar{\eta}}\simeq \End_{U_{\bar{\eta}}}(\mcR^*_{\bar{\eta}})(-1).
$$
\item[(c)] We have an isomorphism
$$
\mcE_{\uple{b}}\simeq \End_{U_{\uple{b}}}(\mcR^*_{\uple{b}})(-1).
$$
\end{enumerate}
Moreover, let $g\colon V\to \Aa^{2l}$ be the map $(r,s,\uple{b})\mapsto
\uple{b}$ over $\Zz$ and
$$
\widetilde{\mcE}=R^4g_!(\mcK\otimes\mcK^{\vee})
$$
on $\Aa^{2l}_{\Fq}$. We claim that
\begin{enumerate}
\item[(a')] The restriction of $\widetilde{\mcE}$ to $X_j-X_{j-1}$ is
  lisse.
\item[(b')] We have an isomorphism
$$
\widetilde{\mcE}_{\bar{\eta}}\simeq \End_{V_{\bar{\eta}}}(\mcK_{\bar{\eta}})(-1).
$$
\item[(c')] We have an isomorphism
$$
\widetilde{\mcE}_{\uple{b}}\simeq \End_{V_{\uple{b}}}(\mcK_{\uple{b}})(-1).
$$
\end{enumerate}

Assuming these facts, we have
\begin{multline*}
 \dim \End_{U_{\uple{b}}}(\mcR^*_{\uple{b}}) = \dim \mcE_{\uple{b}} = \dim \mcE_{\bar{\eta}} = \dim \End_{U_{\bar{\eta}}}(\mcR^*_{\bar{\eta}})\\ = \dim  \End_{V_{\bar{\eta}}}(\mcK_{\bar{\eta}}) = \dim \widetilde{\mcE}_{\bar{\eta}} = \dim \widetilde{\mcE}_{\uple{b}} = \dim \End_{V_{\uple{b}}}(\mcK_{\uple{b}}) 
 \end{multline*}
 with the identities following from respectively (c), (a), (b), the
 assumption, (b'), (a'), and (c'). (In particular, when we apply
 assumption (a) and (a'), we use the fact that $\uple{b}$ is a
 specialization of $\bar{\eta}$, hence they lie on the same connected
 component of $X_j - X_{j-1}$, and so any lisse sheaf on
 $X_j - X_{j-1}$ has equal ranks at these two points.)


We now prove the claims. The assertions (b)/(b') and (c)/(c') follow
from the proper base change theorem, Poincaré duality, and semisimplicity.

Assertion (a) is a consequence of Deligne's semicontinuity theorem and
the tameness of $\mcR^*$. Specifically, by
Lemma~\ref{strata-finite-etale}, we know that $U$, over
$X_{\Fq}-(X_{\Fq} \cap X_{j-1})$, is the complement of a finite
\'{e}tale divisor inside a morphism smooth and proper of relative
dimension one, and $\mcR^* \otimes \mcR^{*,\vee}$ is a lisse sheaf on
it. By Lemma \ref{strata-tame}, the Swan conductor of
$\mcR^* \otimes \mcR^{*,\vee}$ at this divisor vanishes, and so by
Deligne's semicontinuity theorem \cite[Corollary 2.1.2]{LaumonSMF} the
cohomology sheaf is lisse.

Assertion (a'): Let $Y = X_{\Fq}-(X_{\Fq} \cap X_{j-1})$. Then
$\mcK \otimes \mcK^\vee$ is lisse on $V \times_{\Aa^{2l}} Y$. Let
$\left( \mcK \otimes \mcK^\vee\right)^{\pi_1 ( V \times_{\Aa^{2l}}
  Y)}$ be its (geometric) monodromy invariants. Then there is a
natural map
$$
\left( \mcK \otimes \mcK^\vee\right)^{\pi_1 ( V \times_{\Aa^{2l}} Y)}
\to \mcK \otimes \mcK^\vee
$$
over $V \times_{\Aa^{2l}} Y$, where we interpret
$\left( \mcK \otimes \mcK^\vee\right)^{\pi_1 ( V \times_{\Aa^{2l}}
  Y)}$ as a constant sheaf. This induces by functoriality a map
$$
R^4 g_! \left( \mcK \otimes \mcK^\vee\right)^{\pi_1 ( V
  \times_{\Aa^{2l}} Y)} \to R^4 g_! \mcK \otimes \mcK^\vee
$$ 
over $Y$. Because $V$ is an open subset of $\Aa^{2l+2}$ whose fibers
under $g$ are all nonempty, the top cohomology of a constant sheaf
along $g$ is a constant sheaf, so this gives a map
$$
\left( \mcK \otimes \mcK^\vee\right)^{\pi_1 ( V \times_{\Aa^{2l}} Y)}
\to R^4 g_! \mcK \otimes \mcK^\vee.
$$

We claim that this last map is an isomorphism. It is sufficient to
check this on the stalk at each point $\bfb$. To do this, first check
that the monodromy group of $\mcK \otimes \mcK^\vee$ over
$V \times_{\Aa^{2l}} Y$ is equal to the monodromy of the same sheaf on
$V_{\bfb}$. This can be done using Goursat-Kolchin-Ribet, since
$\uple{\chi}$ has \CGM\ and $p>2k+1$. We also use the fact that,
because $Z$ is finite etale over $Y$, and $Z$ includes
$\{-b_1,\dots,-b_{2l} \}$, no $b_i,b_j$ that are distinct generically
on the $Y$ stratum can become equal at any point of $Y$.

Next observe that this map is simply the natural map from the
monodromy invariants of $\mcK \otimes \mcK^{\vee}$ to the monodromy
coinvariants of $\mcK \otimes \mcK^\vee$. Because the monodromy is
semisimple, it is an isomorphism.
\end{proof}

\section{Diophantine preliminaries for the proof of the generic
  statement}
\label{sec-dio}

This section uses independent notation from the rest of the paper. In
particular, we will use the letter $k$ to denote finite fields.

We will use the following variant of the Diophantine Criterion for
irreducibility of Katz (compare~\cite[p. 25]{mmp} and~\cite[Lemma
4.14]{KMS}).

\begin{lemma}\label{lm-dio}
  Let $w$ be an integer.  Let $X$ be a geometrically irreducible
  separated scheme of finite type over a finite field $k$, and let $U$
  be a normal open dense subset of $X$. Let $\ell$ be a prime
  different from the characteristic of $k$. Let $\sheaf{F}$ be an
  $\ell$-adic sheaf on $X$, mixed of weights $\leq w$ on $X$, and
  lisse and pure of weight $w$ on $U$. We have then
\begin{equation}\label{eq-dio-irred}
\dim \End_{\pi_1(U\times\bFq)}(\sheaf{F}|U)= \limsup_{\nu\to+\infty}
\frac{1}{|k|^{\nu(\dim(X_{\bFq})+w)}}\sum_{x\in X(k_{\nu})}
|t_{\sheaf{F}}(x;k_{\nu})|^2,
\end{equation}
where $k_{\nu}$ is the extension of $k$ of degree $\nu$ in a fixed
algebraic closure.
\par
In particular, if the right-hand side of the formula above is equal to
$1$, then $\mcF|U$ is geometrically irreducible.
\end{lemma}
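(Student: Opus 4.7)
My plan is to apply the Grothendieck--Lefschetz trace formula to $|t_{\mcF}|^2$, interpreted as the trace function of the endomorphism sheaf on $U$, and identify the dominant cohomological contribution with $\End_{\pi_1(U\times \bFq)}(\mcF|U)$. Set $d=\dim X_{\bFq}$ and $D = \dim \End_{\pi_1(U\times\bFq)}(\mcF|U)$. First, I decompose $X(k_\nu) = U(k_\nu) \sqcup Z(k_\nu)$ with $Z = X\setminus U$. Since $\mcF$ has bounded rank and is mixed of weights $\leq w$, and $\dim Z \leq d-1$, the trivial bound gives
\[
\sum_{x\in Z(k_\nu)} |t_{\mcF}(x;k_\nu)|^2 \ll |k|^{\nu(d-1+w)},
\]
which is absorbed into the error after dividing by $|k|^{\nu(d+w)}$. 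On $U$, purity of weight $w$ yields $\overline{t_{\mcF}(x;k_\nu)} = |k|^{\nu w}\,t_{\mcF^{\vee}}(x;k_\nu)$, hence the pointwise identity $|t_{\mcF}(x;k_\nu)|^2 = |k|^{\nu w}\,t_{\End(\mcF|U)}(x;k_\nu)$.

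Next, I apply Grothendieck--Lefschetz to the lisse sheaf $\End(\mcF|U) = \mcF\otimes\mcF^{\vee}$, pure of weight $0$ on $U$. By Deligne's Weil~II bound, $H^i_c(U_{\bFq},\End(\mcF|U))$ is mixed of weights $\leq i$, so all terms with $i<2d$ contribute at most $O(|k|^{\nu(d-1/2)})$ to the alternating sum. For the top cohomology, I use Poincaré duality: passing to the smooth locus $U'\subset U$ of the normal variety (whose complement has codimension $\geq 2$ by Serre's criterion, so does not affect $H^{2d}_c$), duality on a smooth geometrically connected variety of dimension $d$ yields
\[
H^{2d}_c(U_{\bFq},\End(\mcF|U)) \cong \End(\mcF|U)_{\pi_1(U\times\bFq)}(-d).
\]
Because $\mcF|U$ is pure, its geometric monodromy representation is semisimple (Deligne, Weil~II), so $\End(\mcF|U)$ is a semisimple $\pi_1$-module and its coinvariants agree with its invariants $\End_{\pi_1(U\times\bFq)}(\mcF|U)$, a space pure of weight $0$ of dimension $D$. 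Combining gives
\[
\frac{1}{|k|^{\nu(d+w)}}\sum_{x\in X(k_\nu)}|t_{\mcF}(x;k_\nu)|^2 = \tr\bigl(\Frob_{k_\nu}\mid \End_{\pi_1(U\times\bFq)}(\mcF|U)\bigr) + O(|k|^{-\nu/2}).
\]

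The remaining — and, I expect, the most delicate — step is to verify that
\[
\limsup_{\nu\to\infty} \tr(\Frob_{k_\nu}\mid V) = \dim V
\]
for any $\bQl$-representation $V$ of $\Gal(\bar{k}/k)$ pure of weight $0$. Writing the Frobenius eigenvalues as $\alpha_1,\dots,\alpha_D$ with $|\iota(\alpha_i)| = 1$, the orbit $\{(\iota(\alpha_1)^\nu,\dots,\iota(\alpha_D)^\nu)\colon \nu\geq 1\}$ is a subsemigroup of the compact torus $(S^1)^D$; its closure is a closed subsemigroup of a compact topological group, hence a subgroup, and in particular contains the identity. Therefore, for infinitely many $\nu$, the quantity $\tr(\Frob_{k_\nu}\mid V) = \sum_i \iota(\alpha_i)^\nu$ is arbitrarily close to $D$, while $|\tr|\leq D$ always. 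This establishes the claimed equality in \eqref{eq-dio-irred}. The ``in particular'' assertion is then immediate: if the limsup equals $1$, then $\dim \End_{\pi_1(U\times\bFq)}(\mcF|U)=1$, and by Schur's lemma applied to the semisimple representation $\mcF|U$ this forces $\mcF|U$ to be geometrically irreducible.
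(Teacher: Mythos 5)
Your argument is correct and follows essentially the same route as the paper: reduce to the open set $U$, recognize $|t_{\mcF}|^2$ as the trace function of $\End(\mcF|U)$, apply Grothendieck--Lefschetz together with Deligne's weight bounds to isolate $H^{2d}_c$, identify it with geometric coinvariants via Poincaré duality (and invariants by semisimplicity), and pass to the limsup. The only difference is that you supply two details the paper takes for granted (passing to the smooth locus before invoking duality, and the compactness argument establishing $\limsup_\nu \tr(\Frob_{k_\nu}\mid V)=\dim V$ for $V$ pure of weight $0$, which the paper cites as a ``standard lemma'').
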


\begin{proof}
  Let $n=\dim(X_{\bFq})$. Up to performing a Tate twist on $\mcF$, we
  may assume that $w=0$.  For any $x\in X(k_\nu)$ we have then
$$
|t_{\sheaf{F}}(x;k_{\nu})|^2\leq \rk(\mcF)^2
$$
hence by trivial counting we get
\begin{align*}
\frac{1}{|k|^{n\nu}}\sum_{x\in X(k_{\nu})}
|t_{\sheaf{F}}(x;k_{\nu})|^2&=\frac{1}{|k|^{n\nu}}\sum_{x\in U(k_{\nu})}
|t_{\sheaf{F}}(x;k_{\nu})|^2+\frac{1}{|k|^{n\nu}}\sum_{x\in (X-U)(k_{\nu})}
|t_{\sheaf{F}}(x;k_{\nu})|^2	\\
&=\frac{1}{|k|^{n\nu}}\sum_{x\in U(k_{\nu})}
|t_{\sheaf{F}}(x;k_{\nu})|^2+O_\mcF(|k|^{-\nu}).
\end{align*}
This shows that we may restrict the sum on the right-hand side
of~(\ref{eq-dio-irred}) to $U(k_\nu)$.

Since $\mcF$ and its dual $\mcF^{\vee}$ are lisse and pointwise pure
of weight $0$ on $U$, the sheaf $\End(\mcF)=\mcF\otimes \mcF^{\vee}$
is also lisse and pointwise pure of weight $0$ on $U$. Moreover, for
all $x\in
U(k_\nu)$, we have
$$
t_{\End(\mcF)}(x;k_\nu)=|t_{\sheaf{F}}(x;k_{\nu})|^2.
$$ 
\par
By the Grothendieck--Lefschetz trace formula, we have
\begin{multline*}
  \frac{1}{|k|^{n\nu}}\sum_{x\in U(k_{\nu})}
  |t_{\sheaf{F}}(x;k_{\nu})|^2=
  \frac{1}{|k|^{n\nu}}\tr(\Frob_{k_\nu}|H_c^{2n}(U\times\bFq,\End(\mcF)))\\
  +\frac{1}{|k|^{n\nu}}\sum_{i=0}^{2n-1}
  (-1)^i\tr(\Frob_{k_\nu}|H_c^{i}(U\times\bFq,\End(\mcF))).
\end{multline*}
By Deligne's Riemann Hypothesis~\cite{WeilII}, all eigenvalues of the
Frobenius of $k_{\nu}$ acting on the cohomology group
$H_c^{i}(U\times\bFq,\End(\mcF_{\ov k}))$ have modulus
$\leq |k|^{i/2}$, and therefore
$$
|\tr(\Frob_{k^\nu}|H_c^{i}(U\times\bFq,\End(\mcF)))|\leq
\dim(H_c^{i}(U\times\bFq,\End(\mcF)))|k|^{i\nu/2},
$$ 
so that we derive
$$
\frac{1}{|k|^{n\nu}}\sum_{x\in U(k_{\nu})}
|t_{\sheaf{F}}(x;k_{\nu})|^2 =\frac{1}{|k|^{n\nu}}
\tr(\Frob_{k_\nu}|H_c^{2n}(U\times\bFq,\End(\mcF)))+O(|k|^{-\nu/2}).
$$
On the other hand, we have a Frobenius-equivariant isomorphism 
$$
H_c^{2n}(U\times\bFq,\End(\mcF))\simeq \End(\mcF)_{\pi_1(U\times\bFq)}(-n).
$$
The eigenvalues of Frobenius on $\End(\mcF)_{\pi_1(U\times\bFq)}(-n)$
have modulus $q^n$. Therefore
$$
|k|^{-n\nu}\tr(\Frob_{k^\nu}|H_c^{2n}(U\times\bFq,\End(\mcF)))
$$
is the sum of the $\nu$-th power of
$\dim H_c^{2n}(U\times\bFq,\End(\mcF))$ complex numbers, each of of
modulus $1$, and by a standard lemma, we have
therefore
\begin{align*}
  \limsup_{\nu\to+\infty}
  \frac{1}{|k|^{n\nu}}\tr(\Frob_{k^\nu}|H_c^{2n}(U\times\bFq,\End(\mcF)))
  &=\dim H_c^{2n}(U\times\bFq,\End(\mcF))\\
  &=\dim\End_{\pi_1(U\times\bFq)}(\mcF),
\end{align*}
by the geometric semi-simplicity of $\mcF|U$.
\end{proof}

This result, combined with the injectivity statement, reduces the
desired isomorphism to a bound on exponential sums, where $\uple{b}$
are summed over a stratum of the stratification. The technique we will
use to obtain cancellation is a form of separation of variables, where
we essentially obtain cancellation in the sum over each individual
coordinate $b_i$.

We now describe a general geometric form of the type of separation of
variables that we will use.

\begin{itemize}
\item[-] Let $m$ and $N$ be natural numbers. Let $S$ be a finite set.

\item[-] Let $\mathcal O_K$ be the ring of integers of a number field,
  and $B$ a separated scheme of finite type over $\mathcal O_K [1/N]$.

\item[-]Let $C_i$ for $i \in S$ be curves over $B$. Let $A$ be a
  smooth geometrically irreducible curve over $\Zz[1/N]$. We will use
  $s$ as a variable for points of $A$ and $x_i$ for points of $C_i$.

\item[-]We denote $\mathcal{C}=C_1\times_B\cdots\times_B C_n$. We view functions
on $C_i$ as functions on $\mathcal{C}$ by composing with the $i$-th
projection.

\item[-]For $1\leq j\leq m$, let $\uple{f}_j=(f_{i,j})_{1\leq i\leq
  n}\in\Gamma$ be a tuple of functions on the curves $C_i$, and let $g_j$ be a function on $B$.

\item[-]Let $Y\subseteq \mathcal{C}$ be the common zero locus of the $m$
functions
$$
\Sigma_j:=g_j + \sum_{ i \in S} f_{i,j}\in\Gamma(\mathcal{C},\mathcal{O}_{\mathcal{C}}),\ j=1,\cdots,m.
$$

\item[-]Let $\pi \colon Y\times A\to Y$ be the obvious projection, and
$g_i\colon Y\times A\to C_i\times A$ the obvious morphisms.

\item[-]Let $\ell$ be a prime number dividing $N$. For $i \in S$, and
  $q$ some prime ideal of $\mathcal O_K$ coprime to $N$, we assume
  given a lisse $\ell$-adic sheaf $\sheaf{F}_i$, pointwise pure of
  weight $0$, on $C_i\times A_\Fq$. We denote by
  $(\rho,x_i,s)\mapsto t_i(\rho,x_i,s;k)$ the trace function of
  $\sheaf{F}_i$ over some finite extension $k/\Fq$.

\item[-]For $s\in A(k)$ and $\rho \in B(k)$ we set 
$$\mcF_{i, \rho, s}:=\mcF_i|C_i\times_B \{ \rho\} \times\{s\}$$
the sheaf on $C_i\times k$ obtained by restricting to the fiber of
$\rho$ and ``freezing'' the $s$-variable.  We assume that for any $q$,
any $k/\Fq$ and any point $s\in A(k)$ the conductor of
$\mcF_{i,\rho, s}$ is bounded by some constant $C\geq 1$

\item For $q$ some prime of $\mathcal O_K$ coprime with $n$, we are given a lisse $\ell$-adic sheaf $\sheaf{G}$, pointwise pure of weight $0$, on $B \times A_{\Fq}$. We denote by $(\rho,s) \mapsto t_* (\rho,s;k)$ its trace function.

\end{itemize}

We make the following ``twist-independence'' assumption:

\begin{TI}For all $i$, for all $\rho \in B$ and for all $s_1\not=s_2$
  in $A$, the lisse sheaf
  $\sheaf{F}_{i,\rho, s_1}\otimes\sheaf{F}_{i,\rho, s_2}^{\vee}$ on
  each geometrically irreducible component of $C_{i, \rho} $ has no
  geometrically irreducible component that is of rank $1$.
\end{TI}

The implicit constants associated with the symbols $O(\cdots)$ or
$\ll$ are assumed to depend on $\mcC,A$, the maps
$(\uple{f}_j)_{j=1,\cdots,m}$, and the conductors of the sheaves
involved.

The  main estimate on exponential sums we will need is the following

\begin{proposition}\label{lm-dio-new}
  Assume that Assumption \emph{(TI)} holds. We have
\begin{multline}\label{th-diophantine-cor-eq}
\sum_{(\rho,\uple{x})\in Y(k)}\Bigl| \sum_{s\in A(k)} t_*(\rho,s;k) \prod_{i=1}^n
t_i(\rho,x_i,s;k)\Bigr|^2=\\ \sum_{(\rho,\uple{x})\in Y(k)}\sum_{s\in
  A(k)}|t_* (\rho,s,k)|^2\prod_{i=1}^n|
t_i(\rho,x_i,s;k)|^2+O\Bigl(|k|^{\dim B+ |S|/2+2}\Bigr).
\end{multline}
\end{proposition}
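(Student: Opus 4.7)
The plan is to expand the square, separate diagonal and off-diagonal contributions in the $s$-variable, then detect the defining equations of $Y$ by additive Fourier analysis—which splits the $\uple{x}$ sum into a product of one-variable exponential sums over each $C_{i,\rho}$. Each of these will be bounded with square-root cancellation using Proposition \ref{pr-recall-rh}, whose hypothesis is verified by (TI).

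Opening the square and exchanging orders yields
$$
\sum_{s_1,s_2 \in A(k)} \sum_{(\rho,\uple{x}) \in Y(k)} t_*(\rho,s_1;k)\overline{t_*(\rho,s_2;k)} \prod_{i \in S} t_i(\rho,x_i,s_1;k)\overline{t_i(\rho,x_i,s_2;k)}.
$$
The diagonal $s_1=s_2$ is precisely the first sum on the right-hand side of (\ref{th-diophantine-cor-eq}). To handle the off-diagonal $s_1 \neq s_2$, I would fix a non-trivial additive character $\psi$ of $k$ and use orthogonality to write
$$
\charfun_{Y(k)}(\rho,\uple{x}) = \frac{1}{|k|^m} \sum_{\uple{t} \in k^m} \psi\Bigl(\sum_{j=1}^m t_j \Sigma_j(\rho,\uple{x})\Bigr)
$$
for $(\rho,\uple{x}) \in \mcC(k)$. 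Because $\Sigma_j = g_j + \sum_i f_{i,j}$ splits additively, the inner exponential factors as $\psi(\sum_j t_j g_j(\rho))$ times $\prod_{i \in S} \psi(\sum_j t_j f_{i,j}(\rho,x_i))$, so the $\uple{x}$-sum factors as a product over $i \in S$ of one-variable sums
$$
T_i(\rho,s_1,s_2,\uple{t}) = \sum_{x_i \in C_{i,\rho}(k)} \psi\Bigl(\sum_j t_j f_{i,j}(\rho,x_i)\Bigr) t_i(\rho,x_i,s_1;k)\overline{t_i(\rho,x_i,s_2;k)}.
$$

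Each $T_i$ is the trace sum over $C_{i,\rho}$ of the lisse sheaf $\mcF_{i,\rho,s_1} \otimes \mcF_{i,\rho,s_2}^\vee \otimes \mcL_\psi(\sum_j t_j f_{i,j})$. The main step is the uniform bound $T_i \ll |k|^{1/2}$ for $s_1 \neq s_2$, obtained from Proposition \ref{pr-recall-rh}. Its hypothesis—no trivial summand on any geometrically irreducible component of $C_{i,\rho}$—follows from (TI): the factor $\mcF_{i,\rho,s_1} \otimes \mcF_{i,\rho,s_2}^\vee$ has no rank-one geometrically irreducible component on such a component, and tensoring with the rank-one lisse sheaf $\mcL_\psi(\sum_j t_j f_{i,j})$ preserves both irreducibility and rank of each summand, so no trivial summand can appear. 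Uniformity of the implied constant in $(\rho,\uple{t},s_1,s_2)$ follows from the assumed uniform conductor bound on $\mcF_{i,\rho,s}$, the estimate (\ref{eq-cond-twist}), and the fact that the conductor of $\mcL_\psi(\sum_j t_j f_{i,j})$ on $C_{i,\rho}$ is controlled by the fixed functions $f_{i,j}$ (by their pole orders at the compactification of $C_{i,\rho}$). This is the heart of the argument and is precisely what (TI) is tailored to guarantee.

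To conclude, I would assemble the estimates: the prefactor $|k|^{-m}$ cancels the $|k|^m$ tuples $\uple{t}$; the sum over $s_1 \neq s_2$ contributes $O(|k|^2)$ since $A$ is a curve; $|B(k)| = O(|k|^{\dim B})$ by a standard point count for finite-type separated schemes; $|t_*(\rho,s;k)|=O(1)$ by weight-zero purity of $\sheaf{G}$; and $\prod_{i \in S} |T_i| = O(|k|^{|S|/2})$ by the above. Multiplying gives the off-diagonal bound $O(|k|^{\dim B + |S|/2 + 2})$, which is exactly the claimed error. No step presents a serious obstacle once (TI) is in hand; the only subtle point is keeping the implicit constants uniform across $(\rho,\uple{t},s_1,s_2)$, which, as noted, reduces to elementary conductor bookkeeping.
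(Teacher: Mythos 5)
Your proposal is correct and follows essentially the same route as the paper: expand the square, isolate the diagonal $s_1=s_2$ as the main term, detect membership in $Y(k)$ by additive characters to factor the $\uple{x}$-sum over the curves $C_{i,\rho}$, and bound each resulting one-variable sum by $|k|^{1/2}$ via Proposition \ref{pr-recall-rh}, whose hypothesis is supplied by (TI) since twisting by the rank-one sheaf $\mcL_\psi(\sum_j t_j f_{i,j})$ cannot create a rank-one (hence trivial) summand. The final bookkeeping matches the paper's as well.
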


\begin{rem} One can often show (by fibering by curves) that as
  $|k|\ra\infty$ the first term on the righthand side of
  \eqref{th-diophantine-cor-eq} satisfies
$$
\sum_{(\rho,\uple{x})\in Y(k)}\sum_{s\in A(k)}|t_* (\rho,s,k)|^2
\prod_{i=1}^n| t_i(\rho,b_i,s;k)|^2\gg |k|^{\dim(Y\times A)_{\bFq}}
$$
while the error term is
$$
\ll |k|^{(n-m+1)-1/2}\ll |k|^{\dim(Y\times A)_{\bFq}-1/2}
$$ as soon
as 
$$
m\leq \frac{|S|-3}{2}.
$$
\end{rem}

\begin{example}  
  Take $B$ a point, $C_i=A=\Gg_m$,
  $\sheaf{F}_i=[(b_i,s)\mapsto b_is]^*\HYPK_2$ on $\Gg_m^2$, and
  $\sheaf{G}= \bQl$. Define $f_{i,j}(b_i)=b_i^j$ and $Y$ be the
  subvariety of $\Gm^n$ defined by the equations
  $$\sum b_i=\cdots=\sum b_i^m=0;$$
  One has $\dim V_{\bFq}=n-m$ for $q$ large enough. Then (TI) is
  satisfied and Proposition \ref{lm-dio-new} states that
$$
\multsum_{\substack{b_1,\ldots, b_n\in \Fqt\\
    \sum b_i=\cdots=\sum b_i^m=0}} \Bigl|\sum_{s\in\Fqt}\prod_{i=1}^n
\hypk_2(b_is;q)\Bigr|^2=
\multsum_{\substack{b_1,\ldots, b_n\in \Fqt\\
    \sum b_i=\cdots=\sum b_i^m=0}} \sum_{s\in\Fqt}\prod_{i=1}^n\Bigl|
\hypk_2(b_is;q)\Bigr|^2 +O(q^{(n-m+1)-1/2}),
$$
provided $m\leq (n-3)/2$.
\end{example}

\begin{proof}
  We will omit the indication of the finite field, which is always
  $k$, in the notation for trace functions.  Opening the square, we
  have
\begin{multline}\label{eq-develop}
  \sum_{(\rho,\uple{x})\in Y(k)}\Bigl| \sum_{s\in A(k)}t_* (\rho,s)
  \prod_{i=1}^n t_i(\rho, x_i, s)\Bigr|^2= \sum_{(\rho,\uple{x})\in
    Y(k)}\sum_{s\in A(k)}|t_* (\rho,s)|^2 \prod_{i=1}^n
  |t_i(\rho, x_i ,s)|^2\\
  +\sumsum_{\substack{s_1,s_2\in A(k)\\s_1\not= s_2}}
  \sum_{(\rho,\uple{x})\in Y(k)}t_* (\rho,s_1) \ov{t_* (\rho,s_2)}
  \prod_{i=1}^n t_i(\rho,x_i,s_1)\ov{t_i(\rho, x_i,s_2)}.
\end{multline}

We detect the condition $(\rho,\uple{x} )\in Y(k)$ through additive
characters. Thus, let $\psi$ a non-trivial character of $k$. For
$\uple{x}=(x_i)_{i\in S}\in \mcC(k)$, we have
\begin{align*}
  \delta_{(\rho,\uple{x})\in
  Y(k)}=\prod_{j=1}^m\frac{1}{|k|}\sum_{\lambda_j\in
  k}\psi(\lambda_j\Sigma_j(\rho, \uple{x}))
  &=\frac{1}{|k|^m}
    \sum_{\uple{\lambda}\in k^m}
    \psi\Bigl(g_j(\rho) + \sum_{j=1}^m\sum_{i\in S} \lambda_j f_{i,j}(x_i)\Bigr)\\
  &=\frac{1}{|k|^m}\sum_{\uple{\lambda}\in k^m} \psi(g_{\uple{\lambda}}
    (\rho)) \prod_{i=1}^n\psi(f_{i,\uple{\lambda}}(x_i)),
\end{align*}
where $\uple{\lambda}=(\lambda_j)_{j\leq m}$, and
$$
  g_{\uple{\lambda}} (\rho) = \sum_{j=1}^m \lambda_j g_j (\rho),\quad\quad
  f_{i,\uple{\lambda}}(x_i)=\sum_{j=1}^m\lambda_j f_{i,j}(x_i).	
$$
Thus the second sum on the right-hand side of~(\ref{eq-develop}) is
equal to
\begin{multline*} 
  \frac{1}{|k|^m} \sumsum_{\substack{s_1,s_2\in A(k)\\s_1\not= s_2}}
  \sum_{\uple{\lambda}\in k^m}\sum_{(\rho, \uple{x})\in
    \mcC(k)}\psi(g_{\uple{\lambda}}(\rho)) t_* (\rho,s_1) \ov{t_*
    (\rho,s_2)} \prod_{i \in S} t_i(\rho,x_i
  ,s_1)\ov{t_i(\rho,x_i,s_2)}\psi(f_{i,\uple{\lambda}}(x_i))
  \\
  =\frac{1}{|k|^m} \sumsum_{\substack{s_1,s_2\in A(k)\\s_1\not=
      s_2}}\sum_{\uple{\lambda}\in k^m} \sum_{\rho \in B(k)}
  \psi(g_{\uple{\lambda}}(\rho)) t_* (\rho,s_1) \ov{t_* (\rho,s_2)}
  \\
  \times \prod_{i\in S} \Bigl( \sum_{x_i\in C_{i,\rho}(k)}
  t_i(\rho,x_i,s_1)\ov{t_i(\rho,x_i,s_2)}
  \psi(f_{i,\uple{\lambda}}(x_i))\Bigr).
\end{multline*}
For $s_1\not=s_2$, it follows from the twist-independence assumption
and the Riemann Hypothesis (Proposition~\ref{pr-recall-rh}
and~(\ref{eq-cond-twist})) that for each $i\in S$, we have
$$
\sum_{x_i \in C_{i,\rho}(k) }
t_i(\rho,x_i,s_1)\ov{t_i(\rho,x_i,s_2)}\psi(f_{i,\uple{\lambda}}(x_i))
\ll |k|^{1/2}
$$ 
and $t_* (\rho,s_1) \ov{t_* (\rho,s_2)} \ll 1$ for all $\rho\in
B(k)$. Hence the sum above is $\ll |k|^{\dim B + |S|/2+2}$, which
concludes the proof.
\end{proof}

\section{Parameterization of strata}


The goal of this section is to give a convenient parameterization of
the irreducible components of the strata of the stratification $(X_j)$
(Definition~\ref{def-xj}).

Let $j$ be an integer with $X_j$ non-empty.  Let
$X\subset X_j\subset \Aa^{2l}$ be an irreducible component of $X_{j}$
over $\Zz$ which intersects the characteristic zero part. Let
$\overline{\eta}$ be a geometric generic point of $X$.

We will show that $X$ is the projection of a space defined by
equations of a certain explicit type; more precisely, these will be
exactly of the type that can be handled using Lemma \ref{lm-dio-new},
allowing us to evaluate the sums that appear in Lemma \ref{lm-dio}. To
describe these equations and to perform an inductive process, where we
express better and better approximations of $X$ as the image of such
space, we need to package certain data, which we do using the
following definitions.


\begin{definition} A \emph{perspective datum} $\ppersp$ on $X$ is a
  tuple
$$
\ppersp=(m,S,B,(C_i), (b_i), (f_{i,j}), (g_j))
$$
where
\begin{itemize}
\item  $m\geq 0$ is an integer.
\item $ S \subseteq \{1,\dots 2l\}$.
\item $B$ is a separated scheme of finite type over $\Qbar$.
\item $(C_i)_{i\in S}$ is a family of relative curves over $B$.
\item $(b_i)_{i\in S}$ is a family of functions $b_i\colon B\to \Aa^1$
  if $i\notin S$ and $b_i\colon C_i\to \Aa^1$ if $i\in S$, such that
  if $i\in S$, the function $b_i$ is not constant on any irreducible
  component of any geometric fiber of $C_i\to B$.
\item $(f_{i,j})_{\substack{i\in S\\1\leq j \leq m}}$ is a family of
  functions $f_{i,j}\colon C_i \to \Aa^1$.
\item $(g_j)_{1\leq j\leq m}$ is a family of functions
  $g_j\colon B \to \Aa^1$.
\end{itemize}
\end{definition}

To simplify the notation, we will sometimes write $\ppersp\cdot m$,
..., $\ppersp\cdot (g_j)$ for the corresponding data.

Let $\ppersp$ be a perspective datum over $X$.  We denote
$\mathcal{C}_{\ppersp}$ the fiber product over $B$ of the curves $C_i$
for $i\in S$, and $\mathcal{Y}_{\ppersp}$ the subvariety of
$\mathcal{C}_{\ppersp}$ defined as the zero locus of the functions
$$
g_j+ \sum_{i \in S}f_{i,j}
$$
for $1\leq j\leq m$, where we extend the functions $f_{i,j}$ and the
functions $g_j$ by pullback to $\mathcal{C}_{\ppersp}$.

A \emph{perspective} over $X$ is a triple $(\ppersp,Y,\bar{\gamma})$
where
\begin{itemize}
\item $\ppersp$ is a perspective datum on $X$,
\item $Y$ is an irreducible component of $\mathcal{Y}_{\ppersp}$ 
\item $\bar{\gamma}$ is a geometric point of $Y$,
\end{itemize}
such that the morphism $g\colon \mathcal{Y}_{\ppersp}\to \Aa^{2l}$
defined by $(b_1,\ldots,b_{2l})$ induces a quasi-finite morphism
$$
Y-g^{-1}(\mcV^{\Delta})\to \Aa^{2l}-\mcV^{\Delta}
$$
which maps $\bar{\gamma}$ to $\bar{\eta}$.


The goal of this section will be to construct a perspective on $X$
where $Y$ is irreducible and the image of the map $Y \to \Aa^{2l}$ is
$X$. More precisely, the main result is the following:

\begin{theorem}\label{thm-ag}
  There exists a perspective $(\ppersp,Y,\bar{\gamma})$ on $X$ such
  that $Y$ is irreducible, $\bar{\gamma}$ is a geometric generic point
  of $Y$, and 
$$ 
2l- |\ppersp\cdot S| + 2\ppersp\cdot m\leq 4(2l-\dim(X)).
$$
\end{theorem}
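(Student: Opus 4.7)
The plan is to construct the perspective by induction on the codimension $c = 2l - \dim(X)$ of $X$ in $\Aa^{2l}$, starting from a trivial top-stratum perspective and refining it step by step. The refinements are driven by the explicit description of the stratification given in Lemma \ref{z-polynomial}: a point $\uple{b}$ belongs to $X_j \setminus X_{j-1}$ precisely when $P_{\uple{b}}(r)$ has exactly $j$ distinct roots, and each such root either equals some $-b_i$ or arises from a tuple $\uple{x}$ with $x_i^k = r+b_i$ and $\sum_{i=1}^l x_i = \sum_{i=l+1}^{2l} x_i$. Thus passing from a larger stratum to a sub-stratum corresponds to imposing coincidences among these singular values of $r$.

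For the base case $c = 0$, the component $X$ must equal $\Aa^{2l}$, and I would take the trivial perspective: $B = \Spec(\Qbar)$, $S = \{1,\dots,2l\}$, $C_i = \Aa^1_{\Qbar}$ with $b_i$ the coordinate, $m = 0$. Then $\mathcal{Y}_{\ppersp} = \Aa^{2l}$ maps isomorphically to $\Aa^{2l}$, the inequality $2l - 2l + 0 \leq 0$ holds, and any geometric generic point serves as $\bar{\gamma}$.

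For the inductive step, given a component $X$ of $X_j$ with $c > 0$, I would find a component $X'$ of a larger stratum with $X \subsetneq X'$ and apply induction to obtain a perspective $\ppersp'$ on $X'$. The passage from $X'$ to $X$ corresponds to $(\dim X' - \dim X)$ additional coincidences among the singular $r$-values. A coincidence of two singular values $r_1, r_2$ with witness tuples $\uple{x}^{(1)}, \uple{x}^{(2)}$ is encoded as follows: on $X$ the relation $r_1 = r_2$ forces $x_i^{(1)k} = x_i^{(2)k}$, so $x_i^{(2)} = \zeta_i x_i^{(1)}$ for some $\zeta_i \in \mmu_k$ not all equal, and then the simultaneous conditions
$$\sum_{i=1}^l x_i^{(1)} - \sum_{i=l+1}^{2l} x_i^{(1)} = 0, \qquad \sum_{i=1}^l \zeta_i x_i^{(1)} - \sum_{i=l+1}^{2l} \zeta_i x_i^{(1)} = 0$$
are both of the separated form $g_j + \sum_{i \in S} f_{i,j}(x_i)$ required by the definition of a perspective. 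Each such coincidence contributes $+1$ to $m$ and (at worst) $+1$ to $2l - |S|$ when one needs to remove an index from $S$ and absorb the associated $\zeta_i$ into an \'etale extension of the base $B$. This yields an increment of at most $4$ to $2l - |S| + 2m$ per unit of codimension gained, which matches the budget $4(c - c')$. To finish, I would take $Y$ to be the unique irreducible component of $\mathcal{Y}_{\ppersp}$ whose image dominates $X$ and whose geometric generic point lies above $\bar{\eta}$; quasi-finiteness on the complement of $\mcV^{\Delta}$ follows because each auxiliary variable $x_i^{(\nu)}$ satisfies a monic polynomial of degree $k$ over $b_i$.

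The main obstacle will be the careful bookkeeping needed to guarantee that, at every step of the induction, (i) the component $\mathcal{Y}_{\ppersp}$ has an irreducible component whose image is exactly $X$ rather than some proper subvariety, (ii) the induced map is quasi-finite on the complement of $\mcV^{\Delta}$, and (iii) the new equations introduced remain of the separated form required. A particularly delicate point is when several coincidence conditions are imposed simultaneously at a single step and fail to be transverse, in which case more auxiliary variables and equations may be needed than a naive count suggests; justifying that the factor $4$ in $4(2l - \dim(X))$ suffices (rather than the $2$ that a transverse count would give) is precisely what absorbs these non-transverse contributions, together with the occasional need to enlarge $B$ by an \'etale cover to accommodate $k$-th roots of unity.
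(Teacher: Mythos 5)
Your proposal takes a genuinely different high-level route from the paper, but it contains a concrete gap at exactly the point you flag as ``particularly delicate,'' and this gap is not a technicality that can be filled in routinely.

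The paper does \emph{not} do induction on the codimension $c = 2l - \dim(X)$. Instead it fixes $X$ once and for all, and runs a minimality argument on the set $\mathcal{P}$ of perspectives $(\ppersp, Y, \bar{\gamma})$ on $X$ satisfying the uniform constraint $2\dim(\ppersp\cdot B) + 2\dim(Y) + |\ppersp\cdot S| \leq 6l$, picking a minimizer of $\dim(Y) + |\ppersp\cdot S|$. The descent toward this minimizer alternates two moves: Lemma~\ref{lm-dimension-lowering} lowers $\dim(Y)$ at the cost of $\dim(B)$ increasing by at most $1$ (with $S$ fixed), and Lemma~\ref{lm-component-cleaning} lowers $|S|$ by $|S|-|T|\geq 1$ at the cost of $m$ decreasing by at least $(|S|-|T|)/2$ (with $\dim Y$ fixed). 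A crucial third ingredient is Lemma~\ref{lm-equation-cleaning}, which resets $m$ to the codimension of $Y$ in $\mathcal{C}_\ppersp$ before comparing to the constraint. Without that ``cleaning'' step, the raw number of equations accumulated over many descent steps (the $m_1 + m_2 + m_3$ of Lemma~\ref{lm-dimension-lowering}, summed over all applications) would be unbounded and the factor of $4$ would be useless.

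Your proposal has no analogue of this. You assert that ``each such coincidence contributes $+1$ to $m$ and (at worst) $+1$ to $2l-|S|$,'' yielding $+4$ to $2l - |S| + 2m$ per unit of codimension; but a single root collision or degree drop of $P_{\uple{b}}$ can generate many conditions at once (the paper's $m_1 + m_2 + m_3$ is in general much larger than $1$, coming from all $\uple{\zeta}\in\mmu_k^{2l}$ whose associated branch has multiplicity $\geq 2$). Nothing in your argument reduces this batch of equations to a small number. You then write that justifying the factor $4$ ``is precisely what absorbs these non-transverse contributions,'' but this is the step that needs to be proved, not assumed. The paper's way around this is to never track equations directly: the invariant $2\dim(B) + 2\dim(Y) + |S| \leq 6l$ makes no reference to $m$, and $m$ is only brought into play at the very end via Lemma~\ref{lm-equation-cleaning}, which bounds it by $\dim(B) + |S| - \dim(Y)$. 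That is the central idea your proof is missing.

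There is a second, more structural issue: your inductive step assumes you can find a component $X'$ of a coarser stratum $X_{j'}$ containing $X$ together with a perspective on $X'$ that you then ``refine.'' But the lemmas of the paper (\ref{lm-finite-algebraic-part}, \ref{lm-infinite-algebraic-part}) are applied to a local ring $R$ whose special point is the generic point $\bar{\eta}$ of the \emph{same} $X$ and whose generic point is a geometric generic point of $Y$ — not of some larger stratum. The refinement takes place entirely \emph{inside} the space of perspectives on $X$, with $Y$ shrinking and $\bar{\gamma}$ staying fixed, not by descending through the stratification. Translating your picture into a correct induction would require a comparison lemma between components of adjacent strata that the paper never establishes and that does not obviously hold.
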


The reader is encouraged to first finish reading the proof of the main
theorems of this paper, assuming that this statement holds, since this
will illustrate how the perspective data is exploited in the final
steps.

The basic strategy is the following:
\begin{enumerate}
\item We start with a perspective with $S$ as large as possible, $m$
  as small as possible, but $\bar{\gamma}$ potentially a quite special
  point of $Y$ (Lemma \ref{lm-perspective-example}). We plan to reduce
  $\dim Y$ while keeping the growth of $m$ and the loss of $|S|$
  controlled by a step-by-step induction.
\item At each step, we find some equations that are satisfied at
  $\bar{\gamma}$ but not at the generic point of $Y$ (Lemmas
  \ref{lm-finite-algebraic-part} and
  \ref{lm-infinite-algebraic-part}).
\item We construct a new perspective by adding these new equations
  (which may require also adjoining some new variables to $B$ and $C_i$), lowering $\dim
  Y$ (Lemma~\ref{lm-dimension-lowering}). However, the solution set in $Y$ of these new equations might
  not contain any irreducible components of the solution set in
  $\mathcal Y_\ppersp$ of the new equations, since they may instead be absorbed
  into other irreducible components of $\mathcal Y_\ppersp$. To deal with
  this, we must assume $\mathcal Y_\ppersp = Y$.
\item We can ensure that this condition holds by a Diophantine
  argument, which requires increasing $|S|$
  (Lemma~\ref{lm-component-cleaning}). This requires certain
  irreducibility assumptions on $B$ and on the curves $C_i$, which we
  ensure in Lemma \ref{lm-irreducibility-cleaning} by a direct
  construction.
\item Finally, we prove Theorem~\ref{thm-ag} by showing that an
  induction involving all these steps terminates in a suitable
  perspective.
\end{enumerate}

We begin by the exhibiting trivial examples of perspectives that will
be used to start the induction process (or to terminate it in a
trivial case).

\begin{lemma}\label{lm-perspective-example} 
  \emph{(1)} The tuple
$$
\ppersp_0=(0,\{1,\ldots,2l\}, \Spec(\Qbar),(\Aa^1)_{1\leq i\leq
  2l},(\mathrm{Id}_{\Aa^1})_{1\leq i\leq 2l}, \emptyset,\emptyset)
$$
is a perspective datum, and $(\ppersp_0, X, \bar{\eta})$ is a
perspective. 
\par
\emph{(2)} The tuple
$$
\ppersp_1=(0,\emptyset,X,\emptyset, (b_i|X),\emptyset, \emptyset)
$$
is a perspective datum and $(\ppersp_1,X,\bar{\eta})$ is a
perspective.
\end{lemma}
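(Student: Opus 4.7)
The plan is to verify directly the axioms of a perspective datum and of a perspective for each of the two tuples. In each case I would check (a) that the entries of the tuple have the required types and satisfy the non-constancy condition on $b_i$ for $i\in S$, and (b), after computing $\mathcal{Y}_{\ppersp}$, that the chosen $Y$ is an irreducible component, that the map $g\colon \mathcal{Y}_\ppersp\to\Aa^{2l}$ is quasi-finite on $Y-g^{-1}(\mcV^{\Delta})$, and that it sends the chosen geometric point to $\bar{\eta}$. Since $m=0$ in both cases, the families $(f_{i,j})$ and $(g_j)$ are empty, so $\mathcal{Y}_{\ppersp}=\mathcal{C}_{\ppersp}$ and no defining equations are imposed.

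For part (1), I would first note that $\Spec(\Qbar)$ is trivially separated and of finite type over $\Qbar$, each $C_i=\Aa^1$ is a relative curve over it, and the identity $\mathrm{Id}_{\Aa^1}$ is manifestly non-constant on the unique (irreducible) geometric fiber $\Aa^1_{\Qbar}$; hence $\ppersp_0$ is a perspective datum. The fiber product $\mathcal{C}_{\ppersp_0}$ is $\Aa^{2l}_{\Qbar}$, so $\mathcal{Y}_{\ppersp_0}=\Aa^{2l}_{\Qbar}$, which is irreducible; the morphism $g$ assembled from the $b_i=\mathrm{Id}_{\Aa^1}$ is the identity on $\Aa^{2l}$, which is trivially quasi-finite on the complement of $\mcV^{\Delta}$ and sends $\bar{\eta}$ to $\bar{\eta}$. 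I would read the ``$X$'' in ``$(\ppersp_0,X,\bar{\eta})$'' as the intended $\mathcal{Y}_{\ppersp_0}=\Aa^{2l}_{\Qbar}$ (the only irreducible component in sight), since the literal $X$ is not an irreducible component of $\Aa^{2l}$.

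For part (2), with $S=\emptyset$ the non-constancy condition is vacuous, and the functions $b_i|X\colon X\to\Aa^1$ form a valid family of maps $B\to\Aa^1$ indexed by $\{1,\ldots,2l\}\setminus S=\{1,\ldots,2l\}$, so $\ppersp_1$ is a perspective datum. The empty fiber product of curves over $B=X$ is $X$ itself, hence $\mathcal{Y}_{\ppersp_1}=X$, which is irreducible by the choice of $X$ as an irreducible component of $X_j$. Then $Y:=X$ is an irreducible component of $\mathcal{Y}_{\ppersp_1}$, $\bar{\eta}$ is a geometric generic point of $Y$, and the morphism $g\colon X\to\Aa^{2l}$ assembled from the $b_i|X$ is the locally closed immersion $X\hookrightarrow\Aa^{2l}$, which is quasi-finite (in fact injective) on the complement of $\mcV^{\Delta}$ and sends $\bar{\eta}$ to $\bar{\eta}$. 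Thus $(\ppersp_1,X,\bar{\eta})$ is a perspective.

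The only real obstacle, such as it is, is bookkeeping: parsing the definition of a perspective datum (reading ``$(b_i)_{i\in S}$'' as an indexing over all $i\in\{1,\ldots,2l\}$ with domain depending on whether $i\in S$), using the convention that the empty fiber product over $B$ equals $B$, and reconciling the apparent typo in part (1). No geometric content beyond this is needed. These two trivial perspectives will serve as the endpoints for the induction producing Theorem \ref{thm-ag}: $\ppersp_0$ maximises $|S|$ and minimises $m$ at the cost of $\dim Y$ being as large as possible, whereas $\ppersp_1$ has $\dim Y=\dim X$ already correct at the cost of $|S|=0$.
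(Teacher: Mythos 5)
Your proof is correct and follows the same approach as the paper's (a direct verification of the axioms, computing $\mathcal{Y}_{\ppersp_0}=\Aa^{2l}$ and $\mathcal{Y}_{\ppersp_1}=X$ and checking quasi-finiteness of the resulting map to $\Aa^{2l}-\mcV^{\Delta}$); the paper compresses this to a single sentence. You are also right to flag that in part~(1) the middle entry of the triple, as literally written, cannot be $X$ (since $X$ is not an irreducible component of $\Aa^{2l}$ unless $j$ is maximal) and must be read as $\mathcal{Y}_{\ppersp_0}=\Aa^{2l}_{\Qbar}$; the paper's own proof, which simultaneously refers to quasi-finiteness of $X\to\Aa^{2l}$, appears to share this slight imprecision, and your reading is the one that is compatible both with the definition of a perspective and with the subsequent use of part~(1) in the proof of Theorem~\ref{thm-ag}.
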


\begin{proof} 
  This is an elementary check. In (1), we have
  $\mathcal{Y}_{\ppersp_0}=\Aa^{2l}$, and the morphism
  $X \to \Aa^{2l}$ is quasi-finite, while in (2) we have
  $\mathcal{C}_{\ppersp_1}=X$, with the same conclusion.
\end{proof}

In the next three lemmas, we begin the proof of the second step by
studying how the roots of the polynomial $P_{\uple{b}}$, which are the
$r$-coordinates of the points in the fiber $Z_{\uple{b}}$, can change
under specialization.

Let $F$ be an algebraically closed field. Let $r_0,b_1,\dots,b_{2l}$
be elements of $F$.  Formally, the polynomial $P_{\uple{b}}\in F[r]$
is the product
$$
P_{\uple{b}}=\prod_{i=1}^{2l}(r+b_i)\ \prod_{(\zeta_i)\in\mmu_k^{2l}}
\Bigl(\sum_{i=1}^{2l} \zeta_i (r+b_i)^{1/k}\Bigr).
$$
This expansion makes sense unambiguously in an algebraic closure $K$
of the complete local field $F((r-r_0))$, provided we fix a choice of
$k$-th roots of $r+b_i$ in $K$. In particular, the order of vanishing
of $P_{\uple{b}}$ at $r_0$ is the sum of the valuation of the
factors, where the valuation on $F((r-r_0))$ is extended uniquely to
$K$.
\par
For $1\leq i\leq 2l$, fix $k$-th roots $(r_0+b_i)^{1/k}$ of $r_0+b_i$
in $F$ consistent with the choice of $(r+b_i)^{1/k}$ in $K$.  Then the
multiplicity of the factor
$$
\sum_{i=1}^{2l} \zeta_i (r+b_i)^{1/k}
$$
at $r_0$ is 
$$
\begin{cases}
0&\text{ if }\quad
\displaystyle{\sum_{i=1}^{2l} \zeta_i (r_0+b_i)^{1/k}} \neq 0,
\\
1/k&\text{ if }\quad
\displaystyle{\sum_{i=1}^{2l} \zeta_i (r_0+b_i)^{1/k}} =0
\text{ but }
\displaystyle{\sum_{\substack{ 1\leq i\leq 2l\\
    r_0+b_i=0}} \zeta_i} \neq 0,
\end{cases}
$$
and otherwise it is equal to the multiplicity of the formal power
series
$$
\sum_{\substack{1\leq i \leq 2l\\
    r_0+b_i \neq 0}} \zeta_i (r+b_i)^{1/k}\in F[[r]]\subset K
$$ 
at $r_0$, when one choses the branch of $(r+b_i)^{1/k}$ with constant
coefficient $(r_0+b_i)^{1/k}$.






\begin{lemma}\label{lm-finite-algebraic-part} 
  Let $R$ be a local integral domain with algebraically closed residue
  field $F$, and let $K$ be an algebraic closure of the fraction field
  of $R$. Let $b_1,\dots,b_{2l}$ be elements of $R$, and
  $\bar{\uple{b}}\in F^{2l}$ their reductions modulo the maximal
  ideal.  Let $r_0$ be some root of $P_{\bar{\uple{b}}}\in F[r]$.
  Assume that there exist at least two roots of $P_{\uple{b}}$ in $K$
  that reduce to $r_0$.  For $1\leq i\leq 2k$, fix a $k-$th root of
  $r_0 + b_i$ in $F$.

  Consider an algebraic closure $\widetilde{K}$ of $K((u))$.  For
  $\uple{\zeta}\in\mmu_k^{2l}$, let $n(\uple{\zeta})\geq 0$ be the
  multiplicity of
$$
\sum_{i=1}^{2l} \zeta_i (r + b_i)^{1/k}
$$
at $r_0$, as defined above.

There is no solution $(u_0,v_1,\ldots,v_{2l})\in R^{1+2l}$ of the
system of equations
\begin{align}
  v_i^k&=u_0+b_i\\
  u_0 + b_i
       &=0,\text{ for all $i$ such that $r_0+b_i=0$}
         \label{eq-m1}\\
  \sum_{i=1}^{2l} \zeta_i v_i       
       &=0, \text{ for all $\uple{\zeta}$ such
         that }
         \sum_{i=1}^{2l} \zeta_i (r _0 + b_i)^{1/k}=0\in F
         \label{eq-m2}\\
  \sum_{\substack{1 \leq i \leq 2l\\ r_0 + b_i \neq 0}} \zeta_i
  v_i^{1-kt}
       &=0,  \text{ if $n(\uple{\zeta})\geq 2$ and $0\leq 
         t\leq n(\uple{\zeta})-1$}.
\label{eq-m3}
\end{align}
\end{lemma}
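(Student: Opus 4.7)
The plan is to argue by contradiction. Suppose $(u_0, v_1, \dots, v_{2l}) \in R^{1+2l}$ satisfies the system. Set $\mu = \mathrm{mult}_{r_0}(P_{\bar{\uple{b}}})$; by hypothesis $\mu \geq 2$. My strategy is to exhibit $u_0 \in R \subset K$ as a root of $P_{\uple{b}}(r) \in R[r]$ of multiplicity at least $\mu$. Then the contradiction comes from the standard fact that the sum of multiplicities (in $K$) of roots of $P_{\uple{b}}$ reducing to $r_0$ equals $\mu$, so $u_0$ must be the unique such root, contradicting the hypothesis of at least two distinct roots reducing to $r_0$.

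First I establish that $\bar{u}_0 = r_0$. If some index $i$ satisfies $r_0 + \bar{b}_i = 0$, then (\ref{eq-m1}) directly gives $u_0 = -b_i$, hence $\bar{u}_0 = r_0$. Otherwise the multiplicity $\mu$ is accounted for entirely by factors $\sum_i \zeta_i (r+b_i)^{1/k}$ of integer multiplicity $n(\uple{\zeta}) \geq 1$; reducing the relations $\sum_i \zeta_i v_i = 0$ from (\ref{eq-m2}) modulo the maximal ideal yields constraints on $(\bar{v}_i)\in F^{2l}$ which, combined with $\bar{v}_i^k = \bar{u}_0 + \bar{b}_i$, force $(\bar{v}_i)$ to be proportional over $\mmu_k$ to $((r_0 + \bar{b}_i)^{1/k})$, and hence $\bar{u}_0 = r_0$.

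Next I compute the multiplicity of $u_0$ as a root of $P_{\uple{b}}$ using the factorization
\[
P_{\uple{b}}(r) = \prod_{i=1}^{2l} (r + b_i) \cdot \prod_{\uple{\zeta} \in \mmu_k^{2l}} \Bigl(\sum_{i=1}^{2l} \zeta_i (r + b_i)^{1/k}\Bigr)
\]
in an appropriate local field, with branches of $(r+b_i)^{1/k}$ chosen at $u_0$ so that the value at $r = u_0$ is $v_i$. The key identity $\{i : u_0 + b_i = 0\} = \{i : r_0 + \bar{b}_i = 0\}$, provided by (\ref{eq-m1}), lets me reuse at $u_0$ the conditions classifying $n(\uple{\zeta})$ at $r_0$. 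Checking factor by factor: linear factors with $r_0 + \bar{b}_i = 0$ vanish at $u_0$ by (\ref{eq-m1}); factors with $n(\uple{\zeta}) = 1/k$ contribute a $(r - u_0)^{1/k}$ term with nonzero coefficient $\sum_{i : u_0 + b_i = 0} \zeta_i \neq 0$ and zero constant term by (\ref{eq-m2}); factors with integer $n(\uple{\zeta}) \geq 1$ have their $(r - u_0)^{1/k}$ term cancel (since $\sum \zeta_i = 0$ over $\{i : u_0 + b_i = 0\}$), so only integer powers of $(r-u_0)$ remain, with Taylor coefficients equal to $\binom{1/k}{t}\sum_{i : u_0 + b_i \neq 0} \zeta_i v_i^{1-kt}$ (the binomials being nonzero in large characteristic), all vanishing for $0 \leq t \leq n(\uple{\zeta}) - 1$ by (\ref{eq-m2}) and (\ref{eq-m3}). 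Summing over all factors gives multiplicity at least $\mu$ at $u_0$.

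The main technical obstacle is the careful synchronization of $k$-th root branch choices: the $v_i \in R$ specify branches of $(r+b_i)^{1/k}$ near $u_0$, while the $(r_0 + \bar{b}_i)^{1/k} \in F$ specify branches at $r_0$, and one must verify these reduce compatibly so that the $\uple{\zeta}$-indexed conditions at $r_0$ translate to the analogous conditions at $u_0$. Any mismatch amounts to a $\mmu_k^{2l}$-twist, which is harmless since the full product over $\uple{\zeta} \in \mmu_k^{2l}$ is symmetric in these choices and simply permutes the factors. Once the multiplicity bound $\geq \mu$ is established, the factorization $P_{\uple{b}} = Q(r)\prod_j (r - r_j)^{m_j}$ over $K$ with $\sum_j m_j = \mu$ implies that $u_0$ absorbs all of $\mu$, leaving no room for a second distinct root reducing to $r_0$, contradicting the hypothesis.
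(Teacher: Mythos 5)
Your proposal follows essentially the same contradiction-by-multiplicity approach as the paper: assume a solution $(u_0,v_i)$ exists, estimate the multiplicity of $u_0$ as a root of $P_{\uple{b}}$ factor by factor using the system of equations, conclude it is at least the multiplicity $\mu$ of $r_0$ in $P_{\bar{\uple{b}}}$, and contradict the hypothesis of two distinct roots reducing to $r_0$. The paper's own proof is considerably terser (it merely asserts the factor-by-factor valuation comparison and concludes); you fill in the branch-of-$k$-th-root bookkeeping and make explicit the preliminary step $\bar{u}_0 = r_0$, which the paper leaves implicit — your sketch for establishing that reduction in the case when no linear factor $r+\bar b_i$ vanishes at $r_0$ (deducing proportionality of $(\bar v_i)$ to $((r_0+\bar b_i)^{1/k})$ from the relations of \eqref{eq-m2}) is not fully justified as written, but since the overall structure and the key idea match the paper's argument, this is a cosmetic rather than a strategic discrepancy.
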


\begin{proof} 
  Suppose that there exists a solution $u_0\in R$. We estimate from
  below the multiplicity of $u_0$ as a root of $P_{\uple{b}}$.  For
  each factor of $P_{\uple{b}}$, the valuation at $u_0$ is at least
  the valuation of the corresponding factor of $P_{\bar{\uple{b}}}$ at
  $r_0$, hence by summing, the order of vanishing of $P_{\uple{b}}$ at
  $u_0$ is at least the order of vanishing of $P_{\bar{\uple{b}}}$ at
  $r_0$. But this contradicts the assumption that there exist two
  roots of $P_{\uple{b}}$ reducing to $r_0$. 
\end{proof}

\begin{lemma} \label{lm-infinite-algebraic-part} Let $R$ be a local
  integral domain with algebraically closed residue field $F$
  containing a primitive $k$-th root of unity. Let $b_1,\dots,b_{2l}$
  be elements of $R$ and $\bar{\uple{b}}$ the reduction of $\uple{b}$
  modulo the maximal ideal.  Assume that
  $\deg(P_{\uple{\bar{b}}})<\deg(P_{\uple{b}})$.

  \emph{(1)} If $\uple{b}\notin \mcV^{\Delta}$, then for any
  $\uple{\zeta}=(\zeta_i)\in\mmu_k^{2l}$ there exists an integer
  $n_{\uple{\zeta}}\geq 0$ such that
$$
\sum_{i=1}^{2l}\zeta_i \bar{b}_i^{n_{\uple{\zeta}}}\not=0\in F.
$$
\par
\emph{(2)} There exists some $\uple{\zeta}=(\zeta_i)\in\mmu_k^{2l}$
and some integer $\nu$ with $0\leq \nu\leq n_{\uple{\zeta}}-1$ such
that
$$
\sum_{i=1}^{2l}\zeta_i b_i^n\not=0\in R.
$$
\end{lemma}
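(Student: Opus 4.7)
Both parts hinge on the factorization recalled just before the lemma,
$$P_{\uple{b}}(r) = \prod_{i=1}^{2l}(r+b_i)\cdot\prod_{\uple{\zeta}\in\mmu_k^{2l}}L_{\uple{\zeta}}(r),\qquad L_{\uple{\zeta}}(r):=\sum_{i=1}^{2l}\zeta_i(r+b_i)^{1/k},$$
analysed at $r=\infty$. Using $(r+b_i)^{1/k}=\sum_{n\geq 0}\binom{1/k}{n}b_i^n\,r^{1/k-n}$, one obtains $L_{\uple{\zeta}}(r)=\sum_n\binom{1/k}{n}\,S_n(\uple{\zeta})\,r^{1/k-n}$ with $S_n(\uple{\zeta}):=\sum_i\zeta_ib_i^n$. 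Since $k$ is invertible in $F$, the binomial coefficients $\binom{1/k}{n}$ are nonzero in $F$ in the range that matters, so the leading $r$-exponent of $L_{\uple{\zeta}}$ is controlled by the minima $\nu_{\uple{\zeta}}:=\min\{n\geq 0:S_n(\uple{\zeta})\neq 0\in R\}$ and $n_{\uple{\zeta}}:=\min\{n\geq 0:\bar{S}_n(\uple{\zeta})\neq 0\in F\}$, with $\deg P_{\uple{b}}$ (and similarly $\deg P_{\bar{\uple{b}}}$) equal to the sum of these leading contributions across all factors.

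\textbf{Proof of (1).} The claim is that $n_{\uple{\zeta}}$ is finite for every $\uple{\zeta}$, equivalently that the rational function $F_{\uple{\zeta}}(x)=\sum_i\zeta_i/(x-\bar{b}_i)\in F(x)$ (whose Laurent expansion at infinity is $\sum_n\bar{S}_n(\uple{\zeta})x^{-n-1}$) is nonzero. Partial fractions reduce this to showing that at least one grouped sum $\sum_{i:\bar{b}_i=c}\zeta_i$ is nonzero in $F$. I would use $\uple{b}\notin\mcV^{\Delta}$ to pick an index $i^{*}$ with $b_{i^{*}}\neq b_j$ in $R$ for every $j\neq i^{*}$. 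In the easy case $\bar{b}_{i^{*}}$ remains distinct from the other $\bar{b}_j$'s, so the class $\{i^{*}\}$ contributes $\zeta_{i^{*}}\in\mmu_k$, which is nonzero. In the collision case $\bar{b}_{i^{*}}=\bar{b}_{j}$ for some $j\neq i^{*}$, I would invoke the degree-drop hypothesis to preclude a complete cancellation of partial sums.

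\textbf{Proof of (2).} Granted (1), the $r$-leading exponent of $L_{\uple{\zeta}}$ computed over $R$ is $1/k-\nu_{\uple{\zeta}}$, and over $F$ it is $1/k-n_{\uple{\zeta}}$, so $\nu_{\uple{\zeta}}\leq n_{\uple{\zeta}}$ with equality exactly when $\bar{S}_{\nu_{\uple{\zeta}}}(\uple{\zeta})\neq 0$. Summing contributions over all $\uple{\zeta}$ and the linear factors $(r+b_i)$ (which specialise to linear factors, so contribute equally to both degrees), we get
$$\deg P_{\uple{b}}-\deg P_{\bar{\uple{b}}}=\sum_{\uple{\zeta}\in\mmu_k^{2l}}\bigl(n_{\uple{\zeta}}-\nu_{\uple{\zeta}}\bigr)>0.$$
Hence at least one $\uple{\zeta}$ satisfies $\nu_{\uple{\zeta}}<n_{\uple{\zeta}}$, and the pair $(\uple{\zeta},\nu=\nu_{\uple{\zeta}})$ fulfils the conclusion: $0\leq\nu\leq n_{\uple{\zeta}}-1$ and $S_\nu(\uple{\zeta})\neq 0$ in $R$.

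\textbf{Main obstacle.} The delicate point is the collision case in part (1), where $\bar{b}_{i^{*}}$ collapses with some other $\bar{b}_j$ and the naïve Vandermonde argument breaks down. The plan is to leverage the degree drop here: a total cancellation of grouped $\zeta$-sums would give a strong algebraic relation among the reductions, which combined with the distinguishing condition $b_{i^{*}}\neq b_j$ in $R$ should force a contradiction with the assumed polynomial nature of $P_{\uple{b}}$ over $R$. Making this step precise—and checking that the characteristic assumption is benign for the binomial coefficients that appear—is the technical heart of the lemma.
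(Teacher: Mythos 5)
Your treatment of part (2) is correct and matches the paper's argument: both amount to the degree bookkeeping $\deg P_{\uple{b}} - \deg P_{\bar{\uple{b}}} = \sum_{\uple{\zeta}}(n_{\uple{\zeta}} - \nu_{\uple{\zeta}})$ coming from the expansion of each factor $\sum_i\zeta_i(r+b_i)^{1/k}$ at $r=\infty$, so the assumed degree drop forces some $\nu_{\uple{\zeta}} < n_{\uple{\zeta}}$.

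However, your proof of part (1) has a genuine gap, and the proposed escape route in the collision case is the wrong tool. You correctly reduce, via partial fractions (equivalently Vandermonde on the power series coefficients), to showing that some grouped sum $\sum_{i\colon \bar{b}_i=c}\zeta_i$ is nonzero in $F$, where the grouping is by equality of the \emph{reductions} $\bar{b}_i$. The distinguished index you therefore need is one with $\bar{b}_{i^*}$ isolated \emph{in $F$}, not one with $b_{i^*}$ isolated in $R$: the latter says nothing about the reductions. And your plan to handle the collision case by invoking the degree-drop hypothesis cannot work: if every grouped $\zeta$-sum vanishes, then $P_{\bar{\uple{b}}}\equiv 0$, so the inequality $\deg P_{\bar{\uple{b}}}<\deg P_{\uple{b}}$ is then automatic (or vacuous), not contradicted. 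Part (1) is a statement purely about the point $\bar{\uple{b}}\in F^{2l}$; no comparison between $R$ and $F$ can supply the missing input.

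The correct and shorter argument is the one the paper (tersely) gives: read $\uple{b}\notin\mcV^{\Delta}$ as a condition on the morphism $\Spec(R)\to\Aa^{2l}$, so that in particular $\bar{\uple{b}}\notin\mcV^{\Delta}(F)$ — this is exactly the situation when the lemma is invoked in the proof of Lemma~\ref{lm-dimension-lowering}, where $R$ is a local ring at a point lying in $\Aa^{2l}-\mcV^{\Delta}$. With this reading, if all the grouped sums vanished then every multiplicity class of the $\bar{b}_i$'s would have size $\geq 2$ (a singleton class $\{i\}$ would force $\zeta_i=0$, impossible for a root of unity), i.e.\ $\bar{\uple{b}}\in\mcV^{\Delta}$, a contradiction. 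No case split is needed, and the degree-drop hypothesis enters only in part (2). Your observation that the finitely many binomial coefficients $\binom{1/k}{n}$ appearing must be nonzero in $F$ is correct, and is covered by the paper's standing large-characteristic assumptions.
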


\begin{proof} 
Writing
$$
\sum_{i=1}^{2l} \zeta_i (r+b_i)^{1/k} = r^{1/k} \sum_{i=1}^{2l}
\zeta_i (1+b_i/r)^{1/k} = r^{1/k} \sum_{t=0}^\infty
\Bigl(\prod_{j=0}^{t-1} \frac{1/k-j} {1+j} \Bigr)
\Bigl(\sum_{i=1}^{2l}\zeta_ib_i^t\Bigr) \frac{1}{r^t}
$$
for $(\zeta_i)\in\mmu_k^{2l}$, we first see that if (1) fails, then
the left-hand side is identically $0$, which implies that
$\uple{b}\in\mcV^{\Delta}$.  Then we obtain
$$
\deg(P_{\uple{b}})=2l+k^{2l-1}
-\sum_{(\zeta_i)\in\mmu_k^{2l}}m_{\uple{\zeta}}
$$ 
where $m_{\uple{\zeta}}\geq 0$ is the largest integer such that
$$
\sum_{i=1}^{2l}\zeta_ib_i^t=0
$$
for $0\leq t\leq m_{\uple{\zeta}}$.
If condition (2) does not hold, we therefore deduce that
$\deg(P_{\uple{b}})\leq \deg(P_{\bar{\uple{b}}})$, which contradicts
the assumption.
\end{proof}

The next lemma is one of the key ingredients of the proof of
Theorem~\ref{thm-ag}.

\begin{lemma}\label{lm-dimension-lowering}
  Let $\ppersp$ be a perspective datum on $X$ and
  $(\ppersp,Y,\bar{\gamma})$ a perspective. If $\mathcal{Y}_{\ppersp}$
  is irreducible, so that $Y=\mathcal{Y}_{\ppersp}$, and
  $\bar{\gamma}$ is not a geometric generic point of
  $\mathcal{Y}_{\ppersp}$, then there exists a perspective
  $(\ppersp',Y',\bar{\gamma}')$ with
$$
\ppersp'\cdot S=\ppersp\cdot S,\quad
\dim(\ppersp'\cdot B)\leq \dim(\ppersp\cdot B)+1\quad
\dim(Y')<\dim(Y).
$$
\end{lemma}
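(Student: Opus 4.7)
The strategy is to exhibit an equation (or a short system of equations) satisfied at $\bar\gamma$ but not at the generic point $\bar\gamma_{gen}$ of $Y$, and adjoin it to the perspective data (possibly after mildly enlarging $B$); this cuts $Y$ to a proper subvariety, and we take $Y'$ to be any irreducible component through $\bar\gamma$.

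Let $R$ be the strict Henselization of $\mcO_{Y,\bar\gamma}$, with algebraically closed residue field $F$ and fraction field $K$, and write $\uple b = (b_i) \in R^{2l}$ for the coordinates pulled back via $g\colon Y \to \Aa^{2l}$. Their reductions give $\uple{\bar b} = g(\bar\gamma) = \bar\eta \in X$. Since $X_j$ is closed in $X_\infty$ by Lemma \ref{strata-finite-etale} and $X$ is the unique irreducible component of $X_j$ whose generic point is $\bar\eta$, the image $\uple b^{gen}$ of $\bar\gamma_{gen}$ cannot lie in $X_j$: otherwise $\overline{\{\uple b^{gen}\}} \subseteq X_j$ is irreducible and contains $\bar\eta$, forcing $\overline{\{\uple b^{gen}\}} = X$, so $\uple b^{gen} = \bar\eta$, and by quasi-finiteness of $Y - g^{-1}(\mcV^\Delta) \to \Aa^{2l} - \mcV^\Delta$ one would conclude $\bar\gamma_{gen} = \bar\gamma$, contradicting non-genericity. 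Thus $|Z_{\uple b^{gen}}| > j \geq |Z_{\uple{\bar b}}|$, so the number of distinct roots of the polynomial $P_{\uple b}$ of Lemma \ref{z-polynomial} strictly drops under the specialization from $\bar\gamma_{gen}$ to $\bar\gamma$. We split two cases according to whether $\deg P$ is preserved.

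\emph{Case I: $\deg P_{\uple{\bar b}} < \deg P_{\uple b^{gen}}$.} Apply Lemma \ref{lm-infinite-algebraic-part}(2) to obtain $\uple\zeta \in \mmu_k^{2l}$ and an integer $\nu \geq 0$ with $\sum_i \zeta_i b_i^\nu \neq 0$ in $R$ but $\sum_i \zeta_i \bar b_i^\nu = 0$ in $F$. Adjoin the single new equation $\Sigma_{m+1} = \sum_i \zeta_i b_i^\nu$, split as $g_{m+1} = \sum_{i \notin S} \zeta_i b_i^\nu$ and $f_{i,m+1} = \zeta_i b_i^\nu$ for $i \in S$. This keeps $B$ and $S$ untouched and realises $\mathcal{Y}_{\ppersp'}$ as a proper closed subvariety of $Y$; any irreducible component $Y' \ni \bar\gamma$ has $\dim Y' < \dim Y$.

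\emph{Case II: $\deg P_{\uple{\bar b}} = \deg P_{\uple b^{gen}}$ but roots merge.} There is then a root $r_0 \in F$ of $P_{\uple{\bar b}}$ to which at least two distinct roots of $P_{\uple b^{gen}}$ in $\bar K$ specialize. Enlarge the base by an auxiliary variable $u$ to form $B' = B \times \Aa^1$ (so $\dim B' = \dim B + 1$); for each $i \in S$ replace $C_i$ by the degree-$k$ Kummer cover $C_i'$ of $C_i \times_B B'$ defined by $v_i^k = u + b_i$ (still a relative curve over $B'$), and for $i \notin S$ adjoin the algebraic $v_i$ to $B'$ (no further increase in $\dim B'$). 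Now adjoin the equations (M2) $u + b_i = 0$ for $i$ with $\bar b_i = -r_0$, (M3) $\sum_i \zeta_i v_i = 0$, and (M4) $\sum_{i: r_0 + \bar b_i \neq 0} \zeta_i v_i^{1-kt} = 0$ of Lemma \ref{lm-finite-algebraic-part}; each splits naturally as $g_j + \sum_{i \in S} f_{i,j}$, with (M4) regular on the open subset $\{v_i \neq 0\}$ containing the chosen lift $\bar\gamma'$ of $\bar\gamma$ (given by $u = r_0$, $v_i = (r_0+\bar b_i)^{1/k}$). The main obstacle is then verifying $\dim Y' < \dim Y$ in this case despite the extra dimension introduced by $u$: I argue by contradiction. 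If some irreducible component of $\mathcal{Y}_{\ppersp'}$ through $\bar\gamma'$ projected dominantly to $Y$, then by the valuative criterion there would exist a valuation ring $V$ dominating $R$ and elements $(u_0, \uple v) \in V^{1+2l}$ satisfying the entire system (M1)-(M4) and reducing to $(r_0, ((r_0+\bar b_i)^{1/k}))$ at the closed point. Passing to the strict Henselization of $V$, whose residue field is algebraically closed and still contains $r_0$ and $\bar b_i$, the hypothesis of Lemma \ref{lm-finite-algebraic-part} persists (the two roots of $P_{\uple b}$ specialising to $r_0$ carry over), producing the desired contradiction. Hence every irreducible component of $\mathcal{Y}_{\ppersp'}$ through $\bar\gamma'$ projects to a proper subvariety of $Y$; combined with the fiberwise finiteness of $Y' \to Y$ (the new coordinates $u$ and $v_i$ being algebraic over $\uple b$ modulo the new equations), this yields $\dim Y' < \dim Y$ and the required quasi-finiteness of $Y' - g^{-1}(\mcV^\Delta) \to \Aa^{2l} - \mcV^\Delta$.
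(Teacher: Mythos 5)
Your proof is correct and follows the same two-case strategy as the paper, using Lemmas~\ref{lm-finite-algebraic-part} and~\ref{lm-infinite-algebraic-part} in the same roles and the same reduction to showing $\uple b^{gen} \notin X_j$. Two small differences are worth noting. First, in Case~I you add a single equation, whereas the paper's proof adjoins the full family of equations $\sum_i\zeta_i b_i^{\nu}=0$ for all $(\uple\zeta,\nu)$ with $\nu<n_{\uple\zeta}$; one equation indeed suffices for the dimension drop, so your version is a harmless simplification. Second, in Case~II you phrase the final contradiction through a valuation ring obtained by the valuative criterion, then pass to strict Henselization; the paper argues more directly by looking at the generic point of $Y'$ mapping to $\bar\beta$ and invoking Lemma~\ref{lm-finite-algebraic-part} against the local ring $R$, but these are equivalent ways of organizing the same limit argument. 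One detail you gloss over but should be checked (the paper does so explicitly): after passing to the covers $C_i'$ one must verify that the pulled-back functions $b_i$ are still non-constant on the irreducible components of the geometric fibers of $C_i'\to B'$, which is a requirement for the new tuple to be a perspective datum. This follows easily because the fibers of $C'_i\to B'$ map to fibers of $C_i\to B$, but it does need to be said.
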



\begin{proof}
  Let $\bar{\alpha}$ be a geometric generic point of $Y$, and
  $\bar{\beta}$ its image in $\Aa^{2l}$. By definition of a
  perspective, the fiber of $Y\to \Aa^{2l}$ over $\bar{\eta}$ is
  finite, and since it contains $\bar{\gamma}$, it cannot contain the
  point
  $\bar{\alpha}$ that
  specializes to $\bar{\gamma}$. Hence $\bar{\beta}\not=\bar{\eta}$,
  and since $\bar{\alpha}$ specializes to $\bar{\gamma}$, it follows
  that $\bar{\beta}$ specializes to $\bar{\eta}$. In particular, we
  deduce that $\bar{\beta}\notin\mcV^{\Delta}$.
\par
By definition, $\bar{\gamma}$ is a geometric generic point of
$X\subset X_j$. If $\bar{\beta}$ was a point of $X_j$, it would follow
that they are equal, which is not the case. Hence the fiber of
$f\colon Z\to \Aa^{2l}-\mcV^{\Delta}$ over $\bar{\beta}$ has $\geq
j+1$ points, whereas the fiber over $\bar{\eta}$ has $j$ points.
\par
Consider now the local ring $R$ of the closure of $\bar{\beta}$ at the
point $\bar{\eta}$. It has algebraically closed residue field. The
polynomial $P_{\bar{\beta}}\in R[r]$ has $\geq j+1$ roots, and the
specialization $P_{\bar{\eta}}$ has $j$ roots. So either there exist
two roots of $P_{\bar{\eta}}$ that have the same image in the residue
field, or $\deg(P_{\bar{\beta}})>\deg(P_{\bar{\eta}})$.
\par
\smallskip
\par
\textbf{Case 1} (two roots coincide).
\par
Let $r_0$ be the common reduction of at least two roots of
$P_{\bar{\beta}}$. We will apply Lemma~\ref{lm-finite-algebraic-part}
to $R$ and to this $r_0$. We define the multiplicity $n(\uple{\zeta})$
for $\uple{\zeta}\in\mmu_k^{2l}$ as in that lemma.

We consider the covering $\widetilde{B}\to B\times \Aa^1$, with
coordinate $u$ on $\Aa^1$, obtained by adjoining $k$-th roots $v_i$ of
$u+b_i$ for all $i\notin S$. We then define $B'$ as the complement in
$\widetilde{B}$ of the zero locus of $u+b_i$ for all $i\notin S$ such
that $r_0+b_i\not=0$. For $i\notin S$, the functions $b_i$ define
functions $B'\to \Aa^1$ by composing with the projection $B'\to B$.

For $i\in S$, we consider the curve $\widetilde{C}_i\to \widetilde{B}$
obtained from the base change of $C_i\times \Aa^1\to B\times \Aa^1$ to
$B'$ by adjoining a $k$-th root $v_i$ of $u+b_i$, so we have a diagram
$$
\begin{array}{ccccc}
  C_i & \longleftarrow & C_i\times_B \widetilde{B} 
  & \longleftarrow & \widetilde{C}_i\\
  \downarrow & & \downarrow \\
  B & \longleftarrow &  \widetilde{B}
\end{array}
$$
If $r_0+b_i\not=0$, we define $C'_i$ as the complement in $C_i$ of the
zero locus of $u+b_i$, and otherwise we define $C'_i=\widetilde{C}_i$.
In all cases, the morphism $C'_i\to C_i$ allows us to define a
function $b_i\colon C'_i\to \Aa^1$. The fibers of this function over a
geometric point of $B'$ project to geometric fibers of $C_i\to B$,
hence irreducible components project to irreducible components, and so
$b_i$ is not constant on any irreducible component of any geometric
fiber, since $\ppersp$ is a perspective datum.
\par
We next define the scheme $\mathcal{C}'\to B'$ as the fiber product
for $i\in S$ of the curves $C'_i$ over $B'$.
\par
There exists a lift $\bar{\gamma}'$ of $\bar{\gamma}$ in
$\mathcal{C}'$ such that $u(\bar{\gamma}')=r_0$ (indeed, we can lift
$\bar{\gamma}$ to the fiber product of the $\widetilde{C}_i$ over
$\widetilde{B}$, and the resulting point lies in $\mathcal{C}'$ since
$r_0+b_i=0$ if $u+b_i=0$). We fix such a lift. This choice defines
canonical $k$-th roots of
$u(\bar{\gamma}') + b_i(\bar{\gamma}')=r_0+b_i$, and we will use these
later.
\par
The functions $g_j$, $1\leq j\leq m$ and $f_{i,j}$ of the perspective
datum $\ppersp$ extend to $B'$ and $C'_i$, respectively, by composing
with the projections $B'\to B$ and $C'_i\to C_i$. We will now add
additional functions (corresponding to a change of the value of the
parameter $m$).
\par
Precisely, let $m'=m+m_1+m_2+m_3$, where $m_1$ (resp. $m_2$, $m_3$) is
the number of equations~(\ref{eq-m1}) in
Lemma~\ref{lm-finite-algebraic-part} (resp. number of
equations~(\ref{eq-m2}) or~(\ref{eq-m3})). We define the additional
functions $g_j$ and $f_{i,j}$ for $m+1\leq j\leq m'$, making a
one-to-one correspondance between the values of $j$ and the equations
of those three types.
\par
If $j$ corresponds to an equation~(\ref{eq-m1}), i.e., to an integer
$i$ with $1\leq i\leq 2l$ such that $r_0+b_i=0$, then we define
$$
\begin{cases}
  f_{i',j}=u+b_i&\textit{ for } i'\in S \text{ if }i'=i\\
  f_{i',j} = 0 & \textit{ for } i'\in S \text{ if } i'\not=i \\
  g_j=0,
\end{cases}
$$
if $i\in S$, and otherwise we define
$$
\begin{cases}
  f_{i',j}=0  & \textit{ for } i'\in S  \\
  g_j=u+b_i.
\end{cases}
$$
\par
If $j$ corresponds to an equation~(\ref{eq-m2}), i.e., to some
$\uple{\zeta}\in \mmu_k^{2l}$ such that
$$
\sum_{i=1}^{2l} \zeta_i (r _0 + b_i)^{1/k}=0
$$
we define
$$
\begin{cases}
  f_{i,j}=\zeta_iv_i&\textit{ for } i\in S\\
  g_j=\sum_{i\notin S}\zeta_i v_i.
\end{cases}
$$
\par
Finally, if $j$ corresponds to an equation~(\ref{eq-m3}), i.e., to
$\uple{\zeta}\in \mmu_k^{2l}$ and $t$ such that
$n(\uple{\zeta})\geq 2$ and $0\leq t\leq n(\uple{\zeta})-1$, then we
define
$$
\begin{cases}
  f_{i,j}=\zeta_iv_i^{1-kt}&\text{ if } i\in S\text{ and } r_0+b_i\not=0\\
  g_j=\sum_{\substack{i\notin S\\r_0+b_i\not=0}}\zeta_i
  v_i^{1-kt}.
\end{cases}
$$
(note that by the definition of $C'_i$, the function $v_i$ is
non-vanishing). We now have defined the perspective datum
$$
\ppersp'=(m',S, B', (C'_i)_{i\in S}, (b_i), (f_{i,j})_{\substack{i\in
    S\\1\leq j\leq m'}}, (g_j)_{1\leq j\leq m'}).
$$
\par
The associated variety, i.e., the vanishing locus $\mathcal{Y}'$ of
$$
g_j+\sum_{i\in S} f_{i,j}
$$
for $1\leq j\leq m'$, contains $\bar{\gamma}'$ by construction (see
Lemma~\ref{lm-finite-algebraic-part} again). Let $Y'$ be an
irreducible component of $\mathcal{Y}'$ containing $\bar{\gamma}'$. We
claim that $(\ppersp',Y',\bar{\gamma}')$ is the required perspective.
\par
First, for $y\in\mathcal{Y}$, the points of the fiber of
$\mathcal{Y}'\to \mathcal{Y}$ over $y$ are determined by the value of
the function $u$ on $\mathcal{Y}'$, whose values lie in the set of
roots of the polynomial $P_{\uple{b}(y)}$. In particular, the fiber is
finite, and hence $\mathcal{Y}'$ is quasi-finite over
$\mathcal{Y}$. It follows on the one hand that $Y'$ has dimension
$\leq \dim(Y)$, and on the other hand that $Y'$ is quasi-finite over
$\Aa^{2l}-\mcV^{\Delta}$. So $(\ppersp',Y',\bar{\gamma}')$ is a
perspective. 
\par
We have $\dim(B')\leq \dim(B)+1$. It remains therefore to check that
$\dim(Y')<\dim(Y)$. We have already observed that
$\dim(Y')\leq \dim(Y)$. Suppose the dimensions were equal. Then, since
$Y'\to Y$ is quasi-finite, the geometric generic point $\bar{\gamma}'$
would map to $\bar{\alpha}$ in $Y$, and therefore to $\bar{\beta}$ in
$\Aa^{2l}$. By applying finally Lemma~\ref{lm-finite-algebraic-part},
we obtain a contradiction: since two roots of $P_{\bar{\beta}}$ reduce
to the same root of $P_{\bar{\eta}}$, there cannot be solutions in $R$
of the system of
equations~(\ref{eq-m1}),~(\ref{eq-m2}),~(\ref{eq-m3}), whereas this is
exactly what we obtain from the fact that $\bar{\beta}$ is the image
of $\bar{\gamma}'$.
\par
\smallskip
\par
\textbf{Case 2} (the degree drops).
\par
We now consider instead Lemma~\ref{lm-infinite-algebraic-part}, and
define integers $n_{\uple{\zeta}}$ for $\zeta\in\mmu_k^{2l}$ as the
least integer $\geq 0$ such that
$$
\sum_{i=1}^{2l}\zeta_i \bar{b}_i^{n_{\uple{\zeta}}}\not=0
$$
at $\bar{\eta}$ (this exists by statement (1) in the lemma). We define
$m'=m+m_1$, where $m_1$ is the number of pairs $(\uple{\zeta},\nu)$
with $\uple{\zeta}\in\mmu_k^{2l}$ and
$0\leq \nu\leq n_{\uple{\zeta}}$. For $m+1\leq j\leq m'$,
corresponding in one-to-one fashion to $(\uple{\zeta},\nu)$, we define
$$
\begin{cases}
  f_{i,j}=\zeta_i b_i^{\nu}&\text{ for } i\in S\\
  g_j=\sum_{i\notin S}\zeta_i b_i^{\nu}.&
\end{cases}
$$
Then
$\ppersp'=(m',S,B,(C_i),(b_i),(f_{i,j})_{\substack{i\in S\\1\leq j\leq
    m'}},(g_j)_{1\leq j\leq m'})$ is a perspective datum (since the
$b_i$ have not changed, the non-constancy condition is also
unchanged). The point $\bar{\gamma}$ belongs to the associated variety
$\mathcal{Y}'\subset \mathcal{Y}_{\ppersp}\subset
\mathcal{C}_{\ppersp}$ (by definition of $n_{\uple{\zeta}}$), so
$(\ppersp',Y',\bar{\gamma})$ is a perspective, where $Y'$ is the
irreducible component of $\mathcal{Y}'$ containing $\bar{\gamma}$. By
Lemma~\ref{lm-infinite-algebraic-part}, on the other hand,
$\bar{\alpha}$ does not lie in $\mathcal{Y}'$, so all its irreducible
components, including $Y'$, have dimension
$<\dim(\mathcal{Y}_{\ppersp})=\dim(Y)$.
\end{proof}

In the next lemma, we produce from a a perspective another one with a
specific value of the parameter $m$.

\begin{lemma}\label{lm-equation-cleaning}
  Let $(\ppersp,Y,\bar{\gamma})$ be a perspective on $X$. There exists
  a perspective $(\ppersp',Y',\bar{\gamma})$ such that
\begin{gather*}
  \ppersp'\cdot S=\ppersp\cdot S,\quad\quad \ppersp'\cdot
  B=\ppersp\cdot B,\quad\quad \ppersp'\cdot (C_i)=\ppersp\cdot (C_i)
  \quad\quad \ppersp'\cdot (b_i)=\ppersp\cdot (b_i)
  \\
  \ppersp'\cdot m=\dim(\ppersp\cdot B)+|\ppersp\cdot S|-\dim(Y)
  \\
  \text{$Y'$ is isomorphic to $Y$},\quad\quad
  \text{$\mathcal{Y}_{\ppersp}\subset \mathcal{Y}_{\ppersp}'$ as
    $B$-schemes.}
\end{gather*}
\end{lemma}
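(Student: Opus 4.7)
The plan is to prove this by selecting a minimal subset of the defining equations $\Sigma_j = g_j + \sum_{i \in S} f_{i,j}$ that still cuts out $Y$ as an irreducible component, and then setting the remaining $f_{i,j}$ and $g_j$ to zero if any padding is needed. Set $c = \dim(\ppersp\cdot B) + |\ppersp\cdot S| - \dim(Y)$, which is the codimension of $Y$ in $\mathcal{C}_\ppersp$ (using that $\mathcal{C}_\ppersp$ is equidimensional of dimension $\dim(B)+|S|$ since the $C_i$ are relative curves). Since $Y$ is an irreducible component of $\mathcal{Y}_\ppersp = V(\Sigma_1,\ldots,\Sigma_m)$, Krull's Hauptidealsatz gives $c \leq m$.

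The key commutative-algebra step is the following: localize at the generic point $\eta_Y$ of $Y$ and let $R = \mathcal{O}_{\mathcal{C}_\ppersp, \eta_Y}$, a Noetherian local ring of dimension $c$ with maximal ideal $\mathfrak{m}$. Because $Y$ is an isolated component of $\mathcal{Y}_\ppersp$, no prime strictly smaller than $\mathfrak{m}$ contains the $\Sigma_j$, so $(\Sigma_1,\ldots,\Sigma_m)R$ is $\mathfrak{m}$-primary. I claim that one can select $c$ indices $j_1,\ldots,j_c$ so that $(\Sigma_{j_1},\ldots,\Sigma_{j_c})R$ is still $\mathfrak{m}$-primary. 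Build the selection inductively: having chosen $\Sigma_{j_1},\ldots,\Sigma_{j_t}$ with $t<c$ whose minimal primes all have height $t$, each such prime has height strictly less than $\dim R = c$ and is therefore not equal to $\mathfrak{m}$; hence $(\Sigma_1,\ldots,\Sigma_m)$ cannot be contained in any of them (else its radical $\mathfrak{m}$ would be too), so by prime avoidance some $\Sigma_{j_{t+1}}$ lies outside all of them, and $(\Sigma_{j_1},\ldots,\Sigma_{j_{t+1}})$ has all minimal primes of height $t+1$.

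Now define
\[
\ppersp' = (c, \ppersp\cdot S, \ppersp\cdot B, \ppersp\cdot (C_i), \ppersp\cdot (b_i), (f_{i,j_l})_{i\in S, 1\leq l\leq c}, (g_{j_l})_{1\leq l\leq c}),
\]
so that $\mathcal{Y}_{\ppersp'} = V(\Sigma_{j_1},\ldots,\Sigma_{j_c}) \supseteq \mathcal{Y}_\ppersp$. Because $(\Sigma_{j_1},\ldots,\Sigma_{j_c})R$ is $\mathfrak{m}$-primary, $\mathfrak{m}$ is a minimal prime of this ideal in $R$, which means $\eta_Y$ is a generic point of $\mathcal{Y}_{\ppersp'}$; hence $Y':=Y$ is an irreducible component of $\mathcal{Y}_{\ppersp'}$. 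The perspective axioms for $(\ppersp',Y',\bar{\gamma})$ follow immediately: $\bar{\gamma}$ still lies in $Y'=Y$, and the projection $Y'-g^{-1}(\mcV^{\Delta}) \to \Aa^{2l}-\mcV^{\Delta}$ is literally the same quasi-finite map as before, sending $\bar\gamma$ to $\bar\eta$. All inclusions and identifications listed in the statement then hold by construction.

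The main (and really the only) obstacle is the prime-avoidance extraction of an $\mathfrak{m}$-primary subsystem from the generators; everything else is bookkeeping. A minor subtlety is that if $\mathcal{C}_\ppersp$ were not equidimensional and the component of $\mathcal{C}_\ppersp$ containing $Y$ had dimension strictly less than $\dim(B)+|S|$, one would obtain an $\mathfrak{m}$-primary ideal with fewer than $m'$ generators; in that case one pads the perspective datum with zero functions $f_{i,l}=0$, $g_l=0$ for the extra indices, which adds trivially satisfied equations and does not change $\mathcal{Y}_{\ppersp'}$.
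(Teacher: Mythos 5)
The overall strategy is close to the paper's: both recognize that $c = \dim(\ppersp\cdot B) + |\ppersp\cdot S| - \dim(Y)$ is the codimension of $Y$ in $\mathcal{C}_\ppersp$ and that one should replace the $m$ functions $\Sigma_j$ with a system of $c$ functions, of the same shape, that still has $Y$ as an irreducible component. But the specific mechanism you propose, selecting a \emph{subset} of the original generators $\Sigma_1,\dots,\Sigma_m$, has a genuine gap: your inductive step asserts that because the ideal $(\Sigma_1,\dots,\Sigma_m)R$ is not contained in any of the minimal primes $\mathfrak p_1,\dots,\mathfrak p_s$ of $(\Sigma_{j_1},\dots,\Sigma_{j_t})R$, ``by prime avoidance some $\Sigma_{j_{t+1}}$ lies outside all of them.'' Prime avoidance only produces \emph{some element of the ideal} outside $\bigcup_i\mathfrak p_i$; it does not produce one of the \emph{given generators}. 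In fact no subset of the generators need work. A standard counterexample: in $R = k[[x,y]]$ take $a_1 = xy$, $a_2 = x(x+y)$, $a_3 = y(x+y)$. Then $(a_1,a_2,a_3) = (xy, x^2, y^2) = \mathfrak m^2$ is $\mathfrak m$-primary with $\dim R = 2$, yet every pair is principal up to radical: $(a_1,a_2) = x(x,y)$, $(a_1,a_3) = y(x,y)$, $(a_2,a_3) = (x+y)(x,y)$, each of height one. So the inductive selection of a subset cannot be carried out.

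The fix is exactly what the paper does: take \emph{linear combinations} $\varphi_\nu = \sum_j \alpha_{\nu,j}(g_j + \sum_i f_{i,j})$ rather than a subset. Since $g'_\nu := \sum_j\alpha_{\nu,j}g_j$ and $f'_{i,\nu} := \sum_j\alpha_{\nu,j}f_{i,j}$ are again functions on $B$ and on $C_i$ respectively, the perspective-datum shape is preserved, and now the avoidance step becomes genuine: the set of coefficient vectors whose $\varphi$ does not vanish identically on a given component $W \supsetneq Y$ is a nonempty Zariski-open subset of the span, and intersecting finitely many of these (one for each component of $V(\varphi_1,\dots,\varphi_{\nu-1})$ containing $Y$) remains nonempty over an infinite field. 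Your padding remark (adding zero functions if the component of $\mathcal C_\ppersp$ through $Y$ has smaller dimension) is fine but is not needed once the main step is done this way, since the induction terminates exactly at $\nu = m'$ with $Y$ an irreducible component of $V(\varphi_1,\dots,\varphi_{m'})$ by construction.
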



\begin{proof} 
  Let $m'=\dim(\ppersp\cdot B)+|\ppersp\cdot S|-\dim(Y)$. It is the
  codimension of $Y$ in $\mathcal{C}_{\ppersp}$. Let $X$ be the
  subspace of $\Gamma(\mathcal{C}_{\ppersp},\mathcal{O})$ generated by
  the functions
$$
h_j=g_j+\sum_{1\leq j\leq m}f_{i,j}
$$
for $1\leq j\leq m$. We claim that for any integer $\nu$ with
$0\leq \nu\leq m'$, there exist $(\varphi_1,\ldots,\varphi_{\nu})$ in
$X$ such that all irreducible components of the zero locus
$V(\varphi_1,\ldots,\varphi_{\nu})$ in $\mathcal{C}_{\ppersp}$ that
contain $Y$ have codimension $\nu$ in $\mathcal{C}_{\ppersp}$.
\par
We prove this by induction on $\nu$. The statement is true for
$\nu=0$. Assume that $\nu\leq m'$ and that the property holds for
$\nu-1$ and the functions $(\varphi_1,\ldots,\varphi_{\nu-1})$. Let
$W$ be an irreducible component of the zero locus
$V(\varphi_1,\ldots,\varphi_{\nu-1})$. It has codimension
$\nu-1<m'=\codim(Y)$ in $\mathcal{C}_{\ppersp}$ so $Y$ is a proper
closed irreducible subset of $W$. Hence there exists $j$ such that
$h_j$ does not vanish identically on $W$, and in particular the set of
$\varphi\in X$ such that $\varphi$ does not vanish on $W$ is a
non-empty Zariski-open subset of $X$. Taking intersection of these
open sets, there exists $\varphi_{\nu}\in X$ such that $\varphi_{\nu}$
is non-vanishing on all irreducible components $W$ containing $Y$. It
follows that $(\varphi_1,\ldots, \varphi_{\nu})$ satisfy the induction
assumption.
\par
For $\nu=m'$, this means that all irreducible components of
$V(\varphi_1,\ldots,\varphi_{m'})$ containing $Y$ have codimension
$m'=\codim(Y)$ in $\mathcal{C}_{\ppersp}$. Hence $Y$ is one of the
irreducible components of $V(\varphi_1,\ldots,\varphi_{m'})$.
\par
For $1\leq \nu\leq m'$, write
$$
\varphi_{\nu}=\sum_{1\leq j\leq m}\alpha_{\nu,j}h_j.
$$
We define
$$
g'_{\nu}=\sum_{1\leq j\leq m}\alpha_{\nu,j}g_j, \quad\quad
f'_{i,\nu}=\sum_{1\leq j\leq m}\alpha_{\nu,j}\sum_{i\in S}f_{i,j},
$$
for $i\in S$ and $1\leq \nu\leq m'$ so that
$$
g'_{\nu}+\sum_{i\in S}f'_{i,\nu}=\varphi_{\nu}.
$$
Then 
$$
\ppersp'=(m',S,B,(C_i)_{i\in S},(b_i),(f'_{i,j})_{\substack{i\in
    S\\1\leq j\leq m'}}, (g'_j)_{1\leq j\leq m'})
$$
is a perspective datum on $X$; by construction $Y$ is an irreducible
component of $\mathcal{Y}_{\ppersp'}$ and
$\mathcal{Y}_{\ppersp}\subset \mathcal{Y}_{\ppersp}'$ as $B$-schemes,
so $(\ppersp',Y,\bar{\gamma})$ is a perspective with the desired
properties.
\end{proof}

In the next lemma, we have a single perspective, so we don't use the
selector notation. 

\begin{lemma}\label{lm-dio-3} 
  Let $(\ppersp,Y,\bar{\gamma})$ be a perspective on $X$. For any
  $T\subset S$ and $b \in B$ , we put
$$
\widetilde{\Gamma}_{T,b }=\prod_{i\in
  T}\Gamma(C_{i,b},\mathcal{O}_{C_{i,b}}),\quad\quad \Gamma_{T,b}=\prod_{i\in
  T}(\Gamma(C_{i,b},\mathcal{O}_{C_{i,b}})/\kappa_b),
$$
where the $\kappa_b$ is the residue field at $b$. The spaces $\widetilde{\Gamma}_{T,b}$ and $\Gamma_{T,b}$
are $\kappa_b$-vector spaces.  For $1\leq j\leq m$, we denote
$\uple{f}_{T,j,b}=(f_{i,j})_{i\in T}\in\widetilde{\Gamma}_{T,b}$.
  \par
  Assume that $S$ is not empty, that $B$ is irreducible, and that the generic fiber of $C_i\to B$ is
  geometrically irreducible for all $i\in S$.
\par
One of the following properties holds:
\par
\emph{(a)} The scheme $\mathcal{Y}_{\ppersp}$ has a unique
geometrically irreducible component whose projection to $B$ is
dominant.
\par
\emph{(b)} There exists a proper subset $T\subset S$ such that the images of
$(\uple{f}_{T,1,\eta},\ldots,\uple{f}_{T,m,\eta})$ 
span a subspace of $\Gamma_{T,\eta}$
of dimension $\leq m-(|S| - |T|)/2$, for $\eta$ the generic point of
$B$.
\end{lemma}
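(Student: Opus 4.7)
I first reduce to the generic fiber by replacing $B$ with $\eta = \Spec K$, $K = \kappa_\eta$. Since each $C_{i,\eta}$ is geometrically irreducible, the product $\mcC_{\ppersp,\overline{K}}$ is irreducible, and condition (a) becomes the assertion that the subscheme $\mcY_{\ppersp,\overline{K}}$ cut out by the equations $h_j := g_j + \sum_i f_{i,j} = 0$ is irreducible. I also normalize by replacing $(h_j)_{j \le m}$ with a $K$-basis of their span in $\Gamma(\mcC_\ppersp,\mathcal{O})$; this preserves $\mcY_\ppersp$ and only strengthens (b) (as the spans in $\Gamma_{T,\eta}$ are unchanged while $m$ can only shrink), so I may assume the $h_j$'s are $K$-linearly independent. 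The goal is then to establish the contrapositive: if (b) fails for every proper $T \subsetneq S$, then $\mcY_{\ppersp,\overline{K}}$ is irreducible.

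The proof is by induction on $|S|$. The base case $|S|=1$ is direct: the only proper subset is $T = \emptyset$, for which (b) reads $0 \le m - 1/2$; since (b) fails, $m = 0$, and $\mcY_\ppersp = C_{1,\eta}$ is geometrically irreducible. For the inductive step with $|S| \ge 2$, fix $i_0 \in S$ and analyze the projection $\pi_{i_0} \colon \mcY_\ppersp \to C_{i_0,\eta}$. Its generic fiber over $\eta' = \Spec K(C_{i_0,\eta})$ is a subscheme of $\prod_{i\neq i_0} C_{i,\eta'}$ cut out by the $m$ functions $h_j' := g_j - f_{i_0,j}(x_{i_0}) + \sum_{i\neq i_0} f_{i,j}$, defining a new perspective datum $\ppersp'$ with $|S|-1$ curves to which I apply the inductive hypothesis.

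If the inductive conclusion is case (a) for $\ppersp'$, then the generic fiber of $\pi_{i_0}$ is geometrically irreducible; combined with the geometric irreducibility of $C_{i_0,\eta}$, this yields a unique component of $\mcY_{\ppersp,\overline{K}}$ dominating $C_{i_0,\overline{K}}$. I rule out components failing to dominate via dimension theory: linear independence of the $h_j$'s makes $\mcY_{\ppersp,\overline{K}}$ equidimensional of codimension $m$ in $\mcC_{\ppersp,\overline{K}}$, and the failure of (b) at $T = \emptyset$ forces $m < |S|/2$, which suffices to ensure positive fiber dimension over $C_{i_0}$. If instead induction yields case (b) for $\ppersp'$ with witness $T' \subsetneq S \setminus \{i_0\}$, I transfer the bound $\dim \pi_{T'}(V') \le m - (|S|-1-|T'|)/2$ from $K'$ to $K$ via the base change isomorphism $V' \cong V \otimes_K K'$, and then choose the lift $T \in \{T', T'\cup\{i_0\}\}$: taking $T = T' \cup \{i_0\}$ makes $|S|-|T| = |S|-1-|T'|$, so the inductive bound is exactly the target bound, provided $\dim \pi_T(V) = \dim \pi_{T'}(V)$, which expresses that the extra coordinate $f_{i_0,j}$ lies in the span determined by the $T'$-coordinates modulo constants on $C_{i_0,\eta}$.

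The main obstacle is the rigidity of the half-integer bound $m - (|S|-|T|)/2$: the inductive step matches the target only if either the lift $T = T' \cup \{i_0\}$ introduces no new dimension (requiring $f_{i_0,j}$ to be controlled by $\pi_{T'}(V)$) or the alternative lift $T = T'$ yields a strengthened bound that compensates. I expect the decisive tool for closing this gap to be the failure of (b) at $T = \{i_0\}$, which forces the $f_{i_0,j}$'s to span a large subspace of $\Gamma(C_{i_0,\eta},\mathcal{O})/K$ and hence constrains how $\pi_{T' \cup \{i_0\}}(V)$ relates to $\pi_{T'}(V)$. The combinatorial bookkeeping that ensures this exact-fit dichotomy always succeeds, for every $i_0$, is where I anticipate the proof being most delicate; a secondary case split based on the rank of the composite $V \twoheadrightarrow \pi_{\{i_0\}}(V) / (\pi_{\{i_0\}}(V) \cap \text{image from } \pi_{T'}(V))$ is likely needed.
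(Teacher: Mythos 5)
Your proposal takes a genuinely different route from the paper, and you correctly flag the crucial gap yourself at the end. Let me make that gap precise, and explain why I don't think your inductive framework closes without importing a substantially new idea.

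The paper does not argue by induction on $|S|$. Instead it spreads the perspective datum out over a number ring, picks a dense open $B^\circ\subset B$ over which both the geometric irreducibility of the $C_{i,b}$ and the negation of (b) persist (both being constructible conditions that hold at $\eta$), and then proves geometric irreducibility of the fiber $V=\mathcal{Y}_{\ppersp,b,\kappa}$ over any finite-field point $b\in B^\circ(\kappa)$ of large characteristic by \emph{point-counting}. The incidence condition is detected by additive characters in auxiliary variables $\uple{\lambda}\in\kappa^m$, the sum factors through the curves $C_{i,b}$, and the Weil bound for curves shows that the contribution of any $\uple{\lambda}$ with $S_{\uple{\lambda}}=T\subsetneq S$ is $O(|\kappa|^{(|S|+|T|)/2})$. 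The negation of (b) is exactly what makes this work: it forces $n(T)=\dim\ker(\varphi_T)\leq (|S|-|T|)/2 - 1/2$, so the total off-diagonal contribution is $O(|\kappa|^{|S|-m-1/2})$, and Lang--Weil then delivers geometric irreducibility. The bound $m-(|S|-|T|)/2$ in (b) is thus tailored to this exponential-sum bookkeeping; it does not come from a dimension count on $V$ itself.

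The gap in your proposal is real and, I think, not closeable as sketched. In your Case (b) for $\ppersp'$ you get $\dim\Imag(\varphi_{T'}) \leq m-(|S|-1-|T'|)/2 = m-(|S|-|T'|)/2 + 1/2$, which is weaker by exactly $1/2$ than what the negation of (b) for $\ppersp$ at $T=T'$ asserts, so there is no contradiction with the lift $T=T'$. With the lift $T=T'\cup\{i_0\}$, you would need $\dim\Imag(\varphi_{T'\cup\{i_0\}})=\dim\Imag(\varphi_{T'})$, i.e., the map $\ker(\varphi_{T'})\to\Gamma(C_{i_0,\eta})/K$, $\uple{\lambda}\mapsto\sum_j\lambda_jf_{i_0,j}$, must vanish. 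But the negation of (b) at $T=\{i_0\}$ pushes the other way: it forces the $f_{i_0,j}$ to span a \emph{large} subspace, which makes it \emph{more} likely that adding $i_0$ raises the dimension. There is no reason in general for the dimensions to match, and the "secondary case split" you anticipate would essentially have to rediscover some averaged or numerical version of the constraint, which is exactly what the paper's Diophantine argument provides for free. Your reduction to linearly independent $h_j$'s, the base case $|S|=1$, and the observation that $m<|S|/2$ (from the negation of (b) at $T=\emptyset$) are all fine; the problem is purely in the interaction between the two halves of the inductive dichotomy.
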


\begin{proof}
  There exists a number field and an open dense subset $\Oc$ of its
  ring of integers in a number field such that the persective datum is
  defined over $\Oc$. We fix one model of $\ppersp$ over $\Oc$, and
  we will use the same notation for its components as for the
  original objects over $\Qbar$.
  We assume that property (b) does \emph{not} hold and we will show
  that (a) holds. We will do this by studying fibers of
  $\mathcal{Y}_{\ppersp}\to B$ over finite-valued field points of a
  suitable dense open subset of $B$, using the point-counting
  criterion for irreducibility over finite fields.


For $b \in B$, the condition that the all curves $C_{i,b}$ are geometrically irreducible is a constructible condition.  So is the condition $(\uple{f}_{T,1,b},\ldots,\uple{f}_{T,m,b})$ generate a
  subspace of $\Gamma_{T,b}$ of dimension $> m - (|S|- |T|)/2$ for all proper subsets $T$ of $S$.
  
  By assumption, including the negation of (b), these properties both hold at the generic point, hence we can find a dense open subset $B^\circ$ where both properties hold.

  Let $\Spec(\kappa)\to \Spec(\Oc)$ be a finite-field valued point of
  $\Spec(\Oc)$.  Fix $b\in B^{\circ}(\kappa)$.  Let $\psi$ be a fixed
  non-trivial additive character of $\kappa$.  We denote
  $V=\mathcal{Y}_{\ppersp,b,\kappa}$. We compute $|V(\kappa)|$ using
  additive characters (as in the proof of Proposition
  \ref{lm-dio-new}). For $\uple{\lambda}\in\kappa^m$ and
  $x\in \mathcal{C}_{\ppersp}(\kappa)$, we denote
$$
f_{\uple{\lambda}}(x)=\sum_{j=1}^m\lambda_j\sum_{i\in S}f_{i,j}(x).
$$
and
$$
\xi(\uple{\lambda})=\psi\Bigl(\sum_{j=1}^m\lambda_jg_j(b)\Bigr).
$$
We have
\begin{align*}
  |V(\kappa)|&=
               \frac{1}{|\kappa|^m}
               \sum_{x\in\mathcal{C}_{\ppersp,b}(\kappa)}
               \prod_{j=1}^m\sum_{\lambda\in\kappa}\psi
               \Bigl(
               \lambda\Bigl(g_j(x)+\sum_{i\in S}f_{i,j}(x)\Bigr)
               \Bigr)\\
             &=
               \frac{1}{|\kappa|^m}
               \sum_{x\in\mathcal{C}_{\ppersp,b}(\kappa)}
               \prod_{j=1}^m\sum_{\lambda\in\kappa
               }\psi(\lambda g_j(b))\psi
               \Bigl(
               \lambda\sum_{i\in S}f_{i,j}(x)
               \Bigr)
  \\
             &
               =\frac{1}{|\kappa|^m} 
               \sum_{\uple{\lambda}\in
               \kappa^m}\xi(\uple{\lambda})E(b;\uple{\lambda}),
\end{align*}
where
$$
E(b;\uple{\lambda})=\sum_{x\in\mathcal{C}_{\ppersp,b}(\kappa)}\psi(f_{\uple{\lambda}}(x)).
$$
By definition of
$\mathcal{C}_{\ppersp}$ as a fiber product, we have the separation of
variable formula
$$
E(b;\uple{\lambda})= \prod_{i\in S} \sum_{x\in
  C_{i,b}(\kappa)}\psi\Bigl(\sum_{j=1}^m\lambda_jf_{i,j}(x)\Bigr).
$$
Let
$$
S_{\uple{\lambda}}=\Bigl\{i\in S\,\mid\, \sum_{j=1}^m
\lambda_jf_{i,j}\text{ is constant on } C_{i,b}\Bigr\}\subset S.
$$
Applying the Weil bound for the exponential sums over
$C_{i,b}(\kappa)$ (assuming the characteristic is larger than the degree of the functions $f_{i,j}$), it follows that
$$
E(b;\uple{\lambda})\ll
|\kappa|^{|S_{\uple{\lambda}}|+(|S|-|S_{\uple{\lambda}}|)/2}=
|\kappa|^{(|S|+|S_{\uple{\lambda}}|)/2}.
$$
\par
We now split the expression for
$|V(\kappa)|$ above according to the value of
$S_{\uple{\lambda}}$, and isolate the term corresponding to
$S_{\uple{\lambda}}=S$ from the others. This gives
$|V(\kappa)|=N_1+N_2$, where
$$
N_1=\frac{1}{|\kappa|^m} \sum_{\substack{\uple{\lambda}\in
    \kappa^m\\S_{\uple{\lambda}}=S}}\xi(\uple{\lambda})E(b;\uple{\lambda}),
\quad\quad 
N_2= \frac{1}{|\kappa|^m} \sum_{\substack{\uple{\lambda}\in
    \kappa^m\\S_{\uple{\lambda}}\not=S}}\xi(\uple{\lambda})E(b;\uple{\lambda}).
$$

Taking $T = S - \{i\}$ for a fixed $i \in S$ in the defining property
of $B^\circ$, we observe that the tuple
$(\uple{f}_{T,1,b},\ldots,\uple{f}_{T,m,b})$ generates a subspace of
$\Gamma_{T,b}$ of dimension $> m - (|S|- |T|)/2 > m-1/2$, hence are
linearly independent in $\Gamma_{T,b}$, and thus are linearly
independent in $\Gamma_{S,b}$.
The condition $S_{\uple{\lambda}}=S$ arises then only when
$\uple{\lambda}=0$.
Hence
$$
N_1=\frac{1}{|\kappa|^m} \prod_{i\in S}|C_{i,b}(\kappa)|.
$$
Since
$C_{i,b}$ is a geometrically irreducible curve (by the choice of
$B^{\circ})$, we have
$|C_{i,b}(\kappa)|=|\kappa|+O(|\kappa|^{1/2})$ for all $i$. Hence
\begin{align*}
  N_1&= |\kappa|^{|S|-m}(1+O(|\kappa|^{-1/2}))^{|S|}+ O(|\kappa|^{-m+|S|-1/2})\\
     &=|\kappa|^{|S|-m}+O(|\kappa|^{|S|-m-1/2}).
\end{align*}
\par
On the other hand, we have 
$$
N_2\ll \frac{1}{|\kappa|^m}\sum_{\substack{T\subset S\\T\not=S}}
|\kappa|^{n(T)}|\kappa|^{(|S|+|T|)/2}
$$
where $n(T)$ is the dimension of the
$\kappa$-vector subspace of
$\kappa^m$ whose elements are all
$\uple{\lambda}$ such that $S_{\uple{\lambda}}\subset
T$. We have $n(T)=\ker(\varphi_T)$, where $\varphi_T\colon \kappa^m\to
\Gamma_{T,b,\kappa}/\kappa$ is the linear map
$$
\uple{\lambda}\mapsto \sum_{j=1}^m\lambda_j\uple{f}_{T,j}\mods \kappa.
$$
Since $T$ is a proper subset of $S$, by the definition of $B^\circ$,
we must have $\dim
\Imag(\varphi_T)>m-\frac{|S|-|T|}{2}$, so that
$n(T)<(|S|-|T|)/2$, which implies $n(T)\leq
(|S|-|T|)/2-1/2$, so we derive
$$
N_2\ll |\kappa|^{-m+(|S|-|T|)/2+(|S|+|T|)/2-1/2}=|\kappa|^{|S|-m-1/2}.
$$
We conclude that
$$
|V(\kappa)|=|\kappa|^{|S|-m}+O(|\kappa|^{|S|-m-1/2}).
$$ 
Applying this to finite extensions of $\kappa$ and applying the
Lang-Weil estimates, we conclude that $V$ is geometrically
irreducible.

Recalling that $V$ was the fiber of $\mathcal{Y}_{\ppersp}$ over an
arbitrary point $b \in B^\circ(\kappa)$, we see that all the fibers of
$\mathcal{Y}_{\ppersp}$ over finite-field valued points of $B^\circ$
with sufficiently large characteristic are geometrically irreducible,
so all the fibers of $\mathcal{Y}_{\ppersp}$ over points of $B^\circ$
are geometrically irreducible. Therefore $\mathcal{Y}_{\ppersp}$ has a
unique geometrically irreducible component that is dominant over $B$,
concluding the proof that condition (a) holds.
\end{proof}

\begin{lemma}\label{lm-component-cleaning}
  Let $(\ppersp,Y,\bar{\gamma})$ be a perspective on $X$ defined over
  an open subscheme $\Spec(\Oc)$ of the ring of integers in a number
  field.  Assume that $S$ is not empty, that $B$ is geometrically
  irreducible, that each $C_i$ is irreducible and that the generic
  fiber of $C_i\to B$ is geometrically irreducible for all $i\in S$.
\par
If $\mathcal{Y}_{\ppersp}$ is reducible and all irreducible components
of $\mathcal{Y}_{\ppersp}$ are dominant over $B$, then there exists a
perspective $(\ppersp',Y',\bar{\gamma})$ on $X$ such that
$\dim Y' = \dim Y$ and
$$
1\leq |\ppersp\cdot S|-|\ppersp'\cdot S| \leq 2(\ppersp\cdot
m-\ppersp'\cdot m).
$$
\end{lemma}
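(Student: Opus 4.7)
Since $\mathcal{Y}_{\ppersp}$ is reducible but every irreducible component is dominant over $B$, alternative (a) of Lemma~\ref{lm-dio-3} fails, so alternative (b) applies: there is a proper subset $T\subsetneq S$ for which the $\kappa_\eta$-linear map
$$
\Phi\colon \kappa_\eta^m\lra\Gamma_{T,\eta},\qquad
\uple{\lambda}\mapsto\Bigl(\sum_{j=1}^m\lambda_j f_{i,j}|_{C_{i,\eta}}\bmod\kappa_\eta\Bigr)_{i\in T}
$$
has kernel of dimension $N\geq(|S|-|T|)/2$. The idea is to build $\ppersp'$ with $S'=T$, using $\ker\Phi$ to trade $N$ of the equations defining $\mathcal{Y}_{\ppersp}$ for an enlargement of the base scheme that absorbs the coordinates indexed by $S\setminus T$.

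\textbf{Construction.} Fix a basis $\lambda^{(1)},\dots,\lambda^{(N)}$ of $\ker\Phi$, extend it to a basis $\lambda^{(1)},\dots,\lambda^{(m)}$ of $\kappa_\eta^m$, and shrink $B$ to a dense open $B^\circ$ over which all $\lambda^{(k)}_j$ are regular and, for each $k\leq N$ and $i\in T$, the relation $\sum_j\lambda^{(k)}_j f_{i,j}=c^{(k)}_i$ holds on $C_i\times_B B^\circ$ for some $c^{(k)}_i\in\Gamma(B^\circ,\Oc)$. Writing $h_j:=g_j+\sum_{i\in S}f_{i,j}$ and $h'_k:=\sum_j\lambda^{(k)}_j h_j$, the kernel property forces $h'_k$ for $k\leq N$ to be independent of the coordinates indexed by $T$, so $h'_k$ descends to a regular function on $\bigl(\prod_{i\in S\setminus T}C_i\bigr)\times_B B^\circ$. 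Define $B'$ as the common zero-locus of $h'_1,\dots,h'_N$ inside this product, and set
$$
\ppersp'=\bigl(m-N,\,T,\,B',\,(C_i\times_B B')_{i\in T},\,(b'_i)_i,\,(f'_{i,j}),\,(g'_j)\bigr),
$$
where the $b'_i$ are unchanged for $i\in T$, obtained by $B'\to C_i$ for $i\in S\setminus T$, and by $B'\to B$ for $i\notin S$; $f'_{i,j}:=\sum_k\lambda^{(N+j)}_k f_{i,k}$ for $i\in T$; and $g'_j$ is chosen so that $g'_j+\sum_{i\in T}f'_{i,j}=h'_{N+j}$ on $\mathcal{C}_{\ppersp'}$.

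\textbf{Verification and main difficulty.} By construction $\mathcal{C}_{\ppersp'}\subset\mathcal{C}_{\ppersp}|_{B^\circ}$ is cut out by $h'_1=\cdots=h'_N=0$, and imposing the remaining equations $h'_{N+1}=\cdots=h'_m=0$ recovers the full system $h_j=0$; hence $\mathcal{Y}_{\ppersp'}=\mathcal{Y}_{\ppersp}|_{B^\circ}$. Because $\bar{\gamma}$ lies above $\bar{\eta}\in B^\circ$, the open subscheme $Y':=Y\cap\mathcal{Y}_{\ppersp'}$ is non-empty open in $Y$, hence an irreducible component of $\mathcal{Y}_{\ppersp'}$ with $\dim Y'=\dim Y$ containing $\bar{\gamma}$. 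The quasi-finiteness of $Y'-g^{-1}(\mcV^{\Delta})\to\Aa^{2l}-\mcV^{\Delta}$ is inherited from $\ppersp$, and the non-constancy of each $b'_i$ on geometric fibres of $C'_i\to B'$ follows for free from the corresponding property of $\ppersp$ since $C'_i$ is the pullback of $C_i$. The inequalities $1\leq|S|-|S'|=|S|-|T|$ and $|S|-|S'|\leq 2(m-m')=2N$ follow from the bound $N\geq(|S|-|T|)/2$ supplied by Lemma~\ref{lm-dio-3}(b). The main technical obstacle is the spreading-out step: the kernel relations only hold generically over $B$, and one must check that after descent to a suitable dense open $B^\circ$ both the auxiliary functions $c^{(k)}_i$ and the factorisation of $h'_k$ through the partial product are coherent; the irreducibility hypotheses on $B$ and on the fibres of the $C_i\to B$ are precisely what ensures that $\kappa_\eta$ is a field and that the construction yields a valid perspective datum.
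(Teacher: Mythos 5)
Your proof is correct and follows essentially the same route as the paper: invoke alternative (b) of Lemma~\ref{lm-dio-3}, extend a basis of $\ker\varphi_T$ to a basis of $\kappa_\eta^m$, build the new base $B'$ as the zero locus of the $\ker\varphi_T$-combinations inside $\bigl(\prod_{i\in S\setminus T}C_i\bigr)\times_B B^\circ$, and set $S'=T$, $m'=m-\dim\ker\varphi_T$, with the codimension-type inequality dropping directly out of Lemma~\ref{lm-dio-3}(b). One small imprecision (shared with the paper's own write-up) is the claim that $\bar{\gamma}$ stays over the shrunken open $B^\circ$: the phrase ``$\bar{\gamma}$ lies above $\bar{\eta}\in B^\circ$'' misidentifies $\bar{\eta}$, which is the geometric generic point of $X\subset\Aa^{2l}$, not a point of $B$; what one actually needs is that $\bar{\gamma}$ is dominant over $B$, a condition that holds in the only context where the lemma is used (after Lemma~\ref{lm-irreducibility-cleaning}) but is not literally part of the hypotheses.
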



\begin{proof} 
  We apply Lemma \ref{lm-dio-3} to $(\ppersp,Y,\bar{\gamma})$, and use
  the same notation. Since $\mathcal{Y}_{\ppersp}$ is reducible
  and all its irreducible components are dominant over $B$, there are
  at least two irreducible components that are dominant over $B$.
  By Lemma~\ref{lm-dio-3}, we conclude that 
  there exists a proper subset $T\subset \ppersp\cdot S$ such that the
  span of $(\uple{f}_{T,1,\eta},\ldots,\uple{f}_{T,m,\eta})$ in $\Gamma_T$ has
  dimension $\leq m-(|S| - |T|)/2$. 
  
  For $\uple{\lambda} \in \ker(\varphi_T)$ and $i \in T$, $\sum_{j=1}^m\lambda_j f_{i,j}$ is equal to an element of $\kappa_\eta$ and hence a rational function on $B$. Let $B^*$ be an open subset of $B$ on which all these functions are defined.  Because $C_i$ is irreducible, $\sum_{j=1}^m\lambda_j f_{i,j}$  is equal to this function on $B^*$ not just at the generic point, but everywhere.

  Let $m'$ be the dimension of the span $X$ of
  $(\uple{f}_{T,1},\ldots,\uple{f}_{T,m})$ in $\Gamma_T$. We have then
$$
1\leq |\ppersp\cdot S|-|T|\leq 2(\ppersp\cdot m-m').
$$
\par
Let $\widetilde{\mathcal{C}}$ be the fibre product of $C_i$ for
$i\in S-T$ with $B^*$ over $B$. We have an evaluation map
$$
\varphi_T\colon \Aa^{m}\to \Gamma_T
$$
sending $(\lambda_i)_{i\in T}$ to 
$$
\sum_{j=1}^m\lambda_j\uple{f}_{T,j}.
$$
We define $B'\subset \widetilde{\mathcal{C}}$ to be the common zero locus
of the functions
$$
\sum_{j=1}^m\lambda_j\Bigl(g_j+\sum_{i\in S}f_{i,j}\Bigr)
$$
for all $\uple{\lambda}$ in $\ker(\varphi_T)$. These
expressions are indeed well-defined functions on
$\widetilde{\mathcal{C}}$ because, as we saw earlier
$$
\sum_{j=1}^m\lambda_j f_{i,j}
$$
is equal to a function on $B^*$ for $i\in T$ if $\uple{\lambda}\in\ker(\varphi_T)$.
\par
Furthermore, we choose $f'_{i,j}$ in $X$ for $i\in T$ and
$1\leq j\leq m'$ so that $f'_{i,j} = \sum_{\nu=1}^m \beta_{j,\nu} \uple{f}_{i,\nu}$ for $(\beta_{j,\nu})_{1\leq j \leq m'}$  a set of elements of $\Aa^m$ that span its image $X$ under $\varphi_T$. Define
$$
g'_j=\sum_{\nu=1}^m \beta_{\nu,j}\Bigl(g_\nu+\sum_{i\in S-T}f_{i,\nu}\Bigr).
$$
Then the tuple
$$
\ppersp'=(m',T, B', (C_i\times_B B')_{i\in T}, (b'_i)_{i\in T},
(f'_{i,j}), (g'_j))
$$
is a perspective datum on $X$, where $b'_i$ is the extension of $b_i$
to $C'_i=C_i\times_B B'$ by pullback for $i\in T$, the composition
$B'\to \tilde{C}\to C_i=\Aa^1$ if $i\in S-T$, and the projection
$B'\to B\to \Aa^1$ otherwise.
\par
By construction, the fiber product $\mathcal{C}_{\ppersp'}$ is a locally closed subset of
contained in $\mathcal{C}_{\ppersp}$.  The subscheme
$\mathcal{Y}_{\ppersp'}$ is an open subset of $\mathcal{Y}_{\ppersp}$, because it has the same set of defining equations after restricting to an open subset $B^*$ of $B$. Because the irreducible component $Y$ was dominant over $B$, its restriction to this open subset has the same dimension, and because $\bar{\gamma}$ was dominant over $B$, it remains in this open subset as well. Hence
$(\ppersp',Y',\bar{\gamma})$ is the desired perspective on $X$.






\end{proof}

The last preparatory lemma constructs a perspective where the base $B$
satisfies the assumptions of the last lemma.

\begin{lemma}\label{lm-irreducibility-cleaning}
Let $(\ppersp,Y,\bar{\gamma})$ be a perspective on $X$. Then there
exists a perspective $(\ppersp', Y',\bar{\gamma}')$ such that 
$$
\ppersp'\cdot m=\ppersp\cdot m,\quad\quad
\ppersp'\cdot S=\ppersp\cdot S,
$$
and $\dim(Y')\leq \dim(Y)$, and moreover
\par
\emph{(a)} $\ppersp'\cdot B$ is irreducible.
\par
\emph{(b)} For all $i\in S$, the curve $\ppersp'\cdot C_i$ are
irreducible and the fiber of $\ppersp'\cdot C_i$ over the geometric
generic point of $\ppersp'\cdot B$ is irreducible.
\par
\emph{(c)} All irreducible components of $\mathcal{Y}_{\ppersp'}$ are
dominant over $B$, as is $\bar{\gamma}$.
\end{lemma}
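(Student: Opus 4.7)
The plan is to modify the perspective in three successive stages, each enforcing one of the conditions (a), (b), (c), while keeping $m$ and $|S|$ unchanged and preserving $\dim Y$.

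\emph{Stage 1 (condition (a)).} Let $B_0$ be the scheme-theoretic image of $Y$ in $B$; since $Y$ is irreducible, $B_0$ is an irreducible closed subscheme of $B$ over which $Y$ is dominant. I replace $B$ by $B_0$, the curves $C_i$ by $C_i \times_B B_0$, and the functions $b_i$, $f_{i,j}$, $g_j$ by their pullbacks. The new fibre product $\mathcal{C}_0$ is a closed subscheme of the original $\mathcal{C}_{\ppersp}$ that contains $Y$, so $Y$ remains an irreducible component of $\mathcal{Y}$ in the new perspective datum and $\bar{\gamma}$ still lies in $Y$. This preserves $m$, $|S|$, and $\dim Y$, and achieves (a).

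\emph{Stage 2 (condition (b)).} For each $i \in S$, form the Stein factorisation $C_i \to D_i \to B_0$; the map $D_i \to B_0$ is finite, and over an open dense subset of $B_0$ it is étale, with fibres indexing the geometrically irreducible components of the corresponding fibres of $C_i \to B_0$. The geometric point $\bar{\gamma}$ projects to a geometric point of each $C_i$, hence of each $D_i$, and these define a geometric point $\bar{\delta}$ of $D := D_1 \times_{B_0} \cdots \times_{B_0} D_{|S|}$. Let $\widetilde{B}$ be the irreducible component of $D$ containing $\bar{\delta}$. Replace $B_0$ by $\widetilde{B}$, a finite, generically étale cover of its image in $B_0$ of the same dimension. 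After base-changing $C_i$ to $\widetilde{B}$, the $i$th projection $\widetilde{B} \to D_i$ defines a tautological section of $D_i \times_{B_0} \widetilde{B} \to \widetilde{B}$, which picks out a unique irreducible component $C'_i$ of $C_i \times_{B_0} \widetilde{B}$ whose generic fibre over $\widetilde{B}$ is geometrically irreducible and contains the image of $\bar{\gamma}$. I take this $C'_i$ as the new curve, and restrict the $b_i$, $f_{i,j}$ to it; the non-constancy of $b_i$ on geometric fibres is preserved because $C'_i$ is a component of a finite base change of $C_i$. Finite base change preserves $\dim Y$, and conditions (a) and (b) both now hold.

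\emph{Stage 3 (condition (c)).} The variety $\mathcal{Y}$ in the new perspective may have acquired irreducible components that are not dominant over $\widetilde{B}$. I remove them by shrinking $\widetilde{B}$ to the complement of their images, an open dense subset, yielding the final base $B'$. The surviving components are all dominant, including the component $Y'$ containing a lift of $\bar{\gamma}$. I set $\bar{\gamma}'$ to be a geometric generic point of $Y'$; then $\bar{\gamma}'$ is dominant over $B'$, and it still maps to the geometric generic point $\bar{\eta}$ of $X$, because $Y'$ is dominant over $X$ (it contains a lift of $\bar{\gamma}$, which maps to $\bar{\eta}$). The triple $(\ppersp',Y',\bar{\gamma}')$ is then a perspective on $X$ satisfying (a), (b), and (c), with $m$, $S$, and $\dim Y$ untouched.

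The main technical subtlety will be verifying that the Stein-factorisation cover in Stage 2 produces \emph{geometrically} irreducible generic fibres of the new $C'_i$, and not merely irreducible ones. This requires the cover $\widetilde{B} \to B_0$ to simultaneously split the Galois action on the set of geometric components of the generic fibre of every $C_i$ with $i \in S$; the product $D_1 \times_{B_0} \cdots \times_{B_0} D_{|S|}$ provides such a simultaneous splitting by construction, and the tautological section produced by the projections to the individual $D_i$ is what lets us pick out one component in each $C_i \times_{B_0} \widetilde{B}$ coherently.
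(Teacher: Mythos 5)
Your argument follows the same overall plan as the paper's proof (replace $B$ by the irreducible image of $Y$; pass to a finite cover splitting the geometric components of the generic fibers of the $C_i$; shrink to remove non-dominant components), but the central construction in Stage~2 does not work as stated. Stein factorization with a \emph{finite} $D_i\to B_0$ requires $C_i\to B_0$ to be proper, and the definition of a perspective datum does not impose this: the initial perspective $\ppersp_0$ of Lemma~\ref{lm-perspective-example} has $C_i=\Aa^1$, for which the Stein factorization of $\Aa^1\to\Spec\Qbar$ is $\Aa^1$ itself, not a finite scheme. The paper circumvents this by working only at the generic point $\beta$ of $B_0$: it chooses a finite extension $\beta'/\beta$ over which all geometric components of all $C_{i,\beta}$ are simultaneously rational, and then realizes $\beta'$ as the generic fiber of a finite flat morphism $B'\to B_0$ by adjoining a root of a rescaled (hence monic) minimal polynomial. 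Your idea is repairable in the same spirit --- take the normalization of $B_0$ in the algebraic closure of $\kappa(B_0)$ inside $\kappa(C_i)$, or compactify the relative curves first --- but the blanket claim that the Stein factorization $D_i\to B_0$ is finite is a real gap as written.

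A second, smaller issue: Stages~2 and~3 silently use the fact that $\bar\gamma$ lies over the generic point of $B_0$. This is what makes the component $\widetilde B$ of $D$ containing $\bar\delta$ dominate $B_0$, and what guarantees in Stage~3 that $Y'$ survives the shrinking and that $\dim Y'=\dim Y$. The fact is true but needs a sentence: quasi-finiteness of $Y-g^{-1}(\mcV^{\Delta})\to\Aa^{2l}-\mcV^{\Delta}$ together with $\bar\gamma\mapsto\bar\eta$ forces the closure of the image of $\bar\gamma$ in $Y$ to have dimension $\dim X=\dim Y$, so $\bar\gamma$ is a geometric generic point of $Y$. The paper avoids having to state this separately by defining $B_0$ directly as the closure of the image of $\bar\gamma$ rather than as the image of $Y$ --- the two coincide precisely because $\bar\gamma$ is generic, and with the paper's choice the dominance of $\bar\gamma'$ over $B'$ is automatic.
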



The strategy of the proof is to make several modifications to the
given perspective datum to ensure that these three conditions hold. We
will first replace $B$ by an irreducible scheme, ensuring condition
(a). We then pass to a finite cover of $B$ over which generic
geometrically irreducible components of $C_i$ are defined and choose
one for each $i$, ensuring condition (b). Finally we remove a closed
subset from $B$, containing all the irreducible components that are
not dominant over $B$, ensuring condition (c).

\begin{proof}
  Let $\mathcal{A}\subset \mathcal{C}_{\ppersp}$ be an irreducible
  component containing $Y$. Let $B_0$ be the schematic closure of the
  image of $\bar{\gamma}$ under the projection $Y \to B$. It is
  closed and irreducible. Let $\beta$ be its generic point. Let
  $\beta'\to \beta$ be a finite extension such that all irreducible
  components of the generic fibers of the curves $C_i$ for $i\in S$
  are defined over $\beta'$. Let then $B' \to B_0$ be a finite flat
  morphism whose generic fiber is $\beta'\to \beta$ (we can construct
  such a morphism by taking a generator of the field extension
  $\beta'/\beta$, and multiplying it by a regular function on $B_0$ so
  that its minimal polynomial $P$ becomes monic; then the cover $B'$
  of $B_0$ obtained by adjoining a root of $P$ has the required
  property).

  Fix a lift $\bar{\gamma}'$ of $\bar{\gamma}$ to $Y \times_B B'$. Let
  $Y'$ be an irreducible component of $Y \times_B B'$ containing
  $\bar{\gamma}'$, and let $\mathcal{A}'$ be an irreducible componet
  of $\mathcal{C}_{\ppersp} \times_B B'$ containing $Y'$. Because
  $\bar{\gamma}$ maps to the generic point $\beta$ of $B_0$,
  $\bar{\gamma}'$ must map to the generic point $\beta'$ of $B'$ (the
  only point lying in the fiber), and so $Y' \to B'$ and
  $\mathcal{A}' \to B'$ are dominant maps. Because $\mathcal{A}'$ is
  an irreducible component of $\mathcal{C}_{\ppersp} \times _B B'$,
  and maps dominantly to $B'$, it follows that $\mathcal{A}'_{\beta'}$
  is an irreducible component of the pullback $\mathcal C_{\beta'}$ of
  the product of the curves $C_i$ to $\beta'$. Hence there are
  irreducible components $\widetilde{C}_{i,\beta'}$ of $C_{i,\beta'}$
  for $i\in S$, such that $\mathcal{A}'_{\beta'}$ is contained in the
  product of the $\widetilde{C}_{i,\beta'}$. Let $C'_i$ be the closure
  of $\widetilde{C}_{i,\beta'}$. This is an irreducible curve over
  $B'$.  We can pullback the functions $b_i$, $g_j$ and $f_{i,j}$ to
  $B'$ and $C'_i$, respectively. We have then constructed a
  perspective datum
$$
\ppersp'=(m,S,B', (C'_i),(b'_i),(f'_{i,j}), (g'_j)).
$$

The irreducible component $Y'$ is contained in
$\mathcal{C}_{\ppersp'}$, hence in $\mathcal{Y}_{\ppersp'}$. Since the
morphism $\mathcal{Y}_{\ppersp'}\to \mathcal{Y}_{\ppersp}$ is finite,
it is an irreducible component of $\mathcal{Y}_{\ppersp'}$. It
contains $\bar{\gamma}$' and so maps dominantly onto $B'$.
  
Let $B''$ be the complement in $B'$ of the closure of the images of
all irreducible components of $\mathcal{Y}_{\ppersp'}$ that are not
dominant over $B'$. We can pullback the data
$C'_i, b'_i, g'_j, f_{i,j}, Y''$ further to $B''$. This defines a
perspective datum
  $$ 
\ppersp''=(m,S,B'', (C''_i),(b''_i),(f''_{i,j}), (g''_j)).
$$
and a perspective $(\ppersp'', Y'', \bar{\gamma}')$.
  
By construction, $B'$ and $B''$ are geometrically irreducible. Since
the curves $C''_i$ are generically irreducible, and their geometric
generic fibers are defined over $\beta'$, they are generically
geometrically irreducible. Because $\bar{\gamma}$ maps dominantly to
$B$, $\bar{\gamma}'$ maps dominantly to $B'$ and $B''$. Finally, all
irreducible components of $\mathcal Y_{\ppersp''}$ map dominantly to
$B''$ by construction.
\end{proof}

We can now conclude this section.

\begin{proof}[Proof of Theorem~\ref{thm-ag}]
  Consider the set $\mathcal{P}$ of perspectives
  $(\ppersp,Y,\bar{\gamma})$ on $X$
  such that 
\begin{equation}\label{eq.1}
2 \dim(\ppersp\cdot B) + 2 \dim(Y) +|\ppersp\cdot S| \leq 6 l.
\end{equation}
This set is nonempty by Lemma~\ref{lm-perspective-example} (1), hence
it contains some element where
$$
\dim(Y)+|\ppersp\cdot S|
$$
is minimal.

Using Lemma~\ref{lm-irreducibility-cleaning}, we obtain a perspective
$(\ppersp,Y,\bar{\gamma})\in\mathcal{P}$ such that $\ppersp\cdot B$ is
geometrically irreducible, the curves $\ppersp\cdot C_i$ are
irreducible, the geometric generic fibers of $\ppersp\cdot C_i$ are
irreducible, and all irreducible components of $\mathcal{Y}_{\ppersp}$
as well as $\bar{\gamma}$ are dominant over $B$. By Lemma
\ref{lm-equation-cleaning}, we may assume that
$$
\ppersp\cdot m = \dim(\ppersp\cdot B) + |\ppersp\cdot S| - \dim(Y)
$$
(note that the last condition in Lemma~\ref{lm-equation-cleaning}
implies that all irreducible components of $\mathcal{Y}_{\ppersp'}$ as
well as $\bar{\gamma}'$ are dominant over $B$ for the new perspective
given by that lemma with input $(\ppersp,Y,\bar{\gamma})$.)

We will then see that, except in a trivial case, a perspective with
these properties satisfies the desired conclusion that
$\ppersp \cdot Y$ is irreducible, $\bar{\gamma}$ is the generic point
of $Y$, and
$$
2l- |\ppersp\cdot S| + 2\ppersp\cdot m\leq 4(2l-\dim(X)).
$$

First, if $Y$ is irreducible and $\bar{\gamma}$ is the generic point
of $Y$, then because $Y$ is quasi-finite over $\Aa^{2l}$, we have
$\dim(Y)=\dim(X)$, hence
\begin{align*}
  2l- |\ppersp\cdot S| + 2\ppersp\cdot m= 2\dim(\ppersp\cdot B) + 2l +
  |\ppersp\cdot S| - 2\dim(Y)&\leq 8l - 4 \dim(Y)\\
  =4(2l-\dim(X)).
\end{align*}

Next assume that $Y$ is irreducible and $\bar{\gamma}$ is not the
generic point of $Y$. Then Lemma \ref {lm-dimension-lowering}
provides a perspective $(\ppersp',Y',\bar{\gamma}')$ with
$$
|\ppersp'\cdot S|=|\ppersp\cdot S|,\quad\quad |\ppersp'\cdot B|\leq
|\ppersp\cdot B|+1,\quad\quad \dim(Y')<\dim(Y)
$$
so
$$ 
2\dim(\ppersp'\cdot B) + \dim (Y') + |\ppersp'\cdot S| \leq 6l
$$
but satisfying
$$
\dim(Y')+|\ppersp'\cdot S|<\dim(Y)+ |\ppersp\cdot S|,
$$
which contradicts the minimality of $\ppersp$.
 
Suppose now that $Y$ is reducible and $\ppersp\cdot S$ is
nonempty. Then Lemma \ref{lm-component-cleaning} provides a
perspective $(\ppersp',Y',\bar{\gamma}')$ which satisfies
$|\ppersp'\cdot S|<|\ppersp\cdot S|$, and moreover
\begin{multline}
  \dim(Y)=\dim(Y') \geq \dim(\ppersp'\cdot B) + |\ppersp'\cdot S| -
  \ppersp'\cdot m
  \\
  \geq \dim(\ppersp'\cdot B) -\ppersp\cdot m + \demi(|\ppersp'\cdot S|
  + |\ppersp\cdot S|)
  \\
  = \dim(\ppersp'\cdot B) - \dim(\ppersp\cdot B)+ \demi(|\ppersp'\cdot
  S|- |\ppersp\cdot S|) + \dim(Y)
\end{multline}
hence
$$
2\dim(\ppersp'\cdot B) -2 \dim(\ppersp\cdot B)  \leq |\ppersp\cdot S|
- |\ppersp'\cdot S|,
$$
which because of~(\ref{eq.1}) implies
$$
2 \dim(\ppersp'\cdot B) + 2 \dim(Y)+|\ppersp'\cdot S| \leq
6l.
$$
On the other hand, we have
$$
\dim(Y')+|\ppersp'\cdot S|<\dim(Y)+ |\ppersp\cdot S|,
$$
again contradicting the assumption of minimality.
 
Finally, the remaining case when $\ppersp\cdot S$ is empty is trivial:
in that case, $Y$ is a closed subscheme of $\ppersp\cdot B$ so that
$$
4 \dim(X) \leq 4 \dim(Y) \leq 2 \dim(\ppersp\cdot B) + 2\dim(Y)\leq 6l
$$
and we may simply take the trivial perspective
$(\ppersp_1,X,\bar{\eta})$ of Lemma~\ref{lm-perspective-example} (2),
for which
$$
2l-|\ppersp_1\cdot S|+2\ppersp_1\cdot m=2l\leq 4(2l-\dim(X)).
$$
\end{proof}


\section{The generic statement}
\label{sec-generic}




We continue with the previous notation.  Fix $j\geq 0$. Let
$X\subset X_j\subset \Aa^{2l}- \mathcal V^\Delta$ be an irreducible
component of $X_{j}$ over $\Zz$ which intersects the characteristic
zero part. Let $\overline{X}$ be the closure of $X$ in $\Aa^{2l}$.

Fix a perspective $(\ppersp, Y, \overline{\gamma})$ on $X$ such that
$\mathcal Y_\ppersp$ is irreducible, $\overline{\gamma}$ is a
geometric generic point of $Y$, and
$2l - |S| + 2m\leq 4\codim_{\Aa^{2l}}(X)$, which exists by
Theorem~\ref{thm-ag}. By definition, all of the perspective data is
defined over $\Qbar$. However, by standard finiteness arguments,
everything is necessarily defined over a finitely generated subring of
$\Qbar$, i.e. over a ring $\Oc_K[1/N]$, where $\Oc_K$ is the ring of
integers of a number field $K$ and $N\geq 1$ is some integer. We will
use the same notation $Y, C_i, b_i$, etc. to refer to the objects over
this ring. Since, by assumption, $\mathcal{Y}_{\ppersp,\Qbar}$ is
irreducible, and equal to $Y_{\Qbar}$, we deduce that
$\mathcal{Y}_{\ppersp}$ is geometrically irreducible and equal to $Y$.

Because the geometric generic point of $Y$ is a lift of the geometric
generic point of $X$, the image of $Y$ in $\Aa^{2l}$ is a dense subset
of $\overline{X}$. For all but finitely many prime ideals $\pi$ of
$\mathcal O_K[1/N]$, with residue field denoted $\Fq$, the variety
$Y_{\bFq}$ is irreducible and nonempty, $X_{\bFq}$ is irreducible and
nonempty, and the map $Y_{\bFq} \to \overline{X}_{\bFq}$ is dominant.
In the remainder of this section, we only consider finite fields $\Fq$
arising in this manner, and we also always assume that the
characteristic of $\Fq$ is $>2k+1$.

\begin{lemma}\label{lm-final-dio}
  Assume that $\uple{\chi}$ has \CGM.  If $p$ is large enough with
  respect to $(k,l,X)$ and $\dim(X_{\Qq})\geq (3l+1)/2$, then we have
\begin{multline}\label{eq-compare-dio}
  \sum_{y\in Y(\Fq)} \sum_{r\in\Fqt} \Bigl| \sum_{s\in\Fqt}
  \prod_{i=1}^l \hypk_k(r(s+b_i(y));\uple{\chi},q)
  \overline{\hypk_k(r(s+b_{i+l}(y));\uple{\chi},q)}
  \Bigr|^2=\\
  \sum_{y\in Y(\Fq)} \sum_{r\in\Fqt} \sum_{s\in\Fqt} \Bigl|
  \prod_{i=1}^l \hypk_k(r(s+b_i(y));\uple{\chi},q)
  \overline{\hypk_k(r(s+b_{i+l}(y));\uple{\chi},q)}
  \Bigr|^2+O(q^{\dim(X_{\Qq})+3/2}),
\end{multline}
where the implied constant depends only on $(\ppersp,k,l)$.
\end{lemma}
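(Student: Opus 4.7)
The plan is to interpret the claimed identity as an instance of Proposition~\ref{lm-dio-new}, applied to the perspective $(\ppersp,Y,\overline{\gamma})$ supplied by Theorem~\ref{thm-ag}, after suitably incorporating the outer variable $r$. Concretely, I would enlarge the base to $B'=B\times\Gg_m$ (with $r$-coordinate) and the curves to $C_i'=C_i\times_{B}B'$, pulling back $b_i,f_{i,j},g_j$ along the projection; since the defining equations $g_j+\sum f_{i,j}$ do not involve $r$, one has $\mathcal{Y}_{\ppersp'}=Y\times\Gg_m$. Take $A=\Gg_m$ with coordinate $s$ as the inner variable of summation. For each $i\in S$, let $\mcF_i$ be the pullback of $\HYPK_k(\uple{\chi})$ (for $1\leq i\leq l$) or its dual (for $l+1\leq i\leq 2l$) via $(x_i,r,s)\mapsto r(s+b_i(x_i))$, and collect the factors for $i\notin S$ into the single auxiliary sheaf $\mcG$ on $B'\times A$, so that $t_*\prod_{i\in S}t_i$ reproduces the product $\prod_{i=1}^{2l}\HYPK_k(r(s+b_i))^{\eps_i}$ in the lemma.

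The decisive geometric input is the twist-independence hypothesis (TI). Fixing a geometric point $\rho=(\rho_B,r_0)$ of $B'$ and distinct $s_1,s_2\in\Gg_m$, on a geometric irreducible component of the fiber $C_{i,\rho_B}$ the map $x\mapsto(r_0(s_1+b_i(x)),r_0(s_2+b_i(x)))$ lands on the affine line $\{w_1-w_2=r_0(s_1-s_2)\}\subset\Gg_m^2$, and the pullback of $\HYPK_k\boxtimes\HYPK_k^{\vee}$ to this line is isomorphic to $[+\delta]^{*}\HYPK_k\otimes\HYPK_k^{\vee}$ with $\delta=r_0(s_1-s_2)\neq 0$. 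The absence of a rank-one geometric summand in this last sheaf is precisely the rigidity assertion in Proposition~\ref{pr-kl}(7). Non-constancy of $b_i$ on fibers, which is built into the definition of a perspective datum, together with the large classical geometric monodromy supplied by the \CGM\ hypothesis (Theorem~\ref{thm-geometric-monodromy}), allows a Goursat--Kolchin--Ribet style argument to transfer this property to $C_{i,\rho_B}$ itself and to verify (TI); the analogous check for $\mcG$ will use the same rigidity applied to the factors indexed by $i\notin S$, now involving cancellation in the $r$-direction on $B'$.

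Once (TI) is in place, Proposition~\ref{lm-dio-new} gives the desired identity with an off-diagonal error bounded by $O(q^{\dim B'+|S|/2+2})$. Substituting the complete-intersection relation $\dim B+|S|-m=\dim Y=\dim X_{\Qq}$ of the perspective and $\dim B'=\dim B+1$ turns this into $O(q^{\dim X_{\Qq}+m-|S|/2+3})$; invoking the inequality $2l-|S|+2m\leq 4(2l-\dim X_{\Qq})$ of Theorem~\ref{thm-ag} together with the hypothesis $\dim X_{\Qq}\geq(3l+1)/2$ then forces the exponent down to $\dim X_{\Qq}+3/2$, where the combinatorial bookkeeping is just tight enough provided we also extract the Weil-type cancellation from the $r$-sum afforded by (TI) on $\mcG$. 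The main obstacle is the verification of (TI): everything else falls into place once this rigidity input---which is exactly where the \CGM\ assumption is indispensable and cannot be relaxed to \CGMT\ without the preliminary twist of Lemma~\ref{how-to-twist}---is secured.
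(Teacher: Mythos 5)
The core of your argument matches the paper's: apply Proposition~\ref{lm-dio-new} to the perspective supplied by Theorem~\ref{thm-ag}, verify (TI) via Goursat--Kolchin--Ribet and the large geometric monodromy ensured by \CGM, and then chain the codimension inequality $2l-|S|+2m\leq 4(2l-\dim X)$. There is, however, a genuine divergence in how you treat the outer variable $r$, and it introduces a step that the paper neither takes nor needs.

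The paper's proof \emph{fixes} $r\in\Fqt$, applies Proposition~\ref{lm-dio-new} with $A=\Gg_m$ in the $s$-coordinate and $\mcF_i=[(b_i,s)\mapsto s(r+b_i)]^*\HYPK_{k,\psi}(\uple{\chi})$, obtaining an error $O(q^{\dim B+|S|/2+2})$ for each $r$, and then sums over $r$ \emph{trivially}, paying $q^1$. Your enlargement $B'=B\times\Gg_m$ produces exactly the same exponent ($\dim B'+|S|/2+2=\dim B+|S|/2+3$), so the two packagings are equivalent at the level of the final estimate. But your closing assertion that the bookkeeping ``is just tight enough provided we also extract the Weil-type cancellation from the $r$-sum afforded by (TI) on $\mcG$'' does not correspond to anything in the paper, and more importantly it is not something Proposition~\ref{lm-dio-new} provides. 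Assumption (TI) is formulated for the sheaves $\mcF_i$ living over the curves $C_i$, not for the auxiliary sheaf $\mcG$ on $B\times A$; in the proof of that proposition, $t_*(\rho,s_1)\ov{t_*(\rho,s_2)}$ is only bounded by $O(1)$ and the sum over $\rho\in B(k)$ is estimated trivially by $|k|^{\dim B}$. There is therefore no mechanism inside that result to harvest cancellation in the $\rho$- (i.e., $r$-) direction, and inventing one would require reformulating the proposition. The paper closes the argument from the codimension chain alone, deriving from Theorem~\ref{thm-ag} and the hypothesis $\dim X_\Qq\geq (3l+1)/2$ a lower bound of the form $\dim X_\Qq\geq\dim B_\Qq+|S|/2+\text{const}$, and substituting into the error $O(q^{\dim B_\Qq+|S|/2+3})$; there is no appeal to extra oscillation in $r$.

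A second, smaller point: your description of the (TI) verification replaces the paper's Goursat argument with an appeal to Proposition~\ref{pr-kl}(7) applied along an embedded affine line $\{w_1-w_2=r_0(s_1-s_2)\}$. The input is the same rigidity statement, but you should be careful to match the parametrization to the one actually used (the paper's proof works with $s(r+b_i)$, for which $s_1,s_2$ act by multiplicative scaling rather than additive shift). In the paper the rigidity is invoked indirectly through Goursat's lemma, by observing that the geometric monodromy of $\mcG=[\![s_1(r+b_i)]\!]^*\HYPK_k\otimes[\![s_2(r+b_i)]\!]^*\HYPK_k^\vee$ is a full product of two copies of the monodromy of $\HYPK_k$, which is connected and has no one-dimensional quotients, and that pulling back along the non-constant $b_i$ only passes to a finite-index subgroup. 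Your version of the check is compatible with this but somewhat terser; in particular, since you only invoke rank-one subrepresentations of the pulled-back sheaf on each component, make sure to spell out why the passage to the finite-index subgroup (coming from the non-constancy of $b_i$) cannot create a new rank-one summand: this is where connectedness of the monodromy group matters and must be said explicitly.
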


\begin{proof}
  We first fix $r\in\Fqt$. We apply Proposition~\ref{lm-dio-new} with
  data $(m,B, S, (C_i), \uple{(f_j)}, (g_j))$ coming from the
  perspective datum $\ppersp$, $A= \Gg_m$, and the sheaf $\sheaf{F}_i$
  is
  $$
  [(b_i,s)\mapsto s(r+b_i)]^*\HYPK_{k,\psi}(\uple{\chi}).
  $$

  Assumption (TI) holds by a Goursat-Kolchin-Ribet argument
  (see~\cite{ESDE} and~\cite{FKMSP}). Indeed, each irreducible
  component of $C_{i,\rho}$ is a geometrically irreducible curve on
  which $b_i$ is a nonconstant function. The sheaf
  $\sheaf{F}_{i,\rho,s_1} \otimes \sheaf{F}^\vee_{i, \rho,s_2}$ is the
  pullback along $b_i$ of the sheaf
  $$
  \sheaf{H}= [b_i \mapsto (s_1 (r+b_i))]^* \HYPK_{k,\psi}(\uple{\chi})
  \otimes [ b_i \mapsto (s_1 (r+b_i))]^*
  \HYPK_{k,\psi}(\uple{\chi})^\vee.
$$
The monodromy group after pulling back along the map $b_i$ is a finite
index subgroup, so it suffices to show that no finite-index subgroup
of the geometric monodromy group of $\sheaf{H}$ admits a
one-dimensional irreducible component. However, by Goursat's lemma,
the geometric monodromy group of $\sheaf{H}$ is a product of two
copies of the monodromy group of $\HYPK_{k,\psi}(\uple{\chi})$, acting
by the tensor product of the standard representation with its
dual. This group is connected, so has no proper finite-index
subgroups, and does not admit a one-dimensional representation, which
proves the claim.

The conductor of all the sheaves $\mcF_{i,\rho,s}$, which are
pullbacks of (shifted and translated) generalized Kloosterman sheaves
are bounded by constants depending only on $\ppersp$.

Applying Proposition~\ref{lm-dio-new} we obtain
\begin{multline*}
  \sum_{y\in Y(\Fq)} \Bigl| \sum_{s\in\Fqt} \prod_{i=1}^l
  \hypk_k(r(s+b_i(y));\uple{\chi},q)
  \overline{\hypk_k(r(s+b_{i+l}(y));\uple{\chi},q)}
  \Bigr|^2=\\
  \sum_{y\in Y(\Fq)} \sum_{s\in\Fqt} \Bigl| \prod_{i=1}^l
  \hypk_k(r(s+b_i(y));\uple{\chi},q)
  \overline{\hypk_k(r(s+b_{i+l}(y));\uple{\chi},q)} \Bigr|^2+O(q^{
    \dim B_{\Qq} + |S|/2 + 2}),
\end{multline*}
where the implied constant depends only on $(\ppersp, k,l)$.

Summing over $r$, we get the formula~(\ref{eq-compare-dio}), except
that the error term is $O(q^{ \dim B_{\Qq} + |S|/2 + 3})$. However,
since $X$ is the vanishing set of $m$ equations in a fiber product of
$|S|$ curves over $B$, we have
\begin{align*}
  \dim X_{\Qq} \geq \dim B_{\Qq} + |S| - m 
  &= \dim B_{\Qq} + \frac{|S|}{2} + l
    - \Bigl(l + m - \frac{|S|}{2}\Bigr)\\
  & \geq \dim B_{\Qq} + \frac{|S|}{2} + l - 2 (2l- \dim X)
    \geq \dim B_{\Qq} +\frac{|S|}{2} + 1/2,
\end{align*}
where the last two inequalities holds by the assumption on the
perspective and the assumption on $\dim X$, respectively.
\end{proof} 

Let $\eta$ be the generic point of $X_{\bFq}$ and let $\bar{\eta}$ be
a geometric generic point over $\eta$.  Let $\eta'$ be the the generic
point of $Y_{\Fq}$. We fix a $k$-tuple $\uple{\chi}$ of characters of
$\Fqt$.

\begin{lemma}\label{lm-equal-3}
  Assume that $\uple{\chi}$ has \CGM. We have
\begin{equation}
  \dim \End_{V_{\eta'\times\bFq}}(\mcK_{\eta'\times\bFq})=
  \dim \End_{U_{\eta'\times\bFq}}(\mcR^*_{\eta'\times\bFq}).
\end{equation} 
\end{lemma}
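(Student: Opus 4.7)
The plan is to apply the Diophantine criterion Lemma~\ref{lm-dio} to both $\mcK$ and $\mcR^*$ and then match the resulting second-moment expressions using Lemma~\ref{lm-final-dio}.

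Concretely, I would first apply Lemma~\ref{lm-dio} with $X = \Aa^2 \times_{\Fq} Y_{\Fq}$ and $\mcF = \mcK$: this ambient scheme is geometrically irreducible since $Y_{\Fq}$ is so by the setup of Section~\ref{sec-generic}, while $\mcK$ is lisse and pointwise pure of weight $0$ on the dense open $V \times_{\Aa^{2l}} Y_{\Fq}$ by Proposition~\ref{pr-kl}. With $w=0$, this yields
\[
\dim \End_{V_{\eta' \times \bFq}}(\mcK_{\eta' \times \bFq}) = \limsup_{\nu} \frac{B_\nu}{q^{\nu(\dim Y + 2)}}, \qquad B_\nu := \sum_{y,r,s \in \Fq^\nu}|t_\mcK(y,r,s;\Fq^\nu)|^2.
\]
Applying Lemma~\ref{lm-dio} analogously with $X = \Aa^1 \times_{\Fq} Y_{\Fq}$, $\mcF = \mcR^*$, $w = 1$ (where $\mcR^*$ is lisse and pure of weight $1$ on $U \times_{\Aa^{2l}} Y_{\Fq}$ by Lemma~\ref{lm-lisse} and the definition of $\mcR^*$),
\[
\dim \End_{U_{\eta' \times \bFq}}(\mcR^*_{\eta' \times \bFq}) = \limsup_{\nu} \frac{A^*_\nu}{q^{\nu(\dim Y + 2)}}, \qquad A^*_\nu := \sum_{y,r \in \Fq^\nu}|t_{\mcR^*}(y,r;\Fq^\nu)|^2,
\]
and note that the two normalization exponents coincide because $(\dim Y + 1) + 1 = (\dim Y + 2) + 0$.

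It then remains to match $A^*_\nu$ with $B_\nu$ up to $o(q^{\nu(\dim Y + 2)})$. First, the surjection $\mcR \twoheadrightarrow \mcR^*$ has kernel mixed of weights $\leq 0$, so $|t_\mcR - t_{\mcR^*}| \ll 1$ pointwise, and combining Cauchy--Schwarz with the trivial bound $A^*_\nu \ll q^{\nu(\dim Y + 2)}$ gives $|A_\nu - A^*_\nu| \ll q^{\nu(\dim Y + 3/2)}$, where $A_\nu := \sum_{y,r}|t_\mcR|^2$ is the left-hand side of Lemma~\ref{lm-final-dio}. The latter, applied over $\Fq^\nu$, yields $A_\nu = B_\nu + O(q^{\nu(\dim X + 3/2)})$; since the perspective $(\ppersp,Y,\bar\gamma)$ makes the projection $Y \to X$ quasi-finite, we have $\dim X = \dim Y$, so this error is $o(q^{\nu(\dim Y + 2)})$ as well. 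Taking $\limsup_\nu$ of the three normalized quantities $A^*_\nu/q^{\nu(\dim Y + 2)}$, $A_\nu/q^{\nu(\dim Y + 2)}$, and $B_\nu/q^{\nu(\dim Y + 2)}$ now produces the equality of dimensions.

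The main technical obstacle is the passage from the $\pi_1(X\times\bFq)$-invariant endomorphism algebra, which is what Lemma~\ref{lm-dio} directly produces, to the geometric generic-fiber endomorphism algebras $\End_{V_{\eta' \times \bFq}}(\mcK_{\eta' \times \bFq})$ and $\End_{U_{\eta' \times \bFq}}(\mcR^*_{\eta' \times \bFq})$ appearing in the statement. By the homotopy exact sequence for the fibration over $Y_{\bFq}$, these differ by $\pi_1(Y_{\bFq})$-invariance, so the stated form of Lemma~\ref{lm-equal-3} requires triviality of the arithmetic Galois action on the endomorphism algebras at the geometric generic fibers---a property announced in the introduction to be settled via a separate vanishing-cycles argument, and which must be combined with the dimension-matching established above.
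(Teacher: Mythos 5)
Your dimension-matching argument is essentially the paper's: apply the Diophantine criterion (Lemma~\ref{lm-dio}) to both sheaves over $Y$, observe the normalizations coincide because $(\dim Y + 1) + 1 = (\dim Y + 2) + 0$, and match the two second moments via Lemma~\ref{lm-final-dio} up to an error $O(q^{-1/2})$ which disappears in the limsup. You also handle the $\mcR$ vs.~$\mcR^*$ discrepancy explicitly via the pointwise $O(1)$ bound, which the paper elides, and this is a genuine refinement.

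However, your diagnosis of the ``main technical obstacle'' in the last paragraph is wrong, and the resolution you invoke would not close the gap. You correctly observe that Lemma~\ref{lm-dio} produces $\pi_1(V\times_{\Aa^{2l}} Y_{\bFq})$-invariant endomorphisms, while the statement concerns endomorphisms over the generic fiber $V_{\eta'\times\bFq}$. But this discrepancy has nothing to do with the arithmetic Galois action or the vanishing-cycles argument. The vanishing cycles appear only in Proposition~\ref{pr-equal-3}, whose purpose is a \emph{different, subsequent} comparison: passing from $\eta'$ (generic point of $Y_{\Fq}$, lying over the generic point of $X_{\bFq}$) to the full geometric generic point $\bar{\eta}$ of $X_{\bFq}$, which requires showing $\Gal(\bar{\eta}/\widetilde{\eta})$ acts trivially on $\End_{U_{\bar\eta}}(\mcR^*_{\bar\eta})$. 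Lemma~\ref{lm-equal-3} is stated at $\eta'\times\bFq$ precisely so that this Galois descent is \emph{not} yet needed.

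What you actually need, and what the paper uses, is much simpler: restrict to the smooth locus $Y^\circ$ of $Y$. Then $V\times_{\Aa^{2l}}Y^\circ_{\bFq}$ is normal and irreducible, so its \'etale fundamental group is a quotient of the absolute Galois group of its function field, which is the same as the function field of the generic fiber $V_{\eta'\times\bFq}$. Hence $\pi_1(V_{\eta'\times\bFq})$ surjects onto $\pi_1(V\times_{\Aa^{2l}}Y^\circ_{\bFq})$, and the two spaces of invariant endomorphisms coincide. The same observation applies to $\mcR^*$ on $U\times_{\Aa^{2l}}Y^\circ_{\bFq}$. No vanishing-cycles input is required at this stage, and in fact the lemma must hold without it, since the vanishing-cycles argument in Proposition~\ref{pr-equal-3} is deduced later and logically depends on nothing beyond the setup.
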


\begin{proof}  
  Let $Y^\circ$ be the smooth locus of $Y$.  The endomorphisms
  $ \End_{V_{\eta'\times\bFq}}(\mcK_{\eta'\times\bFq})$ are the same
  as the endomorphisms of the pullback of $\mcK$ to
  $Y_{\bFq}^\circ \times_{\Aa^{2l}} V$, because the monodromy
  representations of both sheaves are the same (as they are normal,
  with the same generic point). We calculate the endomorphisms by
  applying Lemma~\ref{lm-dio}, obtaining the lim-sup of
$$ q^{- \dim X -2}
\sum_{y\in Y(\Fq)} \sum_{r\in\Fqt} \Bigl| \sum_{s\in\Fqt}
\prod_{i=1}^l \hypk_k(r(s+b_i(y));\uple{\chi},q)
\overline{\hypk_k(r(s+b_{i+l}(y));\uple{\chi},q)}
\Bigr|^2.
$$
\par
We do the same for
$\End_{U_{\eta'\times\bFq}}(\mcR^*_{\eta'\times\bFq})$, obtaining the lim-sup of
$$ q^{- \dim X -2}
\sum_{y\in Y(\Fq)} \sum_{r\in\Fqt} \sum_{s\in\Fqt} \Bigl|
\prod_{i=1}^l \hypk_k(r(s+b_i(y));\uple{\chi},q)
\overline{\hypk_k(r(s+b_{i+l}(y));\uple{\chi},q)} \Bigr|^2.
$$
By Lemma~\ref{lm-final-dio}, these two quantities are equal up to
$O(q^{-1/2})$, and therefore their limsups are equal.
\end{proof}

In the remainder of this section, we will prove an analogous statement
with $\overline{\eta}$ instead of $\eta'$.  The method is to prove
that 
\begin{equation}\label{eq-equal-1}
  \dim \End_{V_{\bar{\eta}}}(\mcK_{\bar{\eta}})=
  \dim \End_{V_{\eta'\times\bFq}}(\mcK_{\eta'\times\bFq})
\end{equation}
and 
\begin{equation}\label{eq-equal-3}
  \dim \End_{U_{\eta'\times\bFq}}(\mcR^*_{\eta'\times\bFq})=
  \dim \End_{U_{\bar{\eta}}}(\mcR^*_{\bar{\eta}}).
\end{equation}

We will prove~(\ref{eq-equal-1}) immediately. The
formula~(\ref{eq-equal-3}) is more difficult, and its proof will use
vanishing cycles.

\begin{proposition}\label{pr-equal-1}
  Assume that $\uple{\chi}$ has \CGM.  For any extension $\eta'$ of
  $\eta$ we have
$$
\dim \End_{V_{\bar{\eta}}}(\mcK_{\bar{\eta}})=
\dim \End_{V_{\eta'\times\bFq}}(\mcK_{\eta'\times\bFq}).
$$
\end{proposition}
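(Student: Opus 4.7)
The strategy is to view both geometric points $\bar{\eta}$ and $\overline{\eta' \times \bFq}$ as lying over the same scheme-theoretic point $\eta \in X_{\bFq}$, then invoke invariance of the étale fundamental group under extension of algebraically closed base fields.

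First I would check that $\eta' \times \bFq$ lies over $\eta$. Since $Y_{\bFq}$ is irreducible with generic point $\eta' \times \bFq$, and since the composed map $Y_{\bFq}\to \overline{X}_{\bFq}$ was arranged to be dominant (this is part of the setup before Lemma~\ref{lm-final-dio}), this generic point maps to the generic point of $\overline{X}_{\bFq}$. Because $X_{\bFq}$ is an open dense subscheme of $\overline{X}_{\bFq}$, this generic point is precisely $\eta$. Hence both $\bar{\eta}$ and $\overline{\eta' \times \bFq}$ are geometric points of $\Aa^{2l}_{\bFq}$ lying over $\eta$, so we may view $V_{\bar{\eta}}$ and $V_{\overline{\eta' \times \bFq}}$ as two geometric fibers of the (geometrically connected, finite-type) $\kappa(\eta)$-scheme $V_\eta = V \times_{\Aa^{2l}}\Spec \kappa(\eta)$.

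Next, I would fix a common algebraically closed field $\Omega$ containing both residue fields $\overline{\kappa(\eta)}$ and $\overline{\kappa(\eta' \times \bFq)}$, obtained by choosing any algebraic closure of the function field of $Y_{\bFq}$. The base-change morphisms
\[
V_\Omega \lra V_{\bar{\eta}}, \qquad V_\Omega \lra V_{\overline{\eta' \times \bFq}}
\]
are extensions of scalars from one algebraically closed extension of $\kappa(\eta)$ to a larger one. By the standard invariance result (see \cite[X.1.8]{sga1}), both morphisms induce isomorphisms on étale fundamental groups. Note here that $V_\eta$ is an open subscheme of $\Aa^2_{\kappa(\eta)}$ obtained by removing finitely many curves of the form $s=0$ and $r+b_i(\eta)=0$, hence is geometrically connected and of finite type, so the cited result applies.

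Finally, because these isomorphisms of fundamental groups are obtained by pullback, they are compatible with the monodromy representations of $\mcK|V_\eta$. Consequently the induced $\pi_1(V_{\bar{\eta}})$- and $\pi_1(V_{\overline{\eta' \times \bFq}})$-representations on the stalks of $\mcK$ have the same image up to isomorphism, so their commutants have the same dimension. Since $\End_{V_{\bar{\eta}}}(\mcK_{\bar{\eta}})$ and $\End_{V_{\eta' \times \bFq}}(\mcK_{\eta' \times \bFq})$ are precisely these commutants, the required equality of dimensions follows. The main point to be careful with is verifying the hypotheses (geometric connectedness and finite type of $V_\eta$) needed to apply the fundamental group invariance; no deep new ingredient is used.
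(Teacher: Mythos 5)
Your argument does not prove the proposition; it proves a tautology and misses the entire content of the statement. You have quietly replaced the subscript $\eta' \times \bFq$ appearing in the statement with $\overline{\eta' \times \bFq}$. These are genuinely different objects: $\eta' \times \bFq$ is the generic point of $Y_{\bFq}$, whose residue field is the function field of $Y_{\bFq}$, which is \emph{not} algebraically closed. The content of the proposition is precisely that the geometric endomorphism ring $\End_{V_{\bar{\eta}}}(\mcK_{\bar{\eta}})$ is already fixed by the action of $\Gal(\bar{\eta}/\eta' \times \bFq)$, so that the "arithmetic" endomorphism ring over the non-closed field $\kappa(\eta'\times\bFq)$ is no smaller than the geometric one. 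Your argument --- invariance of $\pi_1$ under extension of algebraically closed base fields --- only shows that $\End_{V_{\bar{\eta}}}(\mcK_{\bar{\eta}}) = \End_{V_{\overline{\eta'\times\bFq}}}(\mcK_{\overline{\eta'\times\bFq}})$, which is true but vacuous for the purposes of the proposition, and the last sentence of your proof, identifying $\End_{V_{\eta'\times\bFq}}(\mcK_{\eta'\times\bFq})$ with the commutant of the $\pi_1(V_{\overline{\eta'\times\bFq}})$-representation, is exactly the non-trivial claim that needs to be proved. In fact the paper's own proof of Proposition~\ref{pr-equal-1} makes this explicit when it writes that $\End_{V_{\eta'\times\bFq}}(\mcK_{\eta'\times\bFq})$ is "the space of invariants of that action," i.e., of the $\Gal(\bar{\eta}/\eta'\times\bFq)$-action.

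The actual ingredient you are missing is an arithmetic-versus-geometric monodromy comparison. One shows, via the Goursat--Kolchin--Ribet criterion and Theorem~\ref{thm-geometric-monodromy}, that the geometric monodromy group of $\mcK_{\eta\times\bFq}$ is already the full group $G^{|B|}$, where $G$ is the monodromy group of a single generalized Kloosterman sheaf (this is where the hypothesis \CGM\ enters: it guarantees $G = G^0$ is $\SL_k$ or $\Sp_k$). Since the arithmetic monodromy group lies between the geometric one and $G^{|B|}$, they coincide; hence $\Gal(\bar{\eta}/\eta\times\bFq)$ acts trivially on $\End_{V_{\bar{\eta}}}(\mcK_{\bar{\eta}})$, and a fortiori so does the subgroup $\Gal(\bar{\eta}/\eta'\times\bFq)$. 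Your general-topology argument cannot see this, because it never confronts the nontrivial Galois action that the big-monodromy theorem is used to kill.
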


\begin{proof}
  Let $G$ be the geometric monodromy group of $\mcK$, and let $B$ be
  the set of distinct values of $b_1,\dots,b_{2l}$ at $\eta$. Then
  certainly the arithmetic monodromy group of $\mcK_{\eta\times \bFq}$
  is contained in $G^{|B|}$. By Goursat-Kolchin-Ribet, the geometric
  monodromy group of $\mcK_{\eta \times \bFq}$ is $G^{|B|}$, so the
  arithmetic and geometric monodromy groups are equal.  Therefore
  $\Gal(\overline{\eta}/\eta \times \bFq)$ acts trivially on
  $\End_{V_{\eta \times\bFq}}(\mcK_{\eta'\times\bFq})$ as this action
  factors through the quotient of the arithmetic monodromy group by
  the geometric monodromy group. It follows that
  $\Gal(\overline{\eta}/\eta' \times \bFq)$ acts trivially and so
  $ \End_{V_{\eta'\times\bFq}}(\mcK_{\eta'\times\bFq})$, which is the
  space of invariants of that action, is equal to the whole space.
\end{proof}

In order to prove~(\ref{eq-equal-3}), we first introduce some
notation. We write $\widetilde{\eta}=\eta\times\bFq$. We consider the
projective line $\Pp^1_{\widetilde{\eta}}$ with coordinate $r$. We denote
by $\Oc^{et}$ the étale local ring of $\Pp^1_{\widetilde{\eta}}$ at $\infty$ and
by $K$ its field of fractions. We will often identify $K$ (resp. a
separable closure $K^{sep}$ of $K$) with the corresponding spectra.

What follows is the key lemma.

\begin{lemma}\label{lm-key-generic}
  With assumptions as above, the action of $\Gal(K^{sep}/K)$ on
  $\mcR^*_{K^{sep}}$ is unipotent.
\end{lemma}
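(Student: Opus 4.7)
The plan is to use a change of coordinates that turns the local monodromy problem at $r=\infty$ into a vanishing-cycles problem at $r'=0$ for a transformed sheaf, and then to isolate the weight-one contribution via Deligne's semicontinuity theorem.

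First, I would introduce the coordinates $r'=1/r$ and $t=rs$ and pull back $\mcK$ along $(r',t)\mapsto(1/r',tr')$ to obtain a sheaf $\widetilde{\mcK}$ on coordinates $(r',t,\bfb)$ with trace function $\prod_iK(t(1+r'b_i))\overline{K(t(1+r'b_{i+l}))}$. For $\bfb$ at the generic point $\widetilde{\eta}$ of the stratum $X\subset\Aa^{2l}-\mcV^{\Delta}$, the divisors $1+r'b_i=0$ stay away from $r'=0$, so $\widetilde{\mcK}$ is lisse on a neighborhood of $\{r'=0\}\times\Gm_t$. Since $s\mapsto t=rs$ is an automorphism of $\Gm$ for any $r\neq 0,\infty$, the sheaf $\widetilde{\mcR}:=R^1\tilde{\pi}_!\widetilde{\mcK}$ (with $\tilde{\pi}$ projecting away $t$) satisfies $\mcR_{r,\bfb}\simeq\widetilde{\mcR}_{1/r,\bfb}$ up to a Tate twist, so the local monodromy of $\mcR^*$ at $r=\infty$ coincides with that of the weight-one quotient $\widetilde{\mcR}^*$ at $r'=0$.

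Next, I would apply Deligne's semicontinuity theorem to the family $\widetilde{\mcK}$ along the $t$-projection. The sheaf $\widetilde{\mcK}_{r',\bfb}$ is tame at $t=0$ for every $r'$, while its local monodromy at $t=\infty$ is, by Lemma \ref{lm-tilde-psi}(2), isomorphic to
\begin{equation*}
\bigoplus_{(\zeta_2,\ldots,\zeta_{2l})\in \mmu_k^{2l-1}} \mcL_{\widetilde{\psi}}\bigl(c_{\uple{\zeta}}(r')\,t^{1/k}\bigr),\qquad c_{\uple{\zeta}}(r'):=\sum_{i=1}^{2l}\eps_i\zeta_i(1+r'b_i)^{1/k}.
\end{equation*}
Expanding $(1+r'b_i)^{1/k}$ in power series in $r'$ and invoking (a variant of) Lemma \ref{lm-infinite-algebraic-part}(1) for the characters $(\eps_i\zeta_i)$, each $c_{\uple{\zeta}}$ is a nonzero analytic function of $r'$; hence the Swan conductor at $t=\infty$ is locally constant on a punctured neighborhood of $r'=0$, so $\widetilde{\mcR}$ is lisse there. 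The potential jump at $r'=0$ is contributed exactly by the summands with $c_{\uple{\zeta}}(0)=0$.

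The main step, and the principal obstacle, is to show that the vanishing-cycle contributions from these degenerate summands all lie in weight strictly less than $1$, so that $\widetilde{\mcR}^*$ retains only the contribution of the non-degenerate summands. Writing $c_{\uple{\zeta}}(r')=r'^{m_{\uple{\zeta}}}u_{\uple{\zeta}}(r')$ with $u_{\uple{\zeta}}(0)\neq 0$, and pulling back along the $k$-fold tame cover $u^k=t$, each degenerate summand becomes the rank-one Artin--Schreier sheaf $\mcL_{\widetilde{\psi}}(r'^{m_{\uple{\zeta}}}u_{\uple{\zeta}}(r')u)$; the corresponding vanishing cycles at $r'=0$ can be controlled via Laumon's local Fourier transform, or equivalently via an explicit stationary-phase-type calculation, and should yield nearby-cycle contributions whose Frobenius eigenvalues all have absolute value strictly less than $q^{1/2}$. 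Such contributions die in the weight-one quotient, so the monodromy on $\widetilde{\mcR}^*$ at $r'=0$ reduces to that of the non-degenerate summands, which extend lisse-ly across $r'=0$ and act trivially. Combined with the tameness already provided by Lemma \ref{strata-tame}, this yields the unipotent action of $\Gal(K^{sep}/K)$ on $\mcR^*_{K^{sep}}$.
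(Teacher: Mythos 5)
Your setup — passing to coordinates $(r',t)=(1/r,rs)$, compactifying the $t$-line, and analysing the behaviour at the special point $r'=0$ via the local monodromy decomposition of Lemma~\ref{lm-tilde-psi}(2) — is essentially the same as the paper's, and your identification of the "degenerate'' summands $\uple{\zeta}$ with $c_{\uple{\zeta}}(0)=0$ as the source of the jump is correct. But the crux of your argument rests on the claim that the vanishing-cycle contributions from the degenerate summands have Frobenius eigenvalues of modulus strictly less than $q^{1/2}$, so that they die in the weight-one quotient $\mcR^*$. This claim is not justified, and as literally stated (about the stalks of $R\Phi$ at $t=\infty$) it appears to be \emph{false}: for a single degenerate summand, i.e. an Artin--Schreier sheaf $\mcL_{\widetilde\psi}(\gamma(r')u)$ extended by zero to $\Pp^1$, the only nonvanishing cohomology at the special fibre is $H^2(\Pp^1_\sigma,g_!\sheaf{G}_\sigma)\cong\bQl(-1)$, and the vanishing-cycles long exact sequence then forces $H^1(\Pp^1_\sigma,R\Phi g_!\sheaf{G})\cong\bQl(-1)$, which has weight $2$, not weight $<1$. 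For the argument you sketch to work, you would instead need to show that the \emph{image} of $H^1(\Pp^1_{K^{sep}},j_!\widetilde{\mcK})$ inside $H^1(\Pp^1_\sigma,R\Phi j_!\widetilde{\mcK})$ has weight $<1$ (so that $\mcR^*$ factors through the specialization-invariant part), which is a subtler global statement that the direct-sum decomposition at $\infty$ alone does not give you, since those summands are only summands of the local monodromy, not of the global sheaf.

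The paper's proof avoids weights entirely: it uses the same vanishing-cycles exact sequence, but it shows \emph{directly} that the Galois action of $\Gal(K^{sep}/K)$ on both $H^i(\Pp^1_\sigma,j_!\widetilde{\mcK})$ and $H^i(\Pp^1_\sigma,R\Phi j_!\widetilde{\mcK})$ is \emph{trivial}; the first because the special fibre is a fixed sheaf independent of the Galois module structure, and the second by an explicit case analysis of $\gamma(r')$ (identically zero / nonzero but specializing to zero / nonvanishing at $\sigma$) for each rank-one factor. Triviality on both ends of the exact sequence gives unipotence of the action on $H^1(\Pp^1_{K^{sep}},j_!\widetilde{\mcK})$, which is all the lemma claims; the upgrade to a trivial action on $\mcR^*$ is then done afterwards, in Proposition~\ref{pr-equal-3}, using purity. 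Your proposal effectively tries to prove the trivial-action statement in one step via a weight bound on vanishing cycles, but the weight bound you would need is neither proved nor obviously true, so there is a genuine gap.
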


Note that to make sense of this action, we use the fact that the image
of the natural morphism $\Spec(K)\to \Aa^{1+2l}$ has image in $U$,
which follows from Lemma~\ref{z-polynomial}.

\begin{proof}
  We denote by $\sigma$ the special point of $\Spec(\Oc^{et})$. We
  consider the projective line $\Pp^1_{\Oc^{et}}$, with coordinate
  $t$, and denote by $j$ (resp. by $g$) the open immersion
  $\Gg_{m,\Oc^{et}}\to \Pp^1_{\Oc^{et}}$ (resp. the open immersion
  $\Aa^1_{\Oc^{et}}\to \Pp^1_{\Oc^{et}}$).

We consider the lisse sheaf 
$$
\widetilde{\mcK}=\bigotimes_{1\leq i\leq l}
\HYPK_{k,\psi}(\uple{\chi})(t(1+b_i/r))\otimes
\HYPK_{k,\psi}(\uple{\chi})(t(1+b_{i+l}/r))^{\vee}
$$
on $\Gg_{m,\Oc^{et}}$. 

By the change of variable $t=rs$ and the proper base change theorem,
the $\Gal(K^{sep}/K)$-action on $\mcR_{K^{sep}}$ is isomorphic to the
action on $H^1(\Pp^1_{K^{sep}},j_!\widetilde{\mcK})$. Since
$\mcR^*_{K^{sep}}$ is a quotient of $\mcR_{K^{sep}}$, the lemma will
follow if we prove that the action of $\Gal(K^{sep}/K)$ on
$H^1(\Pp^1_{K^{sep}},j_!\widetilde{\mcK})$ is unipotent.

By the long exact sequence for vanishing cycles, we have a long exact
sequence
\begin{equation}\label{eq-exact}
\cdots \to H^i(\Pp^1_{\sigma},j_!\widetilde{\mcK})\to
H^i(\Pp^1_{K^{sep}},j_!\widetilde{\mcK})\to H^i(\Pp^1_{\sigma},R\Phi
j_!\widetilde{\mcK})\to \cdots
\end{equation}

For each $i$, we have an isomorphism
$$
H^i(\Pp^1_{\sigma},j_!\widetilde{\mcK})=
H^i\Bigl(\Pp^1_{\sigma},j_!\Bigl(\HYPK_{k,\psi}(\uple{\chi})^{\otimes
  l}\otimes (\HYPK_{k,\psi}(\uple{\chi})^{\vee})^{\otimes
  l}\Bigr)\Bigr),
$$
hence the $\Gal(K^{sep}/K)$-action on these spaces is trivial.

On the other hand, the vanishing cycle complex
$R\Phi j_!\widetilde{\mcK}$ is zero away from the point at $\infty$ of
$\Pp^1_{\sigma}$ (local acyclicity of smooth morphisms and lisseness
of $j_!\widetilde{\mcK}$) and is zero at $0$ (because of tame
ramification and Deligne's semicontinuity theorem).

We therefore only need to understand $R\Phi j_!\widetilde{\mcK}$ at
$t=\infty$. By the second part of Lemma~\ref{lm-tilde-psi}, the local
monodromy at infinity of $j_!\widetilde{\mcK}$ is isomorphic to that
of a direct sum of sheaves of the form
$$
\mcL_{\widetilde{\psi}}\Bigl( 
    (t(1+b_1/r))^{1/k}
    +\sum_{i=2}^{2k}\eps_i \zeta_i (t(1+b_i/r))^{1/k}
\Bigr).
$$
Since $(1+b_i/r)^{1/k}$ belongs to the étale local ring $\Oc^{et}$,
this is isomorphic to the local monodromy of a direct sum of sheaves
of the form $\mcL_{\widetilde{\psi}}(\gamma(r)t^{1/k})$. We have
$\mcL_{\widetilde{\psi}}(\gamma(r)t^{1/k})=\varpi_*\mcL_{\widetilde{\psi}}(\gamma(r)u)$
where $\varpi$ is the finite covering $u\mapsto u^k$. We compute the
local monodromy at $\infty$ of this sheaf, which we
denote~$\mcG$. This is a standard computation. We use the long exact
sequence
$$
\cdots \to H^i(\Pp^1_{\sigma},g_!\sheaf{G})\to
H^i(\Pp^1_{K^{sep}},g_!\sheaf{G})\to H^i(\Pp^1_{\sigma},R\Phi
g_!\sheaf{G})\to \cdots
$$
and distinguish three cases:
\begin{enumerate}
\item If $\gamma(r)=0$ in $\Oc^{et}$, then $\sheaf{G}$ is tamely ramified at
  $\infty$, so the vanishing cycles vanish.
\item If $\gamma(r)\not=0$ in $\Oc^{et}$ but $\gamma(r)=0$ at the special
  point, then all $H^i$'s with coefficients in $g_!\sheaf{G}$ in the
  above exact sequence vanish except
$$
H^2(\Pp^1_{\sigma},g_!\sheaf{G}),
$$
which is one-dimensional with a trivial action of $\Gal(K^{sep}/K)$;
this implies that the action on
$H^i(\Pp^1_{\sigma},R\Phi g_!\sheaf{G})$ is trivial.
\item If $\gamma(r)\not=0$ at $\sigma$, then all cohomology groups in
  the sequence vanish by properties of the Artin-Schreier sheaves.
\end{enumerate}

In any of the three cases, by local acyclicity of smooth morphisms we
see that $R\Phi g_! \sheaf{G}$ vanishes outside the point at $\infty$,
so knowing that $H^i(\Pp^1_{\sigma},R\Phi g_!\sheaf{G})$ has trivial
Galois action implies that the Galois action on the stalk at $\infty$
vanishes.

Since the
vanishing cycle functor is additive and commutes with finite
pushforward, we conclude that $\Gal(K^{sep}/K)$ acts trivially on 
$H^i(\Pp^1_{\sigma},R\Phi j_!\widetilde{\mcK})$ for all $i$, hence by the
exact sequence~(\ref{eq-exact}), this group acts unipotently on
$H^i(\Pp^1_{K^{sep}},j_!\widetilde{\mcK})$, as desired.
\end{proof}

\begin{proposition}\label{pr-equal-3}
  Assume that $\uple{\chi}$ has \CGM. We have
$$
\dim \End_{U_{\eta'\times\bFq}}(\mcR^*_{\eta'\times\bFq})=
\dim \End_{U_{\bar{\eta}}}(\mcR^*_{\bar{\eta}}).
$$
\end{proposition}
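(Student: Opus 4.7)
The plan is to reduce the desired equality of dimensions to a statement about triviality of a Galois action, and then to establish this triviality via the unipotence result of Lemma~\ref{lm-key-generic}. By unpacking the definition of $\End_{V_{\uple{b}}}$ as $\pi_1$-invariants, the natural inclusion $\End_{U_{\eta'\times\bFq}}(\mcR^*_{\eta'\times\bFq})\subseteq \End_{U_{\bar\eta}}(\mcR^*_{\bar\eta})$ identifies the former as the subspace of elements fixed by the action of $\Gal(\overline{k(Y_{\bFq})}/k(Y_{\bFq}))$, which acts through the quotient $\pi_1(U_{\eta'\times\bFq})/\pi_1(U_{\bar\eta})$. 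Thus equality of dimensions is equivalent to triviality of this Galois action. Since $k(X_{\bFq})\subseteq k(Y_{\bFq})$ is a finite extension, it suffices to prove the stronger statement that the larger group $\Gal(\overline{k(X_{\bFq})}/k(X_{\bFq}))$ acts trivially, or equivalently that the arithmetic and geometric monodromy groups of $\mcR^*|U_\eta$ have the same image in $\GL(\mcR^*_{\bar\eta})$.

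To establish this, I would then view $\End_{U_{\bar\eta}}(\mcR^*_{\bar\eta})$ cohomologically as $H^0(\Pp^1_{\bar\eta}, j_{!*}\End(\mcR^*))$, where $j\colon U_\eta\hookrightarrow \Pp^1_\eta$ is the open immersion and $j_{!*}$ denotes middle extension. The $\Gal_{k(X_{\bFq})}$-action on this cohomology should be analyzed via a vanishing cycles argument on an appropriate DVR model, paralleling the strategy in the proof of Lemma~\ref{lm-key-generic}. By that lemma, $\mcR^*$ has unipotent local monodromy at $\infty$; combined with the tameness at the $Z$-points from Lemma~\ref{strata-tame}, the middle extension $j_{!*}\End(\mcR^*)$ is itself tame with controlled local behaviour, and standard long exact sequence arguments relate its $H^0$ to cohomology on the special fibre where the Galois action is manifestly trivial.

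The hard part will be executing this vanishing cycles argument uniformly at both $\infty$ and at the finite singular points $Z_{\bar\eta}$. At $\infty$, Lemma~\ref{lm-key-generic} does the work directly, since passing from $\mcR^*$ to $\End(\mcR^*)$ preserves unipotence of local monodromy and passage to invariants under a unipotent action yields a vanishing cycles contribution with trivial Galois action. At the finite points of $Z$, the local monodromy is only tame (not obviously unipotent), and one must show that the contribution of each such point to the long exact sequence still carries a trivial Galois action; this likely requires combining the explicit structure of $\mcR$ as $R^1\pi_!\mcK$ with the weight filtration (noting that $\End(\mcR^*)$ is pure of weight $0$) to isolate the relevant sub-representation. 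Once these local Galois-triviality statements are assembled, the vanishing cycles long exact sequence forces the Galois action on $H^0(\Pp^1_{\bar\eta}, j_{!*}\End(\mcR^*))$ to be trivial, completing the proof.
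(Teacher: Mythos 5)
Your initial reduction is on the right track: you correctly identify that the equality of dimensions is equivalent to triviality of the $\Gal(\bar{\eta}/\widetilde{\eta})$-action on $\End_{U_{\bar\eta}}(\mcR^*_{\bar\eta})$, and that since $k(X_{\bFq})\subset k(Y_{\bFq})$ is finite it suffices to handle the larger Galois group. (Note, though, that this is \emph{not} equivalent to equality of the full arithmetic and geometric monodromy images in $\GL(\mcR^*_{\bar\eta})$, as you assert; you only need invariance on the subspace $\End_{U_{\bar\eta}}(\mcR^*_{\bar\eta})\subset \mcR^*_{\bar\eta}\otimes(\mcR^*_{\bar\eta})^\vee$, which is a weaker statement.)

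The subsequent strategy goes astray, and the gap you yourself flag — the contribution of the finite singular points $Z_{\bar\eta}$, where the local monodromy is only tame and not obviously unipotent — is real and is the wrong problem to try to solve. The paper's argument never repackages the endomorphism space as $H^0(\Pp^1,j_{!*}\End(\mcR^*))$ and never runs vanishing cycles over the whole divisor $Z\cup\{\infty\}$. The key observation, which your proposal is missing, is that the decomposition group $\Gal(K^{sep}/K)$ at the \emph{single} point $\infty$ already surjects onto $\Gal(\bar\eta/\widetilde\eta)$, because $K$ (the fraction field of the \'etale local ring of $\Pp^1_{\widetilde\eta}$ at $\infty$) contains no finite extension of $\widetilde\eta$. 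Once you have this surjection, you argue at the level of the stalk: $\Gal(K^{sep}/K)\to\pi_1(U_\eta)$ acts on $\mcR^*_{\bar\eta}\otimes(\mcR^*_{\bar\eta})^\vee$, unipotently by Lemma~\ref{lm-key-generic}; the restriction of this action to the subspace $\End_{U_{\bar\eta}}(\mcR^*_{\bar\eta})$ factors through the surjection $\alpha\colon\Gal(K^{sep}/K)\twoheadrightarrow\Gal(\bar\eta/\widetilde\eta)$, so the $\Gal(\bar\eta/\widetilde\eta)$-action is also unipotent; and by purity of $\mcR^*$ the Frobenius action on $\End_{U_{\bar\eta}}(\mcR^*_{\bar\eta})$ is semisimple. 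Unipotent and semisimple together force triviality. No analysis at the finite points of $Z$ is needed — the unipotence at the one place $\infty$ is already enough to control the entire arithmetic-versus-geometric quotient.
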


\begin{proof}  We first note that we have an inclusion
$$
\End_{U_{\bar{\eta}}}(\mcR^*_{\bar{\eta}})
\subset 
\mcR^*_{\bar{\eta}}\otimes (\mcR^*_{\bar{\eta}})^{\vee}.
$$
Moreover, we have a commutative triangle
$$
\begin{tikzcd}
  \Gal(K^{sep}/K)\arrow{r}\arrow{rd}{\alpha} &
  \arrow{d}\pi_1(U_{\eta}))\\
  & \Gal(\bar{\eta}/\widetilde{\eta})
\end{tikzcd}
$$
where $\alpha$ is surjective because $K$ does not contain a finite
extension of $\widetilde{\eta}$.

The fundamental group $\pi_1(U_{\eta})$ acts on
$\mcR^*_{\bar{\eta}}\otimes (\mcR^*_{\bar{\eta}})^{\vee}$ and the
Galois group $\Gal(\bar{\eta}/\widetilde{\eta})$ acts on 
$\End_{U_{\bar{\eta}}}(\mcR^*_{\bar{\eta}})$, and these actions are
compatible with the inclusion above.

By Lemma~\ref{lm-key-generic}, the action of $\Gal(K^{sep}/K)$ on
$\mcR^*_{\bar{\eta}}\otimes (\mcR^*_{\bar{\eta}})^{\vee}$ is
unipotent, hence the action of $\Gal(\bar{\eta}/\widetilde{\eta})$ on 
$\End_{U_{\bar{\eta}}}(\mcR^*_{\bar{\eta}})$ is also unipotent since
$\alpha$ is surjective. But we know, by purity, that this action is
semisimple, and it follows that the action
$\Gal(\bar{\eta}/\widetilde{\eta})$ on
$\End_{U_{\bar{\eta}}}(\mcR^*_{\bar{\eta}})$ is in fact trivial. In
particular, we have
$$
\dim \End_{U_{\eta'\times\bFq}}(\mcR^*_{\eta'\times\bFq})=
\dim \End_{U_{\bar{\eta}}}(\mcR^*_{\bar{\eta}}).
$$ \end{proof} 

Finally, we can deduce:

\begin{theorem}\label{th-generic}
  Let $X$ be an irreducible component of $X_j$ which intersects the
  characteristic zero part. Assume that $p$ is a prime sufficiently
  large with respect to $(k,l,X)$. Let $\Fq$ be a finite field of
  characteristic $p$, and let $\overline{\eta}$ be the geometric
  generic point of $X_{\bFq}$. Suppose that $X$ has dimension at least
  $(3l+1)/2$.  Let $\uple{\chi}$ be a $k$-tuple of characters of
  $\Fqt$ with Property \CGM. Then we have
$$
\dim \End_{V_{\bar{\eta}}}(\mcK_{\bar{\eta}})=
\dim \End_{U_{\bar{\eta}}}(\mcR^*_{\bar{\eta}}).
$$
\end{theorem}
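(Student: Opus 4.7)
The plan is to observe that Theorem~\ref{th-generic} is now an immediate consequence of a three-step chain of equalities already set up in this section. Let $(\ppersp,Y,\bar{\gamma})$ be a perspective on $X$ as supplied by Theorem~\ref{thm-ag}, defined over a suitable $\Oc_K[1/N]$, and let $\eta'$ denote the generic point of $Y_{\Fq}$. Since $Y\to\overline{X}$ is dominant, the image of $\eta'$ under $(b_1,\ldots,b_{2l})$ is $\eta$, so $\eta'\times\bFq$ sits above $\bar{\eta}$ in a natural way, which is exactly the setup needed to invoke Propositions~\ref{pr-equal-1} and~\ref{pr-equal-3}.

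The first step is to move from the geometric generic point $\bar{\eta}$ of $X_{\bFq}$ down to $\eta'\times\bFq$ on the $\mcK$-side: Proposition~\ref{pr-equal-1} (applied to the extension $\eta'$ of $\eta$) yields
\[
\dim \End_{V_{\bar{\eta}}}(\mcK_{\bar{\eta}})=
\dim \End_{V_{\eta'\times\bFq}}(\mcK_{\eta'\times\bFq}).
\]
The second step is the central equality, supplied by Lemma~\ref{lm-equal-3}:
\[
\dim \End_{V_{\eta'\times\bFq}}(\mcK_{\eta'\times\bFq})=
\dim \End_{U_{\eta'\times\bFq}}(\mcR^*_{\eta'\times\bFq}).
\]
The third step, on the $\mcR^*$-side, is provided by Proposition~\ref{pr-equal-3}:
\[
\dim \End_{U_{\eta'\times\bFq}}(\mcR^*_{\eta'\times\bFq})=
\dim \End_{U_{\bar{\eta}}}(\mcR^*_{\bar{\eta}}).
\]
Composing the three identities gives the theorem.

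The only thing to check is that the hypotheses of each input are satisfied. All three inputs require Property \CGM\ (which is assumed) and $p$ large enough in terms of $(k,l,X)$: Proposition~\ref{pr-equal-1} uses Goursat--Kolchin--Ribet via Theorem~\ref{thm-geometric-monodromy}, for which $p>2k+1$ suffices; Proposition~\ref{pr-equal-3} relies on Lemma~\ref{lm-key-generic}, whose vanishing cycles computation uses only the tame local structure at $0$ and the Artin--Schreier description of the local monodromy at $\infty$ from Lemma~\ref{lm-tilde-psi}(2); and Lemma~\ref{lm-equal-3} requires Lemma~\ref{lm-final-dio}, where the dimension assumption $\dim X_{\Qq}\geq (3l+1)/2$ is used precisely to absorb the $O(q^{\dim B+|S|/2+3})$ error term from Proposition~\ref{lm-dio-new} into $o(q^{\dim X+2})$, via the inequality $2l-|S|+2m\leq 4(2l-\dim X)$ guaranteed by the perspective.

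The main conceptual obstacle in this section is therefore not in Theorem~\ref{th-generic} itself but in Lemma~\ref{lm-equal-3} and its supporting Lemma~\ref{lm-final-dio}: that is where the perspective machinery of the previous section is cashed in, via separation of variables, to obtain the Diophantine cancellation on the stratum; and in Lemma~\ref{lm-key-generic}, where vanishing cycles and the explicit local structure at $\infty$ combine to force the Galois action to be unipotent. Once those are in hand, as they now are, Theorem~\ref{th-generic} is only a formal concatenation of equalities, with no further geometric input required.
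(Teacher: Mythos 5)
Your proposal is correct and takes essentially the same route as the paper: the chain $\dim\End_{V_{\bar\eta}}(\mcK_{\bar\eta})=\dim\End_{V_{\eta'\times\bFq}}(\mcK_{\eta'\times\bFq})=\dim\End_{U_{\eta'\times\bFq}}(\mcR^*_{\eta'\times\bFq})=\dim\End_{U_{\bar\eta}}(\mcR^*_{\bar\eta})$ via Proposition~\ref{pr-equal-1}, Lemma~\ref{lm-equal-3} and Proposition~\ref{pr-equal-3} is exactly the paper's proof. The one small point the paper spells out that you leave implicit is that, since the assertion is geometric, one may first replace $\Fq$ by a finite extension which is a residue field of the spread-out base $\Oc_K[1/N]$, so that $Y_{\Fq}$ is defined, irreducible, and dominant over $\overline{X}_{\Fq}$ as the three inputs require.
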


\begin{proof} 
  Since the assertion is geometric, we may replace $\Fq$ by a finite
  extension that is a residue field of the base $\Oc_K[1/N]$ of the
  ``spread-out'' perspective. The equality then follows, when the
  characteristic of $\Fq$ is sufficiently large in terms of $(k,l,X)$,
  by combining Proposition \ref{pr-equal-1}, Lemma~\ref{lm-equal-3}
  and Proposition \ref{pr-equal-3}.
\end{proof}

\section{Conclusion of the proof}\label{sec-end}

We recall that we want to prove Theorem~\ref{th-thetab}, which we
restate for convenience:

\begin{theorem}\label{th-end} 
  Assume that $\uple{\chi}$ has \CGMT.  If $p$ is large enough,
  depending only on $k,l$, then for any
  $\uple{b}\in \Aa^{2l}(\Fq)-\mcW(\Fq)$, the natural morphism
  $\theta_{\uple{b}}$ is an isomorphism.
\par
Furthermore, each irreducible component of $\mcR^*_{\uple{b}}$ has
rank greater than one.
\end{theorem}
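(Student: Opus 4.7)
The plan is to assemble three results proved in the preceding sections: the injectivity of $\theta_{\uple{b}}$ outside the codimension-$(l-1)$ subvariety $\mcW_1$ (Theorem \ref{th-inj}), the specialization principle (Proposition \ref{pr-specialization}), and the generic equality of dimensions on each high-dimensional stratum (Theorem \ref{th-generic}). First I would reduce to the case where $\uple{\chi}$ has Property \CGM. By Lemma \ref{how-to-twist}, there is a character $\chi_0$ (possibly defined over a finite extension $\Ff_{q^{\nu}}$ of $\Fq$) such that $\chi_0\uple{\chi}$ has \CGM. Twisting replaces $\HYPK_{k,\psi}(\uple{\chi})$ by $\HYPK_{k,\psi}(\uple{\chi})\otimes \mcL_{\chi_0}$; the additional factor in $\mcK$ collapses to $\mcL_{\chi_0(\prod_{i\leq l}(r+b_i)/\prod_{i>l}(r+b_i))}$, which is independent of $s$, so by the projection formula $\mcR$ and $\mcR^{*}$ each acquire a tensor factor by the same rank-one sheaf. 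Since tensoring by a rank-one local system preserves endomorphism algebras and preserves the rank of every irreducible summand, and since both conclusions to be proved are geometric, we may assume \CGM.

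To match the hypotheses of Proposition \ref{pr-specialization}, I would enlarge $\mcW$ to contain $\mcW_1$; this is harmless because $\mcW_1$ is defined over $\Zz$ and has codimension $l-1\geq (l-1)/2$, so the bound~\eqref{eq-codim-w} is preserved. Now fix $\uple{b}\in\Aa^{2l}(\Fq)-\mcW(\Fq)$. Since $\uple{b}\notin\mcV^{\Delta}$, the point belongs to some $X_j$; let $j$ be the smallest such integer, and let $X$ be a $\Zz$-irreducible component of $X_j$ containing $\uple{b}$ and meeting the characteristic-zero part (which exists for $p$ large enough in terms of $k$ and $l$, since for such $p$ every $\Zz$-component of $X_j$ with non-empty $\Fq$-fiber is flat over $\Zz$). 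By definition of $\mcW$, we have $\dim X\geq (3l+1)/2$, and Theorem \ref{th-generic} then yields
\[
\dim \End_{V_{\bar\eta}}(\mcK_{\bar\eta}) = \dim \End_{U_{\bar\eta}}(\mcR^{*}_{\bar\eta})
\]
at a geometric generic point $\bar\eta$ of $X_{\bFq}$, provided $p$ exceeds a bound depending on $k$, $l$ and $X$. Since the $X_j$ have only finitely many $\Zz$-irreducible components, this bound can be taken to depend only on $k$ and $l$. The conditions $\uple{b}\notin X_{j-1}$ and $\uple{b}\notin\mcW_1$ are now satisfied by construction, so Proposition \ref{pr-specialization} promotes the generic dimensional equality to the desired isomorphism statement for $\theta_{\uple{b}}$.

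The second conclusion, that each geometrically irreducible component of $\mcR^{*}_{\uple{b}}$ has rank greater than one, is already implicit in the injectivity argument: by Lemmas \ref{lm-inj1}--\ref{lm-inj2}, every geometrically irreducible summand of $\mcR^{*}_{\uple{b}}$ arises as a weight-one quotient $(R^1\pi_!\sheaf{H}_{\uple{\rho}})^{w=1}$ for some family $\uple{\rho}$, and Lemma \ref{lm-injectivity-plus-epsilon} asserts precisely that the rank of this quotient is at least two whenever $\uple{b}\notin\mcW_1$. At this last stage the argument is essentially bookkeeping; the real difficulties have already been absorbed into Theorem \ref{th-generic} and Proposition \ref{pr-specialization}, which themselves rest on the stratum parameterization of Theorem \ref{thm-ag}, the Diophantine bound of Proposition \ref{lm-dio-new}, and the vanishing-cycles computation of Lemma \ref{lm-key-generic}. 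The main obstacle is therefore not in the present assembly but in ensuring that the lower bound on $p$ is uniform in the finitely many $\Zz$-components of the strata, a point that is secured by the fact that $\mcW$, like the sheaves $\mcK$ and $\mcR$, is defined integrally.
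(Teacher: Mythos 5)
Your overall architecture matches the paper's: reduce to Property \CGM\ via Lemma~\ref{how-to-twist} and the observation that twisting $\mcK$ and $\mcR^{*}$ by a common rank-one local system leaves $\theta_{\uple{b}}$ unchanged; pick the minimal $j$ with $\uple{b}\in X_j$ and a $\Zz$-component $X$ of $X_j$ meeting characteristic zero; apply Theorem~\ref{th-generic} at the generic point and Proposition~\ref{pr-specialization} to descend, with uniformity in $p$ coming from the finiteness of the set of $\Zz$-components. One deviation: you absorb $\mcW_1$ into $\mcW$, which proves the statement for a larger exceptional set than the $\mcW$ fixed in Definition~\ref{def-xj}. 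This is viable if one rewires the definition (the codimension bound $\codim\geq(l-1)/2$ survives since $\codim\mcW_1=l-1$), but the paper instead proves that $\uple{b}\notin\mcW(\Fq)$ already forces $\uple{b}\notin\mcW_1(\Fq)$: the dimension bound $\dim X\geq(3l+1)/2>l+1=\dim\mcW_1$ (for $l\geq 2$) places the generic point $\eta$ of $X$ outside $\mcW_1$, and Lemma~\ref{strata-finite-etale} says $Z$ is finite \'etale over $X_j-X_{j-1}$, so any two coordinates $b_i$ that are distinct over $\eta$ cannot collide upon specialization to $\uple{b}\in X_j-X_{j-1}$. That argument keeps $\mcW$ intact and is a step you should supply rather than sidestep.

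The more serious issue is the final paragraph. You assert that Lemmas~\ref{lm-inj1}--\ref{lm-inj2} already imply that every geometrically irreducible summand of $\mcR^{*}_{\uple{b}}$ is one of the quotients $(R^1\pi_!\sheaf{H}_{\uple{\rho}})^{w=1}$. They do not. Those lemmas describe the irreducible decomposition of $\mcK_{\uple{b}}$ and, in Lemma~\ref{lm-inj1}, characterize injectivity of $\theta_{\uple{b}}$ by non-vanishing of the $(R^1\pi_!\sheaf{H})^{w=1}$; but a priori each $(R^1\pi_!\sheaf{H}_{\uple{\rho}})^{w=1}$ could split further (with rank-one pieces), and different $\uple{\rho}$ could produce isomorphic pieces. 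The bridge requires the \emph{isomorphism} $\theta_{\uple{b}}$ you have just established. The paper makes this precise via idempotents: every irreducible summand of $\mcR^{*}_{\uple{b}}$ is the image of an idempotent in $\End_{U_{\uple{b}}}(\mcR^{*}_{\uple{b}})$, which because $\theta_{\uple{b}}$ is an isomorphism equals $\theta_{\uple{b}}(e')$ for an idempotent $e'\in\End_{V_{\uple{b}}}(\mcK_{\uple{b}})$. The image of $e'$ must be a single irreducible component $\sheaf{H}_{\uple{\rho}}$ (otherwise, by Lemma~\ref{lm-injectivity-plus-epsilon}, its image under $R^1\pi_!(\,\cdot\,)^{w=1}$ would not be irreducible), so the irreducible summand equals $(R^1\pi_!\sheaf{H}_{\uple{\rho}})^{w=1}$, and only then does Lemma~\ref{lm-injectivity-plus-epsilon} give rank $\geq 2$. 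Your conclusion is correct, but the stated justification omits the essential use of the isomorphism.
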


\begin{proof} 
  Since $\uple{\chi}$ has \CGMT, by Lemma \ref{how-to-twist} there
  exists a character $\xi$, possibly over a finite extension
  $\Ff_{q^{\nu}}$ of $\Fq$, such that $\uple{\chi}'=\xi\uple{\chi}$
  has \CGM\ over $\Ff_{q^{\nu}}$. Consider $\uple{\chi}$ as a tuple of
  characters of $\Ff_{q^{\nu}}^{\times}$. Then
  $\HYPK_{k,\psi}(\uple{\chi}')=\mcL_{\xi}\otimes
  \HYPK_{k,\psi}(\uple{\chi})$, and it follows that the auxiliary
  sheaves $\mcK$ and $\mcR^*$ for $\uple{\chi}$ are obtained from
  those associated to $\uple{\chi}'$ by twisting by a rank $1$ sheaf
  $\mcL_{\xi} ( (r+b_1) \dots (r+b_l) (r+b_{l+1}^{-1}) \dots
  (r+b_{2l})^{-1})$. Then the corresponding endomorphism rings (and
  the morphism $\theta_{\uple{b}}$) are the same for $\uple{\chi}$ and
  $\uple{\chi}'$. Up to renaming the field, this implies that we may
  as well assume that $\uple{\chi}$ has \CGM\ over $\Fq$.

  Let $\uple{b}\in \Aa^{2l}(\Fq)-\mcW(\Fq)$ be a point.  Let $j$ be
  the minimum $j$ such that $\uple{b} \in X_j$. Let $X$ be an
  irreducible component of $X_j$ containing $\uple{j}$. By taking $q$
  sufficiently large, we may assume that $X$ intersects the
  characteristic zero part. As the set of irreducible components is
  finite and depends only on $k,l$, the minimum value for $q$ depends
  only on $k,l$.

  If the dimension of $X$ is less than $(3l+1)/2$, then
  $\uple{b} \in X \subseteq \mcW$.

  Otherwise, let $\eta$ be the generic point of $X$.  Then by
  Theorem~\ref{th-generic}, taking $q$ sufficiently large,
  $$ \dim \End_{V_{\bar{\eta}}}(\mcK_{\bar{\eta}})=
  \dim \End_{U_{\bar{\eta}}}(\mcR^*_{\bar{\eta}}).
$$

Because $\mcW_1$ has dimension $\leq l+1$, and
$\dim X \geq (3l+1)/2 > l+1$ as $l>1$, $\eta$ is not contained in
$\mcW_1$. By Lemma~\ref{strata-finite-etale}, $Z$ is finite \'{e}tale
over $X_j - X_{j-1}$. Because the $b_i$ are sections of $Z$, and
$\uple{b}$ is a specialization of $\eta$ inside $X_j - X_{j-1}$, any
two of the $b_i$ which are unequal over $\eta$ must remain unequal
over $\uple{b}$, so $\uple{b}\not \in \mcW_1$.

So by Theorem~\ref{th-inj}, the natural map
$$
\theta_{\uple{b}}: \End_{V_{\uple{b}}}(\mcK_{\uple{b}})
\to \End_{U_{\uple{b}}}(\mcR^*_{\uple{b}})
$$
is injective, hence by Proposition \ref{pr-specialization},
$\theta_{\uple{b}}$ is an isomorphism.

Each irreducible component of $\mcR^*_{\uple{b}}$ is the image of an
idempotent element of $ \End_{U_{\uple{b}}}(\mcR^*_{\uple{b}})$, which
because $\theta_{\uple{b}}$ is an isomorphism is induced by an
idempotent element of $ \End_{V_{\uple{b}}}(\mcK_{\uple{b}})$, and
thus is equal to the weight one part of the cohomology of the image of
that idempotent element of $ \End_{V_{\uple{b}}}(\mcK_{\uple{b}})$. In
other words, it is the weight one part of the cohomology of an
irreducible component of $\mcK_{\uple{b}}$. Hence by
Lemma~\ref{lm-injectivity-plus-epsilon}, its rank is at least two.
\end{proof}

We finally can conclude the proof by showing how Theorem~\ref{th-end}
allows us to give the estimates for complete sums used in the proof of
our main theorems. In both cases, we use the fact (as remarked before
the statements of Theorem~\ref{th-complete-1} and~\ref{th-complete-2})
that we may assume that the function $K$ is
$\Kl_k(x;\uple{\chi},q)$. By Lemma \ref{lm-top-vanishing} and the
Grothendieck--Lefschetz trace formula, for any
$\uple{b} \not\in \mcV^\Delta$, the function $\bfR$ is equal to minus
the trace function of the sheaf $\mcR$, if the additive character
$\psi$ is chosen so that $\psi(x)=e(x/q)$ for $x\in\Fq$.

\begin{proof}[Proof of Theorem~\ref{th-complete-1}]\label{pg-th-c1}
  We have defined $\mcV^{\Delta}$ and $\mcW$, and they satisfy the
  codimension bounds stated in the theorem (see~(\ref{eq-codim-w})).
 \par
 We need to estimate the complete sums
$$
\Sigma_{II}(\bfb)=
\sum_{r\in\Fq}|\bfR(r,\bfb)|^2-\sum_{s\in\Fqt}\sum_{r\in\Fq}
|\bfK(sr,s\bfb)|^2
$$
for $\bfb\in\Ff_q^{2l}$.  Since $\Kl_k$ is bounded, we have
$\Sigma_{II}(\bfb)\ll q^{3}$ for all $\bfb$, which is the trivial
bound~(\ref{eq-complete-11}).
\par
If $\bfb\in \mcW(\Fq)$ and $\bfb\notin \mcV^{\Delta}(\Fq)$, then we
obtain $\Sigma_{II}(\bfb)\ll q^2$ by estimating the two terms in
$\Sigma_{II}$ separately, and using the Riemann Hypothesis together
with the fact that the $\mcR$-sheaf is mixed of weights $\leq 1$ on
$\Aa^{2l}-\mcV^{\Delta}$, and the $\mcK$-sheaf is pure of weight
$0$. This proves~(\ref{eq-complete-12}).
\par
Now assume that $\bfb\notin \mcW(\Fq)$. By Theorem~\ref{th-end}, the
Frobenius-equivariant map
$$
\theta_{\uple{b}}\colon \End_{V_{\uple{b}}}(\mcK_{\uple{b}})\to 
\End_{U_{\uple{b}}}(\mcR^*_{\uple{b}})
$$ 
is an isomorphism. In particular the Frobenius automorphism of $\Fq$
has the same trace on both spaces. The trace on
$\End_{U_{\uple{b}}}(\mcR^*_{\uple{b}})$ is, by the
Grothendieck--Lefschetz trace formula, equal to
$$
\sum_{r\in\Fq}|\bfR(r,\bfb)|^2+O(q^{3/2})
$$
where the error term arises from the contribution of the
$H^1_c$-cohomology and of the weight $<1$ part of $\mcR$. Similarly,
the trace of Frobenius on $\End_{V_{\uple{b}}}(\mcK_{\uple{b}})$ is
equal to
$$
\sum_{s\in\Fqt}\sum_{r\in\Fq}|\bfK(sr,s\bfb)|^2+O(q^{3/2})
$$
where the error term arises from the contribution of the
$H^1_c$-cohomology. Comparing, we obtain~(\ref{eq-complete-13}).
\par
It remains to observe that, in all these estimates, the implied
constant depends only on the sum of the Betti numbers of the relevant
sheaves. These are estimated in the usual way by reducing to
expressions as exponential sums and applying the Betti number bounds
of Bombieri--Katz (see~\cite[Th. 12]{KatzBetti}
and~\cite[Prop. 4.24]{KMS} for the analogue argument in our previous
paper).
\end{proof}

\begin{proof}[Proof of Theorem~\ref{th-complete-2}]\label{pg-th-c2}
We recall that we need to estimate 
$$
\Sigma_I(\bfb)= \sum_{r\in\Fq}\bfR(r,\bfb)
$$
(see~(\ref{eq-def-sigma})).  Since $\Kl_k$ is bounded, we have
$\Sigma_{I}(\bfb)\ll q^{2}$ for all $\bfb$, which is the trivial
bound~(\ref{eq-complete-21}).
\par
If $\bfb\in \mcW(\Fq)$ and $\bfb\notin \mcV^{\Delta}(\Fq)$, then we
obtain $\Sigma_{I}(\bfb)\ll q^{3/2}$ because the $\mcR$-sheaf is of
weights $\leq 1$ on $\Aa^{2l}-\mcV^{\Delta}$. This
proves~(\ref{eq-complete-22}).
\par
Finally, if $\bfb\notin \mcW(\Fq)$, then we obtain
$\Sigma_I(\bfb)\ll q$ straightforwardly from Deligne's Riemann
Hypothesis, since $\mcR^*$ is of weight $1$ and has no geometrically
trivial irreducible component (by Theorem~\ref{th-end} it doesn't even
have rank $1$ components), proving~(\ref{eq-complete-23}).
\par
Again, the implied constants in these estimates depend only on the sum
of the Betti numbers of the relevant sheaves, and are estimated by
reducing to expressions as exponential sums and applying the Betti
number bounds of Bombieri--Katz~\cite{KatzBetti}.
\end{proof}

\begin{bibdiv}

\begin{biblist}

\bib{BFKMM}{article}{
 author={Blomer,V.},
 author={Fouvry, \' E.},
 author={Kowalski, E.},
 author={Michel, Ph.},
 author={Mili\'cevi\'c,  D.},
 title={On moments of twisted $L$--functions},
 journal={Amer. J. Math},
 volume={139},
 number={3},
 pages={707--768},
 year={2017}
}

\bib{BFKMMS}{article}{
 author={Blomer,V.},
 author={Fouvry, \' E.},
 author={Kowalski, E.},
 author={Michel, Ph.},
 author={Mili\'cevi\'c,  D.},
 author={Sawin, W.},
 title={The second moment theory of families of $L$--functions},
 journal={Memoirs of the AMS},
 note={to appear},
}

\bib{Bur}{article}{
  author={Burgess, D.A.},
  title={On character sums and $L$-series, I},
  journal={Proc. London Math. Soc. (3)},
  year={1962},
  volume={12},
  pages={193--206},
}

\bib{WeilII}{article}{
  author={Deligne, P.},
  title={La conjecture de Weil, II}, 
  journal={Publ. Math. IH\'ES},
  volume={52},
  date={1980},
  pages={137--252},
}

\bib{counting}{article}{
  author={Fouvry, {\'E}.},
  author={Kowalski, E.},
  author={Michel, Ph.},
  title={Counting sheaves using spherical codes}, 
  journal={Math. Research Letters},
  volume={20},
  year={2013},
  pages={305--323},
}

\bib{FKM2}{article}{
  author={Fouvry, {\'E}.},
  author={Kowalski, E.},
  author={Michel, Ph.},
  title={Algebraic trace functions over the primes},
  journal={Duke Math. J.},
  volume={163},
  number={9},
  pages={1683--1736},
  date={2014},
}

\bib{FKMSurvey}{article}{
   author={Fouvry, \'Etienne},
   author={Kowalski, Emmanuel},
   author={Michel, Philippe},
   title={Trace functions over finite fields and their applications},
   conference={
      title={Colloquium De Giorgi 2013 and 2014},
   },
   book={
      series={Colloquia},
      volume={5},
      publisher={Ed. Norm., Pisa},
   },
   date={2014},
   pages={7--35},
   review={\MR{3379177}},
}

\bib{FKM1}{article}{
   author={Fouvry, {\'E}.},
   author={Kowalski, E.},
   author={Michel, Ph.},
   title={Algebraic twists of modular forms and Hecke orbits},
   journal={Geom. Func. Anal.},
   volume={25},
   note={\url{arXiv:1207.0617}},
   date={2015},
   number={2},
   pages={580-657},
 }
 
 \bib{FKMRSS}{article}{
   author={Fouvry, \'Etienne},
   author={Kowalski, Emmanuel},
   author={Michel, Philippe},
   author={Raju, Chandra Sekhar},
   author={Rivat, Jo\"el},
   author={Soundararajan, Kannan},
   title={On short sums of trace functions},
   language={English, with English and French summaries},
   journal={Ann. Inst. Fourier (Grenoble)},
   volume={67},
   date={2017},
   number={1},
   pages={423--449},
   issn={0373-0956},
  
}

 \bib{FKMSP}{article}{
   author={Fouvry, {\' E}.},
   author={Kowalski, E.},
   author={Michel, Ph.},
   title={A study in sums of products},
   journal={Philos. Trans. Roy. Soc. A},
   volume={373},
   date={2015},
   number={2040},
   pages={20140309, 26pp.},
}

\bib{fkms}{article}{
  author={Fouvry, {\'E.}},
  author={Kowalski, E.},
  author={Michel, {Ph.}},
  author={Sawin, W.},
  title={Bilinear forms with trace functions},
  journal={preprint,  \url{arXiv:2511.09459}},
}
 
\bib{FoMi}{article}{
  author={Fouvry, {\'E.}},
  author={Michel, {Ph.}},
  title={Sur certaines sommes d'exponentielles sur les nombres premiers}, 
  journal={Ann. Sci. \' Ecole Norm. Sup. (4)},
  volume={31},
  number={1},
  date={1998},
  pages={93--130},
}

\bib{FI}{article}{
  author={Friedlander, J.B.},
  author={Iwaniec, H.},
  title={Incomplete Kloosterman sums and a divisor problem},
  note={(with an appendix by
  B. J. Birch and E. Bombieri)},
  journal={Ann. of Math. (2)},
  volume={121},
  date={1985},
  number={2},
  pages={319--350},
}

\bib{Fu}{book}{
  author={Fu, L.},
  title={\'Etale cohomology theory},
  publisher={World Scientific},
  year={2011},
  series={Nankai Tracts in Mathematics},
  volume={13},
}

\bib{travaux-de-gabber}{collection}{
   title={Travaux de Gabber sur l'uniformisation locale et la cohomologie
   \'etale des sch\'emas quasi-excellents},
   language={French},
   editor={Illusie, Luc},
   editor={Laszlo, Yves},
   editor={Orgogozo, Fabrice},
   note={S\'eminaire \`a l'\'Ecole Polytechnique 2006--2008. [Seminar of the
   Polytechnic School 2006--2008];
   With the collaboration of Fr\'ed\'eric D\'eglise, Alban Moreau, Vincent
   Pilloni, Michel Raynaud, Jo\"el Riou, Beno\^\i t Stroh, Michael Temkin and
   Weizhe Zheng;
   Ast\'erisque No. 363-364 (2014) (2014)},
   publisher={Soci\'et\'e Math\'ematique de France, Paris},
   date={2014},
   pages={i--xxiv and 1--625},
}

 \bib{IwKo}{book}{
   author={Iwaniec, H.},
   author={Kowalski, E.},
   title={Analytic number theory},
   publisher={American Mathematical Society},
   series={Colloquium Publications},
   volume={53},
   address={Providence, RI},
   date={2004},
 }

\bib{GKM}{book}{
   author={Katz, N. M.},
   title={Gauss sums, Kloosterman sums, and monodromy groups},
   series={Annals of Mathematics Studies},
   volume={116},
   publisher={Princeton University Press},
   address={Princeton, NJ},
   date={1988},
}
  
\bib{ESDE}{book}{
   author={Katz, N. M.},
   title={Exponential sums and differential equations},
   series={Annals of Mathematics Studies},
   volume={124},
   publisher={Princeton University Press},
   address={Princeton, NJ},
   date={1990},
}



\bib{ce}{book}{
   author={Katz, N. M.},
   title={Convolution and equidistribution: Sato-Tate theorems for
     finite-field Mellin transforms},
   series={Annals of Mathematics Studies},
   volume={180},
   publisher={Princeton University Press},
   address={Princeton, NJ},
   date={2012},
}

\bib{KatzBetti}{article}{
   author={Katz, N. M.},
   title={Sums of Betti numbers in arbitrary characteristic},
   journal={Finite Fields and Their Applications},
   volume={7},
   number={1},
   year={2001},
   pages={29-44},
}

\bib{mmp}{book}{
   author={Katz, N.M.},
   title={Moments, monodromy, and perversity: a Diophantine perspective},
   series={Annals of Mathematics Studies},
   volume={159},
   publisher={Princeton University Press, Princeton, NJ},
   date={2005},
   pages={viii+475},
   isbn={978-0-691-12330-1},
   isbn={0-691-12330-6},
   review={\MR{2183396}},
}

\bib{KMS}{article}{
   author={Kowalski, E},
   author={Michel, Ph.},
   author={Sawin, W.},
   title={Bilinear forms with Kloosterman sums and applications},
   journal={Annals of Math.},
   volume={186},
   date={2017},
   number={2},
   pages={413--500},
}

\bib{LaumonSMF}{article}{
   author={Laumon, G.},
   title={Semi-continuit\'e du conducteur de Swan (d'apr\`es P. Deligne)},
   conference={
      title={Caractéristique d'Euler--Poincar\'e},
   },
   book={
      series={Ast\'erisque},
      volume={83},
      publisher={Soc. Math. France},
      address={Paris},
   },
   date={1981},
   pages={173--219},
}

\bib{Milne}{book}{
  author={Milne, J.},
  title={\'Etale cohomology},
  publisher={Princeton University Press},
  series={Princeton Math. Series},
  volume={33},
  address={Princeton, N.J.},
  year={1980},
}

\bib{sga1}{book}{
   author={Grothendieck, A.},
   author={Raynaud, M.},
   title={Rev\^etements \'etales et groupe fondamental},
   series={Lecture Notes in Mathematics},
   volume={224},
   publisher={Springer-Verlag},
   address={Berlin-New York},
   date={1971},
   label={SGA1},
   pages={xviii+327},
}


\bib{sga4h}{book}{
   author={Deligne, P.},
   title={Cohomologie \'etale},
   series={Lecture Notes in Mathematics},
   volume={569},
   publisher={Springer-Verlag},
   address={Berlin-New York},
   date={1977},
   pages={iv+312pp},
   label={SGA4$\frac{1}{2}$},
}



\bib{xu}{article}{
  author = {Xu, Junyan},
  title = {Stratification for Multiplicative Character Sums},
  journal = {International Mathematics Research Notices},
  year = {2018},
  doi = {10.1093/imrn/rny096},
}

\bib{young}{article}{
  author={Young, {M. P.}},
  title={The fourth moment of Dirichlet $L$-functions},
  journal={Ann. of Math. (2)},
  pages={1--50},
  date={2011},
  volume={173},
  number={1},
}

\bib{Zac}{article}{
  author={Zacharias, R.},
  title={Simultaneous non-vanishing for Dirichlet L-functions},
  journal={Annales Inst. Fourier},
  volume={69},
  year={2019},
  pages={1459--1524},
  date={2019},
}

\end{biblist}
\end{bibdiv}
\end{document}